 \newcounter{mainthm}
 \newtheorem{main}[mainthm]{Theorem}
 \newtheorem{thm}{Theorem}[section]
 \theoremstyle{plain}
 \newtheorem{lem}[thm]{Lemma}
 \newtheorem{prop}[thm]{Proposition}
 \newtheorem{cor}[thm]{Corollary}
 \newtheorem{defn-thm}[thm]{Definition-Theorem}
 \newtheorem{sublemma}[thm]{Sublemma}
 \newtheorem{defn-lem}[thm]{Definition-Lemma}
\newtheorem{conjecture}
[thm]{Conjecture}
 \theoremstyle{definition}
 \newtheorem{defn}[thm]{Definition}
\newtheorem{assumption}[thm]{Assumption}
\newtheorem{convention}
[thm]{Convention}
\newtheoremstyle{rmk}
{5pt}
{5pt}
{}
{}
{\itshape}
{}
{.5em}
{}
\theoremstyle{rmk}
\newtheorem{rmk}[thm]{Remark}
\newtheoremstyle{note}
{8pt}
{5pt}
{\itshape}
{10pt}
{\bfseries}
{}
{.5em}
{}
\theoremstyle{note}
\newtheorem*{note}{}
\newcommand{\id}{\mathrm{id}}
\newcommand{\mathds}[1]{\text{\usefont{U}{dsrom}{m}{n}#1}}
\newcommand{\one}{\mathds {1}}
\newcommand{\UD}{\mathscr{UD}}
\newcommand{\simud}{\stackrel{\mathrm{ud}}{\sim}}
\newcommand{\con}{\mathsf{con}}
\newcommand{\pr}{\mathrm{pr}}
\newcommand{\ev}{\mathrm{ev}}
\newcommand{\Ev}{\mathrm{Ev}}
\newcommand{\forget}{{\mathfrak {forget}}}
\newcommand{\uu}{\mathbf u}
\newcommand{\ia}{{\mathfrak a}}
\newcommand{\e}{\mathbf e}
\newcommand{\B}{\mathsf B}
\newcommand{\mi}{\mathfrak i}
\newcommand{\mg}{\mathbf g}
\newcommand{\tri}{{\mathrm{tri}}}
\newcommand{\m}{\mathfrak m}
\newcommand{\M}{\mathfrak M}
\newcommand{\mC}{\mathfrak C}
\newcommand{\oi}{{[0,1]}}
\newcommand{\f}{\mathfrak f}
\newcommand{\F}{\mathfrak F}
\newcommand{\h}{\mathfrak h}
\newcommand{\g}{\mathfrak g}
\newcommand{\G}{\mathfrak G}
\newcommand{\q}{\mathfrak q}
\newcommand{\HL}{H^*(L)}
\newcommand{\OL}{\Omega^*(L)}
\DeclareMathOperator{\Sp}{Sp}
\DeclareMathOperator{\incl}{Incl}
\DeclareMathOperator{\eval}{Eval}
\DeclareMathOperator{\val}{\mathsf{v}}
\DeclareMathOperator{\trop}{\mathfrak {trop}}
\DeclareMathOperator{\Hom}{Hom}
\DeclareMathOperator{\DA}{DA}
\DeclareMathOperator{\CU}{CU}
\DeclareMathOperator{\Obj}{Obj}
\DeclareMathOperator{\Mor}{Mor}
\DeclareMathOperator{\CC}{\mathbf{CC}}
\DeclareMathOperator{\CF}{CF}
\DeclareMathOperator{\HF}{HF}
\DeclareMathOperator{\rCC}{\mathbf{\widetilde{CC}}}
\DeclareMathOperator{\rHH}{\widetilde{HH}}
\DeclareMathOperator{\Corr}{Corr}
\DeclareMathOperator{\CO}{{\mathbb {CO}}}
\newcommand*{\Scale}[2][4]{\scalebox{#1}{$#2$}}%
\titleformat{\paragraph}[runin]{\small\sffamily\bfseries
}{}{}{}[]
\titleformat{\subsubsection}[runin]{\itshape\normalsize}{\thesubsubsection \ }{0em}{}[\mbox{. } ]
\setlist[description]{font=\normalfont\itshape\textbullet\space}
\bfseries\vspace{3pt}}%
\footnotesize \vspace{0pt}}%
\footnotesize \vspace{1pt}}%
\begin{document}
\title[\small Family Floer superpotential's critical values are eigenvalues of quantum product by $c_1$]{\large Family Floer superpotential's critical values are eigenvalues of quantum product by $c_1$}
\author[Hang Yuan]{\small Hang Yuan}
\begin{abstract} {\sc Abstract:}  
	 In the setting of the non-archimedean SYZ mirror construction in \cite{Yuan_I_FamilyFloer}, we prove the folklore conjecture that the critical values of the mirror superpotential are the eigenvalues of the quantum multiplication by the first Chern class.
	 Our result relies on a weak unobstructed assumption, but it is usually ensured in practice by Solomon's results \cite{Solomon_Involutions} on anti-symmetric Lagrangians.
	 Lastly, we note that some explicit examples are presented in the recent work \cite{Yuan_local_SYZ}.
\end{abstract}
\maketitle
%
%

\tableofcontents

\setlength{\parindent}{5.5mm}	\setlength{\parskip}{0em}

\section{Introduction}

The mirror symmetry phenomenon does not merely focus on the Calabi-Yau setting.
Assume that a compact symplectic manifold $(X,\omega)$ is Fano (or more generally that $-K_X$ is nef). The mirror of $X$ is expected to be a Landau-Ginzburg model $(X^\vee, W^\vee)$ consisting of a space $X^\vee$ equipped with a global function $W^\vee$ called the superpotential.
In the context of the homological mirror symmetry \cite{KonICM}, a celebrated folklore conjecture states that: (known to Kontsevich and Seidel, compare also \cite[\S 2.9]{Sheridan16})

\vspace{-0.2em}

\begin{conjecture}
	\label{conjecture_folklore}
	The critical values of the mirror Landau-Ginzburg (LG) superpotential $W^\vee$ are the eigenvalues of the quantum multiplication by the first Chern class of $X$.
\end{conjecture}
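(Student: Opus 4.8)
The plan is to deduce the conjecture from the fact that, in the construction of \cite{Yuan_I_FamilyFloer}, the superpotential $W^\vee$ is built to be dual to quantum multiplication by $c_1$ through the closed--open map. Recall that the mirror non-archimedean analytic space $X^\vee$ is glued from the dual tori of the smooth Lagrangian torus fibers $L_q$, and that $W^\vee$ restricts on each chart to the Fukaya--Oh--Ohta--Ono/Cho--Oh disk potential of $(L_q,b)$, where a point $y\in X^\vee$ over $q$ records a unitary rank-one local system, equivalently a weak bounding cochain $b=b_y\in H^1(L_q;\Nov)$. Here the weak unobstructedness hypothesis enters: it makes $W^\vee$ and these bounding cochains well defined, and, as noted in the abstract, Solomon's results \cite{Solomon_Involutions} on Lagrangians fixed by an anti-symplectic involution supply it in practice. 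First I would record the Cho--Oh/FOOO dichotomy in this family form: a point $y\in X^\vee$ over $q$ is a critical point of $W^\vee$ if and only if $\HF^*\big((L_q,b_y),(L_q,b_y)\big)\neq 0$; the only nontrivial point is that the gluing isomorphisms of \cite{Yuan_I_FamilyFloer} intertwine the local potentials, so that ``critical point'' is a chart-independent, global condition on $X^\vee$.

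The heart of the argument is the closed--open string map. For a weakly unobstructed $(L_q,b_y)$ one has the unital $\Nov$-algebra homomorphism
\[
\mathcal{CO}^0_{(L_q,b_y)}\colon QH^*(X;\Nov)\longrightarrow \HF^*\big((L_q,b_y),(L_q,b_y)\big),
\]
and the key computation to carry out is
\[
\mathcal{CO}^0_{(L_q,b_y)}\big(c_1(X)\big)\;=\;W^\vee(y)\cdot\mathbf 1 ,
\]
i.e.\ $c_1(X)$ acts on the $\Nov$-module $\HF^*\big((L_q,b_y),(L_q,b_y)\big)$ as the scalar $W^\vee(y)$. This is the family, $b$-deformed analogue of the toric identity $\mathfrak{ks}(c_1(X))=[\mathfrak{PO}]$ of FOOO: represent $c_1(X)$ by a cycle dual to an anticanonical divisor, insert it at the interior marked point of the closed--open moduli spaces, and apply the divisor axiom together with the Maslov index/dimension count; only the Maslov-$2$ disks contributing to the disk potential survive, each weighted by intersection number $1$, so the count reproduces $W^\vee(y)$ times the unit. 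Granting this, the forward inclusion is immediate: at a critical point $y$ the map $\mathcal{CO}^0_{(L_q,b_y)}$ is nonzero, since it sends $\mathbf 1$ to $\mathbf 1\neq 0$ because $\HF^*\neq 0$; hence if $c_1(X)-W^\vee(y)\,\mathbf 1$ were invertible in $QH^*(X;\Nov)$ its image would be invertible, contradicting $\mathcal{CO}^0_{(L_q,b_y)}\big(c_1(X)-W^\vee(y)\,\mathbf 1\big)=0$. So $c_1(X)-W^\vee(y)\,\mathbf 1$ is a non-unit of the finite-dimensional algebra $QH^*(X;\Nov)$, and since the Novikov field $\Nov$ is algebraically closed this says exactly that $W^\vee(y)$ is an eigenvalue of quantum multiplication by $c_1(X)$. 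Running this over all critical points yields $\{\text{critical values of }W^\vee\}\subseteq\{\text{eigenvalues of }c_1\star-\}$.

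For the opposite inclusion — needed if one wants the full equality of the conjecture — I would use the generalized eigenspace decomposition $QH^*(X;\Nov)=\bigoplus_\lambda QH^*(X;\Nov)_\lambda$ and the induced orthogonal splitting of the Fukaya category, on whose $\lambda$-summand $c_1$ acts with generalized eigenvalue $\lambda$ (compare \cite[\S 2.9]{Sheridan16}); by the previous paragraph a fiber $(L_q,b_y)$ with $\HF^*\neq0$ belongs to the $W^\vee(y)$-summand. It then remains to show that each $\lambda$ with $QH^*(X;\Nov)_\lambda\neq0$ is hit, i.e.\ that some fiber with non-vanishing Floer cohomology lies in that summand; for this one feeds in the family Floer embedding theorem of \cite{Yuan_I_FamilyFloer}, which on the locus where $W^\vee$ is defined makes $\bigoplus_y\mathcal{CO}^0_{(L_q,b_y)}$ faithful enough to detect every nonzero summand of $QH^*(X;\Nov)$.

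The main obstacle is the second displayed identity in the non-archimedean family setting: one must construct $\mathcal{CO}^0$ over $\Nov$ as an honest unital ring map, prove it is compatible with the gluing maps defining $X^\vee$ (so that $\mathcal{CO}^0(c_1)=W^\vee\cdot\mathbf 1$ refers to the \emph{global} function $W^\vee$ rather than to a single chart), and handle the interior constraint by an anticanonical cycle, which here need not be a complex hypersurface, through a pseudocycle or bulk-deformed form of the divisor axiom. The generation input for the reverse inclusion is the other delicate point, and I expect it — rather than the forward direction — to be where the hypotheses of \cite{Yuan_I_FamilyFloer} and the explicit examples of \cite{Yuan_local_SYZ} really do the work.
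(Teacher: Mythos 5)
Your forward-direction skeleton matches the paper's: establish the dichotomy ``$\mathbf y$ is a critical point of $W$ iff $\HF(L,\m,\mathbf y)\neq 0$'' (Theorem \ref{nonvanishing_HF_thm}), build a unital $\Lambda$-algebra homomorphism $\CO_{\mathbf y}: QH^*(X;\Lambda)\to \HF(L,\m,\mathbf y)$ with $\CO_{\mathbf y}(c_1)=W(\mathbf y)[\one]$, and then argue that $c_1-W(\mathbf y)\one_X$ cannot be a unit (Theorem \ref{eigenvalue_general_thm}). Your reduction of the family/analytic statement to the single-fiber statement via well-definedness of critical points under the gluing maps (Corollary \ref{crit_point_inv_cor}) is also the same as \S\ref{ss_implication_family_Floer}.

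However, there is a genuine gap in where you locate the main difficulty, and it is precisely the point the paper is most emphatic about. The closed--open operators $\q_{\ell,k,\beta}$, defined via moduli spaces with one interior marking, naturally produce a ring map into the reduced Hochschild cohomology $\rHH(\OL,\check\m)$ of the \emph{chain-level} algebra $\check\m^{J,L}$ (Theorem \ref{[hat_q]_ring_homo_thm}). But the superpotential $W$ and the self Floer cohomology $\HF(L,\m)$ live on the \emph{minimal model} $\m=\m^{g,J,L}$, and Hochschild cohomology is not functorial in the required sense, so one cannot simply transport $[\hat\q]$ across a quasi-isomorphism. This is exactly what your sketch elides when you say ``apply the divisor axiom together with the Maslov index/dimension count; only Maslov-$2$ disks survive'': the naive direct count at chain level does not target $\HF(L,\m,\mathbf y)$, and in the presence of Maslov-0 disks the FOOO toric argument breaks. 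The paper resolves this with the operator
\[
\Theta(\varphi)=\big(\mi^{-1}\{\varphi\}\big)\diamond\mi,
\]
where $\mi^{-1}$ is a \emph{ud-homotopy} inverse of $\mi:\m\to\check\m$, together with the length-zero projection $\mathbb P$ and the verification (Theorem \ref{from_HH_to_HF_thm}) that $[\mathbb P\Theta]$ is a unital ring homomorphism. Both the construction of $\Theta$ and the proof that it works \emph{modulo $\ia$} rely crucially on Lemma \ref{weakMC_observation_lem}, which in turn uses the full strength of ud-homotopy relations in $\UD$ (divisor axiom, cyclical unitality, semipositivity). Your ``main obstacle'' paragraph discusses compatibility with gluing and pseudocycle representation of $c_1$, which the paper handles by comparatively standard means; you do not mention the minimal-model vs.\ chain-level mismatch at all, nor the need for the hypothesis that $H^*(L)$ is generated by $H^1(L;\mathbb Z)$ (which enters the nonvanishing dichotomy via Lemma \ref{key_lemma}).

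Your proposed reverse inclusion is a second gap, but of a different kind: the paper does not prove it and explicitly disclaims it (``we cannot conversely conclude that all the eigenvalues of $c_1$ are realized by a critical point of the superpotential''). The ``faithfulness'' input you invoke from the family Floer embedding is not established in \cite{Yuan_I_FamilyFloer}, and in general the mirror $X^\vee$ covers only the fibered region $X_0\subset X$, so the generation/split-generation statement needed for the reverse direction is simply not available. The theorem actually proved is the one-sided containment $\{\text{critical values}\}\subseteq\{\text{eigenvalues of }c_1\pmb\ast\}$, and you should state that if you want your write-up to match what is demonstrable under the hypotheses given.
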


\vspace{-0.2em}

The conjecture is first proved by Auroux \cite{AuTDual} in the Fano toric case, concerning the Landau-Ginzburg mirror associated to a toric Lagrangian fibration in $X$ \cite{Cho_Oh, FOOOToricOne}.

\begin{thm}
	Conjecture \ref{conjecture_folklore} holds for the family Floer mirror Landau-Ginzburg superpotential.
\end{thm}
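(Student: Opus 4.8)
The plan is to derive the theorem from the interaction between the quantum cohomology ring and the self-Floer cohomology of the Lagrangian torus fibers $L_b$ out of which the mirror is assembled. Recall that a $\Lambda$-point $y$ of the non-archimedean mirror $X^\vee$ is precisely a fiber $L_b$ together with a $\Lambda^{\times}$-local system, and that the construction of \cite{Yuan_I_FamilyFloer} makes $(L_b,y)$ weakly unobstructed with $\mathfrak{m}_0^{(L_b,y)}=W^\vee(y)\cdot\mathbf 1$, so that $\HF^*\bigl((L_b,y),(L_b,y)\bigr)$ is defined. The first step is to construct, uniformly over such pairs, the degree-zero closed--open map
\[
\mathcal{CO}^0_{(L_b,y)}\colon QH^*(X;\Lambda)\longrightarrow\HF^*\bigl((L_b,y),(L_b,y)\bigr),
\]
by counting pseudo-holomorphic disks with boundary on $L_b$ carrying one interior marked point constrained to a given cycle of $X$, twisted by the local system $y$ and weighted by the Novikov parameter, and to verify that it is a unital homomorphism of $\Lambda$-algebras whenever the target is non-zero. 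In the non-archimedean family Floer framework this is the technical heart: one must run the compactness, transversality and gluing analysis for moduli of disks with interior insertions over the Novikov ring, use weak unobstructedness to make $\mathfrak{m}_1^{(L_b,y)}$ a differential and $\mathcal{CO}^0$ a chain map, and check independence of the auxiliary choices up to the relevant homotopies so that the map descends to cohomology and respects products.

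The second step is the eigenvalue identity
\[
\mathcal{CO}^0_{(L_b,y)}\bigl(c_1(X)\bigr)\;=\;W^\vee(y)\cdot\mathbf 1\qquad\text{in }\ \HF^*\bigl((L_b,y),(L_b,y)\bigr),
\]
which I would prove as in the toric case of \cite{AuTDual, FOOOToricOne}: represent $c_1(X)$ by a cycle Poincaré dual to an anticanonical divisor disjoint from the relevant fibers, apply the divisor axiom so that a disk in class $\beta$ is counted with weight $\beta\cdot D=\mu(\beta)/2$, and observe that after the weak Maurer--Cartan correction the resulting count agrees with $\mathfrak{m}_0^{(L_b,y)}$, which by weak unobstructedness equals $W^\vee(y)\cdot\mathbf 1$. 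The third step is the non-vanishing criterion: if $y\in\operatorname{Crit}(W^\vee)$ then $\HF^*\bigl((L_b,y),(L_b,y)\bigr)\neq 0$, the non-archimedean analogue of the result of \cite{FOOOToricOne}, which I would establish by writing out $\mathfrak{m}_1^{(L_b,y)}$ on $H^*(T^n)\otimes\Lambda$ and identifying its cohomology with a Koszul/Clifford-type algebra built from the Hessian of $W^\vee$ at $y$, non-zero precisely at a critical point. Granting these, the inclusion ``critical values $\subseteq$ eigenvalues of $c_1\star(-)$'' is immediate: for a critical point $y$ the map $\mathcal{CO}^0_{(L_b,y)}$ is a unital algebra homomorphism into a non-zero ring and $\mathcal{CO}^0_{(L_b,y)}\bigl(c_1(X)-W^\vee(y)\mathbf 1\bigr)=0$, so $c_1(X)-W^\vee(y)\mathbf 1$ is not invertible in $QH^*(X;\Lambda)$; equivalently $c_1\star(-)-W^\vee(y)\,\mathrm{id}$ is not invertible, and since $QH^*(X;\Lambda)$ is finite-dimensional over the algebraically closed Novikov field $\Lambda$, the value $W^\vee(y)$ is an eigenvalue of $c_1\star(-)$.

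For the reverse inclusion I would use the idempotent decomposition $QH^*(X;\Lambda)=\prod_{\lambda}QH^*_\lambda$ into generalized $c_1\star(-)$-eigenspaces together with the observation, immediate from the eigenvalue identity, that $\mathcal{CO}^0_{(L_b,y)}$ annihilates every summand $QH^*_\lambda$ with $\lambda\neq W^\vee(y)$. What then remains is to show that each non-zero eigenspace $QH^*_\lambda$ is detected by some pair $(L_b,y)$ with $\HF^*\bigl((L_b,y),(L_b,y)\bigr)\neq 0$ and $W^\vee(y)=\lambda$; by the non-vanishing criterion any such pair automatically has $y\in\operatorname{Crit}(W^\vee)$, so $\lambda$ is a critical value. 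This detection statement is a weak, spectrum-level shadow of the split-generation statement of homological mirror symmetry \cite{KonICM}, and it is here that the weak unobstructedness hypothesis of the theorem --- supplied in practice for anti-symplectic-invariant Lagrangians by \cite{Solomon_Involutions} --- together with the structural description of $X^\vee$ from \cite{Yuan_I_FamilyFloer, Yuan_local_SYZ} must be used. I expect this reverse inclusion, rather than the disk bookkeeping, to be the main obstacle: one has to extract just enough generation to force surjectivity of $\{\,W^\vee(y):dW^\vee(y)=0\,\}$ onto the spectrum of $c_1\star(-)$ without establishing a full homological mirror equivalence.
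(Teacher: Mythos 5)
Your forward inclusion argument --- the unital closed--open ring homomorphism $\mathcal{CO}^0_{(L_b,y)}$ sending $c_1$ to $W^\vee(y)\cdot[\one]$, combined with non-vanishing of $\HF$ at a critical point --- follows the same logical skeleton as the paper (Propositions \ref{CO_c_1_W_prop}, Theorems \ref{nonvanishing_HF_thm} and \ref{eigenvalue_general_thm}). However, your treatment of it conceals what the paper identifies as the central obstacle and devotes most of its machinery to: the geometric closed--open construction you describe naturally lands in the Hochschild cohomology of the \emph{cochain-level} $A_\infty$ algebra $(\Omega^*(L),\check\m)$, whereas the superpotential $W^\vee$ lives on the \emph{minimal model} $(H^*(L),\m)$, and in the presence of Maslov-0 disks these two $A_\infty$ structures are not equivalent in any way that Hochschild cohomology sees functorially. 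The paper bridges this by the operator
\[
\Theta\colon \varphi\longmapsto (\mi^{-1}\{\varphi\})\diamond\mi
\]
(equation (\ref{map_Theta_eq})), where $\mi^{-1}$ is a \emph{ud}-homotopy inverse in $\UD$ of the homological-perturbation morphism $\mi\colon\m\to\check\m$; the fact that $\mathbb P\circ\Theta$ descends to a unital ring homomorphism $\rHH(\Omega^*(L),\check\m)\to\HF(L,\m)$ is Theorem \ref{from_HH_to_HF_thm} and rests on the ud-homotopy theory (notably Lemma \ref{weakMC_observation_lem}) rather than on pure homological algebra. Your proposal, which proceeds directly to $\HF((L_b,y),(L_b,y))$ via disk counts with one interior insertion, is essentially correct only in the regime where Maslov-0 contributions vanish (e.g.\ the Fano toric case); it does not address the wall-crossing ``quantum fluctuation'' that this paper is built to handle. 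Likewise, your Hessian/Clifford-algebra approach to the non-vanishing criterion is a toric heuristic; the paper instead assumes $H^*(L)$ is generated by $H^1(L;\mathbb Z)$ and proves Lemma \ref{key_lemma} by an induction exploiting the divisor axiom and the strict gap $\hbar>0$, precisely because the naive Clifford picture does not obviously survive Maslov-0 corrections.

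Your last paragraph contains a genuine gap: the reverse inclusion (every $c_1\pmb\ast$-eigenvalue is a critical value) is \emph{not} proved in this paper, and the paper explicitly disclaims it --- see the remark after Theorem \ref{Main_thm_c_1}: ``we cannot conversely conclude that all the eigenvalues of $c_1$ are realized by a critical point of the superpotential.'' The theorem you are asked to reprove, as clarified by Theorem \ref{Main_thm_c_1}, asserts only that every critical value of $W^\vee$ is an eigenvalue. Your proposed route to the converse would require a spectrum-level generation statement for the family Floer category, which the mirror construction in \cite{Yuan_I_FamilyFloer} does not supply; moreover the family Floer mirror $X^\vee$ only sees the open locus $X_0$ fibered by the torus fibration, so there is no a priori reason that its critical locus surjects onto the $c_1$-spectrum. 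You should delete that paragraph or state it as an open problem, not as part of the proof.
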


See \S \ref{ss_main_thm} for the precise statement. In brief, we aim to prove the conjecture with nontrivial Maslov-0 quantum correction, and actually we don't need to be limited to the family Floer scope.
It looks like a ``small'' step to include Maslov-0 disks, but it is indeed difficult and requires some new ideas (\S \ref{ss_new_operator}).
We also find new examples recently in \cite{Yuan_local_SYZ} to justify and amplify the value of this paper.

The symplectic-geometric ideas we use are not original, but it is \textit{not} our main point at all. Instead, a major point in this paper is to demonstrate that the classical ideas fit perfectly into the framework of the family Floer mirror construction in \cite{Yuan_I_FamilyFloer}. 
For example, a key new ingredient for our proof is a preliminary version of ``\textit{{Fukaya category over affinoid coefficients}}'' (i.e. the self Floer cohomology for $\UD$ in \S \ref{s_self_HF}).
Roughly, the coefficient we use is not $\mathbb C$, not the Novikov field $\Lambda$, but an affinoid algebra over $\Lambda$.
The merit is that we can ignore all the affects of Maslov-0 disk obstruction \textit{up to affinoid algebra isomorphism on the coefficients} using the ideas in the author's thesis \cite{Yuan_I_FamilyFloer}.
We expect the affinoid coefficients should be important to develop Abouzaid's family Floer functor \cite{AboFamilyFaithful,AboFamilyWithout} with Maslov-0 corrections. A work in progress is \cite{Yuan_affinoid_coeff}.
For the above speculative version of Fukaya category, one might study many topics like splitting generations \cite{AFOOO} and Bridgeland stability conditions (cf. \cite{smith2017stability} \cite{Kon_website_stab}) in some similar ways in future studies.
We also expect that it is related to the non-archimedean geometry, like the coherent analytic sheaves, on the family Floer mirror space, potentially being the analytification of an algebraic variety over the Novikov field.
But unfortunately, the precondition of all the story is to allow the Maslov-0 disks in the course. Otherwise, we cannot even realize a simple algebraic variety like $Y=\{x_0x_1=1+y\}$ as a family Floer mirror space and thus miss the new examples of Conjecture \ref{conjecture_folklore} in \cite{Yuan_local_SYZ}.


\begin{figure}
	\centering
	\includegraphics[scale=0.35]{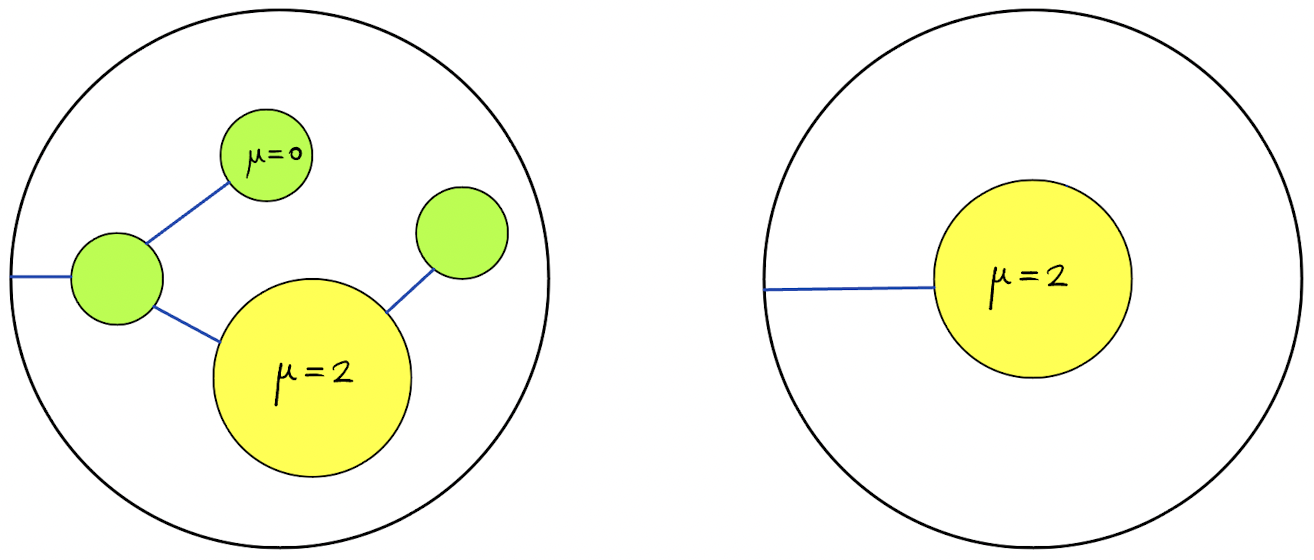}
	\caption{
		\small A piece of contribution to the LG superpotential \textbf{with / without} the Maslov-0 quantum correction. When we perturb $J$, all the Maslov-0 disks (green) in the Feynman-diagrams of the minimal model $A_\infty$ algebras are continually created and annihilated for the wall-crossing. This is more like a sort of ``quantum fluctuation''. This is why the LG superpotential is only well-defined up to affinoid algebra isomorphism.
}
	\label{figure_hp_maslov_0}
	\vspace{-0.5em}
\end{figure}

\subsubsection*{\textit{(1) What is the difficulty of allowing Maslov-0 corrections?}}
\hfill

First and foremost, the family Floer mirror superpotential is only well-defined \textbf{\textit{up to affinoid algebra isomorphisms}}, or equivalently, up to family Floer analytic transition maps \cite{Yuan_I_FamilyFloer}.
This is because the Maslov-0 quantum correction give rise to 
certain mirror non-archimedean analytic topology.
Note that the idea of uniting Maurer-Cartan sets, adopted by Fukaya and Tu \cite{FuBerkeley,FuCounting,Tu}, cannot achieve the family Floer mirror non-archimedean \textit{analytic} structure. We must go beyond their Maurer-Cartan picture; otherwise, although Fukaya and Tu did tell the correct {local} analytic charts, the local-to-global gluing was only \textit{set-theoretic}.
In fact, we never think of bounding cochains but work with formal power series in the corresponding affinoid algebra directly \cite{Yuan_I_FamilyFloer}.
One of many crucial flaws in the previous work concerns the lack of studies of the \textit{divisor axiom} for $A_\infty$ structures\footnote{The earliest literature we find for open-string divisor axiom is due to Seidel \cite{Seidel2006biased}. See also \cite{AuTDual, FuCyclic}.}.
The \textit{ud-homotopy theory} developed in \cite{Yuan_I_FamilyFloer} is indispensable for the mirror \textit{analytic} gluing.
Remarkably, the ud-homotopy theory plays the key role for the folklore conjecture once again (cf. \S \ref{ss_new_operator}).

Putting another way, we must work with the minimal model $A_\infty$ algebras, while paying attention to the wall-crossing phenomenon simultaneously (Figure \ref{figure_hp_maslov_0}).
Without minimal model $A_\infty$ algebras, the superpotential would be only $\Omega^0(L)$-valued and was not well-defined. This issue was not so fatal if the Maslov indices $>0$, but it is indeed fatal if we allow Maslov-0 correction.
As another evidence, a minimal model $A_\infty$ algebra is intuitively like counting \textit{\textbf{holomorphic pearly-trees}}  (cf. \cite{Fukaya2009canonical_Morse}) which are also used in Sheridan's proof of HMS of Calabi-Yau projective hypersurfaces \cite[\S 6.2]{Sheridan15}.
The conventional ideas of the folklore conjecture cannot work or need to be improved for holomorphic pearly-trees or minimal model $A_\infty$ algebras, if nontrivial Maslov-0 disks exist (Figure \ref{figure_hp_maslov_0}).
A daunting trouble is to handle the {minimal model} $A_\infty$ algebra's Hochschild cohomology (see \S \ref{ss_new_operator}).

\subsubsection*{
\textit{(2) Why does the family Floer method give a convincing approach to construct the mirror?}}
\hfill

The answer in short is simply that we can prove the Conjecture \ref{conjecture_folklore} supported by examples below. 

Gross and Siebert proposed a method of mirror construction in the algebro-geometric framework \cite{GS11} based on the SYZ proposal \cite{SYZ}. Gross-Hacking-Keel \cite{GHK15} also give a synthetic algebro-geometric construction of the mirror family for local Calabi-Yau surfaces.
These approaches turn out to be successful in many aspects, such as \cite{Hacking_Keating_2020homological,GHK_birational,GHKK18}.
But, in our biased opinion, the folklore Conjecture \ref{conjecture_folklore} inevitably relies on Floer-theoretic or symplectic-geometric ideas in the end.
The family Floer mirror construction \cite{Yuan_I_FamilyFloer} can naturally include the Landau-Ginzburg superpotential. 
Such a Floer-theoretic mirror construction succeeded in rigorizing the T-duality of smooth Lagrangian torus fibration with full quantum correction considered, and leads to 
a mathematical precise statement of SYZ conjecture with singularities in \cite{Yuan_local_SYZ,Yuan_conifold,Yuan_A_n}.

\subsection{Main result}
\label{ss_main_thm}

Let $(X,\omega)$ be a symplectic manifold which is closed or convex at infinity. Let $\dim_{\mathbb R}X=2n$. Let $L$ be an oriented, spin, and compact Lagrangian submanifold satisfying the \textit{semipositive} condition that prohibits any nontrivial negative Maslov index holomorphic disk.
In practice, a graded or special Lagrangian submanifold is always semipositive by \cite[Lemma 3.1]{AuTDual}.
Note that $L$ is \textit{not} necessarily a torus.
Given an $\omega$-tame almost complex structure $J$, we can associate to $L$ a superpotential function
\begin{equation}
	\label{superpotential_L}
	W_L:= W_L^J=\sum_{\beta\in \pi_2(X,L), \mu(\beta)=2} T^{E(\beta)} Y^{\partial\beta} \m_{0,\beta}
\end{equation}
where we use the \textit{minimal model $A_\infty$ algebra} 
\[
\m=(\m_{k,\beta})_{k\in\mathbb N, \beta\in\pi_2(X,L)}
\]
of the cochain-level $A_\infty$ algebra associated to $L$.
Here we work over the \textit{Novikov field}
$
\Lambda=\mathbb C((T^{\mathbb R}))$, $Y$ is a formal symbol, so $W_L\in \Lambda[[\pi_1(L)]]$.
The reverse isoperimetric inequality \cite{ReverseI} ensures that $W_L$ converges on an analytic open neighborhood of $H^1(L;U_\Lambda)$ in $H^1(L;\Lambda^*)$.

	Although the summation in (\ref{superpotential_L}) runs only over Maslov-2 topological disks $\beta$, it actually can involve \textbf{\textit{all}} the possible Maslov-0 obstruction in the same time. This is simply the nature of minimal model $A_\infty$ algebras, and the idea is illustrated in Figure \ref{figure_hp_maslov_0}.
In general, allowing the Maslov-0 disks, the superpotential function $W_L^J$ is \textit{not} well-defined and rely on the various choices like $J$. But, as in \cite{Yuan_I_FamilyFloer} (see Lemma \ref{wall_crossing_coordiante_change_lem}), we can show that the $W_L$ is well-defined \textit{{up to affinoid algebra isomorphism}}, and we can also show that the critical points and critical values are preserved.

\begin{assumption}
	\label{assumption_weak_MC}
	The formal power series\footnote{It is related to the weak Maurer-Cartan equations in the literature, but we forget the latter and only think $Q_L^J$.} $
	Q_L^J:=\sum_{\mu(\beta)=0} T^{E(\beta)} Y^{\partial\beta} \m_{0,\beta}
	$
	vanishes identically.
\end{assumption}

Notably, the recent results in \cite{Yuan_unobs} provide strong support for considering the above assumption reasonable, and it is likely that the assumption holds in many situations.
It does not say that the counts of Maslov-0 disks vanish. These disks still contribute to both the minimal model $A_\infty$ algebras and the wall-crossing phenomenon, but their contributions ultimately cancel out in the coefficient affinoid algebra.
Moreover, it is \textit{not} a restrictive assumption. Thanks to the work of Solomon \cite{Solomon_Involutions}, a very useful sufficient condition of the above Assumption \ref{assumption_weak_MC}, which we often adopt in practice, is given as follows:

\begin{assumption}
	\label{assumption_weak_MC_anti_involution}
There is an anti-symplectic involution $\varphi$ that preserves $L$.
\end{assumption}

Indeed, such a $\varphi$ gives a pairing on $\pi_2(X,L)$ via $\beta\xleftrightarrow{} -\varphi_*\beta$. Then, the Maslov-0 obstruction terms $\m_{0,\beta}$ in $Q_L^J$ are then canceled pairwise.
In practice, such an involution $\varphi$ is often not hard to find (see also \cite{Solomon_Symmetry_Lag}).
Note that the vanishing of $Q_L^J$ does not rely on $J$ due to \cite{Yuan_I_FamilyFloer}.
Beware that we have not discuss the family Floer setting or the SYZ setting at this moment.
Note also that we do not require $L$ is a \textit{torus} here.
Now, we state the main result.

\begin{main}	[Theorem \ref{eigenvalue_general_thm}]
	\label{Main_thm_c_1_general_thm}
	Under Assumption \ref{assumption_weak_MC}, if the cohomology ring $H^*(L)$ is generated by $H^1(L;\mathbb Z)$, then any critical value of $W_L$ is an eigenvalue of the quantum product by $c_1$. Moreover, the same result still holds if we use different choices.
\end{main}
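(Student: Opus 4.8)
The plan is to imitate the classical Auroux/Kontsevich-Seidel argument via the closed-open map and the divisor axiom, but executed at the level of the minimal model $A_\infty$ algebra $\m=(\m_{k,\beta})$ over the affinoid coefficients, so that the Maslov-0 obstruction is absorbed into the coefficient ring. Concretely, fix a Lagrangian $L$ and a critical point $y_0\in H^1(L;\Lambda^*)$ of $W_L$, i.e. a point where $dW_L=0$. The standard mechanism says: deform $\m$ by the corresponding rank-one local system (or bounding datum) $b=y_0$ to get a deformed $A_\infty$ algebra $\m^{b}$; then the self-Floer cohomology $\HF(L,b)$ is a module over the quantum cohomology $QH^*(X)$ via the closed-open map $\CO\colon QH^*(X)\to HH^*(\m^{b},\m^{b})$, and the key identity is that $\CO(c_1)$ acts on $\HF(L,b)$ as multiplication by $W_L(y_0)$, the critical value. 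Since Assumption \ref{assumption_weak_MC} ($Q_L^J\equiv 0$) guarantees that $y_0$ is genuinely weakly unobstructed with $\m^{b}_0 = W_L(y_0)\cdot\one$ (a multiple of the unit), one has $\HF(L,b)\neq 0$ precisely when $W_L(y_0)$ equals the obstruction value, so the module structure is nontrivial; therefore $W_L(y_0)$ is an eigenvalue of the operator ``quantum multiplication by $c_1$'' on $QH^*(X)$, which is the assertion.

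The steps, in order: (1) Set up the self Floer cohomology $\HF(L,b)$ for the affinoid-valued minimal model $A_\infty$ algebra from \S\ref{s_self_HF}, and verify that at a critical point $b=y_0$ of $W_L$, the leading obstruction term is $\m_0^{b}=W_L(y_0)\,\one$ and the next term $\m_1^{b}$ squares to zero, so $\HF(L,b)$ is defined; here the hypothesis that $H^*(L)$ is generated by $H^1(L;\mathbb Z)$ will be used, via the \emph{divisor axiom}, to reorganize the full $A_\infty$ structure into the ``exponentiated'' form in the variable $Y^{\partial\beta}$ so that $dW_L=0$ is exactly the weak Maurer-Cartan condition (this is where the ud-homotopy theory of \cite{Yuan_I_FamilyFloer} enters, to make the divisor axiom hold on the minimal model up to the required equivalence). (2) Construct/recall the closed-open map $\CO\colon QH^*(X;\Lambda)\to HH^*(\m^{b})$ as a unital ring map, and the unit-insertion/length-zero component $\CO^0\colon QH^*(X)\to \HF(L,b)$ which is a module map over $\CO$. (3) Prove the key geometric identity $\CO^0(c_1) = W_L(y_0)\cdot \one_{\HF(L,b)}$ by a moduli-space argument: the Maslov-2 condition forces the curves contributing to $\CO^0(c_1)$ to be exactly the (minimal-model) disks counted by $W_L$, evaluated at $b=y_0$; pairwise cancellation of Maslov-0 pieces is handled because they live in the coefficient affinoid algebra and the assumption kills $Q_L^J$. (4) Conclude: if $\HF(L,b)\neq 0$ then $c_1\ast(-)$ on $QH^*(X)$ has eigenvalue $W_L(y_0)$, because multiplication by $c_1$ on the nonzero module $\HF(L,b)$ is the scalar $W_L(y_0)$; and $\HF(L,b)\neq 0$ is automatic at a critical point since the Floer differential $\m_1^{b}$ on $H^*(L)$ then has nontrivial cohomology (the critical point condition together with $H^1$-generation forces, e.g., the unit to survive, or one argues via the nondegeneracy of the Poincaré pairing in the closed-open formalism). (5) For the last sentence of the statement, invoke Lemma \ref{wall_crossing_coordiante_change_lem}: under a change of auxiliary data $J$ (equivalently an affinoid transition map), $W_L$ changes by an affinoid algebra isomorphism of the coefficients, which preserves critical points and critical values, so the conclusion is choice-independent.

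The main obstacle I expect is Step (1) together with the $\CO$-compatibility part of Step (3): making the divisor axiom genuinely available at the level of the minimal model $A_\infty$ algebra when nontrivial Maslov-0 disks are present. On the cochain level the divisor axiom is more or less geometric, but passing to the minimal model via homological perturbation scrambles the $Y^{\partial\beta}$-gradings, and a priori the minimal model need not satisfy the divisor axiom on the nose — only up to ud-homotopy. So one must show that the reorganization $W_L = \sum_{\mu(\beta)=2} T^{E(\beta)} Y^{\partial\beta}\m_{0,\beta}$ and the identity $dW_L=0 \Leftrightarrow b$ weakly unobstructed survive this perturbation, and that the closed-open map is compatible with the same reorganization; this is exactly the ``daunting trouble'' the introduction flags about the minimal model $A_\infty$ algebra's Hochschild cohomology. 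I would handle it by working with a model of $\CO$ that is functorial under ud-homotopy, so that both $W_L$ and $\CO$ transform coherently, and by isolating the Maslov-0 contributions as an element $\ia$ of the coefficient ideal that vanishes by Assumption \ref{assumption_weak_MC}; the remaining bookkeeping (signs, orientations, unit normalization) is routine given the framework of \cite{Yuan_I_FamilyFloer}.
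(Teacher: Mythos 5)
Your high-level architecture matches the paper's: a closed-open map, a nonvanishing result for self Floer cohomology at critical points, the identity that the image of $c_1$ is $W(\mathbf y)\cdot[\one]$, and the eigenvalue conclusion via invertibility of $c_1-W(\mathbf y)\one_X$. Your Step (5), citing Lemma \ref{wall_crossing_coordiante_change_lem} for choice-independence, also matches the paper (Corollary \ref{crit_point_inv_cor}). However, the two places you flag as hard are exactly where your proposal has genuine gaps, and your proposed fixes would not work.

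First, the bridge from the cochain-level closed-open map to the minimal-model Floer cohomology. You correctly note that the closed-open map $\hat\q$ is geometric only at the level of $(\OL,\check\m)$, but your fix --- "working with a model of $\CO$ that is functorial under ud-homotopy" --- is not available: the paper explicitly emphasizes that Hochschild cohomology is \emph{not} functorial and that one cannot transport it along $A_\infty$ homotopy equivalences in any general homological-algebraic way. The paper's actual resolution is the specific operator
\[
\Theta: \rCC(\OL)\to\rCC(\HL), \qquad \varphi\mapsto (\mi^{-1}\{\varphi\})\diamond\mi,
\]
where $\mi^{-1}$ is the \emph{ud}-homotopy inverse of $\mi:\m\to\check\m$. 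This $\Theta$ does not descend to a ring map on Hochschild cohomologies; the point (Lemma \ref{Theta_computation_lem} and Theorem \ref{from_HH_to_HF_thm}) is rather that the failure is controlled precisely so that the composite with the length-zero projection $\mathbb P$ \emph{does} give a unital ring homomorphism $\rHH(\OL,\check\m)\to\HF(L,\m)$. This relies on $\mi^{-1}\diamond\mi\simud\id$ and Lemma \ref{weakMC_observation_lem} to kill the obstruction terms modulo $\ia$, together with the semipositivity and divisor-axiom constraints built into $\UD$, and on the reduced Hochschild complex $\rCC$ whose unitality condition (\ref{rCC_condition_eq}) you never introduce but is essential for the unit $\mathfrak e$ and for $\mathbb P\Theta(\mathfrak e)=\one$. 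Without $\Theta$ (or an equivalent), your Step (2)--(3) does not exist as a construction.

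Second, the nonvanishing $\HF(L,\m,\mathbf y)\neq 0$ at a critical point. Your proposal says the unit "survives" or invokes "nondegeneracy of the Poincar\'e pairing in the closed-open formalism." Neither is what makes the argument run. The paper's mechanism is Lemma \ref{key_lemma}: for $H^*(L)$ generated by $H^1(L;\mathbb Z)$, every class $x$ satisfies $\mathbf m_1(x)=\sum_i D_{\theta_i}W\cdot R_i \pmod{D\ia}$ with coefficients in $H^*(L)\hat\otimes\Lambda_0[[\pi_1(L)]]$. This is proved by an induction on $\deg x$ using $\m_{2,0}=$ wedge, the divisor-axiom identity $\mathbf m_1(\theta)=D_\theta W\cdot\one$, Corollary \ref{m_123_cor}, and a $T^\hbar$-bootstrapping via the reverse isoperimetric lower bound (\ref{hbar_eq}) to make the infinite sum adically convergent. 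Then nonvanishing follows: if $\one=\mathbf m_1^{\mathbf y}(x_0)$, evaluating the identity at $\mathbf y$ shows some $D_{\theta_i}W(\mathbf y)\neq 0$. Your appeal to the Poincar\'e pairing or a cyclic structure is not used anywhere and would introduce substantial extra machinery; and "the unit survives" is the conclusion, not the argument, so without Lemma \ref{key_lemma} (or a substitute) this step is missing. The hypothesis that $H^*(L)$ is generated by $H^1(L;\mathbb Z)$ enters precisely here, as the induction step writes $x=\theta\wedge x'$; it is not, as your sketch suggests, primarily a device for the "exponentiated" divisor-axiom reorganization of $W_L$.

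In short: your plan describes the correct overall shape of the proof and correctly identifies the two hard points, but the proposed resolutions (a functorial $\CO$; the Poincar\'e pairing / unit-survival argument) are either impossible or not the operative mechanism, and the actual solutions ($\Theta$ together with $\rCC$, and Lemma \ref{key_lemma}'s induction) are absent.
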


The main new progress and difficulty is that we allow Maslov-0 holomorphic disks.
It is conceivable that if the above $L$ lives in a family of Lagrangian submanifolds $\{L_q\}$, then one may extend the domain of $W_L$ to a larger open analytic space.
Up to the Fukaya's trick, this amounts to study how $W_L$ depends on the choice of $J$. Recall that $W_L$ is only invariant up to affinoid algebra isomorphisms.
This is because of a sort of `quantum fluctuation', when we think of the minimal model $A_\infty$ algebra together with the wall-crossing phenomenon in the meantime (see Figure \ref{figure_hp_maslov_0}).

\subsubsection{Family Floer mirror construction setting}
As a byproduct of the above Theorem \ref{Main_thm_c_1_general_thm}, we can verify the family Floer mirror superpotential in \cite{Yuan_I_FamilyFloer} respects Conjecture \ref{conjecture_folklore}.
We first briefly review it.
Suppose $(X,\omega)$ is a symplectic manifold which is closed or convex at infinity. Suppose there is a smooth Lagrangian torus fibration $\pi_0:X_0\to B_0$ in an open subset $X_0$ in $X$ over a half-dimensional integral affine manifold $B_0$.
Suppose that $\pi_0$ is \textit{semipositive} in the sense that any holomorphic disk bounding a fiber $L_q:=\pi_0^{-1}(q)$ has a non-negative Maslov index.
By \cite{AuTDual}, a sufficient condition is to assume $\pi_0$-fibers are graded or special Lagrangians.
For simplicity, we also require that the $\pi_0$-fibers satisfy Assumption \ref{assumption_weak_MC} or \ref{assumption_weak_MC_anti_involution}.
Then, we state the result of SYZ duality with quantum correction considered:

\begin{thm}[{\cite{Yuan_I_FamilyFloer}}]
	\label{Main_theorem_thesis}
		We can naturally associate to the pair $(X,\pi_0)$ a triple
$
	\mathbb X^\vee:=(X_0^\vee,W_0^\vee, \pi_0^\vee)
$
	consisting of a non-archimedean analytic space $X_0^\vee$ over $\Lambda$, a global analytic function $W_0^\vee$, and an affinoid torus fibration $\pi_0^\vee: X_0^\vee\to B_0$
such that the following properties hold:
	
\begin{enumerate}[i)]
		\item The analytic structure of $X_0^\vee$ is unique up to isomorphism.
		\item The integral affine structure on $B_0$ from $\pi_0^\vee$ coincides with the one from the fibration $\pi_0$.
		\item The set of closed points in $X_0^\vee$ coincides with $\bigcup_{q\in B_0} H^1(L_q;U_\Lambda)$.
	\end{enumerate}
\end{thm}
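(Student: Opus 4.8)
The plan is to realize $X_0^\vee$ as a non-archimedean analytic space glued from local affinoid charts indexed by sufficiently small balls in $B_0$, the charts being extracted from the minimal model $A_\infty$ algebras of the fibers and the gluing maps coming from the Fukaya trick together with $A_\infty$ homotopy transfer. First, fix a ball $U\subseteq B_0$ small enough that the fibers $L_q$, $q\in U$, become mutually symplectomorphic after a local trivialization, and fix an $\omega$-tame $J$. I would assemble the family $(\m^q)_{q\in U}$ of minimal model $A_\infty$ algebras of the $L_q$ into a single $A_\infty$ structure with coefficients in the affinoid algebra $A_U$ over $\Lambda$ of analytic functions on the affinoid domain in $H^1(L_q;\Lambda^*)$ cut out by requiring $\val$ of the torus coordinates $Y^{\partial\beta}$ to lie in the image of $U$ under the action (affine) coordinates. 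Under Assumption \ref{assumption_weak_MC} the Maslov-$0$ term $Q_L^J$ — hence the ideal $\ia$ — vanishes, so the $A_\infty$ relations descend honestly to $A_U$; the reverse isoperimetric inequality \cite{ReverseI} together with semipositivity guarantees that the relevant power series converge in $A_U$. The local chart is then the affinoid space $\Sp A_U$, and the local model of $\pi_0^\vee$ is the valuation of the torus coordinates, landing in $U\subseteq B_0$; note that $W_{L_q}$ of (\ref{superpotential_L}) defines a function on this chart.

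Next, for two overlapping balls $U,U'$ — or one ball with two choices of $J$ and of homotopy-transfer data — I would construct a transition isomorphism $A_{U'}\big|_{U\cap U'}\xrightarrow{\ \sim\ }A_U\big|_{U\cap U'}$ out of a continuation $A_\infty$ homomorphism between the cochain-level Fukaya algebras, transported to the minimal models, and then exponentiated via the \emph{divisor axiom} so that the bounding-cochain dependence becomes an honest change of the monomial variables $Y^{\partial\beta}$. This is the step I expect to be the main obstacle: one must show that these maps are analytic, that they are independent of the transfer data up to ud-homotopy, and that they satisfy the cocycle relation on triple overlaps — precisely the point at which the earlier Maurer-Cartan approaches of Fukaya and Tu only yield a set-theoretic gluing, and precisely where the \emph{ud-homotopy theory} of \cite{Yuan_I_FamilyFloer} is indispensable to upgrade the identification to an isomorphism of affinoid algebras. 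Granting this, the charts $\Sp A_U$ glue to a non-archimedean analytic space $X_0^\vee$ equipped with a morphism $\pi_0^\vee:X_0^\vee\to B_0$ that is an affinoid torus fibration by construction. The divisor axiom together with ud-homotopy invariance also shows that the local superpotentials $W_{L_q}$ are intertwined by the transition maps, so they patch to a single global analytic function $W_0^\vee$.

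Finally I would verify the three listed properties. For (i), any two systems of choices (of $J$, of local trivializations, and of homotopy-transfer data) are related by the transition isomorphisms just built, so the resulting analytic spaces are canonically isomorphic, which is the asserted uniqueness up to isomorphism; the same argument shows the critical points and critical values of $W_0^\vee$ are well-defined, as already noted after (\ref{superpotential_L}). For (ii), the integral affine structure that $\pi_0^\vee$ induces on $B_0$ is read off, via $\trop$, from the valuations of the monomials $Y^{\partial\beta}$ appearing in the charts, whose exponents are the symplectic areas $E(\beta)$; these are exactly the data of the action coordinates of $\pi_0$ encoded by the function $\psi$ (Figure \ref{figure_psi}), so the two affine structures on $B_0$ coincide. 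For (iii), the closed (rigid) points of each chart $\Sp A_U$ lying over a fixed $q$ are the $\Lambda$-points all of whose torus coordinates have norm $1$, i.e. $H^1(L_q;U_\Lambda)$; compatibility with the gluing maps then shows that these assemble over $q\in B_0$ to $\bigcup_{q\in B_0}H^1(L_q;U_\Lambda)$, as claimed.
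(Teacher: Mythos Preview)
This theorem is not proved in the present paper: it is quoted as a result of \cite{Yuan_I_FamilyFloer}, and the paper only offers a brief review in \S\ref{ss_implication_family_Floer} together with the statement of the key transition formula in Lemma~\ref{wall_crossing_coordiante_change_lem}. Your outline is consistent with that review and with the construction it sketches: local charts are the polyhedral affinoid domains $\trop^{-1}(\Delta)\cong \Sp\Lambda\langle\Delta\rangle$ attached to fibers via their minimal model $A_\infty$ algebras, the transition maps are exactly the affinoid isomorphisms $\phi_\mC$ of Lemma~\ref{wall_crossing_coordiante_change_lem} (obtained from the divisor axiom by exponentiating $\pmb{\mathfrak F}_\mC$), and the ud-homotopy theory is what makes these maps independent of choices and cocycle-compatible. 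Your verification of (i)--(iii) also matches the paper's discussion.

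One small correction of emphasis: the paper's review makes clear that the charts are indexed by rational polyhedra $\Delta$ in integral affine coordinates (not merely ``balls''), and the affinoid algebra is the explicit $\Lambda\langle\Delta\rangle$ of (\ref{affinoid_algebra_eq}); the integral affine structure in (ii) then comes directly from the tropicalization map $\trop$ and the identification of $\Delta$ with a subset of $\mathbb R^n$ via action coordinates, rather than from reading off exponents of monomials as you phrase it. Otherwise there is nothing to compare against beyond this review, since the full argument lives in the cited reference.
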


Notice that this mirror construction uses holomorphic disks in $X$ rather than just in $X_0$, and we should place $\pi_0$ inside $X$ to understand the statement.
For simplicity, we often omit the subscripts $0$'s, so we often write $(X^\vee, W^\vee,\pi^\vee)$, if there is no confusion.
The \textit{ud-homotopy theory} and the category $\UD$ in \cite{Yuan_I_FamilyFloer} is indispensable for the non-archimedean analytic topology in Theorem \ref{Main_theorem_thesis}.
	The usual homotopy theory can only achieve a gluing in the set-theoretic level.
	Very intuitively, the family Floer theory is `algebrized' by the category $\UD$, and the choice issues are all controlled by the ud-homotopy relations in $\UD$.

\begin{main}\label{Main_thm_c_1}
Conjecture \ref{conjecture_folklore} holds for the mirror Landau-Ginzburg model in Theorem \ref{Main_theorem_thesis}. Namely, any critical value of $W^\vee$ must be an eigenvalue of the quantum product by the first Chern class $c_1$ of $X$.
\end{main}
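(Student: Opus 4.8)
The plan is to derive Theorem \ref{Main_thm_c_1} as a corollary of Theorem \ref{Main_thm_c_1_general_thm} by checking that the family Floer superpotential $W^\vee$ of Theorem \ref{Main_theorem_thesis} is, locally on $B_0$, one of the superpotentials $W_L$ of \eqref{superpotential_L} for a single Lagrangian fiber $L=L_q$, and that the hypothesis ``$H^*(L)$ generated by $H^1(L;\mathbb Z)$'' is automatic here. The last point is immediate: the fibers $L_q$ of a Lagrangian torus fibration are tori, so $H^*(L_q)=\Lambda^*(H^1(L_q;\mathbb Z))$ is generated in degree one. The semipositivity and the weak unobstructedness (Assumption \ref{assumption_weak_MC}, ensured by Assumption \ref{assumption_weak_MC_anti_involution} in practice) are imposed on the $\pi_0$-fibers by hypothesis, so each $L_q$ satisfies the standing assumptions of Theorem \ref{Main_thm_c_1_general_thm}.

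First I would recall from \cite{Yuan_I_FamilyFloer} that the global analytic function $W_0^\vee$ restricts, on the affinoid chart $(\pi_0^\vee)^{-1}(U)$ over a small contractible $U\ni q$ in $B_0$, to the function built from the minimal model $A_\infty$ algebra of $L_q$, i.e. to $W_{L_q}^J$ under the identification of that chart with an analytic neighborhood of $H^1(L_q;U_\Lambda)$ in $H^1(L_q;\Lambda^*)$ — this identification is precisely the content of item iii) of Theorem \ref{Main_theorem_thesis} together with the Fukaya trick relating different $J$'s. Since critical points are an analytic-local notion and the charts $(\pi_0^\vee)^{-1}(U)$ cover $X_0^\vee$, every critical point $p$ of $W^\vee$ lies in some such chart and is therefore a critical point of $W_{L_q}$ for the corresponding $q$, with the same critical value. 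Here I would invoke Lemma \ref{wall_crossing_coordiante_change_lem}: the transition maps between overlapping charts are affinoid algebra isomorphisms that preserve critical points and critical values, so the critical value of $W^\vee$ at $p$ is well-defined independently of which chart we use to compute it, and it equals a critical value of $W_{L_q}$.

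Next, applying Theorem \ref{Main_thm_c_1_general_thm} to $L=L_q$ gives that this critical value is an eigenvalue of quantum multiplication by $c_1(X)$ on $QH^*(X)$. Note that the quantum cohomology ring $QH^*(X;\Lambda)$ and the operator $c_1\star(-)$ depend only on $(X,\omega,J)$, not on $q$ or on the fibration; hence a single ambient eigenvalue spectrum serves all charts simultaneously, and we conclude that \emph{every} critical value of $W^\vee$ lies in the spectrum of $c_1\star(-)$. The statement ``the same result still holds if we use different choices'' in Theorem \ref{Main_thm_c_1_general_thm}, combined with item i) of Theorem \ref{Main_theorem_thesis} (uniqueness of the analytic structure up to isomorphism), guarantees that the conclusion is intrinsic to the mirror $\mathbb X^\vee$ and does not depend on the auxiliary data $J$, the perturbations, or the choice of charts.

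The main obstacle I anticipate is not in this corollary-level argument but in making the chart identification honest: one must be careful that the ``superpotential attached to a fiber'' appearing inside the family Floer construction is literally the minimal-model expression \eqref{superpotential_L} and not merely equal to it up to an affinoid isomorphism that could a priori move critical data — this is exactly why one needs the ud-homotopy theory and the statement, asserted in the discussion after \eqref{superpotential_L} and in Lemma \ref{wall_crossing_coordiante_change_lem}, that the relevant isomorphisms preserve critical points and critical values. Granting that input, the rest is a patching argument. A secondary point to verify is that convexity at infinity of $X$ still allows the quantum product and the spectral statement of Theorem \ref{Main_thm_c_1_general_thm} to be used verbatim; since Theorem \ref{Main_thm_c_1_general_thm} is already stated for $X$ closed or convex at infinity, this is covered.
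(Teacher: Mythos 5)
Your proposal is correct and follows essentially the same local-to-global reduction as the paper's own proof in \S\ref{ss_implication_family_Floer}: pass to a local chart $\trop^{-1}(\Delta)$ containing the critical point, note the chart-independence of the critical value, and invoke Theorem~\ref{eigenvalue_general_thm}. One citation could be sharpened: the statement that transition maps carry critical points to critical points with equal critical values is Theorem~\ref{DW_ideal_thm} and Corollary~\ref{crit_point_inv_cor} (which build on, but go beyond, Lemma~\ref{wall_crossing_coordiante_change_lem}), and that is the reference the paper uses at the corresponding step.
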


Roughly, by allowing the wall-crossing invariance, we can enlarge the domain of the superpotential and so potentially cover more eigenvalues. But, we cannot conversely conclude that all the eigenvalues of $c_1$ are realized by a critical point of the superpotential.
Note that the $\Lambda$-coefficients in $W^\vee$ include the data of symplectic areas and can tell which chambers the critical points are situated in.
In particular, the locations of critical points depend on the symplectic form $\omega$.

\subsection{A new operator for Hochschild cohomology and ud-homotopy theory}
\label{ss_new_operator}

	The obstructions in Lagrangian Floer theory cause issues for both analysis and algebra.
	The analysis part should be of most importance and difficulty, concerning the complicated transversality issues of moduli spaces of pseudo-holomorphic curves. This is studied and developed by \cite{FOOOBookOne,FOOOBookTwo,Kuranishi_book} and so on.
	In comparison, the algebra part should be minor.
	But, in our biased opinion, the intricacies and subtleties of the algebra part are somewhat underestimated.

	The open string Floer theoretic invariants often depend on choices. Thus, we often must talk about certain equivalence relations, depending on which the information we miss can vary.
	For example, in the literature like \cite{FOOOBookOne}, we usually talk about $A_\infty$ algebras up to homotopy equivalence.
	It is fine in most cases, but it may be too coarse in case we need more information (e.g. non-archimedean analytic structure).
	The homotopy equivalence relation cannot distinguish between a minimal model $A_\infty$ algebra and its original.
	This is why the usual Maurer-Cartan picture \cite{FuBerkeley,FuCounting,Tu} cannot be enough for a local-to-global gluing for the family Floer mirror analytic structure, as explained before.

	In our specific situation here for the folklore Conjecture \ref{conjecture_folklore}, we must carefully distinguish between a minimal model $A_\infty$ algebra, denoted by $(H^*(L),\m)$, and its original, denoted by $(\Omega^*(L),\check \m)$.
	This is because the classic ideas for the folklore conjecture can only achieve connections between the quantum cohomology and the cochain-level Hochschild cohomology $HH(\OL, \check \m)$.
	Meanwhile, the family Floer Landau-Ginzburg superpotential comes from the minimal model $A_\infty$ algebra $(\HL, \m)$.
	
	There are some new challenges.
	For instance, the Hochschild cohomology is just not functorial, and in pure homological algebra, it is basically impossible to make any meaningful connections between the Hochschild cohomologies of a minimal model $A_\infty$ algebra and its original.
	Fortunately, in our specific geometric situation (with Assumption \ref{assumption_weak_MC} or \ref{assumption_weak_MC_anti_involution}), we magically make an effective connection by the following operation (see (\ref{map_Theta_eq}))
	\[
	\Theta: \varphi \mapsto (\mi^{-1}\{ \varphi\} ) \diamond \mi = \sum (-1)^* \  \mi^{-1} \big( \mi ,\dots, \mi, \varphi(\mi,\dots, \mi), \mi, \dots,\mi  \big) 
	\]
	where $\diamond$ is the composition (see \cite{Sheridan15}), the $\{\}$ is the celebrated \textit{brace operation} (see \cite{Getzler_1993cartan}), and the $\mi^{-1}$ is the \textbf{\textit{ud-}}homotopy inverse of the natural $A_\infty$ homotopy equivalence $\mi:\m \to \check \m$ in the course of homological perturbation.
A difficult part of the paper is to find the above operation $\Theta$ and establish the corresponding computations. As far as we know, such $\Theta$ does not appear in any literature and seems to be exclusive for our ud-homotopy theory in \cite{Yuan_I_FamilyFloer}.

Now, the ud-homotopy theory shows its necessity again.
Moreover, it is interesting to note that the family Floer mirror analytic gluing in \cite{Yuan_I_FamilyFloer} does not exploit the full information of ud-homotopy relations, while the techniques in the present paper (e.g. the self Floer cohomology for $\UD$ in \S \ref{s_self_HF}) use more information of ud-homotopy relations.

\subsection{Outline and major difficulties}
\label{ss_outline_may_difficulties}

If the Maslov indices $\ge 2$, the approach to Conjecture \ref{conjecture_folklore} is well-known to experts for a long time: see \cite[\S 6]{AuTDual} or \cite[\S 12.1]{Ritter_Smith} in toric cases and \cite[\S 23]{FOOOSpectral} in general; see also \cite[Definition 2.3]{Sheridan16}.
Roughly speaking, a key idea concerns a \textit{unital ring homomorphism}
\begin{equation}
	\label{classical_idea_eq}
	QH^*(X) \to HH^*(CF(L,L)) \to HF(L,L)
\end{equation}
from the quantum cohomology to the self Floer cohomology through the Hochschild cohomology.
But, when nontrivial Maslov-0 disks are allowed, we cannot obtain such a unital ring homomorphism as usual.
It is even necessary to find a suitable way to generalize the definitions of $HF(L,L)$ and $HH^*(CF(L,L))$ in (\ref{classical_idea_eq}).
The major difficulties are outlined as follows. 

%
%

\begin{note}[\textbf{i}]
	We want the self Floer cohomology $HF(L,L)$ to be defined in a `wall-crossing invariant' way.
\end{note}

This is especially crucial for the global result, and we note that $L$ is not necessarily a torus here.

First, thanks to the Fukaya-Oh-Ohta-Ono's foundation works, we can very generally define the self Floer cohomology $HF((L,b),(L,b))$ by choosing any odd-degree weak bounding cochain $b$.
However, it may be too general for our purpose. We want to work with a more restrictive situation, highlighting the semipositive condition and the wall-crossing phenomenon.
By utilizing the related ideas about the category $\UD$, we define a modified self Floer cohomology $\HF(L,\m)$ for any cohomology-level $A_\infty$ algebra $(\HL,\m)$ in $\UD$ (\S \ref{s_self_HF}).
It is a unital $\Lambda$-algebra, and it does not refer to a specific weak bounding cochain. Instead, whenever $\mathbf y\in H^1(L; U_\Lambda)$ admits a lift of weak  bounding cochain $b$ in $H^1(L;\Lambda_0)$ via $\exp: \Lambda_0\to U_\Lambda$, we may define $\HF(L,\m, \mathbf y)$ in almost the same way as the above $HF((L,b),(L,b))$; moreover, it may be viewed as the restriction of $\HF(L,\m)$ at the point $\mathbf y$, accompanied by a natural unital ring homomorphism $\mathcal E_{\mathbf y}: \HF(L,\m) \to \HF(L,\m,\mathbf y)$.

Our modified Floer cohomology are wall-crossing invariant in the following sense. Recall that an analytic transition map $\phi$ is determined by an $A_\infty$ homotopy equivalence $\mC:\m\to \m'$ in $\UD$. Then, the $\phi$ and $\mC$ can induce an isomorphism $\HF(L,\m)\cong \HF(L,\m')$ as well.
Further, we have the following: \textit{When the de Rham cohomology ring $H^*(L)$ is generated by $H^1(L)$ (e.g. $L$ is topologically a torus), the $\HF(L,\m,\mathbf y)$ is non-vanishing if and only if $\mathbf y$ is a critical point of $W$, where $W$ is the local superpotential defined by $\m$ as before.}
This result also has the wall-crossing invariance.
Set $\mathbf y'=\phi(\mathbf y)$ and $W'=\phi^*W$; not only $\HF(L,\m, \mathbf y)\cong \HF(L,\m', \mathbf y')$ but also we know that if $\mathbf y$ is a critical point of $W$, then $\mathbf y'$ is a one of $W'$ and vice versa.
Finally, we remark that the non-archimedean analysis plays an important role for this result:
Let $\hbar>0$ be a lower bound of the energies of nontrivial holomorphic disks, then we should first show the result modulo $T^{\hbar}$ and inductively refine it to $T^{k\hbar}$ for $k\ge 1$. The limit for the adic topology in the Novikov field $\Lambda$ as $k\to \infty$ works in the end.

\begin{note}[\textbf{ii}]
	The definition of $HH^*(CF(L,L))$ in the middle of (\ref{classical_idea_eq}) becomes problematic and ambiguous.
\end{note}

This issue is actually more difficult than (\textbf{i}).
The moduli spaces of holomorphic disks first only give the {chain-level} $A_\infty$ algebra $(\OL,\check \m)$. In contrast, the self Floer cohomology and the superpotential all resort to the {cohomology-level} $(\HL,\m)$, i.e. the minimal model $A_\infty$ algebra.
Previously, when the Maslov indices were $\ge 2$, the homological perturbation largely degenerates so that the difference between $(\OL,\check \m)$ and $(\HL,\m)$ is mild enough to achieve (\ref{classical_idea_eq}).
Unfortunately, when the Maslov-0 disks are involved, such difference cannot be ignored, and the subsequent discrepancy between the two Hochschild cohomology rings $HH^*(\OL,\check \m)$ and $HH^*(\HL,\m)$ becomes highly nontrivial.
For instance, there is basically no chance to make a ring homomorphism between the two of them, as the Hochschild cohomology is neither covariant nor contravariant.
This obstacle leads to a peculiar mismatch between the algebra and geometry.
The moduli spaces with interior markings can produce a ring homomorphism from the quantum cohomology $QH^*(X)$ only to the chain-level Hochschild cohomology $HH^*(\OL,\check \m)$ but not to the cohomology-level one $HH^*(\HL,\m)$.

In a word, the solution can be realized by the relevant technologies about the category $\UD$ introduced both in this paper and in \cite{Yuan_I_FamilyFloer}. Not only the homological algebra but also the non-archimedean analysis and the geometric semipositive condition in $\UD$ are used in a significant way.
Briefly,
we will carefully design a map $\Theta$ between the underlying two Hochschild cochain complexes such that: although it does not precisely induce a ring homomorphism, the gap to become so is controlled by the ud-homotopy relations in $\UD$.
Next, applying a natural projection map $\mathbb P$, this gap is then controlled by the weak Maurer-Cartan formal power series and eliminated under Assumption \ref{assumption_weak_MC}.
We can ultimately show that the composition map $\mathbb P\circ \Theta$ can induce an honest unital ring homomorphism $\Phi$ (\S \ref{s_closed_open}).
\[
\xymatrix{
	(\text{cochain-level})& QH^*(X) \ar[r]  & HH^*(\OL,\check\m) \ar@{.>}[dd]_{ }  \ar[ddrr]^{\Phi:=[\mathbb P\circ \Theta]} \\
	& &	& \\
	(\text{cohomology-level}) &  & HH^*(\HL,\m) \ar@{.>}[rr]_{  } & & \HF (L,\m) \ar[rr]_{\mathcal E_{\mathbf y}} & & \HF(L,\m,\mathbf y)
}
\]

Another technical point is that we need the so-called \textit{reduced Hochschild cohomologies} (c.f. \cite[\S 25]{FOOOSpectral}). Roughly speaking, it is designed to keep track of units (\S \ref{s_reduced_Hochschild}).
Finally, the composition gives a \textit{unital} ring homomorphism $\mathbb {CO}_{\mathbf y}: QH^*(X)\to \HF(L,\m,\mathbf y)$, despite of all the troubles caused by Maslov-0 disks (\S \ref{s_closed_open}).

\section{Preliminaries and reviews}\label{s_preliminaries_reviews}

The Floer theory usually works over the \textit{Novikov field} $\Lambda= \mathbb C(( T^{\mathbb R}))$.
It has a valuation map $\val: \Lambda \to \mathbb R\cup \{\infty\}$, sending a nonzero series $\sum a_i T^{\lambda_i}$ to the smallest $\lambda_i$ with $a_i\neq 0$ and sending the zero series to $\infty$.
The Novikov ring is the valuation ring $\Lambda_0:=\val^{-1}[0,\infty]$ whose maximal ideal is $\Lambda_+ := \val^{-1} (0,\infty]$.
The multiplicative group of units is $U_\Lambda:= \val^{-1}(0)$.
Note that $U_\Lambda= \mathbb C^*\oplus \Lambda_+$ and $\Lambda_0 =\mathbb C\oplus \Lambda_+$. The standard isomorphism $\mathbb C^*\cong \mathbb C/ 2\pi i \mathbb Z$ naturally extends to $U_\Lambda\cong \Lambda_0/ 2\pi i \mathbb Z$. In particular, for any $y\in U_\Lambda$, there exists some $x\in \Lambda_0$ with $y=\exp(x)$.

Let $\Lambda^* =\Lambda\setminus\{0\}$, and we consider the tropicalization map
\begin{equation}
	\label{trop_eq}
	\trop \equiv \val^{\times n} : (\Lambda^*)^n \to \mathbb R^n
\end{equation}
The total space $(\Lambda^*)^n$ should be understood as not just an algebraic variety over $\Lambda$ but also a non-archimedean analytic space.
Roughly, a non-archimedean analytic space is a locally ringed space whose structure sheaf is locally given by affinoid algebras.
Affinoid algebras are quotients of the Tate algebras.
The spectrum of an affinoid algebra is called an affinoid space or affinoid domain.
Let $\Delta$ be a rational polyhedron in $\mathbb R^n$, then the preimage $\trop^{-1}(\Delta)$ is an affinoid (sub)domain in $(\Lambda^*)^n$.
We refer to the appendix in \cite{Yuan_I_FamilyFloer} or the standard textbook \cite{BoschBook,Berkovich_2012spectral} for further readings.

Finally, we note a useful observation in \cite{Yuan_I_FamilyFloer} or \cite{Yuan_unobs} as follows:

\begin{lem}
	\label{val=0-lem}
	$f\in \Lambda[[Y_1^\pm,\dots, Y_n^\pm]]$ vanishes identically if and only if $f|_{U_\Lambda^n}\equiv 0$.
\end{lem}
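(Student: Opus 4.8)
The claim is an instance of the principle that the Novikov field $\Lambda=\mathbb C((T^{\mathbb R}))$ is ``large enough'' to detect all coefficients of a convergent series by evaluation. The plan is to prove the nontrivial direction: assuming $f|_{U_\Lambda^n}\equiv 0$, deduce that $f$ vanishes as a formal series. The two ingredients are: (i) for each $\lambda\in\mathbb R$ there is a $\mathbb C$-linear projection $p_\lambda\colon\Lambda\to\mathbb C$ sending a series $\sum a_i T^{\lambda_i}$ to the coefficient of $T^\lambda$, and this $p_\lambda$ is continuous for the $\val$-adic topology (indeed $\val(x-x')>\lambda$ already forces $p_\lambda(x)=p_\lambda(x')$); and (ii) $\mathbb C^*$ is infinite, so no nonzero complex Laurent polynomial vanishes on $(\mathbb C^*)^n$. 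The key geometric input is the splitting $U_\Lambda=\mathbb C^*\oplus\Lambda_+$ recalled above, which exhibits the subtorus $(\mathbb C^*)^n\subseteq U_\Lambda^n$ of ``constant'' points.

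Concretely, write $f=\sum_{\alpha\in\mathbb Z^n}c_\alpha Y^\alpha$ with $c_\alpha\in\Lambda$. First I would note that the very hypothesis that $f|_{U_\Lambda^n}$ is defined forces non-archimedean convergence, hence $\val(c_\alpha)\to\infty$ as $|\alpha|\to\infty$; equivalently, for every fixed $\lambda$ only finitely many $\alpha$ satisfy $p_\lambda(c_\alpha)\neq 0$. In particular $f$ converges at every point of $(\mathbb C^*)^n$. Now fix $\lambda\in\mathbb R$ and a point $y=(y_1,\dots,y_n)\in(\mathbb C^*)^n$. Since $y^\alpha\in\mathbb C^*$ and $p_\lambda$ is continuous and $\mathbb C$-linear, it commutes with the convergent sum, so
\[
0=p_\lambda\big(f(y)\big)=\sum_{\alpha}p_\lambda(c_\alpha)\,y^\alpha=:P_\lambda(y),
\]
and by the finiteness just observed $P_\lambda\in\mathbb C[Y_1^{\pm},\dots,Y_n^{\pm}]$ is an honest complex Laurent polynomial. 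As $P_\lambda$ vanishes on all of $(\mathbb C^*)^n$ it must be the zero polynomial, i.e.\ $p_\lambda(c_\alpha)=0$ for every $\alpha$. Letting $\lambda$ range over $\mathbb R$ gives $c_\alpha=0$ for all $\alpha$, hence $f=0$. (One could equally run an induction on $n$, peeling off one variable at a time, but the projection argument dispatches all $n$ uniformly.)

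The only point that needs genuine care is the input that, for each $\lambda$, the family $\{c_\alpha\}$ contributes to $T^\lambda$ only finitely often — that is, identifying precisely the summability/convergence condition implicit in the ring $\Lambda[[Y_1^{\pm},\dots,Y_n^{\pm}]]$ used in the applications (it is exactly what makes $f|_{U_\Lambda^n}$, and a fortiori $f|_{(\mathbb C^*)^n}$, well-defined). Granting that, the interchange of $p_\lambda$ with the infinite sum reduces to the elementary continuity statement above, and the conclusion follows from the density of $\mathbb C^*$. So the substance of the lemma is entirely in the choice of the test set $(\mathbb C^*)^n\subset U_\Lambda^n$ and the coefficient projections; there is no hard analysis.
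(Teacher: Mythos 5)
Your proof is correct, and the route is the expected one: project onto the coefficient of each $T^\lambda$, observe that the convergence condition implicit in $\Lambda[[Y_1^\pm,\dots,Y_n^\pm]]$ (namely $\val(c_\alpha)\to\infty$, equivalently condition (c) of gappedness) makes each $P_\lambda$ an honest Laurent polynomial over $\mathbb C$, and then test against the Zariski-dense subtorus $(\mathbb C^*)^n\subset U_\Lambda^n$. The interchange of $p_\lambda$ with the sum is justified exactly as you say, by $\mathbb C$-linearity plus the local constancy $\val(x-x')>\lambda\Rightarrow p_\lambda(x)=p_\lambda(x')$; and the stabilization of the partial sums $\sum_{|\alpha|\le N}p_\lambda(c_\alpha)y^\alpha$ is immediate from the finiteness of $\{\alpha:\val(c_\alpha)\le\lambda\}$. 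The present paper only cites this lemma to the earlier reference without reproducing its proof, so there is nothing to compare line by line, but your argument is the natural one and is the same coefficient-extraction argument used there; the key choices — restricting to the constant subtorus $(\mathbb C^*)^n$ and filtering by the $T$-grading — are precisely what make the statement reduce to the elementary fact that a nonzero Laurent polynomial cannot vanish on all of $(\mathbb C^*)^n$.
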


\begin{proof}
	Let $f=\sum_{\nu\in\mathbb Z^m} c_\nu Y^\nu$.
	Note that $\mathrm{val}(c_\nu)\to\infty$ i.e. $|c_\nu|\to 0$. 
	Arguing by contraction, suppose the sequence $|c_\nu|$ was nonzero, say, it had a maximal value $|c_{\nu_0}|=1$ for some $\nu_0\in\mathbb Z^n$. May further assume $c_{\nu_0}=1$. 
	Then, $|c_\nu|\leqslant 1$ for all $\nu$, so $f\in \Lambda_0[[Y^\pm]]$.
	Modulo the ideal of elements with norm $<1$, we get a power series $\bar f=\sum_{\nu} \bar c_\nu Y^\nu$ over the residue field $\mathbb C$.
	As $c_\nu \to 0$, we have $|c_\nu|<1$ and $\bar c_\nu=0$ for $\nu\gg 1$.
	Hence, this $\bar f$ is just a Laurent polynomial over $\mathbb C$ with $\bar c_{\nu_0}=1$. Meanwhile, the condition also tells that $\bar f(\mathbf {\bar y})$ vanishes for all $\mathbf {\bar y}\in (\mathbb C^*)^n$; thus, $\bar f$ must be identically zero. This is a contradiction.
\end{proof}

\subsection{Gapped $A_\infty$ algebras and the category $\UD$}
\label{ss_gapped_UD}
We revisit the notations and terminologies from \cite[\S 2]{Yuan_I_FamilyFloer}, along with introducing some new concepts where necessary. While there are slight differences in the formulations, similar notions have been previously introduced in \cite{FOOOBookOne} and \cite{FuCyclic}.

\subsubsection{Gappedness}
\label{sss_label_group}
A \textit{label group} is a triple $(\G,E,\mu)$ consisting of an abelian group $\G$ and two group homomorphisms $E:\G\to \mathbb R$ and $\mu: \G\to 2 \mathbb Z$.
For instance, let $X$ and $L$ denote a symplectic manifold and a Lagrangian submanifold, then we may take $\G=\pi_2(X,L)$ (or more precisely, $\G$ is the image of Hurewicz map $\pi_2(X,L)\to H_2(X,L)$); the $E$ and $\mu$ are the energy and Maslov index.

Assume $C,  C'$ are graded $\mathbb R$-vector spaces. Given $k\in\mathbb N$ and $\beta\in\G$, we use the notation
$
\CC_{k,\beta}(  C,  C')
$
to denote a copy of $\Hom(  C^{\otimes k},  C')$ where $\beta$ is just an extra index.
Then, consider the space 
\begin{equation}
	\label{CC_eq}
	\CC( C,  C') \subseteq  \prod_{k\in\mathbb N} \prod_{\beta\in\G} \CC_{k,\beta}(  C,  C')
\end{equation}
consisting of the operator systems $\mathfrak t=(\mathfrak t_{k,\beta})$ satisfying the following \textit{gappedness conditions}:
\begin{itemize}
	\itemsep 2pt
	\item [(a)] $\mathfrak t_{0,0}=0$; 
	\item [(b)] if $E(\beta)<0$ or $E(\beta)=0$, $\beta\neq 0$, then $\mathfrak t_\beta:=(\mathfrak t_{k,\beta})_{k\in\mathbb N}$ vanishes identically; 
	\item [(c)] for any $E_0>0$, there are only finitely many $\beta$ such that $\mathfrak t_{\beta}\neq 0$ and $E(\beta)\le E_0$.
\end{itemize}

\vspace{0.5em}

Here a collection of multilinear maps is called an operator system $\mathfrak t$. If it is further contained in $\CC(C,C')$, namely, it satisfies the above three conditions, the $\mathfrak t$ is called ($\G$-)\textit{gapped}. Unless we further specify, everything is gapped from now on.
We often abbreviate it to $\CC$.

\subsubsection{Signs and degrees}
\label{sss_sign_degree}

Define the \textit{shifted degree} $\deg' x:=\deg x-1$ for $x\in C$.
It is convenient to set
\begin{equation}
	\label{sign_operator_eq}
	x^\#=\id^\#(x)=\id_\#(x)=(-1)^{\deg x -1 } x  =(-1)^{\deg' x} x
\end{equation}
Given $s\in\mathbb N$ and a multi-linear map $\phi$, we put 
\[
\phi^{\# s}=\phi\circ \id^{\# s}
\]
where $\id^{\# s}$ denotes the $s$-iteration $\id^\#\circ \cdots \circ \id^\#$ of $\id^\#$.
If $s=1$, we set $\phi^\#=\phi^{\# 1}$.
Let $\deg \phi$ be the usual degree as a homogeneously-graded $k$-multilinear operator among graded vector spaces. Then, the shifted degree is $\deg' \phi=\deg \phi+k-1$. Note that 
$
 \phi^\# = (-1)^{\deg' \phi} \ \id^\#\circ \phi$.

We define the \textit{composition} (see \cite[Definition 2.39]{Sheridan15}) and the \textit{Gerstenhaber product} (see \cite{Gerstenhaber_1963cohomology})
\begin{equation}
	\label{composition_Gerstenhaber_eq}
	\begin{aligned}
		(\g\diamond\f)_{k,\beta}=
	\sum_{\ell \ge 1}
	\sum_{k_1+\dots+k_\ell=k}
	\sum_{\beta_0+ \beta_1+\cdots +\beta_\ell=\beta}
	\g_{\ell,\beta_0} 
	\circ ( \f_{k_1,\beta_1}\otimes \cdots \otimes \f_{k_\ell,\beta_\ell} ) \\
	(\g\{ \h\})_{k,\beta} = \sum_{\lambda+\mu+\nu=k}\sum_{\beta'+\beta''=\beta} \g_{\lambda+\mu+1,\beta'} \circ (\id_{\#\deg'\h}^\lambda \otimes \h_{\nu,\beta''}\otimes \id^\mu)
	\end{aligned}
\end{equation}

The gappedness ensures that they are finite sums. 
For instance, if $\f_{0,0}$ could be non-zero, the sum in $\g\diamond \f$ would have infinite terms and was not well-defined.

\subsubsection{$A_\infty$ structures}

By a \textit{$\G$-gapped $A_\infty$ algebra} $(C,\m)$, we mean an operator system $\m=(\m_{k,\beta})$ in $\CC(C,C)$ such that $\deg \m_{k,\beta}=2-\mu(\beta)-k$ and $\m\{ \m\}=0$, namely
\[
\sum_{\beta'+\beta''=\beta} \sum_{\lambda+\mu+\nu=k} \m_{\lambda+\mu+1,\beta'}   (\id_\#^\lambda \otimes \m_{\nu,\beta''}\otimes \id^\mu)=0
\]
A \textit{$\G$-gapped $A_\infty$ homomorphism} from $(C',\m')$ to $(C,\m)$ is an operator system $\f=(\f_{k,\beta})\in \CC(C',C)$ such that $\deg \f_{k,\beta}=1-\mu(\beta)-k$ and $\m\diamond \f=\f\{\m'\}$, namely
\begin{align*}
	\sum_{\ell \ge 1} \sum_{\substack{
			0=j_0\le\cdots\le j_\ell=k
			\\
			\beta_0+\beta_1+\cdots +\beta_\ell=\beta
	}}
	\m_{\ell,\beta_0}  (\f_{j_1-j_0,\beta_1}\otimes \cdots \otimes \f_{j_\ell-j_{\ell-1},\beta_\ell}) = \sum_{\substack{\lambda+\mu+\nu=k \\ \beta'+\beta''=\beta}} \f_{\lambda+\mu+1,\beta'}  (\id_\#^\lambda\otimes \m'_{\nu,\beta''} \otimes \id^\mu)
\end{align*}
As $\mu(\beta)$ is always an even integer, we know $\deg' \m=1$ and $\deg' \f=0$ in $\mathbb Z_2$.
Restricting the $\m$ or $\f$ to the energy-zero part $\prod_k\CC_{k,0}$ (i.e. $\beta=0$), what we obtain is called the \textit{reduction} $(C,\bar {\m})$ (resp. $\bar{\f}:\bar{\m}'\to \bar{\m}$) of $\m$ (resp. $\f$).
In the energy-zero part, we always have $\m_{1,0}\circ \m_{1,0}=0$ and $\m_{1,0}\circ \f_{1,0}= \f_{1,0}\circ \m'_{1,0}$. Hence, an $A_\infty$ algebra $(C,\m)$ naturally induces a cochain complex $(C,\m_{1,0})$ and an $A_\infty$ homomorphism $\f$ also induces a cochain map $\f_{1,0}$.
Applying the homological perturbation to $\m$ yields a new $A_\infty$ algebra $\hat\m$ with $\hat\m_{1,0}=0$, often called the minimal model.

Let $P$ be the convex hull of finite points in a Euclidean space such like a $d$-simplex $\Delta^d$. Denote by $C^\infty(P,C)$ the set of all smooth maps from $P$ to $C$, and we define
\[
C_P:= \Omega^*(P)\otimes_{C^\infty(P)} C^\infty(P,C)
\]
It naturally comes with the evaluation map 
\[
\eval^s:C_P\to C
\]
for any $s\in P$ and the inclusion map 
\[
\incl: C\to C_P
\]
Alternatively, it can be regarded as the space of $C$-valued differential forms on $P$.
For instance, when $C=\Omega^*(L)$ for a manifold $L$, we have a natural identification $C_P\cong \Omega^*(P\times L)$ via $\eta\otimes x\xleftrightarrow{} \eta\wedge x$. Also, the $\eval^s$ and $\incl$ are the pullbacks for the inclusion $L\xhookrightarrow{} \{s\}\times L$ and the projection $P\times L \to L$.

A \textit{$P$-pseudo-isotopy} (of gapped $A_\infty$ algebras) on $C$ is simply a special sort of a gapped $A_\infty$ algebra structure $\M$ on $C_P$ such that the $\M$ is $\Omega^*(P)$-linear (up to sign) and the $\M_{1,0}$ is the canonical differential on $C_P$. In the special case $C=\OL$, the $\M_{1,0}$ coincides with the exterior derivative $d_{P\times L}$. 
The notion of pseudo-isotopy is introduced in \cite{FuCyclic}.
See also \cite{Yuan_I_FamilyFloer} for the details.

\subsubsection{Unitality and divisor axiom}
\label{sss_unitality_divisor_axiom}

An arbitrary operator system $\mathfrak t\in \CC$ is called \textit{cyclically unital} if, for any degree-zero element $\e$ and any $(k,\beta)\neq (0,0)$, we have
\[
\textstyle
\CU[\mathfrak t]_{k,\beta}(\e; x_1,\dots, x_k) := \sum_{i=1}^{k+1}\mathfrak t_{k+1,\beta} (x_1^\#,\dots, x_{i-1}^\#, \e, x_i, \dots, x_k) =0
\]
On the other hand, a gapped $A_\infty$ algebra $(C,\m)$ is called (strictly) \textit{unital} if there exists a degree-zero element $\one$, called a \textit{unit}, such that

\begin{enumerate}
	\itemsep 2pt
	\item $\m_{1,0}(\one)=0$
	\item $\m_{2,0}(\one, x)=(-1)^{\deg x}\m_{2,0}(x,\one)=x$
	\item $\m_{k,\beta} (\dots, \one,\dots)=0$ for $(k,\beta)\neq (1,0), (2,0)$.
\end{enumerate}

Assume $(C', \m')$ and $(C, \m)$ have the units $\one'$ and $\one$; then, a gapped $A_\infty$ homomorphism $\f: \m' \to\m$ is called \textit{unital} if $\f_{1,0}(\one')=\one$ and \[
\f_{k,\beta}(\dots,\one',\dots)=0
\]
for $(k,\beta)\neq (1,0)$.

A gapped $A_\infty$ algebra $(C,\m)$ is called a \textit{quantum correction to de Rham complex}, or in abbreviation, a \textit{q.c.dR}, if $C$ is $\Omega^*(M)$ for some manifold $M$ so that $\m_{1,0}=d_M$, $\m_{2,0}(x_1,x_2)=(-1)^{\deg x_1} x_1\wedge x_2$, and $\m_{k,0}=0$ for $k\ge 3$.
Geometrically, an $A_\infty$ algebra obtained by the moduli spaces or any of its minimal model can satisfy the q.c.dR conditions.

From now on, we always assume $\G=\pi_2(X,L)$ and $C=\HL_P$ or $\OL_P$ for some $L$ and $P$. In either of two cases, we have a natural differential $d$ on $C$ for which there is a well-defined cap product $\partial\beta\cap \cdot $ on \[
Z^1(C):=\{ b\in C\mid d b=0, \deg b=1\}
\]
for any $\beta\in \pi_2(X,L)$.
Now, we say an operator system $\mathfrak t=(\mathfrak t_{k,\beta})$ in $\CC(C,C')$ satisfies the \textit{divisor axiom} if for any $b\in Z^1(C)$ and $(k,\beta)\neq (0,0)$,
\[\textstyle
\DA [\mathfrak t]_{k,\beta}(b;x_1,\dots, x_k):= \sum_{i=1}^{k+1} \mathfrak t_{k+1,\beta} (x_1,\dots, x_{i-1}, b,x_i, \dots, x_k)
=
\partial\beta\cap b\cdot \mathfrak t_{k,\beta}(x_1,\dots, x_k)
\]

\subsubsection{The category $\UD$}
\label{sss_UD}

Given a symplectic manifold $(X,\omega)$, let $L$ be an oriented spin Lagrangian submanifold. We remark that $L$ is not necessarily a Lagrangian torus.
Set $\G=\pi_2(X,L)$.
For simplicity, we use a uniform notation $\one$ to denote all various constant-one functions in either $\HL_P$ or $\OL_P$.
In \cite[\S 2]{Yuan_I_FamilyFloer}, we introduced a category
$\UD$ as follows:

\begin{enumerate}
	\item[(I)] An object $\m$ in $\UD$ is a $\G$-gapped $A_\infty$ algebra with the following properties:
	\begin{itemize}
		\item[(I-0)] it extends $H^*(L)_P$ or $\Omega^*(L)_P$;
		\item[(I-1)] the constant-one function is a (strict) unit;
		\item[(I-2)] the cyclical unitality;
		\item[(I-3)] the divisor axiom;
		\item[(I-4)] it is a $P$-pseudo-isotopy;
		\item [(I-5)] every $\beta\in\pi_2(X,L)$ in the set $\mathsf G_\m:=\{\beta \mid \m_{\beta}\neq 0\}$ satisfies $\mu(\beta)\ge 0$.
	\end{itemize}
	\item[(II)] A morphism $\f$ in ${\UD}$ is a $\G$-gapped $A_\infty$ homomorphism with the following properties:
	\begin{itemize}
		\item[(II-1)] the strict unitalities with the constant-one functions as units;
		\item[(II-2)] the cyclically unitality;
		\item[(II-3)] the divisor axiom;
		\item[(II-4)] $\partial \beta \cap \f_{1,0} (b) = \partial \beta \cap b$ for any divisor input $b\in Z^1(C)$;
		\item[(II-5)] every $\beta\in\pi_2(X,L)$ in the set $\mathsf G_\f :=\{\beta\in\G\mid \f_{\beta}\neq 0\}$ satisfies $\mu(\beta)\ge 0$.	
	\end{itemize}
\end{enumerate}

We also call (I-5) and (II-5) the \textbf{semipositive conditions}.
We write $\Obj\UD$ and $\Mor\UD$ for the collections of objects and morphisms in $\UD$ respectively.

\subsubsection{Homotopy theory and Whitehead theorem}
\label{sss_whitehead_homotopy_theory_UD}
For $\UD$, there is the notion of \textbf{ud-homotopy}:
We call 
\[
\f_0,\f_1\in \Hom_\UD((C',\m'), (C,\m) )
\]
are \textit{ud-homotopic}
if there is 
\[
\F\in \Hom_\UD ( (C',\m'), (C_\oi,\M^\tri))
\]
such that 
\[
\eval^0  \F=\f_0 \quad , \quad \eval^1  \F =\f_1
\]
where $\M^\tri$ is the trivial pseudo-isotopy about $\m$ and can be proved to be an object in $\UD$ as well. In this case, we write $\f_0\simud \f_1$. Moreover, $\f_0\simud \f_1$ if and only if there exists $(\f_s)$ and $(\h_s)$ in $\CC(C', C)$ such that: (see \cite[\S 2]{Yuan_I_FamilyFloer})

\begin{enumerate}[label=(\alph*)]
	\itemsep 2pt
	\item Every $\f_s\in \Mor \UD$;
	\item $ \frac{d}{ds} \circ \f_s = \sum \h_s\circ (\id_\#^\bullet\otimes \m'\otimes \id^\bullet) + \sum \m\circ (\f_s^\#\otimes \cdots \otimes \f_s^\#\otimes  \h_s\otimes \f_s\otimes \cdots\otimes \f_s)$;
	\item The $\h_s$ satisfies the divisor axiom, the cyclical unitality, and $(\h_s)_{k,\beta}(\cdots \one \cdots )=0$ for all $(k,\beta)$;
	\item $\deg (\h_s)_{k,\beta}= -k-\mu(\beta)$. For every $\beta$ with $\h_\beta\neq 0$, we have $\mu(\beta)\ge 0$.
\end{enumerate}

In the context of $\UD$, we also have the Whitehead theorem \cite[\S 3]{Yuan_I_FamilyFloer}:
\begin{thm}[Whitehead]
	\label{whitehead_thm}
	Fix $\f\in \Hom_\UD ((C',\m'), (C,\m))$ such that $\f_{1,0}$ is a quasi-isomorphism of cochain complexes. Then, there exists $\g\in \Hom_\UD ( (C,\m) , (C',\m'))$, unique up to ud-homotopy, such that $\g\diamond \f\simud \id_{C'}$ and $\f\diamond \g\simud \id_C$. We call $\g$ a \textbf{ud-homotopy inverse} of $\f$.
\end{thm}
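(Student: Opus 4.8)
The plan is to prove this by a filtered (energy-graded) version of the classical Whitehead theorem for $A_\infty$ algebras: construct $\g$ by an obstruction-theoretic induction over the energy filtration, and check at every stage that $\g$ together with all interpolating data remain inside $\UD$. At the energy-zero level the argument is classical. The reductions $(C',\bar\m')$ and $(C,\bar\m)$ are honest unital dg/$A_\infty$ algebras (the underlying complexes being of q.c.dR type, or minimal models thereof), and $\f_{1,0}$ is a quasi-isomorphism between them. The unfiltered Whitehead theorem, via homological perturbation, then produces a unital $A_\infty$ homomorphism $\bar\g:\bar\m\to\bar\m'$ inverse to $\bar\f$ up to homotopy; cyclical unitality can be imposed by the usual unit normalization, and the $\beta=0$ divisor axiom is automatic because the energy-zero product is the de Rham wedge. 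This $\bar\g$ is the energy-zero part of the desired $\g$.

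Next I would extend $\g$ over the discrete energy set $0=E_0<E_1<E_2<\cdots$ occurring in $\mathsf G_\f\cup\mathsf G_\m\cup\mathsf G_{\m'}$ (finitely many below any bound, by gappedness). Inductively, suppose $\g$ has been built modulo energy $<E_N$, lying in $\UD$ to that order and with $\f\diamond\g\equiv\id_C$ modulo that order. The failure of the $A_\infty$ relation $\m\diamond\g=\g\{\m'\}$ at energy exactly $E_N$ is an operator system $\mathrm{Obst}_N$ (supported in $\beta$ with $E(\beta)=E_N$) which the standard associativity bookkeeping — using $\m\{\m\}=0$ and $\m'\{\m'\}=0$ — shows to be a cocycle for the linearized bar-type differential $D$ determined by $\m_{1,0}$ and $\m'_{1,0}$, and moreover to lie in the image of pushforward by $\f$. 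Since $\f_{1,0}$ is a quasi-isomorphism, this pushforward is a quasi-isomorphism on the relevant bar complexes, so $\mathrm{Obst}_N$ is a $D$-coboundary; choosing a primitive yields $\g_\beta$ for $E(\beta)=E_N$ extending the $A_\infty$ relation, and by adjusting within the space of $D$-cocycles one simultaneously drives $\f\diamond\g$ to $\id_C$ at order $E_N$ and enforces the remaining morphism axioms of $\UD$: strict unitality (II-1), cyclical unitality (II-2), the divisor axiom (II-3), and (II-4). These decorations are preserved because the corresponding sub-$D$-module of $\CC$ is closed under $\diamond$ and $\{\cdot\}$. Semipositivity (II-5) is automatic, since every $\beta$ occurring is a sum of Maslov-$\ge 0$ classes from $\mathsf G_\f,\mathsf G_\m,\mathsf G_{\m'}$, and $\Omega^*(P)$-linearity (compatibility with the pseudo-isotopy structure) is preserved because all operations in sight are $\Omega^*(P)$-linear. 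The same induction, now into the trivial pseudo-isotopy $\M^\tri$ with prescribed endpoint evaluations, produces the ud-homotopy $\f\diamond\g\simud\id_C$, where one additionally checks conditions (a)--(d) for the homotopy terms $\h_s$ (divisor axiom, cyclical unitality, vanishing on the unit, and the Maslov-$\ge 0$ constraint).

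For the other relation, I would run the analogous construction on the other side — using that precomposition by $\f$ is a quasi-isomorphism of bar complexes, again because $\f_{1,0}$ is a quasi-isomorphism — to obtain $\k\in\Hom_\UD((C,\m),(C',\m'))$ with $\k\diamond\f\simud\id_{C'}$. Then $\g\simud(\k\diamond\f)\diamond\g=\k\diamond(\f\diamond\g)\simud\k$, using associativity of $\diamond$ and the compatibility of $\simud$ with $\diamond$ on either side (ud-homotopies can be whiskered by morphisms). Hence $\g$ is a two-sided ud-homotopy inverse of $\f$, and $\g\diamond\f\simud\k\diamond\f\simud\id_{C'}$. Uniqueness up to ud-homotopy is then formal: if $\g_0$ and $\g_1$ both satisfy the two relations, then $\g_0\simud\g_0\diamond\f\diamond\g_1\simud\g_1$.

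I expect the main obstacle to be the inductive step of the second paragraph: the obstruction calculus must be carried out not on all of $\CC$ but on the sub-$D$-module cut out by strict and cyclical unitality, the divisor axiom, and (II-4), and one must verify both that this sub-module is genuinely preserved by the homological-perturbation bookkeeping and that $\f$-pushforward remains a quasi-isomorphism on it, so that $\mathrm{Obst}_N$ stays exact \emph{within the decorated complex}. Running in parallel is the requirement that every interpolating homotopy again satisfies all of (a)--(d); the interaction between the divisor axiom for the homotopy term $\h_s$ and the semipositive Maslov constraint is the most delicate point, and is exactly where the specific structure of $\UD$ from \cite{Yuan_I_FamilyFloer} is needed.
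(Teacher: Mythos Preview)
The paper does not prove this theorem; it is stated as a result imported from \cite[\S 3]{Yuan_I_FamilyFloer}, so there is no in-paper proof to compare against. Your approach --- energy-filtered obstruction theory, starting from the classical (unfiltered) Whitehead theorem at $\beta=0$ and then inductively killing obstructions over the discrete energy spectrum, while checking that all corrections remain in the decorated sub-complex cut out by (II-1)--(II-5) --- is the standard and expected strategy for a gapped Whitehead theorem (cf.\ the analogous arguments in \cite{FOOOBookOne}), and is almost certainly what the cited proof does as well. You have also correctly located the genuine content: the obstruction cocycle must be shown to be exact \emph{within} the sub-$D$-module defined by strict unitality, cyclical unitality, the divisor axiom, and (II-4), and the interpolating homotopies $(\h_s)$ must satisfy (a)--(d); this is precisely where the work in \cite{Yuan_I_FamilyFloer} lies, and your sketch does not yet supply that argument. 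One small correction: at energy zero you should not expect to need higher $\bar\g_k$ in general --- for the q.c.dR objects in $\UD$ the reductions are honest (commutative) dg algebras with $\bar\m_{k,0}=0$ for $k\ge 3$, so $\bar\g$ can and should be taken strict (i.e.\ concentrated in $\bar\g_{1,0}$), which makes the $\beta=0$ verification of (II-1)--(II-4) immediate rather than something requiring a normalization step.
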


\subsection{An observation about weak Maurer-Cartan equations}
\label{ss_weak_MC}

The semipositive condition (I-5) says that for $\m\in \Obj\UD$, we have $\m_\beta\neq 0$ only if $\mu(\beta)\ge 0$. Hence, $\deg \m_{0,\beta}= 2-\mu(\beta) \le 2$.
If the $\m$ is defined on $H^*(L)$, then we may write as in \cite{Yuan_I_FamilyFloer}:
\begin{equation}
	\label{W_Q_eq}
	W\cdot \one + Q:=\sum_{\mu(\beta)=2} T^{E(\beta)}Y^{\partial\beta} \m_{0,\beta}+\sum_{\mu(\beta)=0} T^{E(\beta)}Y^{\partial\beta} \m_{0,\beta}
\end{equation}

\begin{rmk}
	\label{W_minimal_model_rmk}
	The moduli space of holomorphic disks bounding a Lagrangian submanifold $L$ gives an $A_\infty$ algebra $\check\m$ on $\OL$ at first. But, we need to use its minimal model $(\HL, \m)$.
	Otherwise, the first summation in (\ref{W_Q_eq}) would live in $\Lambda[[\pi_1(L)]]\hat\otimes \Omega^0(L)$ and may not be in the form of $W\cdot \one$. 
\end{rmk}

Let $\ia$ denote the ideal in $\Lambda[[\pi_1(L)]]$ generated by the components of $Q$.
The ideal $\ia$ vanishes under Assumption \ref{assumption_weak_MC}, but let us keep it for a moment to highlight the ideas.
Basically, one may view (\ref{W_Q_eq}) as the (weak) Maurer-Cartan equations with certain differences.
We review the following lemma whose proof is also implicitly given in \cite{Yuan_I_FamilyFloer}:

\begin{lem}
	\label{weakMC_observation_lem}
	If $\f_0\simud \f_1$ from $(\HL,\m')$ to $(\HL,\m)$ and $(\f_0)_{1,0}=(\f_1)_{1,0}$, then
	\[
	\textstyle \sum_\beta T^{E(\beta)} Y^{\partial\beta} ((\f_1)_{0,\beta}-(\f_0)_{0,\beta})\in \ia
	\]
	In particular, under Assumption \ref{assumption_weak_MC}, we have
	\[
	\textstyle \sum_\beta T^{E(\beta)} Y^{\partial\beta}(\f_0)_{0,\beta}
	=
	\textstyle \sum_\beta T^{E(\beta)} Y^{\partial\beta}(\f_1)_{0,\beta}
	\]
\end{lem}

\begin{proof}
	Let $(\f_s)$ and $(\h_s)$ be as in \S \ref{sss_whitehead_homotopy_theory_UD} with the conditions (a) (b) (c) (d) therein.
	For a basis of $\pi_1(L)$, we have $\Lambda[[\pi_1(L)]]\cong \Lambda[[ Y_1^\pm, \dots, Y_m^\pm]]$.
	For a basis of $H^2(L)$, we write $Q=\sum_\eta Q_\eta\cdot \eta$, and then by definition, the ideal $\ia$ is generated by all the $Q_\eta$.
	By Lemma \ref{val=0-lem}, it suffices to restrict 
	\[
	S:= \sum T^{E(\beta)} Y^{\partial\beta} ((\f_1)_{0,\beta}-(\f_0)_{0,\beta})
	\]
	to an arbitrary point $\mathbf y=(y_1,\dots, y_m)$ in $U_\Lambda^m\cong H^1(L;U_\Lambda)$.
	We can write $y_i=e^{x_i}$ for $x_i\in\Lambda_0$. We set $b=(x_1,\dots, x_m)$ for the basis.
	Applying the divisor axiom of $\f_0,\f_1$ and the condition (b) implies that
	\begin{align*}
		S(y_1,\dots, y_m) &= \sum_{k,\beta} T^{E(\beta)}  \big((\f_1)_{k,\beta} (b,\dots, b) -(\f_0)_{k,\beta}(b,\dots, b) \big) \\
		&= \sum T^{E(\beta)}  \int_0^1 ds \cdot  \frac{d}{ds} \circ (\f_s)_{k,\beta}(b,\dots,b)\\
		&=  \sum T^{E(\beta_1)} \int_0^1 ds\cdot  (\h_s)_{\lambda+\mu+1,\beta_1} (b,\dots,b , T^{E(\beta_2)} \m'_{\nu,\beta_2} (b,\dots,b) , b ,\dots b ) \\
		&+ \sum T^{E(\beta)} \int_0^1ds\cdot \m \big( (\f_s)(b,\dots,b) ,\dots, (\h_s)_{\ell,\beta_0} (b,\dots, b) ,\dots, (\f_s)(b,\dots,b)\big)
	\end{align*}
	Since $\deg (\h_s)_{\ell,\beta_0}(b,\dots,b)=-\mu(\beta_0)$, the semipositive condition and the cyclical unitality of $\m$ deduce that the second summation vanishes. Hence, for the operator $\mathfrak H:=\int_0^1ds \cdot \h_s$, it follows from the condition (c) that
	\[
	S(y_1,\dots, y_m)= \sum_\beta T^{E(\beta)} \mathbf y^{\partial\beta} \mathfrak H_{1,\beta} \big( W(\mathbf y) \cdot \one + \sum_\eta Q_{\eta}(\mathbf y) \cdot  \eta \big) = \sum_\eta Q_\eta(\mathbf y) \cdot \sum_\beta T^{E(\beta)} \mathbf y^{\partial\beta } \mathfrak H_{1,\beta}(\eta)
	\]
	Now that the above equation holds for any $\mathbf y=(y_i)\in U_\Lambda^m$, it also holds identically by Lemma \ref{val=0-lem}.
\end{proof}

\subsection{Wall-crossing from family Floer viewpoint: Review}
\label{ss_wall_cross_review}

Here we review the wall-crossing aspect of the family Floer framework in \cite{Yuan_I_FamilyFloer}. In this section, we provide a summary of the basic points and refer to \cite{Yuan_I_FamilyFloer} for the details.

\subsubsection{$A_\infty$ structures}
\label{sss_moduli_space_A_infinity}

Let $J$ be an $\omega$-tame almost complex structure in $X$. 
Fix $k\in\mathbb N$ and $\beta\in\pi_2(X,L)$ with $(k,\beta)\neq (0,0), (1,0)$. We consider the moduli space
$\mathcal M_{k+1,\beta}(J,L)$ of the equivalence classes of $(k+1)$-boundary-marked $J$-holomorphic genus-zero stable maps with one boundary component in $L$ in the class $\beta$.
Let $\mathbb M(J)$ denote the collection of the moduli spaces $\mathcal M_{k+1,\beta}( J, L)$ for all $(k,\beta)$.
We call $\mathbb M( J )$ a \textit{moduli system}. Using the virtual techniques \cite{Kuranishi_book}, it gives rise to a chain-level $A_\infty$ algebra denoted by $(\OL, \check \m^{J,L})$.
Suppose $J$ is semipositive in the sense that there does not exist any $J$-holomorphic disk in $\pi_2(X,L)$ with negative Maslov index. Then, one can check that $\check \m=\check \m^{J,L}$ is an object in $\UD=\UD(L)$.

The family Floer theory only works with the cohomology-level $A_\infty$ algebras; see e.g. Remark \ref{W_minimal_model_rmk}. We need to perform the homological perturbation to the above $\check \m^{J,L}$.
Let $g$ be a metric.
By the Hodge decomposition, one can find two cochain maps $i(g): \HL\to \OL$, $\pi(g):\OL\to \HL$ of degree zero and a map $G(g):\OL\to \OL$ of degree $-1$ such that
$i(g)\circ \pi(g)-\id= d_L\circ G+G\circ d_L$, $\pi(g)\circ i(g)=\id$, $G(g)\circ G(g)=0$, $G(g)\circ i(g)=0$, and $\pi(g)\circ G(g)=0$.
Now, we call $\con(g):=(i(g),\pi(g),G(g))$ the \textit{harmonic contraction}.
Applying the homological perturbation with this
$\con(g)$, the $A_\infty$ algebra $\check \m^{J,L}$ induces
its minimal model denoted by
$
(H^*(L),\m^{g,J,L})$.
It is an object in $\UD$ and also accompanied by a morphism $\mi^{g,J,L}: \ (H^*(L), \m^{g,J,L}) \to (\Omega^*(L), \check \m^{J,L})$ in $\UD$.
Next, for a path of metrics $\mg=(g_s)_{s\in\oi}$, there is a parameterized harmonic contraction $\con(\mg)$; further using the parameterized moduli spaces for a path $\pmb J=(J_s)_{s\in\oi}$, we can naturally produce an $A_\infty$ homotopy equivalence $\mC: (\HL, \m^{g_0,J_0,L}) \to (\HL, \m^{g_1,J_1,L})$ in $\UD$ such that $\mC_{1,0}=\id$.
It always comes from a pseudo-isotopy, and it does not depend on various choices up to the ud-homotopy relation.

\subsubsection{Analytic coordinate change maps}
We take an adjacent Lagrangian submanifold $\tilde L$ in a Weinstein neighborhood $\mathcal U_L$ of $L$
such that there exists a small isotopy $F$ supported near $L$ such that $F(L)=\tilde L$.
Choose a tautological 1-form $\lambda$ on $\mathcal U_L$ such that $\lambda$ vanishes exactly on $\tilde L$ and $\omega=d\lambda$. Then, one can use the Stokes' formula to show $E(\tilde \beta)=E(\beta)-\partial\beta\cap \lambda|_L$ for any $\beta\in \pi_2(X,L)$ and $\tilde\beta:=F_*\beta\in\pi_2(X,\tilde L)$.
Note that $\lambda:=\lambda|_L$ is a closed one-form on $L$.
Conventionally, we distinguish between $\pi_2(X,L)$ and $\pi_2(X,\tilde L)$ due to the energy change, but we may feel free to identify $\pi_1(L)$ with $\pi_1(\tilde L)$ for adjacent fibers.
%

By \cite{FuCyclic}, Fukaya's trick essentially refers to the observation of a natural correspondence between a $J$-holomorphic curve $u$ with boundary on the Lagrangian $L$ and an $F_*J$-holomorphic curve $F \circ u$ with boundary on $\tilde{L}$.

\begin{convention} \label{convention_Fuk_trick}
	We say $\mC: (\HL,\m)\to (H^*(\tilde L), \tilde \m)$ is an \textit{$A_\infty$ homotopy equivalence} \textit{up to the Fukaya's trick}, if it is an $A_\infty$ homotopy equivalence in $\Mor \UD$ from $(H^*( L), \m )$ to $(H^*( L), \tilde \m^F)$ such that $\mC_{1,0}=F^{-1*}$ and the $A_\infty$ algebra $\tilde\m^F$ is defined in view of Fukaya's trick by
	\[
	\tilde\m^F_{k,\beta} =  F^{ *} \tilde\m_{k,\tilde\beta} (F^{-1*},\dots, F^{-1*})
	\]
In practice, such a $\mC$ is obtained in almost the same way as \S \ref{sss_moduli_space_A_infinity} but further using the Fukaya's trick.
\end{convention}

Suppose $\mC: (\HL,\m)\to (H^*(\tilde L), \tilde \m)$ is an $A_\infty$ homotopy equivalence in $\Mor\UD$ up to the Fukaya's trick as above.
We write $W, Q, \ia$ and $\tilde W, \tilde Q, \tilde \ia$ as in (\ref{W_Q_eq}) for $\m$ and $\tilde \m$ respectively. We write
\begin{equation}
	\label{mC_F_notation_eq}
	\pmb {\mathfrak F}_{\mC} (Y)= \sum T^{E(\gamma)} Y^{\partial\gamma} \mC_{0,\gamma}  \ \ \in \Lambda[[\pi_1(L)]] \hat\otimes H^*(L)
\end{equation}
By the semipositive condition, we know any nonzero $\mC_{0,\gamma}\in H^1( L)$. By the gappedness, $\mC_{0,0}=0$.
The following lemma describes the wall-crossing phenomenon precisely.

\begin{lem}[\cite{Yuan_I_FamilyFloer}]
	\label{wall_crossing_coordiante_change_lem}
	The assignment $\phi: \tilde Y^{\alpha} \mapsto T^{\langle \alpha, \lambda\rangle} Y^{\alpha} \exp \langle \alpha, \pmb {\mathfrak F}_{\mC} \rangle$ defines an isomorphism
	\begin{equation}
		\label{phi_eq}
		\phi=\phi_\mC: \Lambda[[\pi_1(\tilde L)]] / \tilde \ia \to \Lambda[[\pi_1(L)]] / \ia
	\end{equation}
	such that $\phi(\tilde W)=W$.
	Moreover, this map $\phi$ only depends on the ud-homotopy class of $\mC$.
	Under Assumption \ref{assumption_weak_MC}, we actually have $\ia=\tilde\ia=0$ and
	\[
	\phi=\phi_\mC: \Lambda[[\pi_1(\tilde L)]]   \to \Lambda[[\pi_1(L)]]
	\]
\end{lem}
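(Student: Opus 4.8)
The plan is to verify directly that the stated assignment $\phi$ is a well-defined ring isomorphism and then to check the compatibility $\phi(\tilde W)=W$ using the $A_\infty$ relation for $\mC$. First I would unwind the definition of the map on monomials: for $\alpha\in\pi_1(\tilde L)$ we send $\tilde Y^\alpha\mapsto T^{\langle\alpha,\lambda\rangle}Y^\alpha\exp\langle\alpha,\pmb{\mathfrak F}_{\mC}\rangle$, where $\langle\alpha,\pmb{\mathfrak F}_{\mC}\rangle\in\Lambda[[\pi_1(L)]]\hat\otimes H^1(L)$ is paired into a scalar in $\Lambda[[\pi_1(L)]]$ via the canonical pairing $\pi_1(L)\times H^1(L)\to\mathbb Z$; the identification $\pi_1(\tilde L)\cong\pi_1(L)$ for adjacent fibers makes this make sense. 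I must check the energy bookkeeping: by the Stokes computation recalled before the lemma, $E(\tilde\beta)=E(\beta)-\partial\beta\cap\lambda|_L$, so the factor $T^{\langle\alpha,\lambda\rangle}$ is exactly what converts $\tilde L$-energies into $L$-energies, and this is what makes $\phi$ send convergent (gapped) series to convergent series, hence a well-defined map on the formal power series rings. Multiplicativity $\phi(\tilde Y^{\alpha+\alpha'})=\phi(\tilde Y^\alpha)\phi(\tilde Y^{\alpha'})$ is immediate since everything in sight ($T^{\langle-,\lambda\rangle}$, $Y^{(-)}$, $\exp\langle-,\pmb{\mathfrak F}_{\mC}\rangle$) is additive in the exponent.

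Next I would construct the inverse. The natural candidate is $\phi_{\mC'}$ where $\mC'$ is a ud-homotopy inverse of $\mC$ (which exists by the Whitehead Theorem \ref{whitehead_thm}, since $\mC_{1,0}=F^{-1*}$ is an isomorphism of cochain complexes). One computes $\phi_{\mC}\circ\phi_{\mC'}$ and $\phi_{\mC'}\circ\phi_{\mC}$ and shows each equals the identity modulo $\ia$ (resp. $\tilde\ia$): the leading $T$-exponent and $Y$-exponent cancel because $\langle\alpha,\lambda\rangle$ is odd under swapping $L\leftrightarrow\tilde L$, and the exponential correction terms $\pmb{\mathfrak F}_{\mC}$, $\pmb{\mathfrak F}_{\mC'}$ combine to something that vanishes up to $\ia$. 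This last cancellation is exactly where the ideal $\ia$ enters and where Lemma \ref{weakMC_observation_lem} does the work: $\mC'\diamond\mC$ is ud-homotopic to the identity with the same linear part, so applying Lemma \ref{weakMC_observation_lem} shows the relevant difference of $(0,\beta)$-components lies in $\ia$; hence $\phi_{\mC'}\circ\phi_{\mC}$ descends to the identity on the quotient $\Lambda[[\pi_1(\tilde L)]]/\tilde\ia$. The same argument applied with $\mC$ and $\mC'$ interchanged gives the other composite, so $\phi$ is a ring isomorphism between the quotients. It also shows that $\phi$ depends only on the ud-homotopy class of $\mC$: two ud-homotopic choices differ in their $\pmb{\mathfrak F}$-terms by something in $\ia$, again by Lemma \ref{weakMC_observation_lem}.

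The identity $\phi(\tilde W)=W$ is the substantive part, and I expect it to be the main obstacle. The idea is to look at the $A_\infty$ homomorphism equation $\m\diamond\mC=\mC\{\tilde\m\}$ (suitably transported through the Fukaya's trick so that both sides live on $H^*(L)$, using $\tilde\m^F$), restrict all inputs to a divisor $b\in Z^1$, and collect the Maslov-index-$2$ part together with the $\exp$-resummation coming from the divisor axiom. Concretely one evaluates at a point $\mathbf y=e^b\in H^1(L;U_\Lambda)$, uses the divisor axiom for $\m$, $\mC$, and $\tilde\m$ to convert the multi-linear sums into generating functions with the $Y^{\partial\beta}$ and $\exp\langle-,\pmb{\mathfrak F}_{\mC}\rangle$ factors, and matches the Maslov-$2$ terms on both sides; the $\exp\langle\alpha,\pmb{\mathfrak F}_{\mC}\rangle$ is precisely the contribution of the $\mC_{0,\gamma}\in H^1$ components feeding into the divisor slots. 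The Maslov-$0$ terms contribute elements of the ideals $\ia$, $\tilde\ia$, which is why the equation only holds after passing to the quotients — this is exactly the role of Assumption-type reasoning encoded in $\ia$. Then one invokes Lemma \ref{val=0-lem} to pass from the identity at all $U_\Lambda$-points to an identity of formal series. The bookkeeping of signs in the brace operation $\mC\{\tilde\m\}$ and keeping careful track of which $\beta$-components are Maslov-$0$ versus Maslov-$2$ is the delicate part; the guiding principle is the same as in the proof of Lemma \ref{weakMC_observation_lem}, namely that the semipositive condition plus cyclical unitality kill the unwanted higher terms, leaving exactly the asserted coordinate-change formula.
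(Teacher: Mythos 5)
Your proposal is correct and follows the route the paper itself attributes to the cited source: Remark \ref{rmk_selfHF_morphism} explains that Lemma \ref{wall_crossing_coordiante_change_lem} is essentially the $k=0$ instance of the computation that appears in the proof of Proposition \ref{A_infinity_morphism_non_curved_prop}. That is, one expands $\sum_\beta T^{E(\beta)}Y^{\partial\beta}(\tilde\m^F\diamond\mC)_{0,\beta}=\sum_\beta T^{E(\beta)}Y^{\partial\beta}(\mC\{\m\})_{0,\beta}$, resums the degree-one $\mC_{0,\gamma}$-inputs on the left via the divisor axiom of $\tilde\m^F$ into $\exp\langle\partial\beta,\pmb{\mathfrak F}_\mC\rangle$, recognizes $\tilde W$ on the left and $W$ on the right modulo $\tilde\ia$ and $\ia$ using the unitality of $\mC$, and passes from $U_\Lambda$-points to formal-series identities via Lemma \ref{val=0-lem}; the inverse and the ud-homotopy invariance both reduce to Lemma \ref{weakMC_observation_lem}, as you say. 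One slip to correct: you wrote the $A_\infty$ homomorphism relation as ``$\m\diamond\mC=\mC\{\tilde\m\}$,'' but since $\mC$ goes \emph{from} $(\HL,\m)$ \emph{to} $(H^*(\tilde L),\tilde\m)$, the paper's convention gives $\tilde\m^F\diamond\mC=\mC\{\m\}$ (after Fukaya's trick). Your own description of the resummation---the $\mC_{0,\gamma}$'s feeding into the divisor slots of $\tilde\m$---is consistent with the correct equation and not with the one you wrote, so this is an internal inconsistency worth fixing. Also, your inverse argument tacitly uses the compatibility $\phi_{\mC'\diamond\mC}=\phi_\mC\circ\phi_{\mC'}$ (the length-zero analogue of the composition result in \S\ref{sss_composition_HF}); it should be stated, but the verification is the same flavor of divisor-axiom resummation you already outlined.
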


We use the formal symbols $Y^\alpha$ and $\tilde Y^\alpha$ to distinguish the monomials in $\Lambda[[\pi_1(L)]]$ and $\Lambda[[\pi_1(\tilde L)]]$ respectively, where $\alpha\in\pi_1(L)\cong \pi_1(\tilde L)$. 
From a different perspective, the coordinate change map $\phi$ corresponds to a rigid analytic map
\begin{equation}
	\label{phi_*_eq}
	\phi: \mathcal U\subset H^1(L;  \Lambda^*) \to H^1(\tilde L ;\Lambda^*) 
\end{equation}
where the domain $\mathcal U$ is usually a proper subset but always contains $H^1(L;U_\Lambda)$ by Gromov's compactness.
When $H^1(L;\Lambda^*)$ is identified with $(\Lambda^*)^n$, and we often take the domain $\mathcal U$ to be a polytopal affinoid domain $\trop^{-1}(\Delta)$ for a rational polyhedron $\Delta$ in $\mathbb R^n$, c.f. (\ref{trop_eq}).
Concretely, the image point $\tilde{\mathbf y} := \phi (\mathbf y)$ in $H^1(\tilde L; \Lambda^*)$ is such that $\tilde {\mathbf y}^\alpha = T^{\langle \alpha,\lambda\rangle} \mathbf y^\alpha \exp\langle \alpha, \pmb {\mathfrak F}_\mC (\mathbf y) \rangle$ for $\alpha\in\pi_1(L)\cong \pi_1(\tilde L)$, where the $\mathbf y^\alpha$ denotes the image of $( \mathbf y, \alpha)$ under the pairing $H^1(L;\Lambda^*) \times \pi_1(L) \to \Lambda^*$.
Further, chosen a basis, this means $\tilde y_i= T^{c_i} y_i\exp \big( \mathfrak F_i(y_1,\dots, y_n) \big)$. One can check that whenever $\mathbf y \in H^1(L; U_\Lambda)$, we have $\tilde {\mathbf y}\in H^1(\tilde L; U_\Lambda)$.
The above lemma tells that $\tilde W(\tilde {\mathbf y})=W(\mathbf y)$.
Conventionally, the $\phi$ will be called a (family Floer) \textit{analytic coordinate change}.

\section{Self Floer cohomology for the category $\UD$}
\label{s_self_HF}
%

\subsection{Non-curved $A_\infty$ structures}
\label{ss_non_curved_A_inf_algebra}

\subsubsection{Definition}
Fix a cohomology-level $A_\infty$ algebra $(H^*(L),\m)\in \Obj\UD$ (\S \ref{sss_UD}).
We define
\begin{equation}
	\label{CF_m_k_eq}
	 \mathbf m_k:=\sum_{\beta\in\pi_2(X,L)} T^{E(\beta)} Y^{\partial\beta} \m_{k,\beta} \ :   H^*(L)^{\otimes k} \to H^*(L)\hat\otimes \Lambda[[\pi_1(L)]]
\end{equation}
where we can further extend each $\m_{k,\beta}$ by linearity so that the $\mathbf m_k$ is $\Lambda[[\pi_1(L)]]$-linear.
The $\mathbf m_k$ is actually $\Lambda_0[[\pi_1(L)]]$-linear, since $E(\beta)\ge 0$ whenever $\m_{k,\beta}\neq 0$.
Also, the semipositive condition (\S \ref{sss_UD}) tells that $\mu(\beta)\ge 0$ if $\m_{k,\beta}\neq 0$ for some $k$.
Remark that since we really use higher Maslov indices like $\mu(\beta)>2$ here, there may be more information extracted from $\UD$ than in \cite{Yuan_I_FamilyFloer}.

Be cautious that $\mathbf m_0= W\cdot \one +Q$ by (\ref{W_Q_eq}) is only available in the cohomology level (Remark \ref{W_minimal_model_rmk}).
To develop a non-curved $A_\infty$ algebra, we need to exclude $\mathbf m_0$ in our consideration. 
Notice that the coefficient ring $\Lambda[[\pi_1(L)]]$ may be replaced by some quotient ring. Although the ideal $\ia$ vanishes under Assumption \ref{assumption_weak_MC}, we retain it to preserve a more general framework. The vanishing of $\ia$ becomes relevant only when considering the derivatives and critical points of $W$.

\begin{prop}
	\label{A_infinity_non_curved_prop}
	The collection $\mathbf m=\{ \mathbf m_k\}_{k\ge 1}$ forms a (non-curved) $A_\infty$ algebra over $\Lambda[[\pi_1(L)]]/\ia$.	
\end{prop}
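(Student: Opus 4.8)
The plan is to reduce the claim to the curved $A_\infty$ relations that $\m$ already satisfies as an object of $\UD$, and then track how the Maslov-$0$ obstruction $Q$ gets absorbed into the ideal $\ia$. First I would record the curved $A_\infty$ equation $\m\{\m\}=0$ in components: for each $(k,\beta)$,
\[
\sum_{\substack{k_1+k_2=k+1\\ \beta_1+\beta_2=\beta}} \sum_{i} \m_{k_1,\beta_1}(x_1,\dots, \m_{k_2,\beta_2}(x_i,\dots),\dots) = 0,
\]
with the appropriate Koszul signs coming from the shifted degree conventions in \S\ref{sss_sign_degree}. Multiplying by $T^{E(\beta)}Y^{\partial\beta}$, summing over all $\beta$, and using the divisor axiom (I-3) to convert the insertion of $Y^{\partial\beta}$ through each $\m_{k,\beta}$ into the correct exponential/monomial bookkeeping, this assembled identity becomes precisely the (curved) $A_\infty$ relation for the operators $\mathbf m_k$ of (\ref{CF_m_k_eq}) — but now with a genuine $\mathbf m_0 = W\cdot\one + Q$ term present. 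So the statement to prove is really: once we pass to the quotient ring $\Lambda[[\pi_1(L)]]/\ia$, the $\mathbf m_0$ contributions drop out and we are left with the \emph{non-curved} relations among $\{\mathbf m_k\}_{k\ge 1}$.

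The key step is to isolate exactly where $\mathbf m_0$ enters the assembled relation. In the quadratic $A_\infty$ identity for a given $(k,\beta)$, the terms involving a length-zero inner operation are of two types: (i) an outer $\m_{k+1,\beta_1}$ with one input slot filled by $\m_{0,\beta_2}$, and (ii) — when $k=0$ — the term $\m_{1,\beta_1}\circ \m_{0,\beta_2}$. For type (i), using (\ref{W_Q_eq}) we split $\mathbf m_0 = W\cdot \one + Q$; the $W\cdot\one$ piece is killed by strict unitality (I-1) of $\m$ (it forces $\m_{k+1,\beta}(\dots,\one,\dots)=0$ for $(k+1,\beta)\neq (1,0),(2,0)$, and the $k+1=2$, $\beta=0$ case is exactly the unit axiom, which contributes the identity and is handled separately / is automatically consistent), while the $Q$ piece has every component in the ideal $\ia$ by definition of $\ia$, hence vanishes in the quotient. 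Type (ii) is handled similarly: $\mathbf m_1(W\cdot\one) = W\cdot \mathbf m_1(\one) = 0$ because $\one$ is $\mathbf m_1$-closed (strict unitality again), and $\mathbf m_1(Q)$ lies in $H^*(L)\hat\otimes\ia$, so it too vanishes modulo $\ia$. After discarding all such terms, the surviving relation for each $(k,\beta)$ with $k\ge 1$ is precisely the non-curved $A_\infty$ relation for $\{\mathbf m_k\}_{k\ge 1}$, now read in $\Lambda[[\pi_1(L)]]/\ia$.

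I should also verify two supporting points. First, that $\mathbf m_k$ is genuinely well-defined as a map into $H^*(L)\hat\otimes \Lambda[[\pi_1(L)]]$ — this is the gappedness of $\m$ (conditions (a)–(c) of \S\ref{sss_label_group}): for each energy bound only finitely many $\beta$ contribute, so the formal sum converges $T$-adically; the semipositivity (I-5) is what guarantees $\mathbf m_k$ lands in non-negative powers and in particular that the coefficient ring is $\Lambda_0[[\pi_1(L)]]$-linear as noted. Second, that passing to the quotient by $\ia$ is legitimate, i.e. that $\ia$ is a well-defined ideal and the induced operations on $H^*(L)\hat\otimes (\Lambda[[\pi_1(L)]]/\ia)$ are $\Lambda[[\pi_1(L)]]/\ia$-linear — this is immediate once $\mathbf m_k$ is $\Lambda[[\pi_1(L)]]$-linear, which it is by construction. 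The main obstacle is bookkeeping rather than conceptual: getting the Koszul signs in the assembled relation to match the sign conventions of $\m\{\m\}=0$, and being careful that the divisor-axiom manipulation (inserting $Y^{\partial\beta}$) interacts correctly with the splitting $\mathbf m_0 = W\cdot\one + Q$ — in particular that the unitality cancellation of the $W\cdot\one$ term is not spoiled by the monomial weights. Under Assumption \ref{assumption_weak_MC} all of this trivializes further, since then $\ia = 0$ and $Q$ itself vanishes identically, but I would present the argument in the general form so that the role of the quotient is transparent.
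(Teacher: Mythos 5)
Your overall strategy — reassemble $\m\{\m\}=0$ into a relation among the $\mathbf m_k$, isolate the terms that see $\mathbf m_0$, split $\mathbf m_0 = W\cdot\one+Q$ via (\ref{W_Q_eq}), kill $W\cdot\one$ by unitality and $Q$ by passing to the quotient by $\ia$ — is exactly the paper's proof. However, your justification of the assembly step misattributes the key mechanism. The divisor axiom (I-3) is not used and has no role here. What carries $\sum_\beta T^{E(\beta)}Y^{\partial\beta}(\m\{\m\})_{k,\beta}=0$ into the curved $A_\infty$ relation for the $\mathbf m_k$ is simply that (a) $E$ and $\partial$ are additive, so $T^{E(\beta)}Y^{\partial\beta}$ factors through any splitting $\beta=\beta'+\beta''$ as $T^{E(\beta')}Y^{\partial\beta'}\cdot T^{E(\beta'')}Y^{\partial\beta''}$, and (b) the paper extends each $\m_{k,\beta}$ by $\Lambda[[\pi_1(L)]]$-linearity, so the inner scalar $T^{E(\beta'')}Y^{\partial\beta''}$ passes through the outer $\m_{\lambda+\mu+1,\beta'}$ freely. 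The divisor axiom is a quite different statement — it commutes a degree-one \emph{input} $b\in Z^1(C)$ past $\m_{k,\beta}$ at the cost of multiplying by $\partial\beta\cap b$ — and it is only needed later (e.g. in Lemma \ref{weakMC_observation_lem} and Proposition \ref{A_infinity_morphism_non_curved_prop}, where one evaluates at bounding cochains or transports through a wall).

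Two smaller inaccuracies worth flagging: the claim that the coefficients land in $\Lambda_0[[\pi_1(L)]]$ follows from the gappedness condition (b) of \S\ref{sss_label_group} (which forces $E(\beta)\ge 0$ on nonzero components), not from the semipositivity (I-5), which constrains Maslov indices; and in the residual case $(k+1,\beta)=(2,0)$ the two unit insertions cancel pairwise, $\m_{2,0}(\one,x)+\m_{2,0}(x^\#,\one)=x-x=0$, rather than ``contributing the identity.'' The paper packages this as the vanishing of the full cyclic sum $\sum_{0\le\lambda\le k}\m_{k+1,\beta}(x_1^\#,\dots,x_\lambda^\#,\one,x_{\lambda+1},\dots,x_k)$ for every $\beta$.
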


\begin{proof}
The $A_\infty$ associativity of $\m=(\m_{k,\beta})$ deduces that $(\m\{\m\})_{k,\beta}=0$ for all pairs $(k,\beta)$. It implies that $\sum_\beta T^{E(\beta)} Y^{\partial\beta} (\m\{\m\})_{k,\beta} (x_1,\dots, x_k)=0$ for any fixed $k$. Therefore,
	\begin{align*}
		&	\sum_{\substack{\lambda+\mu+\nu=k \\ \lambda,\mu\ge 0,\nu\ge 1}} \mathbf m_{\lambda+\mu+1} \big(x_1^\#,\dots, x_\lambda^\#, \mathbf m_\nu(x_{\lambda+1},\dots, x_{\lambda+\nu}), \dots, x_k\big) \\
		&= -\sum_\lambda \sum_\beta T^{E(\beta)} Y^{\partial\beta}  \m_{k+1,\beta} (x_1^\#,\dots, x_\lambda^\#,  \mathbf m_0, x_{\lambda+1},\dots, x_k)  
		=0 \mod \ia
	\end{align*}
 The reasons are as follows. First, all of $\m_{0,\beta}$-terms are collected on the right, which forms $\mathbf m_0=W\cdot \one+Q$. Recall the ideal $\ia$ is generated by the components of $Q$, so $\mathbf m_0\equiv W\cdot \one$ (mod $\ia$).
 Finally, the unitality of $\m$ deduces that $\sum_{0\le\lambda\le k} \m_{k+1,\beta}(x_1^\#,\dots, x_\lambda^\#, \one ,x_{\lambda+1},\dots, x_k)=0$ for every $\beta$. 
\end{proof}

\begin{cor}
	\label{m_123_cor}
	$\mathbf m_1\circ\mathbf m_1=0$,
	$\mathbf m_1 \big(\mathbf m_2(x,y) \big) + \mathbf m_2( x^\#, \mathbf m_1(y)) + \mathbf m_2(\mathbf m_1(x), y)=0$, and
	\[
	\Scale[0.85]{	\mathbf m_2(\mathbf m_2( x,y ), z)+ \mathbf m_2(x^\#, \mathbf m_2(y,z))+ \mathbf m_1(\mathbf m_3(x,y,z)) + \mathbf m_3(\mathbf m_1(x),y,z)+\mathbf m_3(x^\#, \mathbf m_1(y), z) +\mathbf m_3(x^\#, y^\#, \mathbf m_1(z))=0}
	\]
\end{cor}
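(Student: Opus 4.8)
The plan is to extract Corollary \ref{m_123_cor} directly from Proposition \ref{A_infinity_non_curved_prop} by specializing the $A_\infty$ associativity relations to low values of $k$. Since Proposition \ref{A_infinity_non_curved_prop} already establishes that $\mathbf m = \{\mathbf m_k\}_{k\ge 1}$ is a non-curved $A_\infty$ algebra over $\Lambda[[\pi_1(L)]]/\ia$, all three displayed identities are instances of the single relation
\[
\sum_{\substack{\lambda+\mu+\nu=k \\ \lambda,\mu\ge 0,\ \nu\ge 1}} \mathbf m_{\lambda+\mu+1}\big(x_1^\#,\dots,x_\lambda^\#,\mathbf m_\nu(x_{\lambda+1},\dots,x_{\lambda+\nu}),\dots,x_k\big)=0 \mod \ia
\]
proved in that proposition. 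First I would set $k=1$: the only admissible triple is $(\lambda,\mu,\nu)=(0,0,1)$, so the relation reads $\mathbf m_1(\mathbf m_1(x))=0$, which is the first assertion. I should remark that under Assumption \ref{assumption_weak_MC} the ideal $\ia$ vanishes, so the ``mod $\ia$'' is vacuous; but the statement holds over $\Lambda[[\pi_1(L)]]/\ia$ in any case.

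Next I would take $k=2$. The admissible triples $(\lambda,\mu,\nu)$ with $\lambda,\mu\ge 0$, $\nu\ge 1$, $\lambda+\mu+\nu=2$ are $(1,0,1)$, $(0,1,1)$, and $(0,0,2)$. Reading off the corresponding terms gives
\[
\mathbf m_2\big(\mathbf m_1(x),y\big) + \mathbf m_2\big(x^\#,\mathbf m_1(y)\big) + \mathbf m_1\big(\mathbf m_2(x,y)\big) = 0,
\]
which is the second assertion. For $k=3$ the admissible triples are $(2,0,1)$, $(1,1,1)$, $(0,2,1)$ — contributing the three $\mathbf m_3(\mathbf m_1(\cdot),\cdot,\cdot)$-type terms with the appropriate $\#$-shifts on the inputs preceding the inner $\mathbf m_1$ — together with $(1,0,2)$ and $(0,1,2)$ — giving $\mathbf m_2(x^\#,\mathbf m_2(y,z))$ and $\mathbf m_2(\mathbf m_2(x,y),z)$ — and finally $(0,0,3)$, giving $\mathbf m_1(\mathbf m_3(x,y,z))$. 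Collecting these is exactly the third displayed identity. The only point needing care here is bookkeeping of the sign operator $\id^{\#}$: each input $x_i$ with $i\le\lambda$ gets replaced by $x_i^\#$, which explains why in the $k=3$ relation the first argument carries a $\#$ in the terms $\mathbf m_2(x^\#,\mathbf m_2(y,z))$, $\mathbf m_3(x^\#,\mathbf m_1(y),z)$, $\mathbf m_3(x^\#,y^\#,\mathbf m_1(z))$, and both the first and second arguments carry a $\#$ in the last term, while no shift appears when the inner operation acts on the leftmost slot.

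There is no real obstacle: Corollary \ref{m_123_cor} is a direct unpacking of Proposition \ref{A_infinity_non_curved_prop}, and the only thing to watch is the sign conventions from (\ref{sign_operator_eq}) and the shifted-degree formula, which have already been fixed. If anything the mildly delicate step is confirming that no stray signs are produced by the $\Lambda_0[[\pi_1(L)]]$-linear extension of the $\m_{k,\beta}$ — but since each $\mathbf m_k$ is defined to be $\Lambda[[\pi_1(L)]]$-linear and the monomials $T^{E(\beta)}Y^{\partial\beta}$ commute past everything, this is immediate. I would therefore present the corollary's proof as a one-line reduction: ``Apply the $A_\infty$ associativity relation from Proposition \ref{A_infinity_non_curved_prop} with $k=1,2,3$ in turn.''
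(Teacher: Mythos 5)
Your proposal is correct and is exactly the intended derivation: the paper states Corollary \ref{m_123_cor} immediately after Proposition \ref{A_infinity_non_curved_prop} without a separate proof, precisely because it is the specialization of the $A_\infty$ relation to $k=1,2,3$. Your enumeration of the triples $(\lambda,\mu,\nu)$ and the placement of the $\#$-shifts on the inputs preceding the inner operation both match the conventions of \S\ref{sss_sign_degree}, so nothing further is needed.
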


\begin{defn}
		The \textbf{self Floer cohomology of $L$} (associated to $\m$) is defined to be the $\mathbf m_1$-cohomology:
	\begin{equation}
		\label{HF_Lambda_eq}
		\HF(L,\m) := H^* \big( H^*(L)\hat\otimes \Lambda[[\pi_1(L)]] / \ia , \mathbf m_1 \big)
	\end{equation}
Moreover, $[x]\cdot [y]= [\mathbf m_2( -x^\#, y)]$ defines a ring structure on $\HF(L,\m)$ with a unit $[\one]$.
\end{defn}


Note that we may also replace the formal power series ring $\Lambda[[\pi_1(L)]]$ by any polyhedral affinoid algebra contained in it.
Since $\deg \m_{1,\beta}=1-\mu(\beta)\equiv 1 \ (\mathrm{mod} \ 2)$, the $\HF(L,\m)$ is at least $\mathbb Z_2$-graded.


Suppose $\mC: (\HL,\m)\to (H^*(\tilde L), \tilde \m)$ is an $A_\infty$ homotopy equivalence up to the Fukaya's trick in the sense of Convention \ref{convention_Fuk_trick} with respect to an isotopy $F$ such that $F(L)=\tilde L$.
Consider the two non-curved $A_\infty$ algebras $\mathbf m$ and $\tilde{\mathbf m}$ obtained as in Proposition \ref{A_infinity_non_curved_prop} using the two \textit{gapped} $A_\infty$ algebras $\m$ and $\tilde \m$ respectively.
By virtue of Lemma \ref{wall_crossing_coordiante_change_lem},
there is an $\mC$-induced isomorphism $\phi=\phi_{\mC}:\Lambda[[\pi_1(\tilde L)]]/\tilde \ia\cong\Lambda[[\pi_1(L)]]/\ia$.
Similar to (\ref{CF_m_k_eq}), we define
\begin{equation}
	\label{mC_k_eq}
	\mathbf C_k:= \mathbf C^{(F)}_k :=  F^{-1*} \sum_\beta T^{E(\beta)} Y^{\partial\beta} \mC_{k,\beta}  : \big( H^*(L) \hat \otimes \Lambda[[\pi_1(L)]]/\ia \big)^{\otimes k}  \to H^*(\tilde L)\hat\otimes \Lambda[[\pi_1(\tilde L)]] / \tilde \ia
\end{equation}
where all the formal power series coefficients in $Y$ will be afterward transformed to those in $\tilde Y$ via the isomorphism $\phi=\phi_\mC$.
Note that $\mathbf C_0=F^{-1*} \pmb {\mathfrak F}_{\mC}$, c.f. (\ref{mC_F_notation_eq}). Recall that the collections $\mathbf m$ and $\tilde {\mathbf m}$ exclude $\mathbf m_0$ and $\tilde { \mathbf m}_0$. Similarly, we set $\mathbf C=\{\mathbf C_k\}_{k\ge 1}$ excluding $\mathbf C_0$ as well.

\begin{prop}
	\label{A_infinity_morphism_non_curved_prop}
	The $\mathbf C=\mathbf C^{(F)}$ gives a (non-curved) $A_\infty$ homomorphism from $\mathbf m$ to $\tilde{\mathbf m}$.
\end{prop}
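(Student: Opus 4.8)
\textbf{Proof proposal for Proposition \ref{A_infinity_morphism_non_curved_prop}.}

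The plan is to deduce the non-curved $A_\infty$ homomorphism relations for $\mathbf C=\mathbf C^{(F)}$ from the curved relation $\tilde\m\diamond\mC=\mC\{\m\}$ holding in $\Mor\UD$, by the same ``collect the curvature terms and kill them mod $\ia$'' strategy used in Proposition \ref{A_infinity_non_curved_prop} and Lemma \ref{weakMC_observation_lem}. First I would fix $k\ge 1$ and elements $x_1,\dots,x_k\in H^*(L)$, expand the $(k,\beta)$-component of $\tilde\m\diamond\mC=\mC\{\m\}$, multiply by $T^{E(\beta)}Y^{\partial\beta}$ and sum over $\beta$. On the left one gets, after applying the definition \eqref{CF_m_k_eq} of $\tilde{\mathbf m}$ and \eqref{mC_k_eq} of $\mathbf C$, a sum of terms $\tilde{\mathbf m}_\ell(\mathbf C_{k_1}(\cdots),\dots,\mathbf C_{k_\ell}(\cdots))$ over compositions $k_1+\cdots+k_\ell=k$ \emph{including} those $k_i=0$ where a factor $\mathbf C_0=F^{-1*}\pmb{\mathfrak F}_{\mC}$ appears, plus the $\ell$-ary term with $\tilde{\mathbf m}_0=\tilde W\cdot\one+\tilde Q$ plugged in; on the right one gets $\sum \mathbf C_{\lambda+1+\mu}(x_1^\#,\dots,\mathbf m_\nu(\cdots),\dots,x_k)$ over $\lambda+\mu+\nu=k$, again including the $\nu=0$ piece feeding in $\mathbf m_0=W\cdot\one+Q$. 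The key bookkeeping step is the change of coefficient rings: everything on the $\m$-side carries coefficients in $Y$, and one must push them forward along $\phi=\phi_\mC$ to land in $\Lambda[[\pi_1(\tilde L)]]/\tilde\ia$, which is exactly the convention built into \eqref{mC_k_eq}; here one uses $\phi(\tilde W)=W$ from Lemma \ref{wall_crossing_coordiante_change_lem}.

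The main technical point — and what I expect to be the real obstacle — is handling the $\mathbf C_0$ insertions and the two curvature terms $\mathbf m_0$, $\tilde{\mathbf m}_0$ correctly, so that they either cancel or vanish mod $\ia$ (equivalently mod $\tilde\ia$ under $\phi$). For the curvature pieces: the $Q$- and $\tilde Q$-parts lie in the ideals $\ia$, $\tilde\ia$ by definition, so they drop out modulo the ideals; the remaining $W\cdot\one$ and $\tilde W\cdot\one$ insertions get annihilated by the strict unitality (II-1) of $\mC$ together with the unitality of $\m$, $\tilde\m$ — precisely the mechanism at the end of the proof of Proposition \ref{A_infinity_non_curved_prop}, now applied to both the $\mC_{k+1,\beta}(\dots,\one,\dots)$ terms and the $\tilde\m$-with-$\one$ terms. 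The subtler matter is the terms where one or more $\mathbf C_0$ factors appear inside $\tilde{\mathbf m}_\ell$: these are genuinely present (they are the ``$\exp\langle\alpha,\pmb{\mathfrak F}_{\mC}\rangle$'' contributions in disguise) and must \emph{not} be discarded; rather, they are absorbed into the $A_\infty$ relation for $\mathbf C$ as legitimate lower-arity composition terms, which is why the statement is about $\mathbf C=\{\mathbf C_k\}_{k\ge 1}$ as a morphism whose structure already encodes $\mathbf C_0$ in its defining formulas. I would therefore organize the expansion so that the $\mathbf C_0$-insertions on the left are kept verbatim and matched against the definition of $A_\infty$ homomorphism for the \emph{reduced/non-curved} pair, rather than trying to eliminate them.

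Concretely the steps are: (i) write out the curved relation $(\tilde\m\diamond\mC)_{k,\beta}=(\mC\{\m\})_{k,\beta}$ and sum $\sum_\beta T^{E(\beta)}Y^{\partial\beta}(\cdots)$; (ii) separate each side into a ``$\mathbf m_0$/$\tilde{\mathbf m}_0$-free'' part and a ``contains exactly one curvature insertion'' part — higher numbers of curvature insertions cannot occur in a single Gerstenhaber/composition term; (iii) on the curvature part, use that $Q,\tilde Q$ generate $\ia,\tilde\ia$ to reduce to the $W\cdot\one$, $\tilde W\cdot\one$ insertions, then invoke strict unitality of $\mC$, $\m$, $\tilde\m$ to see these sum to zero; (iv) transport the surviving $Y$-coefficients through $\phi_\mC$ using $\phi(\tilde W)=W$ and the $\Lambda[[\pi_1]]$-linearity, noting degrees match because $\deg\mC_{k,\beta}=1-\mu(\beta)-k$ forces $\deg\mathbf C_k\equiv 1-k\pmod 2$ as required of a (shifted-degree-zero) $A_\infty$ morphism; (v) conclude that the $\mathbf m_0$-free identity that remains is exactly $\tilde{\mathbf m}\diamond\mathbf C=\mathbf C\{\mathbf m\}$ over $\Lambda[[\pi_1(\tilde L)]]/\tilde\ia$. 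One should also record that under Assumption \ref{assumption_weak_MC} all of $\ia$, $\tilde\ia$ vanish, so the statement specializes to an honest non-curved $A_\infty$ homomorphism over $\Lambda[[\pi_1(\tilde L)]]$; and that, as with $\mC$ itself, the resulting $\mathbf C$ depends on the choices only up to the appropriate homotopy, by Lemma \ref{wall_crossing_coordiante_change_lem} and the ud-homotopy invariance of $\mC$.
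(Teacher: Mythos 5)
There is a genuine gap in step (iii): you correctly flag the $\mC_{0,\beta}$ insertions appearing inside $\tilde\m\diamond\mC$ as the real obstacle and even parenthetically recognize them as the ``$\exp\langle\alpha,\pmb{\mathfrak F}_{\mC}\rangle$ contributions in disguise,'' but your proposed resolution --- to ``keep them verbatim'' and ``absorb them into the $A_\infty$ relation for $\mathbf C$ as legitimate lower-arity composition terms'' --- is incorrect. The collection $\mathbf C=\{\mathbf C_k\}_{k\ge 1}$ deliberately excludes $\mathbf C_0$, so the non-curved homomorphism identity you must verify has no $\mathbf C_0$ slot in which to hide these terms. They must be eliminated, and the mechanism the paper uses is the divisor axiom of $\tilde\m^F$ (condition (I-3) of $\UD$), applicable because the components of $\pmb{\mathfrak F}_\mC$ lie in $H^1(L)$ by the semipositive condition, together with Lemma~\ref{val=0-lem} to justify the pointwise argument. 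That contraction produces exactly the factor $\exp\langle\partial\beta',\pmb{\mathfrak F}_\mC\rangle$ multiplying each $T^{E(\beta')}Y^{\partial\beta'}\tilde\m^F_{\ell,\beta'}$, and then --- and only then --- Lemma~\ref{wall_crossing_coordiante_change_lem} identifies $T^{E(\beta')}Y^{\partial\beta'}\exp\langle\partial\beta',\pmb{\mathfrak F}_\mC\rangle\,\tilde\m_{\ell,\beta'}$ with $\tilde{\mathbf m}_\ell$ in the $\tilde Y$-coefficients. Merely ``transporting coefficients through $\phi_\mC$'' in your step (iv), without the prior divisor-axiom step, does not produce this identification; the left-hand side would simply fail to equal $\sum F^*\tilde{\mathbf m}_\ell(\mathbf C_{j_1},\dots,\mathbf C_{j_\ell})$.

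A secondary error: in step (ii) you speak of separating out ``the $\ell$-ary term with $\tilde{\mathbf m}_0=\tilde W\cdot\one+\tilde Q$ plugged in'' and later of killing ``$\tilde W\cdot\one$ insertions'' by unitality. But the composition $\tilde\m\diamond\mC$ is $\sum_{\ell\ge 1}\tilde\m_{\ell}\circ(\mC\otimes\cdots\otimes\mC)$; the term $\tilde\m_{0,\beta_0}$ never appears, so there is no $\tilde{\mathbf m}_0$ insertion and nothing of that form to cancel. The only curvature insertion you need to control is $\mathbf m_0$ on the $\mC\{\m\}$ side, which your step (iii) does handle correctly via unitality of $\mC$ after reducing $\mathbf m_0\equiv W\cdot\one\ (\mathrm{mod}\ \ia)$. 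You should also be more careful about the Fukaya's-trick bookkeeping: the curved relation is $\tilde\m^F\diamond\mC=\mC\{\m\}$ with $\tilde\m^F$ (not $\tilde\m$), and the $F^*$/$F^{-1*}$ conjugation must be peeled off at the end.
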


\begin{proof}
The $A_\infty$ associativity equations tells $(\tilde\m^F \diamond \mC)_{k,\beta}= (\mC \{ \m\})_{k,\beta}$ (c.f. Convention \ref{convention_Fuk_trick}). Hence,
	\begin{equation}
		\label{mC_k_non_curved_A_inf_eq}
		\sum_\beta T^{E(\beta)} Y^{\partial\beta} (\tilde \m^F \diamond \mC)_{k,\beta} (x_1,\dots, x_k) = \sum_\beta T^{E(\beta)} Y^{\partial \beta}  (\mC\{ \m\})_{k,\beta}(x_1,\dots, x_k)
	\end{equation}
for any fixed $k\ge 1$.
	On the left side, the terms involving $\mC_{0,\beta}$ form the series $\pmb {\mathfrak F}_{\mC}$ as in (\ref{mC_F_notation_eq}). Besides, since the components of $\pmb {\mathfrak F}_\mC$ are contained in $H^1(L)$, the divisor axiom of $\tilde \m^F$ can be applied after we use Lemma \ref{val=0-lem}.
	Therefore, the left side of (\ref{mC_k_non_curved_A_inf_eq}) becomes:
	\begin{align*}
		\sum_{\substack{ \ell\ge 1, \beta', \beta_1,\dots,\beta_\ell \\ 0 = j_0 < j_1<\dots < j_{\ell-1} < j_\ell=k  }}
		T^{E(\beta')}Y^{\partial\beta'}
		\exp \langle \partial\beta',  \pmb {\mathfrak F}_\mC\rangle \
		\tilde \m^F_{\ell,\beta'} \big(
		T^{E(\beta_1)}Y^{\partial\beta_1} \mC_{j_1-j_0,\beta_1}\otimes \cdots \otimes 
		T^{E(\beta_\ell)}Y^{\partial\beta_{\ell}}\mC_{j_\ell-j_{\ell-1},\beta_\ell} \big) (x_1,\dots, x_k)	
	\end{align*}
	On the other hand, using Lemma \ref{wall_crossing_coordiante_change_lem} yields that:
	\begin{equation*}
		\label{tilde_m_ell_eq}
		\sum T^{E(\beta)} Y^{\partial\beta} \exp \langle \partial\beta, \pmb {\mathfrak F}_{\mC}\rangle\tilde\m_{\ell,\beta} =\sum T^{E(\tilde\beta)} \tilde Y^{\partial\tilde\beta}\tilde \m_{\ell,\beta}= \tilde {\mathbf m}_\ell
	\end{equation*}
(Be careful to distinguish $Y$ from $\tilde Y$.) In summary, the left side of (\ref{mC_k_non_curved_A_inf_eq}) further becomes:
\begin{align*}
		\sum_{0 = j_0 < j_1<\dots < j_{\ell-1} < j_\ell=k}	
		F^* \tilde {\mathbf m}_\ell
		( \mathbf C^{(F)}_{j_1-j_0} \otimes \cdots \otimes \mathbf C^{(F)}_{j_\ell-j_{\ell-1}} )
		(x_1,\dots, x_k)  
	\end{align*}
	Now, in the right side of (\ref{mC_k_non_curved_A_inf_eq}), we collect all those terms involving $\m_{0,\beta}$ first, and their sum is equal to
	\begin{align*}
		\sum_{\beta, 0\le \lambda\le k}
		T^{E(\beta)} Y^{\partial\beta} \mC_{k+1,\beta} 
		\big(
		x^\#_1,\dots, x^\#_\lambda,  \mathbf m_0, x_{\lambda+1},\dots, x_k 
		\big) 	
	\end{align*}
	But, $\mathbf m_0= W\cdot \one + Q\equiv W\cdot \one \ (\mathrm{mod} \ \ia)$ and $k\ge 1$, and the above summation vanishes by the unitality of $\mC$. To conclude, what remain on the right side of (\ref{mC_k_non_curved_A_inf_eq}) are precisely as follows:
	\[
	\sum_{\substack{\lambda,\mu\ge 0, \nu\ge 1 \\ \lambda+\mu+\nu=k}} F^* \mathbf C^{(F)}_{\lambda+\mu+1} 
	\big(x_1^\#, \dots, x_\lambda^\#, \mathbf m_\nu(x_{\lambda+1},\dots, x_{\lambda+\nu} ), \dots, x_k\big)
	\]
	Applying $F^{-1*}$ to the two equations above, we complete the proof.
\end{proof}

\begin{rmk}
	\label{rmk_selfHF_morphism}
		It is worth mentioning that the equation (\ref{mC_k_non_curved_A_inf_eq}) in the above proof assumes $k\ge 1$. In contrast, the gluing analytic coordinate change maps developed in \cite{Yuan_I_FamilyFloer} (i.e. Lemma \ref{wall_crossing_coordiante_change_lem}) essentially uses the same equation as (\ref{mC_k_non_curved_A_inf_eq}) but only for $k=0$. Therefore, we really use more information about the category $\UD$ here. Compare also Remark \ref{rmk_selfHF_homot_inv}.
\end{rmk}

\subsubsection{Invariance}
\label{sss_invariance_HF}
In particular, Proposition \ref{A_infinity_morphism_non_curved_prop} tells that $\tilde {\mathbf m}_1\circ \mathbf C_1 =\mathbf C_1\circ \mathbf m_1$ and $\tilde {\mathbf m}_2( \mathbf C_1 (x_1) , \mathbf C_1 (x_2) )= \mathbf C_1 ( \mathbf m_2(x_1,x_2))$.
Therefore, we know that $\mathbf C_1: H^*(L)\hat\otimes \Lambda[[\pi_1(L)]] /\ia \to H^*(\tilde L) \hat\otimes \Lambda[[\pi_1(\tilde L)]] / \tilde \ia$ is a cochain map and further induces a $\Lambda$-algebra homomorphism
\begin{equation}
	\label{eta_HF_mC_1}
	 \eta_{\mC} :=[\mathbf C_1] : \HF(L,\m) \to \HF (\tilde L, \tilde \m)
\end{equation}

\begin{prop}
	\label{inv_Floer_self_prop}
	The $\eta_{\mC}$ only relies on the ud-homotopy class of $\mC$ in $\UD$.
\end{prop}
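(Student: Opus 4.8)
The plan is to show that if $\mC_0 \simud \mC_1$ are ud-homotopic morphisms in $\UD$ from $(\HL,\m)$ to $(H^*(\tilde L),\tilde\m)$ (up to Fukaya's trick with the same underlying isotopy $F$), then the induced maps $\eta_{\mC_0}$ and $\eta_{\mC_1}$ on self Floer cohomology agree. First I would invoke the explicit description of ud-homotopy from \S\ref{sss_whitehead_homotopy_theory_UD}: there exist families $(\mC_s)$ in $\Mor\UD$ and $(\h_s)$ in $\CC(\HL,H^*(\tilde L))$ satisfying conditions (a)--(d), in particular the differential equation (b) relating $\frac{d}{ds}\mC_s$ to $\h_s$, $\m$, and $\mC_s$, together with the divisor axiom, cyclical unitality, and vanishing-on-unit properties of $\h_s$. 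The strategy is to integrate this over $s\in[0,1]$ after passing to the non-curved picture, exactly as in the proof of Lemma \ref{weakMC_observation_lem}, to exhibit $\mathbf C_1^{(1)} - \mathbf C_1^{(0)}$ as a chain homotopy (in the $\mathbf m_1$/$\tilde{\mathbf m}_1$ complexes) plus terms that vanish modulo $\ia$.

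The key steps, in order: (1) For each $s$, form the $\Lambda[[\pi_1(L)]]$-linear operators $\mathbf C_k^{(s)}$ as in (\ref{mC_k_eq}) and also $\mathbf h_k^{(s)} := F^{-1*}\sum_\beta T^{E(\beta)} Y^{\partial\beta} (\h_s)_{k,\beta}$, transporting coefficients via $\phi_{\mC_s}$; note $\phi_{\mC_s}$ is independent of $s$ since it only depends on the ud-homotopy class (Lemma \ref{wall_crossing_coordiante_change_lem}), so all the $\mathbf C^{(s)}$ and $\mathbf h^{(s)}$ land in the same ring $\Lambda[[\pi_1(\tilde L)]]/\tilde\ia$. (2) Specialize to an arbitrary $\mathbf y\in H^1(L;U_\Lambda)$ and use the divisor axiom (as in Lemma \ref{weakMC_observation_lem}) to convert the $A_\infty$-level identity (b) into a statement about $\frac{d}{ds}\mathbf C_1^{(s)}(x)$ evaluated with divisor inputs. (3) Integrate over $s$: the left side gives $\mathbf C_1^{(1)}(x) - \mathbf C_1^{(0)}(x)$; on the right, one term feeds $\mathbf m$ (i.e. $\m'$ in the notation of (b)) into $\h_s$ and another plugs $\h_s$ into a string of $\tilde\m^F$ and $\mC_s$'s. (4) Isolate the contribution where $\mathbf m_0 = W\cdot\one + Q$ appears; modulo $\ia$ this is $W\cdot\one$, and the cyclical unitality / unit-vanishing conditions on $\h_s$ and the semipositivity+cyclical unitality of $\tilde\m$ kill those terms, exactly as the second summation was killed in Lemma \ref{weakMC_observation_lem}. (5) What survives is precisely $\tilde{\mathbf m}_1(\mathfrak H_1(x)) + \mathfrak H_1(\mathbf m_1(x)) + (\text{terms of } \mathfrak H_k \text{ with } k\ge 2 \text{ fed through } \tilde{\mathbf m}_2,\dots)$, where $\mathfrak H := \int_0^1 \h_s\, ds$; collecting, this exhibits $\mathbf C_1^{(1)} - \mathbf C_1^{(0)} = \tilde{\mathbf m}_1\circ\mathfrak H_1 + \mathfrak H_1\circ\mathbf m_1$ modulo $\ia$ on divisor inputs, hence identically by Lemma \ref{val=0-lem}. (6) Therefore $[\mathbf C_1^{(1)}] = [\mathbf C_1^{(0)}]$ on $\mathbf m_1$-cohomology, i.e. $\eta_{\mC_0} = \eta_{\mC_1}$; since $\mathbf C_1$ is a cochain map one also checks $\mathfrak H_1$ descends appropriately, but the cohomological conclusion is what is needed.

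I expect the main obstacle to be bookkeeping the signs and the degree/index constraints when integrating the differential equation (b) term-by-term and, crucially, verifying that every term containing $\mathbf m_0$ (equivalently $Q$, which is only zero under Assumption \ref{assumption_weak_MC} but should be tracked as living in $\ia$) is genuinely annihilated by the combination of unitality of $\mC_s$, cyclical unitality and unit-vanishing of $\h_s$, and the semipositive degree count $\deg(\h_s)_{k,\beta} = -k-\mu(\beta)$. This degree count forces $\deg(\h_s)_{\ell,\beta_0}(b,\dots,b) = -\mu(\beta_0) \le 0$, so when such an output is inserted into $\tilde\m$ the only nonvanishing possibility after applying cyclical unitality is the degenerate one, exactly mirroring the vanishing of the ``second summation'' in Lemma \ref{weakMC_observation_lem}. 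A secondary subtlety is confirming that the Fukaya's trick pullbacks $F^*, F^{-1*}$ and the shared coordinate change $\phi_{\mC_s}$ interact correctly with the $s$-dependence, but since $\phi$ depends only on the ud-homotopy class this is a formality. Once these vanishing and sign checks are dispatched, the proof is a direct adaptation of the integration argument already used for Lemma \ref{weakMC_observation_lem}, now carried out one $A_\infty$-level higher (at $\mathbf C_1$ rather than $\mathbf C_0 = \pmb{\mathfrak F}_\mC$), which is the sense in which, as noted in Remark \ref{rmk_selfHF_morphism}, we genuinely exploit more of the structure of $\UD$.
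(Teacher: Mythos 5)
Your proposal takes essentially the same route as the paper's proof: integrate condition (b) of the ud-homotopy with a single external input $\pmb x$, absorb the $(\f_s)_{0,\bullet}$-series into the $\tilde Y$-coordinate change via the divisor axiom and Lemmas \ref{val=0-lem}, \ref{weakMC_observation_lem}, \ref{wall_crossing_coordiante_change_lem}, kill the remaining obstruction terms via semipositivity, cyclical unitality of $\tilde\m^F$, and the unit-vanishing of $\h_s$ from condition (c), then conclude that $\xi := F^{-1*}\sum_\beta T^{E(\beta)}Y^{\partial\beta}\int_0^1(\h_s)_{1,\beta}\,ds$ is the chain homotopy between $\mathbf C_1$ and $\mathbf C'_1$. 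One small correction to your step (5): with a single external input $\pmb x$, no terms of the form ``$\mathfrak H_k$ with $k\ge 2$ fed through $\tilde{\mathbf m}_2$'' appear; the actual contributions beyond the chain homotopy are (i) $(\h_s)_{2,\beta}(\pmb x^\#,\mathbf m_0)$ and $(\h_s)_{2,\beta}(\mathbf m_0,\pmb x)$, which die modulo $\ia$ since $\mathbf m_0\equiv W\cdot\one$ and $\h_s$ vanishes on $\one$, (ii) the $\CU[\tilde\m^F]_{\ell,\beta_0}((\h_s)_{0,\beta'};\dots)$ packets, which die by cyclical unitality and the degree count $\deg(\h_s)_{0,\beta'}=0$, and (iii) $\tilde\m^F_\ell$ with one $\mathfrak H_1(\pmb x)$ slot and several $(\f_s)_{0,\bullet}$ slots, which is not a higher-order remainder but collapses exactly to $\tilde{\mathbf m}_1\circ\mathfrak H_1$ under the divisor-axiom coordinate change you already invoke.
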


\begin{proof}
	We basically use the ideas in \cite{Yuan_I_FamilyFloer}.
	Suppose $\mC,\mC'$ are two $A_\infty$ homotopy equivalences from $(\HL, \m)$ to $(H^*(\tilde L),\tilde \m)$ up to the Fukaya's trick in $\Mor \UD$ about some isotopy $F$ (Convention \ref{convention_Fuk_trick}). Suppose also that they are ud-homotopic to each other. Our task is to prove
	$\eta_{\mC}=\eta_{\mC'}$.
	
	By definition, there exist operator systems $\f_s=\{(\f_s)_{k,\beta}\}$ and $\h_s = \{ (\h_s)_{k,\beta} \}$ for $0\le s\le 1$ such that
	the conditions (a) (b) (c) (d) in \S \ref{sss_whitehead_homotopy_theory_UD} hold, where $\f_0=\mC$ and $\f_1=\mC'$.
	By the condition (b), we get
	\begin{align}
		\sum_\beta T^{E(\beta)} Y^{\partial\beta} \frac{d}{ds} (\f_s)_{1,\beta}( \pmb x)
		&= \notag
		\sum_{\beta_1, \beta_2} T^{E(\beta_1)} Y^{\partial\beta_1} (\h_s)_{1,\beta_1} ( T^{E(\beta_2)} Y^{\partial\beta_2}\m_{1,\beta_2} (\pmb x)) \\
		&+ \notag
		\sum_{\beta_1,\beta_2} T^{E(\beta_1)} Y^{\partial\beta_1}(\h_s)_{2,\beta_1} \big(\pmb x^\#, T^{E(\beta_2)} Y^{\partial\beta_2}\m_{0,\beta_2} \big) \\
		&+ 	
		\label{f_s_h_s_application_eq}
		\sum_{\beta_1,\beta_2} T^{E(\beta_1)} Y^{\partial\beta_1}(\h_s)_{2,\beta_1} \big( T^{E(\beta_2)} Y^{\partial\beta_2}\m_{0,\beta_2},  \pmb x \big) \\
		&+ \notag
		\sum_\beta T^{E(\beta)} Y^{\partial\beta}   
		\sum_{1\le i\le \ell, \ \beta_0+\cdots+\beta_\ell+\beta'=\beta}
		\CU[\tilde\m^F]_{\ell,\beta_0} \big(	(\h_s)_{0,\beta'} ;
		(\f_s)_{0,\beta_1},\dots, (\f_s)_{1,\beta_i}( \pmb x),\dots, (\f_s)_{0,\beta_\ell}
		\big)	 \\
		&+ \notag
		\sum_\beta T^{E(\beta)} Y^{\partial \beta} \sum_{1\le i\le \ell, \ \beta_0+\cdots+\beta_\ell=\beta}
		\tilde  \m^F_{\ell, \beta_0} 
		\Big(
		(\f_s)_{0,\beta_1},\dots, (\h_s)_{1,\beta_i}(\pmb x),\dots, (\f_s)_{0,\beta_\ell}
		\Big)
	\end{align}
	The second and third sums are all zero modulo $\ia$ due to the condition (c) and the decomposition (\ref{W_Q_eq}).
	The fourth sum vanishes
	because the semipositive condition ensures $\deg (\h_s)_{0,\beta'}=-\mu(\beta')=0$ and we can use the cyclical unitality.
	For the fifth sum, the semipositive condition also tells $\deg (\f_s)_{0,\beta_j}=1-\mu(\beta_j)=1$; by the divisor axiom and by Lemma \ref{val=0-lem}, we conclude that
	the fifth sum is equal to
	\[
	\sum_{\beta_0,\beta_1} T^{E(\beta_0)} Y^{\partial\beta_0} 
	\exp \langle \partial\beta_0, \pmb {\mathfrak F}_{\f_s}\rangle \
	\tilde \m^F_{1,\beta_0} \big(  T^{E(\beta_1)} Y^{\partial\beta_1} (\h_s)_{1,\beta_1} (\pmb x) \big)
	\]
	Then, since $\f_s\simud\mC\simud\mC'$, it follows from Lemma \ref{weakMC_observation_lem} that $\pmb {\mathfrak F}_{\f_s} \equiv \pmb {\mathfrak F}_{\mC}=\pmb {\mathfrak F}_{\mC'}$ (mod $\ia$), where we recall the notation
	in (\ref{mC_F_notation_eq}). Hence, by Lemma \ref{wall_crossing_coordiante_change_lem}, after we transform into $\tilde Y$-coefficients, this fifth sum is
	\[
	F^* \sum_{\beta_0,\beta_1} T^{E(\beta_0)} \tilde Y^{\partial\beta_0}
	\tilde \m_{1,\beta_0} \big(  T^{E(\beta_1)} Y^{\partial\beta_1}  F^{-1*}(\h_s)_{1,\beta_1} (\pmb x) \big)
	\]
	To conclude, only the first and fifth sum in (\ref{f_s_h_s_application_eq}) survive.
	Now, we set
$\xi =\sum_\beta T^{E(\beta)} Y^{\partial\beta} \textstyle 
		F^{-1*} \int_0^1 (\h_s)_{1,\beta}$.
	Taking the integration from $s=0$ to $s=1$ and applying $F^{-1*}$ to the both sides of the initial equation (\ref{f_s_h_s_application_eq}), we exactly obtain that
$
		{\mathbf C}'_1(\pmb x)-\mathbf C_1(\pmb x) = 
		\xi \circ \mathbf m_1+ \tilde {\mathbf m}_1 \circ \xi
$.
	In other words, the $\xi$ gives a cochain homotopy between the two cochain maps $\mathbf C_1$ and $\mathbf C'_1$.
	Particularly, the induced morphisms on the cohomology are the same $\eta_{\mC}=\eta_{\mC'}$. The proof is now complete.
\end{proof}

\begin{rmk}
	\label{rmk_selfHF_homot_inv}
	Similar to Remark \ref{rmk_selfHF_morphism}, we indicate that the equation (\ref{f_s_h_s_application_eq}) without the input $\pmb x$ is used in \cite{Yuan_I_FamilyFloer} to prove that the analytic coordinate change maps are well-defined. In contrast, adding the input $\pmb x$ here, we have extracted more information from the category $\UD$ than \cite{Yuan_I_FamilyFloer}.
\end{rmk}

\subsubsection{Composition}
\label{sss_composition_HF}

Now, we consider three adjacent Lagrangian submanifolds $L, \tilde L$, and $\tilde{\tilde L}$.
Let $F$ and $F'$ be small isotopies such that $F(L)=\tilde L$ and $F'(\tilde L)=\tilde {\tilde L}$. Suppose $\mC: (H^*(L),\m)\to (H^*(\tilde L),\tilde \m)$ and $\mC': (H^*(\tilde L),\tilde \m) \to (H^*(\tilde{\tilde L}), \tilde {\tilde \m})$ are $A_\infty$ homotopy equivalences in $\Mor \UD$ up to Fukaya's tricks.
By Proposition \ref{A_infinity_morphism_non_curved_prop}, they give rise to two non-curved $A_\infty$ homomorphisms $\mathbf C=\{\mathbf C_k\}_{k\ge 1}$ and $\mathbf C'=\{\mathbf C'_k\}_{k\ge 1}$.
By Proposition \ref{A_infinity_morphism_non_curved_prop} again, the composition gapped $A_\infty$ morphism $\mC'\diamond \mC$ in $\Mor\UD$ also induces a non-curved $A_\infty$ homomorphism, denoted by $\mathbf C'\diamond \mathbf C=((\mathbf C'\diamond \mathbf C)_k)_{k\ge 1}$. We call it the \textit{composition} of $\mathbf C'$ and $\mathbf C$.
The notation and term we use here can be justified as follows:

\begin{prop}
	The non-curved $A_\infty$ homomorphism $\mathbf C'\diamond \mathbf C=((\mathbf C'\diamond \mathbf C)_k)_{k\ge 1}$ satisfies that
	\[
	(\mathbf C'\diamond \mathbf C)_k=\sum_{\ell\ge 1, \ 0=j_0<j_1<\dots <j_{\ell-1}<j_\ell=k} \mathbf C'_\ell  \big(\mathbf C_{j_1-j_0},\dots, \mathbf C_{j_\ell-j_{\ell-1}}\big)
	\]
\end{prop}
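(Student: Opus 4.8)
The plan is to unwind the definition of the gapped composition $\mC'\diamond\mC$ from (\ref{composition_Gerstenhaber_eq}) and repeat, almost verbatim, the divisor-axiom manipulation carried out in the proof of Proposition \ref{A_infinity_morphism_non_curved_prop}. The existence of the induced non-curved homomorphism $\mathbf C'\diamond\mathbf C$ is already granted (it is the non-curved homomorphism attached by Proposition \ref{A_infinity_morphism_non_curved_prop} to the gapped morphism $\mC'\diamond\mC\in\Mor\UD$, taken up to the Fukaya's trick of the composed isotopy $F'\circ F$), so the only task is to compute its components $(\mathbf C'\diamond\mathbf C)_k$ for $k\ge 1$.

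First I would fix $k\ge 1$ and write, using (\ref{composition_Gerstenhaber_eq}), the series $\sum_\beta T^{E(\beta)}Y^{\partial\beta}(\mC'\diamond\mC)_{k,\beta}$ as the (gappedness-finite) sum over $\ell\ge 1$, over $k_1+\dots+k_\ell=k$, and over $\beta_0+\beta_1+\dots+\beta_\ell=\beta$, of the terms $T^{E(\beta_0)}Y^{\partial\beta_0}\mC'_{\ell,\beta_0}\circ\big(T^{E(\beta_1)}Y^{\partial\beta_1}\mC_{k_1,\beta_1}\otimes\cdots\otimes T^{E(\beta_\ell)}Y^{\partial\beta_\ell}\mC_{k_\ell,\beta_\ell}\big)$, suitably Fukaya-twisted as in Convention \ref{convention_Fuk_trick}. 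The crucial step is to isolate the inner slots with $k_i=0$: by the semipositive condition (II-5) any nonzero $\mC_{0,\beta_i}$ lies in $H^1(L)$, hence (since $\m_{1,0}=0$) is a closed degree-one element, a legitimate divisor input for $\mC'$. Passing to $U_\Lambda$-points via Lemma \ref{val=0-lem} and invoking the divisor axiom (II-3) of $\mC'$ in its Fukaya-twisted form, exactly as in the derivation following (\ref{mC_k_non_curved_A_inf_eq}), I would collapse every $k_i=0$ slot into the exponential factor $\exp\langle\partial\beta_0,\pmb{\mathfrak F}_\mC\rangle$ (notation as in (\ref{mC_F_notation_eq})); then Lemma \ref{val=0-lem} restores the identity over the whole coefficient ring. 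What remains is a sum over $\ell\ge 1$ and $0=j_0<j_1<\dots<j_\ell=k$, the $j$'s recording the positions of the surviving slots, of arity $j_i-j_{i-1}\ge 1$. Note that no genuinely curved contribution can arise: the composition formula always has $\ell\ge 1$, while $k\ge 1$ forbids all inner slots from being of arity $0$ simultaneously, so the only curved subtlety is precisely the divisor-axiom absorption above (and, as in Propositions \ref{A_infinity_non_curved_prop} and \ref{A_infinity_morphism_non_curved_prop}, $\mathbf m_0\equiv W\cdot\one$ modulo $\ia$ plus the strict unitality of $\mC,\mC'$ keeps everything consistent).

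Finally I would recognize the two pieces. The surviving inner block $\sum_{\beta_i}T^{E(\beta_i)}Y^{\partial\beta_i}\mC_{j_i-j_{i-1},\beta_i}$, carrying the $F$-twist, is by (\ref{mC_k_eq}) exactly $\mathbf C_{j_i-j_{i-1}}$ (its coefficients transported to $\tilde Y$ via $\phi_\mC$); and $\sum_{\beta_0}T^{E(\beta_0)}Y^{\partial\beta_0}\exp\langle\partial\beta_0,\pmb{\mathfrak F}_\mC\rangle\,\mC'_{\ell,\beta_0}$, after the $Y\mapsto\tilde Y$ change and then the $\tilde Y\mapsto\tilde{\tilde Y}$ change, is by Lemma \ref{wall_crossing_coordiante_change_lem} and (\ref{mC_k_eq}) the Fukaya pullback of $\mathbf C'_\ell$; here one needs the functoriality $\phi_{\mC'\diamond\mC}=\phi_\mC\circ\phi_{\mC'}$ of the analytic coordinate changes, which follows from Lemma \ref{wall_crossing_coordiante_change_lem} together with the ud-homotopy invariance, or can be cited from \cite{Yuan_I_FamilyFloer}. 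Assembling the pieces and applying the inverse pullback of $F'\circ F$ to both sides, exactly the $F^{-1*}$-type maneuver at the end of the proof of Proposition \ref{A_infinity_morphism_non_curved_prop}, yields $(\mathbf C'\diamond\mathbf C)_k=\sum_{\ell\ge 1,\,0=j_0<\dots<j_\ell=k}\mathbf C'_\ell\big(\mathbf C_{j_1-j_0},\dots,\mathbf C_{j_\ell-j_{\ell-1}}\big)$, as claimed. The main obstacle I anticipate is purely the bookkeeping: keeping the three ideals $\ia,\tilde\ia,\tilde{\tilde\ia}$, the two coordinate changes $\phi_\mC,\phi_{\mC'}$, and the Fukaya pullbacks $F^*,(F')^*$ consistently aligned, so that the divisor-axiom collapse of the $\mC_{0,\cdot}$-blocks reproduces on the nose, not merely modulo the ideals, the exponential factor already built into $\mathbf C'_\ell$ through $\phi_\mC$.
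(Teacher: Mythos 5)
Your proposal is correct and follows essentially the same route as the paper's own proof: expand $(\mC'\diamond\mC)_{k,\beta}$ via the gapped composition formula, collect the $\mC_{0,\cdot}$ slots into $\pmb{\mathfrak F}_\mC$, absorb them through the divisor axiom of $\mC'$ (after passing through $U_\Lambda$-points by Lemma \ref{val=0-lem}), and then recognize $T^{E(\beta')}Y^{\partial\beta'}\exp\langle\partial\beta',\pmb{\mathfrak F}_\mC\rangle=\tilde Y^{\partial\beta'}$ via Lemma \ref{wall_crossing_coordiante_change_lem}. The only difference is that you make the Fukaya-trick pullbacks and the compatibility $\phi_{\mC'\diamond\mC}=\phi_\mC\circ\phi_{\mC'}$ explicit, whereas the paper leaves these implicit.
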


\begin{proof}
	Fix $k\ge 1$. By definition, we first know $(\mathbf C'\diamond \mathbf C)_k=\sum_\beta T^{E(\beta)} Y^{\partial\beta} (\mC'\diamond \mC)_{k,\beta}$.
	Expand each $(\mC'\diamond \mC)_{k,\beta}$ by (\ref{composition_Gerstenhaber_eq}), and all the terms involving $\mC_{0,\beta}$ form the series $\pmb {\mathfrak F}_{\mC}$.
	Then, as in the proof of Proposition \ref{A_infinity_morphism_non_curved_prop}, the divisor axiom can be applied after we use Lemma \ref{val=0-lem}, and we finally obtain
\begin{align*}
	\Scale[0.9]{(\mathbf C'\diamond \mathbf C)_k}
	=
	\sum_{\substack{ \ell\ge 1, \beta', \beta_1,\dots,\beta_\ell \\ 0 = j_0 < j_1<\dots < j_{\ell-1} < j_\ell=k  }}
	\Scale[0.9]{
		T^{E(\beta')}Y^{\partial\beta'} \exp\langle \partial\beta', \pmb {\mathfrak F}_\mC \rangle \mC'_{\ell,\beta'} \big(
	T^{E(\beta_1)}Y^{\partial\beta_1} \mC_{j_1-j_0,\beta_1}\otimes \cdots \otimes 
	T^{E(\beta_\ell)}Y^{\partial\beta_{\ell}}\mC_{j_\ell-j_{\ell-1},\beta_\ell} \big)}
\end{align*}
Recall that $\tilde Y^{\partial\beta'}=T^{E(\beta')}Y^{\partial\beta'} \exp\langle \partial\beta', \pmb {\mathfrak F}_\mC \rangle$ by Lemma \ref{wall_crossing_coordiante_change_lem}. The proof is now complete.
\end{proof}

In particular, the above proposition infers that $(\mathbf C'\diamond \mathbf C)_1=\mathbf C'_1\circ \mathbf C_1$ is also a cochain map; hence,
\begin{equation}
	\label{eta_composition_eq}
\eta_{\mC'\diamond \mC}=\eta_{\mC'}\circ \eta_{\mC}: \HF(L,\m)\to \HF(\tilde L,\tilde \m) \to \HF(\tilde {\tilde L},\tilde{\tilde \m} )
\end{equation}

\begin{cor}
	\label{inv_Floer_self_cor}
	In the context of Proposition \ref{inv_Floer_self_prop}, the $\eta_\mC$ is an isomorphism.
\end{cor}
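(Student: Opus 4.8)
The plan is to deduce Corollary \ref{inv_Floer_self_cor} formally from the three preceding results---the Whitehead theorem (Theorem \ref{whitehead_thm}), the functoriality of $\eta$ under composition (equation \eqref{eta_composition_eq}), and the ud-homotopy invariance of $\eta$ (Proposition \ref{inv_Floer_self_prop})---by the usual two-out-of-six / categorical argument. First I would invoke the Whitehead theorem: since $\mC:(\HL,\m)\to(H^*(\tilde L),\tilde\m)$ is an $A_\infty$ homotopy equivalence in $\Mor\UD$ up to the Fukaya trick, its linear part $\mC_{1,0}=F^{-1*}$ is a quasi-isomorphism of cochain complexes (indeed an isomorphism), so Theorem \ref{whitehead_thm} produces a ud-homotopy inverse $\g\in\Hom_\UD((H^*(\tilde L),\tilde\m),(\HL,\m))$ with $\g\diamond\mC\simud\id$ and $\mC\diamond\g\simud\id$. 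One caveat to address: $\g$ is a morphism in $\UD$, but to feed it into the self-Floer machinery of \S\ref{ss_non_curved_A_inf_algebra} one wants it realized ``up to the Fukaya trick'' for the isotopy $F^{-1}$; this is routine since $F^{-1}(\tilde L)=L$ and the constructions of \S\ref{sss_moduli_space_A_infinity} apply verbatim with $F^{-1}$ in place of $F$, so $\g$ induces a non-curved $A_\infty$ homomorphism and hence a $\Lambda$-algebra map $\eta_\g:\HF(\tilde L,\tilde\m)\to\HF(L,\m)$.

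Next I would apply the composition formula \eqref{eta_composition_eq} to both composites: $\eta_\g\circ\eta_\mC=\eta_{\g\diamond\mC}$ and $\eta_\mC\circ\eta_\g=\eta_{\mC\diamond\g}$. Here one must be slightly careful that \eqref{eta_composition_eq} was stated for three ``adjacent'' Lagrangians $L,\tilde L,\tilde{\tilde L}$ with isotopies $F,F'$; in our situation the relevant triple is $L,\tilde L,L$ with isotopies $F$ and $F^{-1}$ (so $F'\circ F=\id$ up to the relevant identifications), and the proof of the composition proposition goes through unchanged, giving $\eta_{\g\diamond\mC}=\eta_\g\circ\eta_\mC$ on $\HF(L,\m)$ and $\eta_{\mC\diamond\g}=\eta_\mC\circ\eta_\g$ on $\HF(\tilde L,\tilde\m)$. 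Then I would apply Proposition \ref{inv_Floer_self_prop}: since $\g\diamond\mC\simud\id_{\HL}$ and $\mC\diamond\g\simud\id_{H^*(\tilde L)}$, and since the identity morphism visibly induces the identity on self-Floer cohomology ($\mathbf C_1=\id$ when $\mC=\id$), ud-homotopy invariance gives $\eta_{\g\diamond\mC}=\eta_{\id}=\id_{\HF(L,\m)}$ and likewise $\eta_{\mC\diamond\g}=\id_{\HF(\tilde L,\tilde\m)}$. Combining, $\eta_\g\circ\eta_\mC=\id$ and $\eta_\mC\circ\eta_\g=\id$, so $\eta_\mC$ is an isomorphism with inverse $\eta_\g$.

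The argument is essentially a diagram chase, so there is no deep obstacle; the only real work is bookkeeping. The main point to get right---the place where I'd slow down---is checking that the Fukaya-trick normalization is compatible along the composition $L\to\tilde L\to L$, i.e. that the non-curved $A_\infty$ homomorphism induced by $\g\diamond\mC$ really is the one obtained from $\id_{\HL}$ up to the identifications $\pi_1(L)\cong\pi_1(\tilde L)$ and the energy shifts $E(\tilde\beta)=E(\beta)-\partial\beta\cap\lambda|_L$ built into \eqref{phi_eq}; once $F'=F^{-1}$ the net energy shift and the net $\pmb{\mathfrak F}$-twist cancel, so $\phi_{\g}\circ\phi_{\mC}=\id$ and the whole construction closes up on the nose. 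With that compatibility in hand, the corollary follows immediately, and one also records for later use that $\eta_\g$ is canonical (independent of the choice of Whitehead inverse) because any two ud-homotopy inverses are ud-homotopic and Proposition \ref{inv_Floer_self_prop} then forces their induced maps to agree.
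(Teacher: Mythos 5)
Your argument is exactly the paper's: invoke the Whitehead theorem to produce a ud-homotopy inverse $\mC^{-1}$ of $\mC$ (up to Fukaya's tricks), then use the composition formula \eqref{eta_composition_eq} together with the ud-homotopy invariance of Proposition \ref{inv_Floer_self_prop} to conclude $\eta_{\mC^{-1}}\circ\eta_{\mC}=\eta_{\mC^{-1}\diamond\mC}=\eta_{\id}=\id$ and vice versa. The extra caveats you flag about the Fukaya-trick bookkeeping along $L\to\tilde L\to L$ are reasonable things to verify but do not change the route.
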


\begin{proof}
	The $\mC$ admits a ud-homotopy inverse $\mC^{-1}$ so that $\mC^{-1}\diamond \mC\simud \id$ and $\mC\diamond \mC^{-1} \simud\id$ (up to Fukaya's tricks).
	By Proposition \ref{inv_Floer_self_prop} and (\ref{eta_composition_eq}), we get $\eta_{\mC^{-1}}\circ \eta_{\mC}= \eta_{\mC^{-1}\diamond \mC} =\eta_\id= \id$ and vice versa.
\end{proof}

\subsection{Evaluation of self Floer cohomology}
\label{ss_evaluation_HF}

\subsubsection{Background and review}
In the literature, the self Floer cohomology is defined by a different way. 
Here let's assume $\pi_1(L)$ has no torsion part.
Instead of $H^*(L)\hat\otimes \Lambda[[\pi_1(L)]]  / \ia $ in Proposition \ref{A_infinity_non_curved_prop}, we previously work with
$\CF(L; \Lambda) := H^*(L)\hat\otimes \Lambda$ and consider a \textit{weak bounding cochain} $b\in H^{\mathrm{odd}}(L; \Lambda_+)$, i.e.
$
\textstyle \sum_{k\ge 0} \sum_\beta T^{E(\beta)} \m_{k,\beta}(b,\dots, b) \in \Lambda_+\cdot \one
$
for the constant-one function $\one\in H^0(L)$. Then, we define
\[
\mathbf m_k^b (x_1,\dots , x_k) = \sum T^{E(\beta)} \m_{\bullet,\beta} (b,\dots,b , x_1, b,\dots \ \dots, b, x_k, b, \dots, b)
\]
By condition, the $\mathbf m_0^b\in \Lambda_+\cdot \one$ will be eliminated.
It is known in the literature that (see e.g. \cite[Proposition 3.6.10]{FOOOBookOne}):
\textit{The collection $\{\mathbf m_k^b\}_{k\ge 1}$ forms a (non-curved) $A_\infty$ algebra on $H^*(L)\hat\otimes \Lambda$.}
Then, the self Floer cohomology is usually defined by
$\HF(L, b; \Lambda) := H^* ( \CF(L;\Lambda) ;  \mathbf m_1^b)
$.

\subsubsection{Evaluation at a weak bounding cochain}

For the family Floer theory and SYZ picture, we should mainly focus on those degree-one weak bounding cochains $b$ in $H^1(L; \Lambda_+)$. Indeed, if $L$ is a Lagrangian torus fiber in the SYZ framework, its dual torus fiber is expected to be $H^1(L; U(1))$, at least at the topological level.
Now, we want to make connections with $\HF(L,\m)$ in (\ref{HF_Lambda_eq}).
In fact, since $\deg b=1$, the divisor axiom of $\m$ implies that
\[
\mathbf m_k^b(x_1,\dots, x_k) =\sum T^{E(\beta)} e^{\partial\beta\cap b} \m_{k,\beta}(x_1,\dots, x_k) 
\]
and we can even actually require $b\in H^1(L; \Lambda_0)$ instead of $H^1(L; \Lambda_+)$ without violating the convergence.
There is a quotient map 
$\mathsf q: H^1(L; \Lambda_0) \to H^1(L; U_\Lambda)$ induced by $\exp: \Lambda_0 \to U_\Lambda$.
For a basis of $H^1(L)$, we write $b=(x_1,\dots, x_m)$ for $x_i\in\Lambda_0$; the image point $\mathbf y=\mathsf q(b)$ is given by $(y_1,\dots, y_m)$ for $y_i=e^{x_i}\in U_\Lambda$.
In general, let $\mathbf y^\alpha$ be the image of $(\alpha, \mathbf y)$ under the natural pairing $\pi_1(L)\times H^1(L; \Lambda^*)\to \Lambda^*$. We also pick a basis so that $\pi_1(L)\cong\mathbb Z^m$. For $\alpha=(\alpha_1,\dots, \alpha_m)\in\pi_1(L)$ and $\mathbf y=(y_1,\dots, y_m)\in H^1(L; \Lambda^*)$, we have $\mathbf y^\alpha= y_1^{\alpha_1}\cdots y_m^{\alpha_m}$.
Now, we can introduce a slight variation of the above $\mathbf m_k^b$ as follows:
\begin{equation}
	\label{m_y_k_eq}
\mathbf m_k^{\mathbf y}(x_1,\dots, x_k) = \sum T^{E(\beta)} \mathbf y^{\partial\beta} \m_{k,\beta} (x_1,\dots, x_k)
\end{equation}
Note that $\mathbf m^{\mathbf y}_0\equiv W(\mathbf y)\cdot \one +Q(\mathbf y)$ (\ref{W_Q_eq}).
By the divisor axiom and under the semipositive condition, $Q(\mathbf y)=0$ if and only if the $\mathbf y$ admits a lift $b$ which is a weak bounding cochain.
Hence, by the transformation $y_i=e^{x_i}$, it simply goes back to the previous $\mathbf m_k^b$. 
Nevertheless, the advantage of (\ref{m_y_k_eq}) is that we can further allow $\mathbf y$ to run over a small neighborhood of $H^1(L;U_\Lambda)\cong U_\Lambda^n$ in $H^1(L;\Lambda^*)\cong (\Lambda^*)^n$.

\begin{prop}
	\label{A_infinity_CF_y_non_curved_prop}
If $Q(\mathbf y)=0$, then $\mathbf m^{\mathbf y}=\{\mathbf m_k^{\mathbf y}\}_{k\ge 1}$ forms a non-curved $A_\infty$ algebra on $H^*(L)\hat\otimes \Lambda$.
\end{prop}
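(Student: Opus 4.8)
The statement is the ``pointwise'' counterpart of Proposition \ref{A_infinity_non_curved_prop}: it specializes $Y$ to the fixed value $\mathbf y$ in place of passing to the quotient by $\ia$, and I would prove it by exactly the same mechanism. First, the assignment $Y^\alpha\mapsto\mathbf y^\alpha$ extends to a continuous $\Lambda$-algebra homomorphism $\mathrm{ev}_{\mathbf y}$ on a polyhedral affinoid algebra $A\subset\Lambda[[\pi_1(L)]]$ carrying all the coefficient series of the operators $\mathbf m_k$ in \eqref{CF_m_k_eq}; the existence of such an $A$ on a neighborhood of $H^1(L;U_\Lambda)$ follows from the reverse isoperimetric inequality, just as for the convergence of $W_L$ after \eqref{superpotential_L}, while for $\mathbf y\in H^1(L;U_\Lambda)$ itself the required $T$-adic convergence is immediate from the gappedness condition (c) of \S\ref{sss_label_group}. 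Since $\ia$ is generated by the components of $Q$ and $Q(\mathbf y)=0$ by hypothesis, $\mathrm{ev}_{\mathbf y}$ kills $\ia\cap A$ and factors through $A/(\ia\cap A)$. The $A_\infty$ relations satisfied by $\mathbf m=\{\mathbf m_k\}_{k\ge 1}$ over $\Lambda[[\pi_1(L)]]/\ia$ (Proposition \ref{A_infinity_non_curved_prop}) are polynomial identities in the structure maps and the scalars, and $\mathbf m_k^{\mathbf y}$ is precisely the image of $\mathbf m_k$ under base change along $\mathrm{ev}_{\mathbf y}$; applying $\mathrm{ev}_{\mathbf y}$ termwise therefore turns them into the $A_\infty$ relations for $\{\mathbf m_k^{\mathbf y}\}_{k\ge 1}$ over $\Lambda$.

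To keep things self-contained one can instead repeat the computation of Proposition \ref{A_infinity_non_curved_prop} verbatim with $Y^{\partial\beta}$ replaced throughout by $\mathbf y^{\partial\beta}$. Starting from $(\m\{\m\})_{k,\beta}=0$ for all $(k,\beta)$, multiplying by $T^{E(\beta)}\mathbf y^{\partial\beta}$ and summing over $\beta$ with $k\ge 1$ fixed, and separating out the terms in which some $\m_{0,\beta}$ occurs as an inner input, one gets
\[
\sum_{\substack{\lambda,\mu\ge 0,\ \nu\ge 1\\ \lambda+\mu+\nu=k}}\mathbf m_{\lambda+\mu+1}^{\mathbf y}\big(x_1^\#,\dots,x_\lambda^\#,\mathbf m_\nu^{\mathbf y}(x_{\lambda+1},\dots,x_{\lambda+\nu}),\dots,x_k\big)=-\sum_{0\le\lambda\le k}\sum_\beta T^{E(\beta)}\mathbf y^{\partial\beta}\,\m_{k+1,\beta}\big(x_1^\#,\dots,x_\lambda^\#,\mathbf m_0^{\mathbf y},x_{\lambda+1},\dots,x_k\big).
\]
By \eqref{W_Q_eq} we have $\mathbf m_0^{\mathbf y}=W(\mathbf y)\cdot\one+Q(\mathbf y)$, which equals $W(\mathbf y)\cdot\one$ under the hypothesis $Q(\mathbf y)=0$; and the strict unitality of $\m$ gives $\sum_{0\le\lambda\le k}\m_{k+1,\beta}(x_1^\#,\dots,x_\lambda^\#,\one,x_{\lambda+1},\dots,x_k)=0$ for every $\beta$ (exactly as invoked in the proof of Proposition \ref{A_infinity_non_curved_prop}), so the right-hand side is $W(\mathbf y)$ times $0$. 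Hence $\{\mathbf m_k^{\mathbf y}\}_{k\ge 1}$ satisfies the $A_\infty$ relations with no curvature term, which is the claim.

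It remains only to record the grading: since $\mu(\beta)$ is always even, every $\m_{k,\beta}$ contributing to $\mathbf m_k^{\mathbf y}$ has $\deg\m_{k,\beta}=2-\mu(\beta)-k\equiv 2-k\pmod 2$, so $\mathbf m_k^{\mathbf y}$ is a $\Lambda$-linear operator of the correct (at least $\mathbb Z_2$-)degree, and all Koszul signs persist because $\mathrm{ev}_{\mathbf y}$ is even and $\Lambda$-linear. I do not anticipate a genuine obstacle here: the proposition is merely the evaluation of Proposition \ref{A_infinity_non_curved_prop} at the point $\mathbf y$, and the only delicate point is the convergence of the specialized series $\mathbf m_k^{\mathbf y}$, which holds on a neighborhood of $H^1(L;U_\Lambda)$ by the reverse isoperimetric inequality and is automatic on $H^1(L;U_\Lambda)$ itself.
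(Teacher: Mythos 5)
Your proposal is correct and takes essentially the same approach as the paper: the paper's proof is a one-line remark that one argues exactly as in Proposition \ref{A_infinity_non_curved_prop} using the $A_\infty$ associativity and the unitality of $\m$, and your second paragraph carries out precisely that computation with $Y^{\partial\beta}$ specialized to $\mathbf y^{\partial\beta}$ and $Q(\mathbf y)=0$ replacing the reduction mod $\ia$.
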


\begin{proof}
	We may argue just as Proposition \ref{A_infinity_non_curved_prop} by the $A_\infty$ associativity and the unitality of $\m$.
\end{proof}

Particularly, $\mathbf m_1^{\mathbf y}\circ \mathbf m_1^{\mathbf y}=0$, so we can define the \textbf{self Floer cohomology of $L$ at $\mathbf y$}:
\begin{equation}
	\label{HF_at_y_eq}
	\HF(L, \m, \mathbf y):=H^*( H^*(L)\hat\otimes \Lambda ,\mathbf m^{\mathbf y}_1)
\end{equation}
As before, we can also show that
	$[x_1]\cdot[x_2]=[\mathbf m^{\mathbf y}_2(-x_1^\#, x_2)]$ defines a ring structure on $\HF(L,\m,\mathbf y)$ with a unit $[\one]$.
Additionally, we can consider the \textit{evaluation map at $\mathbf y$}:
\begin{equation}
	\label{Eva_y_eq}
	\mathcal E_{\mathbf y}:  H^*(L)\hat\otimes \Lambda[[\pi_1(L)]] \to H^*(L)\hat\otimes \Lambda \qquad Y^{\alpha}\cdot x \mapsto  \mathbf y^\alpha \cdot x 
\end{equation}
where $\alpha\in\pi_1(L)$ and $x\in H^*(L)$.
For the sake of convergence, we should replace $\Lambda[[\pi_1(L)]]$ by some polyhedral affinoid algebra in it, and we also require each coefficient $\mathbf y$ is contained in the corresponding polyhedral affinoid domain. But, let's make this point implicit for clarity.
Here we also assume $\mathbf y$ admits a weak bounding cochain lift $b$ up to Fukaya's trick, namely, $Q(\mathbf y)=0$. Thus, the above $\mathcal E_{\mathbf y}$ also descends to a quotient map modulo $\ia$, which is still denoted by
$\mathcal E_{\mathbf y}:  H^*(L)\hat\otimes \Lambda[[\pi_1(L)]] /\ia \to H^*(L)\hat\otimes \Lambda$.
Further, we can extend the definition of $\mathcal E_{\mathbf y}$ by setting $(\mathcal E_{\mathbf y})_1=\mathcal E_{\mathbf y}$, $(\mathcal E_{\mathbf y})_k=0$, $k\neq 1$, and we can directly check the following:
\begin{lem}
	\label{Eva_y_HF_to_HF_lem}
	The $\mathcal E_{\mathbf y}$ gives a non-curved $A_\infty$ homomorphism from
	$\mathbf m$ to $\mathbf m^{\mathbf y}$.
	Moreover, the induced map $\mathcal E_{\mathbf y}=[\mathcal E_{\mathbf y}]:  \HF(L, \m) \to \HF(L,\m,\mathbf y)$ is a unital $\Lambda$-algebra homomorphism so that $\mathcal E_{\mathbf y}([\one])=[\one]$.
\end{lem}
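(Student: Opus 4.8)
\emph{Proof sketch.} The plan is to check directly that the arity-one extension $(\mathcal E_{\mathbf y})_1=\mathcal E_{\mathbf y}$, $(\mathcal E_{\mathbf y})_k=0$ for $k\neq 1$, satisfies the non-curved $A_\infty$ homomorphism equation $\mathbf m^{\mathbf y}\diamond\mathcal E_{\mathbf y}=\mathcal E_{\mathbf y}\{\mathbf m\}$, and then to read off the cohomological assertions from its arity-one and arity-two parts. Because $\mathcal E_{\mathbf y}$ is concentrated in arity one, the composition $\mathbf m^{\mathbf y}\diamond\mathcal E_{\mathbf y}$ collapses to $\sum_{k\ge 1}\mathbf m^{\mathbf y}_k(\mathcal E_{\mathbf y}\otimes\cdots\otimes\mathcal E_{\mathbf y})$ and $\mathcal E_{\mathbf y}\{\mathbf m\}$ collapses to $\sum_{k\ge 1}\mathcal E_{\mathbf y}\circ\mathbf m_k$, with all $\#$-shifts and Koszul signs trivial since $\mathcal E_{\mathbf y}$ is a degree-preserving arity-one operator; so the homomorphism equation is equivalent to the system of identities
\[
\mathbf m^{\mathbf y}_k\big(\mathcal E_{\mathbf y}(\xi_1),\dots,\mathcal E_{\mathbf y}(\xi_k)\big)=\mathcal E_{\mathbf y}\big(\mathbf m_k(\xi_1,\dots,\xi_k)\big),\qquad k\ge 1.
\]
Since $\mathbf m_k$ and $\mathbf m^{\mathbf y}_k$ are built from the same operators $\m_{k,\beta}$ by attaching the monomials $T^{E(\beta)}Y^{\partial\beta}$, resp.\ $T^{E(\beta)}\mathbf y^{\partial\beta}$, and extending $\m_{k,\beta}$ multilinearly over $\Lambda[[\pi_1(L)]]$, resp.\ over $\Lambda$, and since $\mathcal E_{\mathbf y}$ is by definition the $\Lambda$-algebra map $\Lambda[[\pi_1(L)]]\to\Lambda$ determined by $Y^\alpha\mapsto\mathbf y^\alpha$ (hence multiplicative and $\Lambda$-linear), it suffices to evaluate both sides on generators $\xi_i=x_i\in H^*(L)$, where each side becomes literally $\sum_\beta T^{E(\beta)}\mathbf y^{\partial\beta}\m_{k,\beta}(x_1,\dots,x_k)$.

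I expect no serious obstacle; the few points needing care are these. First, $\mathcal E_{\mathbf y}$ must descend modulo $\ia$: this is exactly the hypothesis $Q(\mathbf y)=0$, which kills the generators of $\ia$. Second, the target algebra $\mathbf m^{\mathbf y}$ must itself be well defined and non-curved, which is Proposition \ref{A_infinity_CF_y_non_curved_prop}, again using $Q(\mathbf y)=0$. Third, for convergence one replaces $\Lambda[[\pi_1(L)]]$ by a polyhedral affinoid algebra $\trop^{-1}(\Delta)$ and requires $\mathbf y\in\trop^{-1}(\Delta)$; the reverse isoperimetric inequality together with Gromov compactness then guarantee the relevant series converge, so all the above manipulations are legitimate.

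Finally, for the statement about $\HF$ I would specialize. The $k=1$ identity $\mathbf m^{\mathbf y}_1\circ\mathcal E_{\mathbf y}=\mathcal E_{\mathbf y}\circ\mathbf m_1$ says $\mathcal E_{\mathbf y}$ is a cochain map, so it induces $[\mathcal E_{\mathbf y}]:\HF(L,\m)\to\HF(L,\m,\mathbf y)$. Combining the $k=2$ identity with $\mathcal E_{\mathbf y}(-x^\#)=-(\mathcal E_{\mathbf y}x)^\#$ (valid since $\mathcal E_{\mathbf y}$ preserves the cohomological grading) gives $[\mathcal E_{\mathbf y}][x]\cdot[\mathcal E_{\mathbf y}][y]=[\mathbf m^{\mathbf y}_2(-(\mathcal E_{\mathbf y}x)^\#,\mathcal E_{\mathbf y}y)]=[\mathcal E_{\mathbf y}\mathbf m_2(-x^\#,y)]=[\mathcal E_{\mathbf y}]([x]\cdot[y])$, so $[\mathcal E_{\mathbf y}]$ is a ring homomorphism; and since $\one\in H^0(L)$ carries the trivial coefficient $Y^0=1$ we have $\mathcal E_{\mathbf y}(\one)=\one$, hence $[\mathcal E_{\mathbf y}]([\one])=[\one]$, which gives unitality.
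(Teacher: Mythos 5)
Your proof is correct, and it fills in exactly the "direct check" the paper leaves implicit. Since $\mathcal E_{\mathbf y}$ is supported in arity one, the Gerstenhaber brace $\mathcal E_{\mathbf y}\{\mathbf m\}$ forces $\lambda=\mu=0$, so in fact no $\#$-shifted inputs ever appear (not merely that they would act trivially); your stated reason is slightly off but immaterial since the conclusion holds either way. The remaining points — the reduction to $\sum_\beta T^{E(\beta)}\mathbf y^{\partial\beta}\m_{k,\beta}$ via multiplicativity of $Y^\alpha\mapsto\mathbf y^\alpha$, the descent modulo $\ia$ from $Q(\mathbf y)=0$, the $k=1$ and $k=2$ specializations for the cochain map and ring homomorphism, and $\mathcal E_{\mathbf y}(\one)=\one$ from $Y^0=1$ — all match the structure the paper expects and are handled correctly.
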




Suppose $\mC: (\HL,\m)\to  ( H^*(\tilde L),\tilde \m)$ is an $A_\infty$ homotopy equivalence up to the Fukaya's trick for a small isotopy $F$ with $F(L)=\tilde L$ (Convention \ref{convention_Fuk_trick}).
For the analytic map $\phi$ in (\ref{phi_*_eq}), the point $\tilde {\mathbf y}=\phi(\mathbf y)$ also admits a weak bounding cochain lift $\tilde b$, i.e. $\tilde Q(\tilde {\mathbf y})=0$.
Then, we get two non-curved $A_\infty$ algebras $\{\m_k^{\mathbf y}\}_{k\ge 1}$ and $\{\tilde \m_k^{\tilde{\mathbf y}}\}_{k\ge 1}$ on $H^*(L)\hat\otimes \Lambda$ and $H^*(\tilde L)\hat\otimes \Lambda$ by Proposition \ref{A_infinity_CF_y_non_curved_prop}.
Just as (\ref{mC_k_eq}), we define
\begin{equation}
	\label{mC_y_k_eq}
	\mathbf C_k^{\mathbf y}: = F^{-1} \sum T^{E(\beta)} \mathbf y^{\partial\beta} \mC_{k,\beta} :  \big( H^*(L)\hat \otimes \Lambda \big)^k \to H^*(\tilde L) \hat\otimes \Lambda
\end{equation}

\begin{prop}
	The collection $\mathbf C^{\mathbf y}=\{\mathbf C_k^{\mathbf y}  \}_{k\ge 1}$ forms a (non-curved) $A_\infty$ homomorphism from
	$\mathbf m^{\mathbf y}$ to
	 $\tilde {\mathbf m}^{\tilde{\mathbf y}}$ so that $\mathcal E_{\tilde {\mathbf y}} \diamond \mathbf C =\mathbf C^{\mathbf y}\diamond \mathcal E_{\mathbf y}$. In particular, we have $\tilde {\mathbf m}_1^{\tilde {\mathbf y}} \circ  \mathbf C_1^{\mathbf y}=\mathbf C_1^{\mathbf y}\circ \mathbf m_1^{\mathbf y}$ and $\tilde {\mathbf m}_2^{\tilde {\mathbf y}}  (\mathbf C_1^{\mathbf y} (x_1), \mathbf C_1^{\mathbf y}(x_2) ) = \mathbf C_1^{\mathbf y} (\mathbf m_2^{\mathbf y} (x_1,x_2))$.
\end{prop}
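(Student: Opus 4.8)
The plan is to derive both assertions of the proposition --- that $\mathbf C^{\mathbf y}$ is a non-curved $A_\infty$ homomorphism $\mathbf m^{\mathbf y}\to\tilde{\mathbf m}^{\tilde{\mathbf y}}$, and that the square $\mathcal E_{\tilde{\mathbf y}}\diamond\mathbf C=\mathbf C^{\mathbf y}\diamond\mathcal E_{\mathbf y}$ commutes --- from the material already in place, in two moves. First I would verify the commuting square directly as an identity of operator systems; then I would use that square, together with the facts that the evaluations $\mathcal E_{\mathbf y},\mathcal E_{\tilde{\mathbf y}}$ (Lemma \ref{Eva_y_HF_to_HF_lem} and its evident analogue over $\tilde L$) and the coordinate change $\mathbf C$ (Proposition \ref{A_infinity_morphism_non_curved_prop}) are already non-curved $A_\infty$ homomorphisms, to bootstrap the $A_\infty$ homomorphism equations for $\mathbf C^{\mathbf y}$ itself. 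The two displayed identities are then just the arity-$1$ and arity-$2$ parts of the latter. Throughout I use $\tilde{\mathbf y}=\phi(\mathbf y)$ with $\phi=\phi_\mC$ from Lemma \ref{wall_crossing_coordiante_change_lem}, the hypothesis $Q(\mathbf y)=0$, and the bounding-cochain property $\tilde Q(\tilde{\mathbf y})=0$ recorded just before the statement; implicitly $\mathbf y$ lies in a polyhedral affinoid domain on which the relevant series converge.

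For the square: $\mathcal E_{\mathbf y}$ and $\mathcal E_{\tilde{\mathbf y}}$ are strict (only the arity-$1$ component is nonzero), so in every arity $k\ge 1$ the two composites reduce respectively to $\mathcal E_{\tilde{\mathbf y}}\circ\mathbf C_k$ and $\mathbf C_k^{\mathbf y}\circ\mathcal E_{\mathbf y}^{\otimes k}$. By its very construction in \eqref{mC_k_eq}, $\mathbf C_k$ is the image under $\phi^{-1}$ of the $\Lambda[[\pi_1(L)]]/\ia$-multilinear $Y$-series $F^{-1*}\sum_\beta T^{E(\beta)}Y^{\partial\beta}\mC_{k,\beta}$. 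On the other hand, since $\tilde{\mathbf y}=\phi(\mathbf y)$, evaluation at $\tilde{\mathbf y}$ on $\Lambda[[\pi_1(\tilde L)]]/\tilde\ia$ equals evaluation at $\mathbf y$ precomposed with $\phi$; equivalently $\mathcal E_{\tilde{\mathbf y}}\circ\phi^{-1}=\mathcal E_{\mathbf y}$ as maps on the source coefficient ring. Hence $\mathcal E_{\tilde{\mathbf y}}\circ\mathbf C_k=\mathcal E_{\mathbf y}\circ\big(F^{-1*}\sum_\beta T^{E(\beta)}Y^{\partial\beta}\mC_{k,\beta}\big)=F^{-1*}\sum_\beta T^{E(\beta)}\mathbf y^{\partial\beta}\mC_{k,\beta}$, and using the $\Lambda[[\pi_1(L)]]/\ia$-multilinearity to transfer the evaluation $\mathcal E_{\mathbf y}$ onto the $k$ inputs identifies this with $\mathbf C_k^{\mathbf y}\circ\mathcal E_{\mathbf y}^{\otimes k}$, cf. \eqref{mC_y_k_eq}. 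This proves the square.

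For the $A_\infty$ homomorphism property: composing $A_\infty$ homomorphisms, $\mathcal E_{\tilde{\mathbf y}}\diamond\mathbf C$ is a non-curved $A_\infty$ homomorphism $\mathbf m\to\tilde{\mathbf m}^{\tilde{\mathbf y}}$, hence so is $G:=\mathbf C^{\mathbf y}\diamond\mathcal E_{\mathbf y}$ by the square. Writing $b_\bullet$ for the bar codifferential of each of the $A_\infty$ algebras at hand and $\widehat{(\,\cdot\,)}$ for the coalgebra extension of an operator system, the homomorphism relation $\widehat G\circ b_{\mathbf m}=b_{\tilde{\mathbf m}^{\tilde{\mathbf y}}}\circ\widehat G$, together with $\widehat{\mathcal E_{\mathbf y}}\circ b_{\mathbf m}=b_{\mathbf m^{\mathbf y}}\circ\widehat{\mathcal E_{\mathbf y}}$ and $\widehat G=\widehat{\mathbf C^{\mathbf y}}\circ\widehat{\mathcal E_{\mathbf y}}$, yields $\big(\widehat{\mathbf C^{\mathbf y}}\circ b_{\mathbf m^{\mathbf y}}-b_{\tilde{\mathbf m}^{\tilde{\mathbf y}}}\circ\widehat{\mathbf C^{\mathbf y}}\big)\circ\widehat{\mathcal E_{\mathbf y}}=0$. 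The arity-$1$ part $(\mathcal E_{\mathbf y})_1$ is split surjective --- it is the identity on the $Y^0$-component --- so $\widehat{\mathcal E_{\mathbf y}}$ is surjective and the bracket vanishes; this is exactly the non-curved $A_\infty$ homomorphism equations for $\mathbf C^{\mathbf y}\colon\mathbf m^{\mathbf y}\to\tilde{\mathbf m}^{\tilde{\mathbf y}}$. Extracting arities $1$ and $2$ gives $\tilde{\mathbf m}_1^{\tilde{\mathbf y}}\circ\mathbf C_1^{\mathbf y}=\mathbf C_1^{\mathbf y}\circ\mathbf m_1^{\mathbf y}$ and $\tilde{\mathbf m}_2^{\tilde{\mathbf y}}(\mathbf C_1^{\mathbf y}(x_1),\mathbf C_1^{\mathbf y}(x_2))=\mathbf C_1^{\mathbf y}(\mathbf m_2^{\mathbf y}(x_1,x_2))$.

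The genuinely delicate points are bookkeeping rather than geometry, but it is worth flagging where they sit. In the square, one must be sure that the $\tilde Y$-presentation of $\mathbf C_k$ is the honest $\phi$-conjugate of its $Y$-presentation, so that evaluating at $\tilde{\mathbf y}$ in the former agrees with evaluating at $\mathbf y$ in the latter; this is precisely Lemma \ref{wall_crossing_coordiante_change_lem} combined with \eqref{mC_k_eq}. In the bootstrap, one must check that the surjectivity of $(\mathcal E_{\mathbf y})_1$ survives the completed tensor powers, which holds because $\mathcal E_{\mathbf y}$ is $\Lambda$-linear and split onto the reduced part. Should one wish to avoid the bootstrap, the whole proposition can instead be re-proved by repeating the argument of Proposition \ref{A_infinity_morphism_non_curved_prop} verbatim with the formal monomials $Y^{\partial\beta},\tilde Y^{\partial\beta}$ replaced everywhere by the scalars $\mathbf y^{\partial\beta},\tilde{\mathbf y}^{\partial\beta}$: the curvature terms are annihilated on the target side by $Q(\mathbf y)=0$ and the strict unitality of $\mC$, and on the source side by the pointwise form of Lemma \ref{wall_crossing_coordiante_change_lem}; in that route the one truly delicate step is again, exactly as in Proposition \ref{A_infinity_morphism_non_curved_prop}, the packaging of the $\mC_{0,\gamma}$-contributions into the factor $\exp\langle\partial\beta',\pmb{\mathfrak F}_\mC(\mathbf y)\rangle$ via the divisor axiom, which I expect to be the main obstacle.
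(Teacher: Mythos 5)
Your proof is correct, but it takes a genuinely different route from the paper's. The paper's entire proof of this proposition is the single sentence ``the proof is just the same as that of Proposition~\ref{A_infinity_morphism_non_curved_prop}'' --- that is, it repeats the divisor-axiom/unitality computation with the formal monomials $Y^{\partial\beta}$ and $\tilde Y^{\partial\beta}$ replaced by the scalars $\mathbf y^{\partial\beta}$ and $\tilde{\mathbf y}^{\partial\beta}$, and the quotient by $\ia$ replaced by the condition $Q(\mathbf y)=0$. This is precisely the ``alternative route'' you sketch in your last paragraph. Your primary argument instead establishes the commuting square directly from $\mathcal E_{\mathbf y}\circ\phi=\mathcal E_{\tilde{\mathbf y}}$ (which is how $\tilde{\mathbf y}=\phi(\mathbf y)$ is defined) plus the strictness of the evaluation morphisms, and then bootstraps: $\mathcal E_{\tilde{\mathbf y}}\diamond\mathbf C$ is $A_\infty$ because its two factors are (Lemma~\ref{Eva_y_HF_to_HF_lem} and Proposition~\ref{A_infinity_morphism_non_curved_prop}), and the surjectivity of $\widehat{\mathcal E_{\mathbf y}}$ in each tensor degree lets you cancel it from $\bigl(\widehat{\mathbf C^{\mathbf y}}\circ b_{\mathbf m^{\mathbf y}}-b_{\tilde{\mathbf m}^{\tilde{\mathbf y}}}\circ\widehat{\mathbf C^{\mathbf y}}\bigr)\circ\widehat{\mathcal E_{\mathbf y}}=0$. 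This is a clean functoriality argument: it avoids re-running the divisor-axiom bookkeeping at all, at the price of invoking the bar-construction formalism and the split surjectivity of $\mathcal E_{\mathbf y}$ (which is indeed split via the $Y^0$-component inclusion, and this is harmless since $\ia=0$ under Assumption~\ref{assumption_weak_MC}, or more generally since the generators of $\ia$ have positive valuation). The paper's approach buys uniformity with the preceding argument and avoids any appeal to surjectivity; yours buys conceptual economy and makes visible that this proposition is a formal consequence of the two earlier ones plus the definition of $\tilde{\mathbf y}=\phi(\mathbf y)$.
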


\begin{proof}
	The proof is just the same as that of Proposition \ref{A_infinity_morphism_non_curved_prop}.
\end{proof}

\begin{prop}
	\label{inv_Floer_self_prop_y}
	The $\mathbf C_1^{\mathbf y}$ similarly induces a $\Lambda$-algebra homomorphism
	$\eta_\mC^{\mathbf y}= [\mathbf C_1^{\mathbf y}] : \HF(L,\m, \mathbf y) \to \HF(L, \tilde \m, \tilde{\mathbf y})$ which only relies on the ud-homotopy class of $\mC$ in $\UD$. In particular, if $\mC$ admits a ud-homotopy inverse, then $\eta^{\mathbf y}$ is an isomorphism.
\end{prop}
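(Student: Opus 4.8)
The plan is to obtain the statement by specializing the already-proven formal-coefficient results at the point $\mathbf y$, i.e. by pushing everything through the evaluation map $\mathcal E_{\mathbf y}$ of (\ref{Eva_y_eq}); recall that $\mathbf y$ is taken in the relevant polyhedral affinoid domain with $Q(\mathbf y)=0$ (automatic under Assumption \ref{assumption_weak_MC}) and that $\tilde{\mathbf y}=\phi_\mC(\mathbf y)$, so $\tilde Q(\tilde{\mathbf y})=0$. First, that $\eta_\mC^{\mathbf y}=[\mathbf C_1^{\mathbf y}]$ is well-defined and a unital $\Lambda$-algebra homomorphism is immediate from the preceding proposition: $\mathbf C_1^{\mathbf y}$ intertwines $\mathbf m_1^{\mathbf y}$ and $\tilde{\mathbf m}_1^{\tilde{\mathbf y}}$ (so it descends to cohomology), the identity $\tilde{\mathbf m}_2^{\tilde{\mathbf y}}(\mathbf C_1^{\mathbf y}(x_1),\mathbf C_1^{\mathbf y}(x_2))=\mathbf C_1^{\mathbf y}(\mathbf m_2^{\mathbf y}(x_1,x_2))$ makes it multiplicative, and the unitality of $\mC$ (hence of $\mathbf C^{\mathbf y}$) makes it unital, exactly as in (\ref{eta_HF_mC_1}).

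For the ud-homotopy invariance, let $\mC\simud\mC'$ be two $A_\infty$ homotopy equivalences up to the Fukaya's trick, and let $\xi=\sum_\beta T^{E(\beta)}Y^{\partial\beta}F^{-1*}\int_0^1(\h_s)_{1,\beta}\,ds$ be the cochain homotopy produced in the proof of Proposition \ref{inv_Floer_self_prop}, so that $\mathbf C'_1-\mathbf C_1=\xi\circ\mathbf m_1+\tilde{\mathbf m}_1\circ\xi$. Set $\xi^{\mathbf y}:=F^{-1*}\sum_\beta T^{E(\beta)}\mathbf y^{\partial\beta}\int_0^1(\h_s)_{1,\beta}\,ds$ for its evaluation. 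Because $\xi$ transports $Y$-coefficients to $\tilde Y$-coefficients compatibly with the identification $\phi_\mC$ and $\tilde{\mathbf y}=\phi_\mC(\mathbf y)$, one checks $\mathcal E_{\tilde{\mathbf y}}\circ\xi=\xi^{\mathbf y}\circ\mathcal E_{\mathbf y}$. Applying $\mathcal E_{\tilde{\mathbf y}}$ to the cochain-homotopy identity, and using that $\mathcal E_{\mathbf y},\mathcal E_{\tilde{\mathbf y}}$ are cochain maps (Lemma \ref{Eva_y_HF_to_HF_lem}) together with the naturality $\mathcal E_{\tilde{\mathbf y}}\diamond\mathbf C=\mathbf C^{\mathbf y}\diamond\mathcal E_{\mathbf y}$ (and likewise for $\mathbf C'$) from the preceding proposition, I obtain
\[
\big((\mathbf C')^{\mathbf y}_1-\mathbf C^{\mathbf y}_1\big)\circ\mathcal E_{\mathbf y}=\big(\xi^{\mathbf y}\circ\mathbf m_1^{\mathbf y}+\tilde{\mathbf m}_1^{\tilde{\mathbf y}}\circ\xi^{\mathbf y}\big)\circ\mathcal E_{\mathbf y}.
\]
Since $\mathbf y^0=1$, the evaluation $\mathcal E_{\mathbf y}$ is surjective (it is the identity on constant coefficients), so it can be cancelled on the right; hence $\xi^{\mathbf y}$ is a cochain homotopy between $\mathbf C_1^{\mathbf y}$ and $(\mathbf C')^{\mathbf y}_1$, and $\eta_\mC^{\mathbf y}=\eta_{\mC'}^{\mathbf y}$. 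Equivalently, one re-runs the computation of Proposition \ref{inv_Floer_self_prop} verbatim, replacing each $Y^{\partial\beta}$ by $\mathbf y^{\partial\beta}$, the appeal to Lemma \ref{val=0-lem} by the built-in divisor-axiom factors $\mathbf y^{\partial\beta}$, and every ``modulo $\ia$'' by the honest vanishing coming from $Q(\mathbf y)=0$.

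For the final claim, I would first record the evaluated composition law $\eta_{\mC'\diamond\mC}^{\mathbf y}=\eta_{\mC'}^{\tilde{\mathbf y}}\circ\eta_\mC^{\mathbf y}$, which follows from the $k=1$ part of the $\mathbf y$-analogue of the composition proposition by the same argument that produced (\ref{eta_composition_eq}). Then, given a ud-homotopy inverse $\mC^{-1}$ of $\mC$ (so $\mC^{-1}\diamond\mC\simud\id$, $\mC\diamond\mC^{-1}\simud\id$, and $\phi_{\mC^{-1}}(\tilde{\mathbf y})=\mathbf y$), combining this with the invariance just proved yields $\eta_{\mC^{-1}}^{\tilde{\mathbf y}}\circ\eta_\mC^{\mathbf y}=\eta_{\mC^{-1}\diamond\mC}^{\mathbf y}=\eta_{\id}^{\mathbf y}=\id$ and symmetrically in the other order, so $\eta_\mC^{\mathbf y}$ is an isomorphism.

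I do not anticipate a genuine obstacle here: the entire content is the specialization at $\mathbf y$ of Propositions \ref{A_infinity_morphism_non_curved_prop} and \ref{inv_Floer_self_prop}, so the ``new'' information about $\UD$ beyond \cite{Yuan_I_FamilyFloer} is already absorbed there (cf. Remarks \ref{rmk_selfHF_morphism} and \ref{rmk_selfHF_homot_inv}). The only points demanding care are bookkeeping ones — keeping $\mathbf y$ inside the polyhedral affinoid domain on which all the relevant series converge (as in the remarks following (\ref{Eva_y_eq})), and consistently invoking $Q(\mathbf y)=\tilde Q(\tilde{\mathbf y})=0$ wherever the formal-coefficient proofs argued modulo $\ia$.
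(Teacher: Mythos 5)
Your proposal is correct and is essentially the expanded version of the paper's own proof, which simply says "the proof is just the same as that of Proposition \ref{inv_Floer_self_prop} and Corollary \ref{inv_Floer_self_cor}." Both your route (a) (pushing the cochain homotopy $\xi$ through $\mathcal E_{\mathbf y}$ using surjectivity) and your route (b) (rerunning the argument verbatim with $Y^{\partial\beta}$ replaced by $\mathbf y^{\partial\beta}$ and $\operatorname{mod}\ia$ replaced by $Q(\mathbf y)=0$) are valid unpackings of this, with (b) being the one the paper implicitly intends.
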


\begin{proof}
	The proof is just the same as that of Proposition \ref{inv_Floer_self_prop} and Corollary \ref{inv_Floer_self_cor}.
\end{proof}

The proposition tells that $\HF(L,\m,\mathbf y)$ is invariant under the various analytic coordinate change $\phi$ (Lemma \ref{wall_crossing_coordiante_change_lem}). From the viewpoint of Theorem \ref{Main_theorem_thesis}, we should view $\mathbf y$ and $\tilde {\mathbf y}=\phi(\mathbf y)$ as the same point in the mirror analytic space $X^\vee$ but referring to different local charts.

\subsection{Critical points of the superpotential}
\label{ss_critical_points_nonvanishing}

Take a formal power series
$F=\sum_{j=1}^\infty c_j Y^{\alpha_j}$ in $\Lambda[[\pi_1(L)]]$ (where $c_j\in\Lambda$ and $\alpha_j\in \pi_1(L)$).
Given $\theta \in H^1(L) \cong \Hom (\pi_1(L),\mathbb R)$, we define the \textit{logarithmic derivative along $\theta$} of $F$ by
\begin{equation}
	\label{D_theta_eq_defn}
\textstyle D_\theta F= \sum_{j=1}^\infty c_j  \langle \alpha_j, \theta\rangle Y^{\alpha_j} 
\end{equation}
One can easily check the following properties:  (a) $D_\theta (F\cdot G)= F\cdot D_\theta G + G\cdot D_\theta F$; (b) $D_\theta(\exp(F)) =\exp (F) \cdot D_\theta F$.
Given an ideal $\mathcal I$ in $\Lambda[[\pi_1(L)]]$, we denote by
\begin{equation}
	\label{D_ideal_eq}
	D \mathcal I = \ \text{the ideal generated by} \  \{ D_\theta F\mid \forall  \ \theta\in H^1(L;\mathbb Z), \,   \forall \ F\in \mathcal I\} 
\end{equation}

We note that $D\mathcal I=0$, whenever $\mathcal I =0$.

Suppose we have an $A_\infty$ homotopy equivalence $\mC:  (\HL, \m) \to (H^*(\tilde L), \tilde \m)$ in $\Mor \UD$ up to the Fukaya's trick about a small isotopy $F$ such that $F(L)=\tilde L$ (Convention \ref{convention_Fuk_trick}). Recall that by Lemma \ref{wall_crossing_coordiante_change_lem}, the analytic coordinate change map 
\[
\phi: \Lambda[[\pi_1(\tilde L)]]/\tilde\ia \to \Lambda[[\pi_1(L)]]/\ia, \qquad 
\tilde Y^{\alpha} \mapsto T^{\langle \alpha, \lambda\rangle} Y^{\alpha} \exp \langle \alpha, \pmb {\mathfrak F}_{\mC}(Y)\rangle
\]
is an isomorphism such that $\phi(\tilde W)=W$. Recall that $W, Q, \ia$ and $\tilde W, \tilde Q, \tilde \ia$ are given as in (\ref{W_Q_eq}) for $\m$ and $\tilde \m$ respectively. 
Regard $W$ or $\tilde W$ as a principal ideal, we can define $DW$ or $D\tilde W$ just like (\ref{D_ideal_eq}).

\begin{prop}
	\label{DW_ideal_thm}
The analytic coordinate change map $\phi$ match the ideal $D\tilde W$ with $DW$.
\end{prop}

\begin{proof}
	Suppose $\theta\in H^1(L)$ and $\tilde\theta \in H^1(\tilde L)$ are $F$-related.	
	First, we compute the commutator of $\phi$ and $D$ applied to a monomial $\tilde Y^{\alpha}$:
\[
	D_\theta( \phi(\tilde Y^\alpha)) -\phi (D_{\tilde\theta} \tilde Y^\alpha) = T^{\langle \alpha, \lambda\rangle}  Y^\alpha \exp \langle \alpha, \pmb {\mathfrak F}_{\mC}(Y)\rangle D_\theta \langle \alpha, \pmb {\mathfrak F}_{\mC}(Y)\rangle
\]
Note that $\pmb {\mathfrak F}_{\mC} (Y)= \sum T^{E(\gamma)} Y^{\partial\gamma} \mC_{0,\gamma}$ is contained in $\Lambda[[\pi_1(L)]]\hat\otimes H^1(L)$ by (\ref{mC_F_notation_eq}). Next, we compute
	\begin{align*}
		D_\theta W -\phi (D_{\tilde \theta} \tilde W)
		&
		=
		D_\theta( \phi(\tilde W)) -\phi (D_{\tilde \theta} \tilde W) \\
		&
		=  	\sum_{\mu(\beta)=2} T^{E(\beta)} Y^{\partial\beta} \exp \langle \partial\beta, \pmb {\mathfrak F}_{\mC} (Y)\rangle D_\theta  \langle \partial\beta, \pmb {\mathfrak F}_{\mC}(Y)\rangle \cdot \tilde \m_{0,\beta}		\\
		&
		=	\sum_{\mu(\beta)=2} T^{E(\beta)} Y^{\partial\beta} \exp \langle \partial\beta, \pmb {\mathfrak F}_{\mC} (Y)\rangle \Big(
		\sum_{\gamma\neq 0, \mu(\gamma)=0} T^{E(\gamma)} \langle \partial\gamma, \theta\rangle Y^{\partial\gamma} \langle \partial\beta, \mC_{0,\gamma} \rangle
		\Big) \tilde \m_{0,\beta}	\\
		&
		=
		\sum_{\gamma\neq 0,\mu(\gamma)=0} T^{E(\gamma)}
		\langle \partial\gamma, \theta\rangle Y^{\partial\gamma} \sum_{\mu(\beta)=2} 
		\langle \partial\beta, \mC_{0,\gamma}\rangle    \cdot
		T^{E(\beta)} 
		 Y^{\partial\beta} \exp\langle \partial\beta, \pmb {\mathfrak F}_\mC(Y)\rangle  \cdot\tilde \m_{0,\beta}
	\end{align*}
For a fixed $\gamma$, we observe that the second sum over $\beta$ can be further simplified as follows:
\begin{align*}
	 \Scale[0.9]{ 
	 	\displaystyle 
\sum  \langle \partial \beta,   \mC_{0,\gamma}\rangle \ \phi\big( T^{E(\tilde\beta)} Y^{\partial\tilde\beta} \tilde\m_{0,\beta} \big)
=	
	 \phi \Big(
	\sum  \langle \partial\tilde\beta, F^{-1*} \mC_{0,\gamma}\rangle  T^{E(\tilde\beta)} \tilde Y^{\partial\tilde\beta} \tilde\m_{0,\beta}
	\Big) 
	=
	\phi \Big( D_{{}_{F^{-1*}\mC_{0,\gamma}} } \tilde W \Big)}
\end{align*}
%
Thus,
$D_\theta W$ is contained in the (extension) image ideal $\phi(D\tilde W)$, so $DW \subset \phi(D\tilde W)$.
By Lemma \ref{wall_crossing_coordiante_change_lem}, the inverse $\phi^{-1}$ can be defined by the same pattern as (\ref{phi_eq}) using a ud-homotopy inverse $\mC^{-1}$ of $\mC$.
Hence, we can similarly obtain that $D\tilde W\subset \phi^{-1}(DW)$. Putting them together, we conclude $\phi(D\tilde W)=DW$.
\end{proof}

\begin{defn}
	\label{critical_point_W_defn}
We call $\mathbf y\in H^1(L; \Lambda^* )$ a \textbf{critical point} of $W$ if $\mathbf y$ is contained in the convergence domain\footnote{When we substitute $\mathbf{y}$ into $W$, the result $W(\mathbf{y})$ is a series in the field $\Lambda$. We require that it converges with respect to the non-Archimedean norm; see \cite[Page 10, Lemma 3]{BoschBook}.}
and $D_{\theta} W(\mathbf y)=0$ for any $\theta\in H^1(L)$.
\end{defn}

\begin{cor}
	\label{crit_point_inv_cor}
	$\mathbf y$ is a critical point of $W$ if and only if $\tilde{\mathbf y}:=\phi(\mathbf y)$ is a critical point of $\tilde W$.
	Thus, in Theorem \ref{Main_theorem_thesis}, the critical points of $W^\vee$ are well-defined points in the mirror analytic space $X^\vee$.
\end{cor}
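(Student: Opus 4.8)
The plan is to deduce this directly from Theorem \ref{DW_ideal_thm} together with the analyticity of $\phi$ recorded in Lemma \ref{wall_crossing_coordiante_change_lem}. The first step is to rephrase Definition \ref{critical_point_W_defn} in terms of the ideal $DW$: since $D_\theta W$ is $\mathbb R$-linear in $\theta$ and $H^1(L;\mathbb Z)$ spans $H^1(L;\mathbb R)$, the requirement that $D_\theta W(\mathbf y)=0$ for every $\theta\in H^1(L)$ is equivalent to the vanishing at $\mathbf y$ of all the generators of $DW$ listed in (\ref{D_ideal_eq}), hence to the vanishing of the whole ideal $DW$ at $\mathbf y$. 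In other words, the critical locus of $W$ is precisely the common zero set of the ideal $DW$ inside the convergence domain of $W$, and likewise for $\tilde W$ and $D\tilde W$.

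Next I would invoke Theorem \ref{DW_ideal_thm}, which says $\phi(D\tilde W)=DW$ after extension along the ring isomorphism $\phi$ of Lemma \ref{wall_crossing_coordiante_change_lem}. Because $\phi$ is a ring isomorphism carrying generators of $D\tilde W$ to generators of $DW$, and because $\phi$ as a ring map is the pullback by the analytic coordinate change $\mathbf y\mapsto\tilde{\mathbf y}=\phi(\mathbf y)$ of (\ref{phi_*_eq}), one has $(\phi g)(\mathbf y)=g(\tilde{\mathbf y})$ for each generator $g$ of $D\tilde W$. Hence $\mathbf y$ lies in the zero set of $DW$ if and only if $\tilde{\mathbf y}$ lies in the zero set of $D\tilde W$. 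One small bookkeeping point is that $\mathbf y$ lies in the convergence domain of $W$ exactly when $\tilde{\mathbf y}$ lies in that of $\tilde W$: this is built into the statement that $\phi$ is an isomorphism of the relevant polytopal affinoid domains and restricts to the bijection $H^1(L;U_\Lambda)\leftrightarrow H^1(\tilde L;U_\Lambda)$, and it is consistent with $\phi(\tilde W)=W$. Combining, $\mathbf y$ is a critical point of $W$ if and only if $\tilde{\mathbf y}$ is a critical point of $\tilde W$, which is the first assertion.

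For the global statement, recall from Theorem \ref{Main_theorem_thesis} and \cite{Yuan_I_FamilyFloer} that $X^\vee$ is glued from the local charts $H^1(L_q;\Lambda^*)$ (restricted to the convergence/affinoid domains) along the family Floer analytic transition maps, which are exactly the coordinate changes $\phi$ of Lemma \ref{wall_crossing_coordiante_change_lem}, and that the local superpotentials patch into the global $W^\vee$ since $\phi(\tilde W)=W$. By the first part of the corollary, being a critical point is preserved under every such transition map, so the set of local critical points of the local superpotentials agrees on overlaps and therefore descends to a well-defined subset of $X^\vee$, namely the critical locus of $W^\vee$.

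I do not expect a genuine obstacle: essentially all the content is in Theorem \ref{DW_ideal_thm}. The only steps requiring a line of care are the linear-algebra reduction ``$\mathbf y$ is critical $\Leftrightarrow$ $\mathbf y$ annihilates $DW$'' (which uses that $H^1(L;\mathbb Z)$ spans) and the verification that the convergence/affinoid domains are matched by $\phi$, so that evaluation at $\mathbf y$ and at $\tilde{\mathbf y}$ make simultaneous sense.
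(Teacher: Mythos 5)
Your overall plan is right and is the one the paper intends (the paper offers no separate proof of the corollary; it is meant to fall straight out of Theorem~\ref{DW_ideal_thm} and Lemma~\ref{wall_crossing_coordiante_change_lem}). However, there is a small but genuine imprecision in the first step that you should fix. You claim the critical locus of $W$ is \emph{exactly} the common zero set of the ideal $DW$. With the definition (\ref{D_ideal_eq}) applied to the principal ideal $(W)$, the ideal $DW$ is generated by $D_\theta(gW)=g\,D_\theta W + W\,D_\theta g$ for all $g$ and $\theta$, and hence contains $W\cdot D_\theta g$ for all $g,\theta$. On $(\Lambda^*)^n$, taking $g=Y_1$ and $\theta$ dual to $Y_1$ gives $D_\theta g = Y_1\neq 0$, so a common zero $\mathbf y$ of $DW$ must satisfy $W(\mathbf y)=0$ in addition to $D_\theta W(\mathbf y)=0$. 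In other words, the zero set of $DW$ as literally defined is $\{D_\theta W=0\ \forall\theta\}\cap\{W=0\}$, which is strictly smaller than the critical locus whenever $W$ has a nonzero critical value. Feeding this into Theorem~\ref{DW_ideal_thm} would only give the equivalence of critical points with critical value zero.

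The fix is straightforward and matches what the proof of Theorem~\ref{DW_ideal_thm} actually establishes: work with the \emph{Jacobian ideal} $\mathcal J_W:=(D_\theta W\mid\theta\in H^1(L;\mathbb Z))\subset DW$, whose common zero set inside the convergence domain is precisely the critical locus (here your linear-algebra remark that $H^1(L;\mathbb Z)$ spans $H^1(L;\mathbb R)$ is exactly what is needed). The displayed computation in the proof of Theorem~\ref{DW_ideal_thm} exhibits $D_\theta W - \phi(D_{\tilde\theta}\tilde W)$ as an explicit $\Lambda[[\pi_1(L)]]$-combination of $\phi\bigl(D_{F^{-1*}\mC_{0,\gamma}}\tilde W\bigr)$, so $\mathcal J_W\subset\phi(\mathcal J_{\tilde W})$, and the reverse inclusion follows by applying the same argument to $\phi^{-1}$ and $\mC^{-1}$. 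Then your evaluation identity $(\phi g)(\mathbf y)=g(\tilde{\mathbf y})$ for $\tilde{\mathbf y}=\phi(\mathbf y)$ gives $D_\theta W(\mathbf y)=0$ for all $\theta$ if and only if $D_{\tilde\theta}\tilde W(\tilde{\mathbf y})=0$ for all $\tilde\theta$, as desired. The rest of your write-up, including the point about $\phi$ matching the affinoid convergence domains and the descent argument to the global $X^\vee$, is correct.
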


\subsection{Non-vanishing of self Floer cohomology}

For the lemma below, the basic ideas already exist in the literature, e.g. \cite[Lemma 13.1]{FOOOToricOne}, \cite[Lemma 2.4.20]{FOOO_bookblue}, \cite[Theorem 2.3]{FOOO_toric_degeneration}. But, we want some slight modifications in our settings.
Let $(\HL, \m)\in \Obj\UD$ be a minimal model $A_\infty$ algebra obtained by the moduli space of holomorphic disks and by the homological perturbation. Recall that $\m_{1,0}=0$ and $\m_{2,0}$ agrees with the wedge product up to sign.

Define the $\mathbf m_k$ from $\m=(\m_{k,\beta})$ as in (\ref{CF_m_k_eq}).

\begin{lem}
	\label{key_lemma}
	Suppose the de Rham cohomology ring $H^*(L)$ is generated by $H^1(L; \mathbb Z)$. Let $\{\theta_1,\dots,\theta_n\}$ in $H^1(L;\mathbb Z)$ be generators.
	For any $x \in H^*(L)$, there exist $R_1,\dots, R_n \in H^*(L) \hat\otimes \Lambda_0[[\pi_1(L)]]$
	such that
	\[
	\mathbf m_1(x) = D_{\theta_1} W\cdot R_1+\cdots+ D_{\theta_n} W \cdot R_n \quad (\mathrm{mod} \ D \ia)
	\]
\end{lem}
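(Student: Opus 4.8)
The plan is to exploit the divisor axiom to turn $\mathbf m_1(x)$ into a derivative of the curved term $\mathbf m_0 = W\cdot\one + Q$, and then to identify that derivative with a $W$-controlled object modulo $D\ia$. Concretely, since $H^*(L)$ is generated by $H^1(L;\mathbb Z)$, I would first write $x$ as a polynomial in the generators $\theta_1,\dots,\theta_n$ (wedge products), using the minimal model relation $\m_{2,0}=\pm\wedge$ to realize wedge products via $\mathbf m_2$ up to corrections by higher $\mathbf m_k$ and by positive-energy $\m_{2,\beta}$. The key move is the following: for a degree-one closed form $b$, the divisor axiom gives $\sum_{i}\m_{k+1,\beta}(\dots,b,\dots)=\partial\beta\cap b\cdot \m_{k,\beta}(\dots)$, which after multiplying by $T^{E(\beta)}Y^{\partial\beta}$ and summing says that inserting $Y^{\alpha}$ (with $\alpha\leftrightarrow b$) as an extra argument into $\mathbf m_{k}$ amounts, up to the $A_\infty$ relation, to applying $D_\theta$ with $\theta=\langle\cdot,b\rangle$. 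So the strategy is to use the $A_\infty$ associativity relations of $\mathbf m$ (Proposition \ref{A_infinity_non_curved_prop} / Corollary \ref{m_123_cor}) to express $\mathbf m_1(\theta_{i_1}\wedge\cdots\wedge\theta_{i_r})$ as a combination of terms $\mathbf m_{\bullet}(\dots,\theta_{i_j},\dots)$ and then recognize, via the divisor axiom applied to the curved relation $\sum_\lambda \mathbf m_{k+1}(x_1^\#,\dots,x_\lambda^\#,\mathbf m_0,x_{\lambda+1},\dots,x_k)\equiv 0$ (mod $\ia$) from the proof of Proposition \ref{A_infinity_non_curved_prop}, that the net effect is $D_{\theta_{i_j}}$ acting on $\mathbf m_0$.

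In more detail, I would run an induction on the word length $r$ of a monomial $\theta_I=\theta_{i_1}\wedge\cdots\wedge\theta_{i_r}$. The base case uses the curved $A_\infty$ relation in the form
\[
\mathbf m_1(\mathbf m_2(\theta,z)) = -\,\mathbf m_2(\theta^\#,\mathbf m_1 z) - \mathbf m_2(\mathbf m_1\theta,z) - \mathbf m_3(\mathbf m_0,\dots) - \cdots
\]
together with the divisor axiom, which converts $\mathbf m_2(\theta,\cdot)$ acting on the curvature into $D_\theta(\mathbf m_0)=D_\theta W\cdot\one + D_\theta Q$. The term $D_\theta Q$ lies in $D\ia$ by definition (\ref{D_ideal_eq}), so it is absorbed into the "mod $D\ia$". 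The $D_\theta W\cdot\one$ term, multiplied by whatever $H^*(L)\hat\otimes\Lambda_0[[\pi_1(L)]]$-coefficient has been accumulated, contributes to $D_{\theta_i}W\cdot R_i$. The remaining terms are either lower-order (handled by induction) or again of the shape $\mathbf m_{\bullet}(\dots)$ with a closed degree-one input, to which the divisor axiom applies once more. Throughout, one uses that $\m_{k,\beta}\neq 0$ forces $E(\beta)\ge 0$ so all coefficients stay in $\Lambda_0[[\pi_1(L)]]$, and that $\mathbf m_1$ lands in degree $\equiv 0$ so the bookkeeping of shifted degrees is consistent.

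The main obstacle I expect is organizing the induction so that every "leftover" term is genuinely of strictly smaller complexity — i.e. controlling the higher operations $\mathbf m_k$ ($k\ge 3$) and the positive-energy part of $\mathbf m_2$ (which is \emph{not} the strict wedge product) that appear when one writes $x$ in terms of $\theta_1,\dots,\theta_n$. One has to be careful that expressing $\theta_I$ via iterated $\mathbf m_2$'s introduces error terms $\mathbf m_2(\theta_i,\theta_j) - \theta_i\wedge\theta_j \in H^{\ge 2}(L)\hat\otimes\Lambda_+[[\pi_1(L)]]$, and these must be fed back into the induction (they are of lower de Rham degree after the correction, or of strictly higher energy, giving a valid induction on a lexicographic pair). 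A clean way to handle this is to induct on the pair $(\deg x,\ \text{energy filtration level})$ and invoke the gappedness condition (c) to ensure the energy induction terminates; the divisor axiom and the curved $A_\infty$ relations then do all the real work, exactly paralleling \cite[Lemma 2.4.20]{FOOO_bookblue} but keeping track of $D\ia$ rather than working modulo $\Lambda_+$.
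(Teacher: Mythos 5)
Your plan uses the same ingredients as the paper's proof — the divisor axiom identity $\mathbf m_1(\theta)=D_\theta W\cdot\one + D_\theta Q$, the curved $A_\infty$ relations from Proposition~\ref{A_infinity_non_curved_prop} / Corollary~\ref{m_123_cor}, peeling off a degree-one factor via $x=\theta\wedge x'=\m_{2,0}(-\theta,x')$, and the observation that $D_\theta Q\in D\ia$ — and you correctly flag the positive-energy part of $\mathbf m_2$ as the real obstacle. However, exactly at that point your argument has a gap. Your proposed ``lexicographic induction on $(\deg x,\,\text{energy filtration level})$'' cannot terminate: gappedness condition (c) guarantees only finitely many classes $\beta$ with $E(\beta)\le E_0$ for each \emph{fixed} $E_0$, but the iteration pushes you to higher and higher energy levels with no ceiling, so there is no well-founded induction on the energy index. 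Saying gappedness ``ensures the energy induction terminates'' is not correct; what is needed is a completeness argument, not a terminating recursion.

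The paper resolves this with a cleaner two-step structure that you may want to adopt. It first isolates $\hbar>0$ below the minimal positive disk energy and proves a Sublemma by induction on $\deg x$ \emph{alone}: $\mathbf m_1(x) = \sum_i D_{\theta_i}W\cdot R_i + T^{\hbar}\,\mathbf m_1(S) \pmod{D\ia}$, where the error term is \emph{explicitly} factored as $T^{\hbar}$ times an $\mathbf m_1$-image. The key trick is the decomposition $x = \mathbf m_2(-\theta,x') + T^{\hbar}S_0$: the entire positive-energy part of $\mathbf m_2$ contributes a single $T^{\hbar}S_0$ term, and the induction on degree never touches energy. Then one iterates the Sublemma ad infinitum, obtaining $\mathbf m_1(x)=\sum_i D_{\theta_i}W\cdot\sum_{k\ge 0}T^{k\hbar}R_i^{(k)}$ with all $R_i^{(k)}\in H^*(L)\hat\otimes\Lambda_0[[\pi_1(L)]]$, and the inner sums converge in the $T$-adic topology because of the factor $T^{k\hbar}$. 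So the role of $\hbar$ and of $\Lambda_0[[\pi_1(L)]]$-completeness is precisely to replace your (ill-founded) energy induction with a convergent geometric iteration; without stating this explicitly your proof does not close.
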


By \cite[Proposition 4.1.4]{MS}, for a sufficient small number $\hbar>0$, any nontrivial holomorphic disk $u$ in $\pi_2(X,L)$ satisfies
\begin{equation}
	\label{hbar_eq}
	E(u) > \hbar >0
\end{equation}

In the first place, we prove a weaker statement as follows:

\begin{sublemma}
	\label{key_lemma_to_prove_lem}
	Assume the same conditions in Lemma \ref{key_lemma}.
	For any $x \in H^*(L)$, there exist $R_1,\dots, R_n, S\in H^*(L) \hat\otimes \Lambda_0[[\pi_1(L)]]$
	such that
	\[
	\mathbf m_1(x) =D_{\theta_1} W\cdot R_1+\cdots+ D_{\theta_n} W \cdot R_n + T^{\hbar} \mathbf m_1 (S) \quad (\mathrm{mod} \ D \ia)
	\]
\end{sublemma}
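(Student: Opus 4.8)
The plan is to induct on the cohomological degree of $x$, peeling off one degree-one generator at each step and absorbing the errors into the two allowed remainders: the ideal $D\ia$ and the tail $T^{\hbar}\mathbf m_1(S)$. Throughout, the $A_\infty$ relations of Corollary~\ref{m_123_cor} and the divisor axiom for $\m$ (\S\ref{sss_unitality_divisor_axiom}) do all the work.

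First I would record the elementary inputs. Since $(\HL,\m)$ is a minimal model we have $\m_{1,0}=0$, and since every $\beta\neq 0$ with $\m_\beta\neq 0$ satisfies $E(\beta)>\hbar$ by \eqref{hbar_eq} and the gappedness, in \eqref{CF_m_k_eq} one can write $\mathbf m_1=T^{\hbar}\,\mathbf m_1'$ and $\m_{2,0}(a,b)=\mathbf m_2(a,b)-T^{\hbar}z_{a,b}$, where $\mathbf m_1'$ and $z_{a,b}$ take values in $\HL\hat\otimes\Lambda_0[[\pi_1(L)]]$. Next, the divisor axiom applied with base index $(0,\beta)$, $\beta\neq 0$, together with $\m_{1,0}=0$ and \eqref{W_Q_eq}, gives for every generator $\theta_i\in H^1(L;\mathbb Z)=Z^1(\HL)$
\[
\mathbf m_1(\theta_i)=\sum_{\beta\neq 0}T^{E(\beta)}Y^{\partial\beta}\langle\partial\beta,\theta_i\rangle\,\m_{0,\beta}=D_{\theta_i}\!\big(W\cdot\one+Q\big)=D_{\theta_i}W\cdot\one+D_{\theta_i}Q,
\]
and $D_{\theta_i}Q\in\HL\hat\otimes D\ia$ since the components of $Q$ generate $\ia$; hence $\mathbf m_1(\theta_i)\equiv D_{\theta_i}W\cdot\one\pmod{D\ia}$. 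Finally, unitality of $\m$ gives $\mathbf m_1(\one)=0$ and $\mathbf m_2(a^{\#},\one)=-a$; both $\mathbf m_1,\mathbf m_2$ are $\Lambda_0[[\pi_1(L)]]$-(multi)linear and therefore preserve the submodule $\HL\hat\otimes D\ia$.

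Now the induction on $\deg x$. For $\deg x=0$ the claim holds trivially since $\mathbf m_1(\one)=0$, and for $\deg x=1$ it follows from the displayed identity by expanding $x$ in the generators $\theta_i$ (which span $H^1(L)$). For $\deg x=p\geq 2$, since $\HL$ is generated by $H^1(L;\mathbb Z)$ I may assume, by linearity and passing to monomials, that $x=c\,\m_{2,0}(x',\theta)$ for some generator $\theta=\theta_j$, a sign $c=\pm1$, and $x'\in H^{p-1}(L)$. Using $\m_{2,0}(x',\theta)=\mathbf m_2(x',\theta)-T^{\hbar}z_{x',\theta}$ and then the second relation of Corollary~\ref{m_123_cor},
\[
\mathbf m_1\big(\mathbf m_2(x',\theta)\big)=-\mathbf m_2\big((x')^{\#},\mathbf m_1(\theta)\big)-\mathbf m_2\big(\mathbf m_1(x'),\theta\big),
\]
I treat the two terms separately. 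The first equals $D_{\theta}W\cdot x'$ modulo $D\ia$, using $\mathbf m_1(\theta)\equiv D_\theta W\cdot\one$ and $\mathbf m_2((x')^{\#},\one)=-x'$. For the second, the inductive hypothesis for $x'$ expresses $\mathbf m_1(x')$ as a $D_{\theta_i}W$-combination plus $T^{\hbar}\mathbf m_1(S_{x'})$ modulo $D\ia$; substituting and using bilinearity of $\mathbf m_2$ yields the desired $D_{\theta_i}W\cdot R_i$ terms together with a residue $T^{\hbar}\mathbf m_2\big(\mathbf m_1(S_{x'}),\theta\big)$, to which I apply the same $A_\infty$ relation once more (now to $\mathbf m_2(S_{x'},\theta)$): this converts the residue into a $T^{\hbar}\mathbf m_1(\cdot)$ term plus a further $T^{\hbar}D_\theta W$-multiple, modulo $D\ia$. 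Gathering all $D_{\theta_i}W$-multiples into $R_1,\dots,R_n$ and all $T^{\hbar}\mathbf m_1(\cdot)$-contributions into $T^{\hbar}\mathbf m_1(S)$ completes the step, once one checks that every coefficient that appears lies in $\HL\hat\otimes\Lambda_0[[\pi_1(L)]]$ — which holds because each $\m_{k,\beta}$ has $E(\beta)\geq 0$ and every extra $D_{\theta_i}$ or $T^{\hbar}$ keeps us inside $\Lambda_0$.

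The main obstacle is not any single hard idea but the bookkeeping: one must keep the two remainder regimes — the ideal $D\ia$ and the $T^{\hbar}\mathbf m_1(\cdot)$-tail — scrupulously separate through the nested uses of the $A_\infty$ relation, and one must make sure $\mathbf m_1$ never introduces negative powers of $T$, so that $R_1,\dots,R_n,S$ genuinely lie over $\Lambda_0$ rather than $\Lambda$. The hypotheses $\m_{1,0}=0$ (minimal model), the divisor axiom, and the semipositive condition are precisely what make these two points work; granting them, the computation is essentially forced.
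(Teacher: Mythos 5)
Your argument is correct and follows essentially the same route as the paper's proof: induction on degree, the base case via the divisor axiom, and the inductive step via the second relation of Corollary~\ref{m_123_cor} applied twice (once to decompose $\mathbf m_1(x)$, once more to absorb $\mathbf m_2(\mathbf m_1(S'),\theta)$ into a $\mathbf m_1(\cdot)$ term plus a $D_\theta W$-multiple). The only differences are cosmetic — you place the degree-one factor $\theta$ on the right and track signs via $c=\pm1$ and $x^\#$, whereas the paper writes $x=\m_{2,0}(-\theta,x')$ with $\theta$ on the left.
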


\begin{proof}
	We perform induction on $\deg x$.
	When $\deg x=0$, $x=c\cdot\one\in H^0(L)$, so $\mathbf m_1(x)=0$. When $\deg x=1$, we write $x=\theta\in H^1(L)$, and it follows from the divisor axiom of $\m$ that
	\begin{equation}
		\label{key_sublemma_m_1_theta}
		\textstyle \mathbf m_1(\theta) 
		=
		\sum_\beta T^{E(\beta)} Y^{\partial\beta} \langle \partial\beta,\theta\rangle \m_{0,\beta}= D_{\theta} W\cdot \one + D_\theta Q
		 \equiv D_{\theta} W\cdot \one \ \ (\mathrm{mod} \ D\ia)
	\end{equation}
	
	Suppose the lemma is true for degrees $\le k-1$.
	When $\deg x=k$, we may write $x=\theta\wedge x'$ for some $\theta\in H^1(L)$ by assumption, so $\deg x'=k-1$.
	Recall that $\m_{2,0} (x_1,x_2)=(-1)^{\deg x_1}x_1\wedge x_2$.
	Therefore, 
	\begin{equation}
		\label{key_sublemma_x_expansion_eq}
		x
		=\theta\wedge x'
		= \m_{2,0} (-\theta, x') 
		= \mathbf m_2(-\theta, x')
		+ \sum_{\beta\neq 0} T^{E(\beta)} Y^{\partial\beta} \m_{2,\beta}(\theta, x')
		\, \, =: \mathbf m_2(-\theta,x')+ T^{\hbar} S_0
	\end{equation}
	By (\ref{hbar_eq}), if $\m_{2,\beta}(\theta , x')\neq 0$, we must have $E(\beta) > \hbar$. Hence, $S_0\in\Lambda_0[[\pi_1(L)]]$.
	By Corollary \ref{m_123_cor},
	\begin{equation}
		\label{key_sublemma_m_1_m_2_eq}
		\mathbf m_1( \mathbf m_2(-\theta ,x')) = \mathbf m_2 (\mathbf m_1(\theta), x') + \mathbf m_2(\theta, \mathbf m_1(x'))
	\end{equation}
	Due to (\ref{key_sublemma_m_1_theta}) and the unitality of $\m$, the first term on the right side is given by
	\begin{equation}
		\label{key_sublemma_m_2_m_1_theta_i_eq}
		\mathbf m_2( \mathbf m_1(\theta), x') 
		=
		\mathbf m_2(D_\theta W\cdot \one , x')
		=
		 D_{\theta} W\cdot x'  \ \  \ (\mathrm{mod} \ D\ia)
	\end{equation}
	On the other hand, the induction hypothesis implies that there exist $R_1',\dots, R_n', S' \in H^*(L)\hat\otimes \Lambda_0[[\pi_1(L)]]$ such that
$\mathbf m_1(x')= D_{\theta_1} W\cdot R_1' +\cdots + D_{\theta_n} W\cdot R_n' + T^{\hbar} \mathbf m_1(S')$ modulo $D\ia$.
	Then, using the equations (\ref{key_sublemma_m_1_m_2_eq}, \ref{key_sublemma_m_2_m_1_theta_i_eq}) with $S'$ in place of $x'$ there, we deduce that the second term on the right side of (\ref{key_sublemma_m_1_m_2_eq}) is
	\begin{align*}
		\label{key_sublemma_induction_hypo_eq}
		\mathbf m_2(\theta, \mathbf m_1(x') ) 
		&=  D_{\theta_1}W\cdot \mathbf m_2(\theta,R'_1)+\cdots + D_{\theta_n} W\cdot \mathbf m_2(\theta, R'_n) + T^{\hbar} \mathbf m_2(\theta, \mathbf m_1(S'))  \\
		&=
		D_{\theta_1}W\cdot \mathbf m_2(\theta,R'_1)+\cdots + D_{\theta_n} W\cdot \mathbf m_2(\theta, R'_n) + T^{\hbar} \Big(\mathbf m_1(\mathbf m_2(-\theta, S')) -D_\theta W\cdot S' \Big)
		\ \ \ (\mathrm{mod} \ D\ia)	
	\end{align*}
	Putting things together, we get
	\begin{align*}
		&\mathbf m_1(x) = \mathbf m_1(\mathbf m_2(-\theta, x')+T^{\hbar} S_0) = \mathbf m_2(\mathbf m_1(\theta), x') +\mathbf m_2(\theta,\mathbf m_1(x')) +T^{\hbar} \mathbf m_1(S_0)\\
		=& 
		D_\theta W\cdot \big( x' -T^{\hbar} S'\big)+ D_{\theta_1}W\cdot  \mathbf m_2(\theta, R'_1)+\cdots + D_{\theta_n} W\cdot \mathbf m_2(\theta, R'_n)  
		+ T^{\hbar} \mathbf m_1 \big( S_0+\mathbf m_2(-\theta, S') \big) \ \ \ (\mathrm{mod} \ D\ia)
	\end{align*}
	In conclusion, if $\theta=c_1\theta_1+\cdots+c_n\theta_n$, we set $R_k:= c_k (x'-T^{\hbar}S')+\mathbf m_2(\theta, R_k')$ for $1\le k\le n$ and $S:= S_0+\mathbf m_2(-\theta, S')$. The induction is now complete.
\end{proof}

\begin{proof}[Proof of Sublemma \ref{key_lemma_to_prove_lem} $\implies$ Lemma \ref{key_lemma}]
	We note that $\mathbf m_1$ is actually $\Lambda_0[[\pi_1(L)]]$-linear by definition. Repeatedly using Sublemma \ref{key_lemma_to_prove_lem} implies that for various series $R_i^{(k)}, S^{(k)}\in H^*(L)\hat\otimes \Lambda_0[[\pi_1(L)]]$,
	\begin{align*}
		\mathbf m_1(x) 
		& =
		D_{\theta_1} W\cdot R_1^{(0)} + \cdots + D_{\theta_n} W\cdot R_n^{(0)} + T^{\hbar} \mathbf m_1( S^{(0)}) \\
		\mathbf m_1(S^{(0)})
		&=
		D_{\theta_1} W\cdot R_1^{(1)} + \cdots + D_{\theta_n} W\cdot R_n^{(1)} + T^{\hbar} \mathbf m_1( S^{(1)}) \\
		& \cdots \\
		\mathbf m_1(S^{(N-1)} )
		&=
		D_{\theta_1} W\cdot R_1^{(N)} + \cdots + D_{\theta_n} W\cdot R_n^{(N)} + T^{\hbar} \mathbf m_1(S^{(N)}) \\
		& \cdots
	\end{align*}
	modulo the ideal $D\ia$. Then, it follows that
	\[
	\mathbf m_1(x)=\sum_{i=1}^n D_{\theta_i}W\cdot \sum_{k=0}^\infty T^{k\hbar} R_i^{(k)} \ \ \ (\mathrm{mod} \ D\ia)
	\]
	Since $R_i^{(k)}\in \Lambda_0[[\pi_1(L)]]$, the summations are convergent for the adic topology.
\end{proof}


\begin{thm}\label{nonvanishing_HF_thm}
	Suppose the de Rham cohomology ring $H^*(L)$ is generated by $H^1(L; \mathbb Z)$. Under Assumption \ref{assumption_weak_MC}, $\HF(L,\m,\mathbf y)\neq 0$ if and only if $\mathbf y$ is a critical point of $W$.
\end{thm}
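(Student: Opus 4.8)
The plan is to establish the two implications separately: the direction ``$\mathbf y$ critical $\Rightarrow\HF(L,\m,\mathbf y)\neq 0$'' will be read off from Lemma \ref{key_lemma}, while the converse ``$\mathbf y$ not critical $\Rightarrow\HF(L,\m,\mathbf y)=0$'' will follow from the divisor axiom together with the ring structure on $\HF(L,\m,\mathbf y)$. Throughout, Assumption \ref{assumption_weak_MC} gives $\ia=0$, hence $D\ia=0$; and, comparing (\ref{CF_m_k_eq}) with (\ref{m_y_k_eq}), the differential $\mathbf m_1^{\mathbf y}$ is obtained from $\mathbf m_1$ by the substitution $Y^\alpha\mapsto\mathbf y^\alpha$, i.e. $\mathbf m_1^{\mathbf y}=\mathcal E_{\mathbf y}\circ\mathbf m_1$, where $\mathcal E_{\mathbf y}$ is a ring homomorphism on the polyhedral affinoid algebra inside $\Lambda[[\pi_1(L)]]$ that contains $\mathbf y$ in its associated affinoid domain (as in \S\ref{ss_evaluation_HF}), so it respects products of series.

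For the first implication, suppose $\mathbf y$ is a critical point of $W$. Fixing generators $\theta_1,\dots,\theta_n$ of $H^1(L;\mathbb Z)$, Lemma \ref{key_lemma} yields, for each $x\in H^*(L)$, elements $R_1,\dots,R_n\in H^*(L)\hat\otimes\Lambda_0[[\pi_1(L)]]$ with $\mathbf m_1(x)=\sum_{i=1}^n D_{\theta_i}W\cdot R_i$ (an exact equality, since $D\ia=0$). Applying $\mathcal E_{\mathbf y}$ and using Definition \ref{critical_point_W_defn}, which gives $D_{\theta_i}W(\mathbf y)=0$ for all $i$, I obtain $\mathbf m_1^{\mathbf y}(x)=\sum_i D_{\theta_i}W(\mathbf y)\cdot R_i(\mathbf y)=0$. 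Hence $\mathbf m_1^{\mathbf y}$ is identically zero, so $\HF(L,\m,\mathbf y)=H^*(L)\hat\otimes\Lambda\neq 0$ (it contains $[\one]$).

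For the converse I argue contrapositively. If $\mathbf y$ is not a critical point, then by $\mathbb R$-linearity of $D_\bullet W$ in the first slot and since $H^1(L;\mathbb Z)$ spans $H^1(L;\mathbb R)$, there is an integral class $\theta\in H^1(L;\mathbb Z)$ with $D_\theta W(\mathbf y)\neq 0$. The divisor axiom of $\m$ applied to the degree-one input $\theta$, combined with the decomposition (\ref{W_Q_eq}) and the semipositive condition (which forces $\mu(\beta)\in\{0,2\}$ whenever $\m_{0,\beta}\neq 0$), gives $\mathbf m_1^{\mathbf y}(\theta)=D_\theta W(\mathbf y)\cdot\one + D_\theta Q(\mathbf y)$; since $Q=0$ under Assumption \ref{assumption_weak_MC}, this reads $\mathbf m_1^{\mathbf y}(\theta)=D_\theta W(\mathbf y)\cdot\one$, which is a coboundary. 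Thus in $\HF(L,\m,\mathbf y)$ we have $D_\theta W(\mathbf y)\cdot[\one]=0$; as $D_\theta W(\mathbf y)$ is a nonzero element of the field $\Lambda$, it is invertible, so $[\one]=0$. Since $[\one]$ is the unit of the ring $\HF(L,\m,\mathbf y)$ (the ring structure defined after (\ref{HF_at_y_eq})), the whole ring vanishes.

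The only point requiring genuine care is the one used implicitly in the first implication: the auxiliary series $R_i$ delivered by Lemma \ref{key_lemma} — and the iterated series built in the proof of Sublemma \ref{key_lemma_to_prove_lem} — must actually converge at $\mathbf y$, so that $\mathcal E_{\mathbf y}$ may be applied termwise. This is handled exactly as in \S\ref{ss_evaluation_HF}: one works over a polyhedral affinoid algebra on which all the gapped series in play converge and keeps $\mathbf y$ in the corresponding affinoid domain, relying on the reverse isoperimetric inequality for the convergence of $W$. Apart from this bookkeeping the argument is formal once Lemma \ref{key_lemma} is in hand, so the real content of the theorem lies in that lemma rather than in the present deduction.
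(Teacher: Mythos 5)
Your proof is correct and follows essentially the same route as the paper: the direction ``not critical $\Rightarrow\HF=0$'' is identical (divisor axiom gives $\mathbf m_1^{\mathbf y}(\theta)=D_\theta W(\mathbf y)\cdot\one$, kill $[\one]$), and the other direction in both cases comes from Lemma \ref{key_lemma} combined with $\mathcal E_{\mathbf y}$. The only difference is organizational: you prove ``critical $\Rightarrow\HF\neq 0$'' directly by showing $\mathbf m_1^{\mathbf y}$ vanishes identically (hence in fact $\HF(L,\m,\mathbf y)=H^*(L)\hat\otimes\Lambda$), whereas the paper proves the contrapositive ``$\HF=0\Rightarrow$ not critical'' by exhibiting $x_0$ with $\mathbf m_1^{\mathbf y}(x_0)=\one$ and deducing that some $D_{\theta_i}W(\mathbf y)$ is nonzero; your arrangement is a touch cleaner since it applies Lemma \ref{key_lemma} only to $x\in H^*(L)$ rather than to the scaled $T^\lambda x_0\in H^*(L)\hat\otimes\Lambda_0$, and it records the stronger conclusion that the Floer differential is identically zero at a critical point.
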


\begin{proof}
	The assumption tells that $\ia=D\ia=0$ (\ref{D_ideal_eq}).
	Suppose $\mathbf y$ is not a critical point: there exists $\theta\in H^1(L)$ with $D_\theta W(\mathbf y) \neq 0$. 
	Just like (\ref{key_sublemma_m_1_theta}), utilizing the divisor axiom of $\m$ yields
	\[
	\mathbf m_1^{\mathbf y} (\theta) = \sum T^{E(\beta)} \mathbf y^{\partial\beta} \m_{1,\beta}(\theta)= D_\theta W(\mathbf y) \cdot \one
	\]
	Thus, for $\theta' :=\frac{1}{D_\theta W(\mathbf y)} \theta\in H^1(L)\hat\otimes \Lambda$, we have $\mathbf m_1^{\mathbf y}(\theta')=\one$. The unit $[\one]=0$, so $\HF(L,\m,\mathbf y)=0$.
	
	Conversely, suppose $\HF(L,\m,\mathbf y)=0$. So, there exists $x_0\in H^*(L)\hat\otimes \Lambda$ such that $\mathbf m_1^{\mathbf y} (x_0)=\one$ in $H^*(L)\hat\otimes \Lambda\cong \Lambda^{\oplus \dim H^*(L)}$. 
	By Lemma \ref{key_lemma}, we apply $\mathbf m_1$ (instead of $\mathbf m_1^{\mathbf y}$) to $x_0$, so there exist $R_1,\dots, R_n\in H^*(L)\hat\otimes \Lambda[[\pi_1(L)]]$ such that the equation
	\[
	\mathbf m_1(x_0)=D_{\theta_1} W\cdot R_1+\cdots +D_{\theta_n} W \cdot R_n
	\]
	holds in $H^*(L)\hat\otimes \Lambda[[\pi_1(L)]]$.
	Further, applying the evaluation map $\mathcal E_{\mathbf y}$ in (\ref{Eva_y_eq}) to the both sides, we get
	\[
		0\neq  \one = \mathbf m_1^{\mathbf y} (x_0)=   D_{\theta_1} W (\mathbf y) \cdot R_1(\mathbf y) +\cdots +D_{\theta_n}W(\mathbf y)  \cdot R_n (\mathbf y)
	\]
in $H^*(L)\hat\otimes \Lambda$. Hence, at least one of $D_{\theta_i}W(\mathbf y)$ is nonzero, so the $\mathbf y$ cannot be a critical point of $W$.
\end{proof}

\begin{rmk}
	The above theorem has a `wall-crossing invariance' in the sense that it does not rely on the choice of $\m$.
	If $\tilde \m$ is another $A_\infty$ algebra in $\UD$ which admits an $A_\infty$ homotopy equivalence $\mC:\m\to \tilde \m$ such that $\mC_{1,0}=\id$. By Lemma \ref{wall_crossing_coordiante_change_lem}, we have an isomorphism $\phi$ defined by $\mC$ as in (\ref{phi_eq}), and the $\tilde W=\phi^* W$ is exactly the superpotential associated to $\tilde \m$. Let $\tilde{\mathbf y}=\phi (\mathbf y)$. It follows from Corollary \ref{crit_point_inv_cor} that $\tilde{\mathbf y}$ is a critical point of $\tilde W$ if and only if $\mathbf y$ is a critical point of $W$.
	Finally, by Proposition \ref{inv_Floer_self_prop_y}, $\HF(L,\m,\mathbf y) \neq 0$ if and only if $\HF(L,\tilde \m,\tilde{\mathbf y})\neq 0$.
\end{rmk}

\section{Reduced Hochschild cohomology}
\label{s_reduced_Hochschild}

\subsection{Quantum cohomology: Review}
\label{ss_quantum_cohomology}

\subsubsection{Preparation}
We will adopt the virtual language in \cite{Kuranishi_book}, but the readers may also adopt their own conventions.
By a \textit{smooth correspondence}, we mean a tuple $\mathfrak X=( \mathcal X,M,M_0,f,f_0)$ consisting of a compact metrizable space $\mathcal X$ equipped with a Kuranishi structure, two smooth manifolds $M_0$ and $M$, a strongly smooth map $f: \mathcal X\to M$ and a weakly submersive strongly smooth map $f_0:\mathcal X\to M_0$. We have a \textit{correspondence map} of degree $\dim M -\mathrm{vdim}(\mathcal X)$ between the space of differential forms:
\begin{equation}
	\label{corr_map_eq}
\Corr_{\mathfrak X} \equiv \Corr(\mathcal X; f, f_0): \Omega^*(M_0)\to \Omega^*(M)
\end{equation}
One can define the fiber product $\mathfrak X_{13}$ of two Kuranishi spaces $\mathfrak X_{12}$ and $\mathfrak X_{23}$, and the \textit{composition formula} \cite[Theorem 10.21]{Kuranishi_book} implies
\begin{equation}
	\label{corr_formula_eq}
	\Corr_{\mathfrak X_{23}} \big( \Corr_{\mathfrak X_{12}} (h_1)\times h_2 \big)= \Corr_{\mathfrak X_{13}}(h_1\times h_2)
\end{equation}
On the other hand, a smooth correspondence $\mathfrak X$ induces a boundary smooth correspondence $\partial\mathfrak X=(\partial \mathcal X, M, M_0, f|_{\partial \mathcal X}, f_0|_{\partial \mathcal X} )$. The Stokes' formulas in the Kuranishi theory \cite[Theorem 9.28]{Kuranishi_book} is
\begin{equation}
	\label{stokes_Kuranishi_eq}
	d_{M_0}\circ \Corr_{\mathfrak X} - \Corr_{\mathfrak X} \circ d_M =\Corr_{\partial\mathfrak X}
\end{equation}

\subsubsection{Definition}
We briefly recall the basic aspects of the (small) quantum cohomology. A standard reference is \cite{MS}.
Let $t$ be a formal symbol, and we define (c.f. \cite{mclean2020birational})
\begin{equation}
	\label{quantum_Nov_eq}
	\Lambda^X :=
	\left\{
	\sum_{i\in\mathbb N} b_i t^{A_i} \mid b_i   \in   \mathbb R, A_i \in H_2(X), \  0\le E(A_i)\equiv \omega(A_i)\to \infty
	\right\}
\end{equation}
We may also replace $H_2(X)\equiv H_2(X;\mathbb Z)$ by the image of the Hurewicz map $\pi_2(X)\to H_2(X)$.
We call $\Lambda^X$ the \textit{quantum Novikov ring} of $X$, since we reserve the term `Novikov ring' for the $\Lambda_0$.
Define
\begin{equation}
	\label{QH_two_defn_eq}
QH^*(X; \Lambda^X):= H^*(X)\hat\otimes \Lambda^X 
\quad \text{and}\quad 
QH^*(X; \Lambda)= H^*(X)\hat\otimes \Lambda
\end{equation}
Take the moduli space $\mathcal M_\ell(A)$ of genus-0 stable maps of homology class $A$ with $\ell$ marked points.
Let
\[
\Ev=(\Ev_0,\Ev_1,\dots,\Ev_{\ell-1}): \mathcal M_\ell(A)\to X^\ell
\]
be the evaluation maps at the marked points.
The quantum product of $QH^*(X; \Lambda^X)$ is defined as follows.
First, in the chain level, we define for $g_1, g_2 \in \Omega^*(X)$:
\[
(g_1 \pmb\ast g_2)_A =\Corr (\mathcal M_3(A) ;   (\Ev_1,\Ev_2), \Ev_0) ( g_1, g_2)
\]
and define
\[
g_1 \pmb\ast g_2 = \sum_{A\in H_2(X) }  (g_1\pmb \ast g_2)_A \cdot t^A
\]
Since the moduli $\mathcal M_\ell (A)$ has no codimension-one boundary, it follows from the Stokes' formula (\ref{stokes_Kuranishi_eq}) that the above assignment $\pmb\ast:\Omega^*(X)\otimes \Omega^*(X)\to \Omega^*(X)\hat\otimes \Lambda^X$ is a cochain map and hence induces a product map $\pmb\ast: H^*(X)\otimes H^*(X)\to H^*(X)\hat\otimes \Lambda^X$. Extending by linearity, we get the \textit{quantum product}:
\[
\pmb\ast: QH^*(X; \Lambda^X)\times QH^*(X; \Lambda^X)\to QH^*(X; \Lambda^X)
\]
Abusing the notations, over the Novikov field $\Lambda$, we also put
\[
g_1 \pmb\ast g_2 = \sum_{A\in H_2(X) }  (g_1\pmb \ast g_2)_A \cdot T^{E(A)}
\]
By the same discussion, it defines a quantum product
\[
\pmb\ast: QH^*(X;\Lambda) \times QH^*( X;\Lambda) \to QH^*(X; \Lambda)
\]
The Novikov field $\Lambda$ is algebraically closed \cite[Appendix A]{FOOOToricOne}, so we will work with the latter $QH^*(X;\Lambda)$ to study the eigenvalues. But, the  first $QH^*(X; \Lambda^X)$ is slightly more general and is still useful for our purpose.
In either cases, it is standard that the product $\pmb\ast$ gives a ring structure. The leading term $(g_1\pmb\ast g_2)_0$ of $g_1\pmb \ast g_2$ is just the standard wedge (cup) product. Besides, the constant-one function $\one_X\in H^0(X)$ is the \textit{unit} in the quantum cohomology ring \cite[Proposition 11.1.11]{MS}.

\subsubsection{An example: $\mathbb {CP}^n$} 
\label{sss_an_example_QH_CPn}

Let $\mathcal H \in H_2(\mathbb {CP}^n)$ be the standard generator represented by the line $\mathbb {CP}^1$. The cohomology $H^*({\mathbb {CP}^n})$ is a truncated polynomial ring generated by the class $c \in H^2(\mathbb {CP}^n)$ such that $c(\mathcal H)=1$ and $c^{n+1}=0$. It suffices to compute $(c^i\pmb\ast c^j)_{\ell\mathcal H}$ for $0\le i,j\le n$. When $\ell=0$, it corresponds to the usual cup product. When $\ell\ge 1$, it is standard that (see e.g. \cite[Example 11.1.12]{MS})
\[
\int_X (c^i\pmb \ast c^j)_{\ell \mathcal H} \cup c^k = \mathrm{GW}_{\ell\mathcal H, 3}^{\mathbb {CP}^n} (c^i, c^j, c^k)=
\begin{cases}
	1, \quad &\text{if} \ \ell=1 , i+j+k=2n+1\\
	0, \quad &\text{otherwise}
\end{cases}
\]
To sum up,
\[
(c^i \pmb \ast c^j)_{\ell \mathcal H}= 
\begin{cases}
	c^{i+j} 			& \text{if} \ \ell=0 ,  \  0\le i+j\le n \\
	c^{i+j-n-1} 		&\text{if} \ \ell=1, \  n+1\le i+j\le 2n \\
	0 &\text{otherwise}
\end{cases}
\]
Namely, the quantum product $\pmb \ast$ on $QH^*(X;\Lambda)$ (resp. $QH^*(X; \Lambda^X)$) is given by
\[
c^i\pmb \ast c^j =\begin{cases}
c^{i+j}, &\text{if} \ 0\le i+j\le n \\
c^{i+j-n-1} \cdot T^{E(\mathcal H)},\quad (\text{resp. }  \, \, c^{i+j-n-1} \cdot t^{\mathcal H}) \  &\text{if} \ n+1\le i+j\le 2n
\end{cases}
\]
Recall that the first Chern class is $c_1=(n+1)\cdot c$. To study the eigenvalues of $c\pmb\ast$, it would be better to work with $QH^*(X;\Lambda)$ instead of $QH^*(X;\Lambda^X)$. The matrix of $c\pmb\ast$ is then given by
\[
\begin{bmatrix}
	0 & T^{E(\mathcal H)} \\
	I_{n} & 0
\end{bmatrix}
\]
where $I_n$ denotes the $n\times n$ identity matrix. So, the eigenvalues of $c\pmb\ast $ solve the equation $\lambda^n-T^{E(\mathcal H)}=0$. Since $c_1=(n+1)c$, we finally know the eigenvalues of $c_1\pmb \ast $ are
\[
\Xi_s:=(n+1) T^{E(\frac{\mathcal H}{n+1})} e^{\frac{2\pi i s}{n+1}} \qquad \text{for} \ s\in\{0,1,\dots, n\}
\]

\subsection{Reduced Hochschild cohomology}

\subsubsection{Label grading}
\label{sss_label_grading}
Fix a $\G$-gapped $A_\infty$ algebra $(C,\m)$. We study the Hochschild cohomologies in our labeled setting \S \ref{sss_label_group}.
The $\CC\equiv \CC_\G(C)$ is a subspace of the direct product $\prod_{k,\beta} \CC_{k,\beta}$ where each $\CC_{k,\beta}$ is just a copy of $\Hom(C^{\otimes k}, C)$ with an extra label $\beta$.
Note that every $\CC_{k,\beta}$ naturally embeds into $\CC$.
Since $\deg\m_{k,\beta}=2-k-\mu(\beta)$ involves $\beta$, the operator system $\m$ in $\CC$ only has a well-defined degree modulo 2.
To settle this, we introduce the following degree on $\CC$.

\begin{defn}
	\label{label_grading_defn}
	The \textit{label degree} $|\cdot|$ of a $k$-multilinear map $\phi$ in $\CC_{k,\beta}$ is defined to be
	\[
	|\phi|= \deg'\phi +\mu(\beta) \ \in \mathbb Z
	\]
	where $\deg' \phi$ is the shifted degree as a multilinear map (see \S \ref{sss_sign_degree}). This is called the \textit{label grading}.
\end{defn}

Since $\mu(\beta)\in 2\mathbb Z$, $|\phi| \equiv \deg'\phi$ ($\mathrm{mod} \ 2$). But, if $\m=(\m_{k,\beta})$ is a gapped $A_\infty$ algebra on $C$, then $\deg \m_{k,\beta}=2-k-\mu(\beta)$, $\deg' \m_{k,\beta}=1-\mu(\beta)$, and so $|\m_{k,\beta}|=1$ is homogeneous.
For a $\G$-gapped $A_\infty$ homomorphism $\f=(\f_{k,\beta})$, one can similarly check $|\f_{k,\beta}|=0$.

%

\subsubsection{Brace operations}
\label{sss_brace}
The brace operation for $n=2$ is defined by Gerstenhaber \cite{Gerstenhaber_1963cohomology}, and the higher braces are defined by
Kadeishvili \cite{Kadeishvili_1988structure}
and 
Getzler \cite{Getzler_1993cartan}.
We can adapt the definitions to our labeled setting.
Given homogeneously-graded $\g, \f_1,\dots, \f_n\in \CC_\G(C)$, we define an element $\g\{ \f_1,\dots, \f_n\}$ in $\CC_\G(C)$ by the formula:
\begin{equation*}
	\Scale[0.92]{(\g\{\f_1,\dots,\f_n\})_{k,\beta}
	=
	\sum_{\Scale[0.75]{\substack{r_0+\cdots +r_{n}=r\\ r+t_1+\cdots +t_n=k \\ \bar\beta+\beta_1+\cdots +\beta_n=\beta}
		}
	}
	\g_{r+n,\bar\beta}
	\left(
	(\id^{\# s_0})^{\otimes r_0} \otimes (\f_1^{\# s_1})_{t_1,\beta_1}\otimes (\id^{\# s_2})^{\otimes r_2} \otimes \cdots \otimes (\id^{\# s_{n-1}})^{\otimes r_{n-1}} \otimes (\f_n^{\# s_n})_{t_n,\beta_n} \otimes (\id^{\# s_n})^{\otimes r_n}
	\right)}
\end{equation*}
where we denote $s_m=|\f_{m+1}|+\cdots +|\f_n|$ for $0\le m\le n$ (so $s_n=0$).
Moreover, note that the gappedness conditions (b) (c) in \S \ref{sss_label_group} ensure the above sum is finite, but the condition (a) is not necessary and will be omitted in \S \ref{sss_reduced_Hochschild_cohomology}.
The brace operation has degree zero for the label grading in the sense that for homogeneously-graded elements, we have
\begin{equation}
	\label{brace_degree_eq}
	|\g\{\f_1,\dots, \f_n\}|= |\g|+|\f_1|+\cdots+ |\f_n|
\end{equation}
Recall that we already adopt the brace operations to denote the Gerstenhaber product in \S \ref{sss_sign_degree}.
Also, we adopt the convention that 
\[
\g\{\}=\g
\]
By routine calculation, we can show that: (see \cite[Page 51]{getzler1994operads} and \cite[Proposition 2.3.2]{tamarkin2005ring})

\begin{prop}
	\label{brace_property_prop}
	The brace operations satisfy the following property:
	\[
	\h\{\g_1,\dots,\g_m\} \{\f_1,\dots , \f_n\}
	=
	\sum (-1)^\epsilon \
	\h \big\{\f_1,\dots, \f_{i_1}, \g_1\{\f_{i_1+1},\dots, \f_{j_1}\}, \f_{j_1+1}, \dots, \f_{i_m}, \g_m\{ \f_{i_m+1},\dots, \f_{j_m}\}, \f_{j_m+1},\dots, \f_n\big\}
	\]
	where the summation is over ${0\le i_1\le j_1\le \cdots \le i_m\le j_m\le n}$ and $\epsilon =\sum_{k=1}^m |\g_k| \cdot \sum_{\ell=1}^{i_k}|\f_\ell|$.
\end{prop}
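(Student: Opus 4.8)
The plan is to follow the classical argument that establishes the higher pre-Jacobi (homotopy Gerstenhaber) identity for brace operations, e.g. as in McClure--Smith and Gerstenhaber--Voronov, and merely to track the extra bookkeeping forced by the label grading from Definition \ref{label_grading_defn} and by the Koszul signs coming from the shifted-degree convention in \S\ref{sss_sign_degree}. First I would expand the left-hand side $\h\{\g_1,\dots,\g_m\}\{\f_1,\dots,\f_n\}$ by applying the definition of the outer brace with $\g:=\h\{\g_1,\dots,\g_m\}$: this inserts $\f_1,\dots,\f_n$ (each conjugated by the sign-iterates $\id^{\#\bullet}$ dictated by the shifts $s_\ell=|\f_{\ell+1}|+\cdots+|\f_n|$) into the $k$ input slots of $\h\{\g_1,\dots,\g_m\}$. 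Then I would expand that latter expression itself by the definition of the inner brace, so that $\h$ appears applied to a string that alternates blocks of identities with the $\g_i$'s. The combinatorial heart of the matter is then simply that each $\f_\ell$ lands in exactly one of the resulting slots: either inside some $\g_i$ (contributing to a sub-brace $\g_i\{\f_{i_i+1},\dots,\f_{j_i}\}$) or into one of the $\id^{\#}$-blocks between the $\g_i$'s, i.e. directly into a slot of $\h$. Collecting terms by which $\g_i$ each $\f_\ell$ went into yields precisely the indexing set $0\le i_1\le j_1\le\cdots\le i_m\le j_m\le n$ on the right-hand side, with the $\f_\ell$ for $\ell\le i_1$, or $j_k<\ell\le i_{k+1}$, or $\ell>j_m$ being the ``direct'' inputs to $\h$.

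The only real content is the sign. I would verify the claimed sign $\epsilon=\sum_{k=1}^m |\g_k|\cdot\sum_{\ell=1}^{i_k}|\f_\ell|$ by comparing, slot by slot, the $\id^{\#}$-conjugations produced by the two-step expansion on the left with those produced directly by the single brace $\h\{\cdots,\g_k\{\cdots\},\cdots\}$ on the right. On the left, when we insert $\f_\ell$ into an argument of $\h\{\g_1,\dots,\g_m\}$, it gets conjugated by $\id^{\#s}$ where $s$ is the total label degree of all the $\f$'s to its right; but on the right, inside $\h\{\dots,\g_k\{\f_{i_k+1},\dots,\f_{j_k}\},\dots\}$, the sub-brace $\g_k\{\cdots\}$ itself is conjugated (as a single operator of label degree $|\g_k|+\sum_{\ell=i_k+1}^{j_k}|\f_\ell|$ by \eqref{brace_degree_eq}) by the $\id^{\#}$ coming from everything to its right. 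The discrepancy between ``conjugating the whole block $\g_k\{\cdots\}$ at once'' and ``conjugating each $\f_\ell$ inside it individually, then the $\g_k$'' is, by the standard Koszul-sign computation for $\id^{\#}$ passing through a multilinear map (the identity $\phi^\#=(-1)^{\deg'\phi}\,\id^\#\circ\phi$ from \S\ref{sss_sign_degree}, iterated), exactly $(-1)^{|\g_k|\cdot(\text{label degree of the }\f\text{'s that sit to the left of }\g_k)}$; since the $\f$'s to the left of the $\g_k$-block are $\f_1,\dots,\f_{i_k}$, this contributes $(-1)^{|\g_k|\sum_{\ell=1}^{i_k}|\f_\ell|}$, and these accumulate over $k$ to give $\epsilon$. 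One also checks that the $\beta$-labels just add up ($\bar\beta+\sum\beta_i+\sum\beta'_j=\beta$ in either decomposition), so the gappedness conditions (b),(c) of \S\ref{sss_label_group} guarantee all sums are finite and the manipulation is legitimate.

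I expect the main obstacle to be purely notational: keeping the nested sign-iterates $\id^{\#s_m}$ straight through the two different orders of expansion, and in particular being scrupulous that the label degree $|\cdot|$ (not the plain degree $\deg$ or the shifted degree $\deg'$) is what governs each Koszul sign — this is exactly the point at which the labeled setting departs from the textbook statement, and it is the reason \eqref{brace_degree_eq} is recorded first. A clean way to organize the verification is to do the case $m=1$ in full (where $\epsilon=0$ and the identity reduces to the statement that $\h\{\g_1\}\{\f_1,\dots,\f_n\}=\sum \h\{\f_1,\dots,\g_1\{\f_{i+1},\dots,\f_j\},\dots,\f_n\}$, with all signs absorbed into the $\id^{\#}$'s), and then to obtain the general $m$ by induction, applying the $m=1$ case repeatedly to peel off $\g_m,\g_{m-1},\dots$; at each peeling step a new factor $(-1)^{|\g_k|\sum_{\ell=1}^{i_k}|\f_\ell|}$ is generated exactly as above. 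Since the statement and its proof are entirely standard once the grading conventions are fixed, I would present the $m=1$ computation and the inductive step and leave the remaining sign-chasing to the reader.
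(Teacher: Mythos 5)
Your proposal is essentially correct and follows the standard approach. Note that the paper itself does not give a proof of this proposition; it labels the identity a ``routine calculation'' and cites \cite{Mcclure_Smith_1999solution,Gerstenhaber_1995higher}, so what you have written is a reconstruction of the classical argument rather than a comparison against a written-out paper proof. Your combinatorial reduction --- expand the outer brace, then the inner one, and observe that each $\f_\ell$ lands either inside some $\g_k$ or directly in a slot of $\h$, which produces exactly the index set $0\le i_1\le j_1\le\cdots\le i_m\le j_m\le n$ --- is the right decomposition; the $\beta$-labels add, so the gappedness conditions of \S\ref{sss_label_group} keep all sums finite; and the $m=1$-then-peel induction is a clean way to organize the sign check.

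One minor point worth tightening in the write-up: the phrase ``the discrepancy between conjugating the whole block $\g_k\{\cdots\}$ at once and conjugating each $\f_\ell$ inside it individually'' does not actually locate the sign, because by design (this is what \eqref{brace_degree_eq} guarantees) those two conjugations agree. The sign $\epsilon$ arises elsewhere. On the left-hand side, after forming $\psi:=\h\{\g_1,\dots,\g_m\}$, a direct slot of $\h$ sitting to the left of $\g_k$ carries an $\id^{\#|\g_k|}$ factor from the definition of the brace; when $\psi\{\f_1,\dots,\f_n\}$ inserts $\f_\ell$ into that slot, this iterate is applied to the \emph{output} $\f_\ell(\cdots)$. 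On the right-hand side, by contrast, the corresponding iterate appears as $\f_\ell^{\#|\g_k|}=\f_\ell\circ\id^{\#|\g_k|}$, i.e.\ it is applied to the \emph{inputs} of $\f_\ell$. Passing the iterate through the multilinear map via the very identity you quote, $\id^{\#s}\circ\phi=(-1)^{s\deg'\phi}\phi^{\#s}$, yields a factor $(-1)^{|\g_k|\deg'\f_\ell}$; since $\mu\in 2\mathbb Z$, Definition \ref{label_grading_defn} gives $\deg'\f_\ell\equiv|\f_\ell|\pmod 2$, so this is $(-1)^{|\g_k||\f_\ell|}$. Accumulating over $\ell\le i_k$ and over $k$ gives precisely $\epsilon=\sum_k|\g_k|\sum_{\ell\le i_k}|\f_\ell|$. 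Your final formula is correct; the narration of the mechanism was the only thing that needed to be anchored more carefully to the definition of $\#$.
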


The sign can be described briefly as follows: for each $\g_k$, we gather all $\f_j$'s that appear to the left of $\g_k$, then compute the sum of the degrees of these $\f_j$'s multiplied by the degree of $\g_k$.
A few special cases of Proposition \ref{brace_property_prop} are as follows:
\begin{align*}
	\h\{\g\}\{ \f\}
	&
	=
	\h\{ \g\{\f\}\}+\h\{\g\{\},\f\}+(-1)^{|\g||\f|} \h\{ \f,\g\{\}\} \\
	\h\{\g\} \{\f_1,\f_2\}
	&
	=
	\h\{ \g\{\f_1,\f_2\}\} + \h\{\g\{\f_1\}, \f_2 \} + (-1)^{|\g||\f_1|} \h\{\f_1,\g\{\f_2\} \} \\
	&
	+
	\h\{\g\{\}, \f_1,\f_2\} + (-1)^{|\g||\f_1|} \ \h\{ \f_1, \g\{\},\f_2\} + (-1)^{|\g|(|\f_1|+|\f_2|)} \ \h\{ \f_1, \f_2 , \g\{\} \}
\end{align*} 
where we also recall the convention that $\g\{\}=\g$.

On the other hand, we define the \textit{Gerstenhaber bracket} in our labeled setting as follows:
\begin{equation}
	\label{Gerstenhaber_bracker_eq}
	[\f,\g]:=\f \{ \g\} - (-1)^{|\f||\g| } \g \{ \f\}
\end{equation}
Due to (\ref{brace_degree_eq}), $|[\f,\g]|=|\f|+|\g|$.
One can easily check the \textit{graded skew-symmetry}:
$
	[\f,\g]=-(-1)^{|\f||\g|}[\g,\f]$.
A tedious but straightforward computation yields the \textit{graded Jacobi identity}:
\begin{equation}
	\label{Jacobi_eq}
	[\f,[\g,\h]]=[[\f, \g],\h] + (-1)^{|\f||\g|} [\g, [\f,\h]]
\end{equation}

\subsubsection{Hochschild cohomology as usual}
Given a gapped $A_\infty$ algebra $(C,\m)$, we define
\begin{equation}
	\label{Hochschild_diff_m_eq}
	\delta_\m:= [\m, \cdot ] : \CC_\G(C)\to \CC_\G(C)
\end{equation}
Clearly, $\delta_\m\circ \delta_\m=0$, and it is called a \textit{Hochschild differential}; thereafter, we may call an operator system in $\CC_\G(C)$ a Hochschild cochain.
Since $|\m|=1$, we have $|\delta_\m|=1$, i.e. $|\delta_\m\f|=|\f|+1$.
In contrast, $\deg'\delta_\m$ is only defined in $\mathbb Z_2$; this is also a reason why we introduce the label grading above.
In addition, one can use (\ref{Jacobi_eq}) to show the \textit{graded Leibniz rule}:
\begin{equation}
	\label{Leibniz_DGLA_eq}
	\delta_\m[\f,\g]=[\delta_\m \f,\g] + (-1)^{|\f|} [\f,\delta_\m\g]
\end{equation}
In summary, $(\CC(C), \delta_\m, [,])$ is a differential graded Lie algebra.
Now, the \textit{Hochschild cohomology} of a gapped $A_\infty$ algebra $(C,\m)$ is defined by 
\[HH^*(C,\m) := H^*( \CC(C), \delta_\m)\]
Next, we define the following Yoneda product on the cochain complex $\CC(C)$: (c.f. \cite{Ganatra_thesis})
\begin{equation}
	\label{cup_product_eq}
	\f\cup_\m \g= (-1)^{|\f|+1} \m\{\f,\g\}
\end{equation}
Then, we can carry out a few computations using Proposition \ref{brace_property_prop}:
\begin{align*}
	0= \m \{ \m\} \{\f ,\g\} & =  \m \{\m,\f,\g\} + (-1)^{|\f|} \m \{\f,\m,\g\} +(-1)^{|\f|+|\g|} \m\{\f,\g,\m\}  \\
	&+
	\m\{ \m\{\f\},\g\} + (-1)^{|\f|}\m\{\f,\m\{\g\}\} + \m \{\m\{\f,\g\}\}
\end{align*}
\[
\m\{\f ,\g \} \{\m\}= (-1)^{|\f|+|\g|} \m\{\m,\f,\g\} + (-1)^{|\g|} \m \{\f,\m,\g\} + \m\{\f,\g,\m\} +(-1)^{|\g|}\m\{\f\{\m\},\g\}+ \m\{\f,\g\{\m\}\}
\]
Hence, a direct calculation yields that
$
\delta_\m (\f\cup\g)=\delta_\m\f \cup \g -(-1)^{|\f|} \f\cup \delta_\m\g$, and we get a well-defined \textit{Yoneda cup product} on the Hochschild cohomology:
\[
\cup=\cup_\m: HH^*(C,\m)\times HH^*(C,\m)\to HH^*(C,\m)
\]
From (\ref{brace_degree_eq}) it follows that the label degree is $|\cup|=1$, namely, 
$|\f\cup\g|=|\f|+|\g|+1$.

\begin{prop}
	\label{cup_product_ring_prop}
	The cup product $\cup_\m$ defines a ring structure on the Hochschild cohomology $HH^*(C,\m)$.
\end{prop}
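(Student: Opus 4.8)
The plan is to supply the facts not yet recorded: that $\cup_\m$ is bilinear, which is immediate from the $\mathbb R$-multilinearity of the brace $\m\{\cdot,\cdot\}$ in each slot, and — the only substantive point — that the induced product on $HH^*(C,\m)$ is associative. Well-definedness of $\cup_\m$ on cohomology is already secured by the Leibniz identity $\delta_\m(\f\cup\g)=\delta_\m\f\cup\g-(-1)^{|\f|}\f\cup\delta_\m\g$ proved just above, which also exhibits $\delta_\m$ as a derivation for $\cup_\m$. (A strict two-sided identity does not live in $\CC_\G(C)$ — the length-zero cochain $\e$ with $\e()=\one$ violates the gappedness condition (a) of \S\ref{sss_label_group} — so ``ring'' is meant here in the associative, not-necessarily-unital sense; a unit is restored once one passes to the reduced complex of \S\ref{sss_reduced_Hochschild_cohomology}.)

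For associativity I would run the brace calculus once more, with the single relation $\m\{\m\}=0$ as the only input. Applying Proposition \ref{brace_property_prop} to $\m\{\m\}\{\f,\g,\h\}=0$ (one entry in the first bracket, three in the second) expands it into the ten terms indexed by $0\le i\le j\le 3$, which split into three groups: (i) the two ``outer composition'' terms $\m\{\m\{\f,\g\},\h\}$ and $(-1)^{|\f|}\m\{\f,\m\{\g,\h\}\}$, which by the sign convention of (\ref{cup_product_eq}) recombine, up to an explicit overall sign, to $(\f\cup\g)\cup\h-\f\cup(\g\cup\h)$; (ii) the three terms $\m\{\m\{\f\},\g,\h\}$, $(-1)^{|\f|}\m\{\f,\m\{\g\},\h\}$, $(-1)^{|\f|+|\g|}\m\{\f,\g,\m\{\h\}\}$; and (iii) the four ``length-four'' terms $\m\{\m,\f,\g,\h\}$, $(-1)^{|\f|}\m\{\f,\m,\g,\h\}$, $(-1)^{|\f|+|\g|}\m\{\f,\g,\m,\h\}$, $(-1)^{|\f|+|\g|+|\h|}\m\{\f,\g,\h,\m\}$ together with $\m\{\m\{\f,\g,\h\}\}$. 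In group (ii) I substitute $\m\{\f\}=\delta_\m\f+(-1)^{|\f|}\f\{\m\}$ — just the definition of $\delta_\m=[\m,\cdot]$ together with $|\m|=1$ — and similarly for $\g$ and $\h$; each of these terms then breaks into a piece carrying $\delta_\m$ of a single argument and a residual piece of the form $\m\{\f\{\m\},\g,\h\}$ (and its two analogues). A second application of Proposition \ref{brace_property_prop}, now to $\m\{\f,\g,\h\}\{\m\}$ (three entries in the first bracket, one in the second), recognizes those residual pieces together with the group (iii) terms as exactly $\pm\,\delta_\m\!\big(\m\{\f,\g,\h\}\big)$, the sign being fixed by $|\m\{\f,\g,\h\}|=|\f|+|\g|+|\h|+1$. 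Assembling everything one lands on a chain-level identity of the shape
\[
(\f\cup\g)\cup\h-\f\cup(\g\cup\h)=\pm\,\delta_\m\!\big(\m\{\f,\g,\h\}\big)\pm\m\{\delta_\m\f,\g,\h\}\pm\m\{\f,\delta_\m\g,\h\}\pm\m\{\f,\g,\delta_\m\h\}
\]
up to an overall unit sign. Restricting to Hochschild cocycles $\f,\g,\h$ kills the last three terms and leaves the first as a coboundary, so $\cup_\m$ descends to an associative product on $HH^*(C,\m)$; together with bilinearity this is the asserted ring structure. (Conceptually, this is the statement that a brace algebra carrying a square-zero element $\m$ is a homotopy $G$-algebra, compare \cite{Gerstenhaber_1995higher,Mcclure_Smith_1999solution}, whose Yoneda product is automatically associative on cohomology; I would nevertheless include the direct computation for self-containedness.)

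The only real obstacle I foresee is the sign bookkeeping: reconciling the Koszul signs $(-1)^\epsilon$ of Proposition \ref{brace_property_prop}, governed by the \emph{label} grading $|\cdot|$ rather than the naive degree, with the $(-1)^{|\f|+1}$ built into the definition of $\cup_\m$ and with the sign inside $\delta_\m=[\m,\cdot]$, so that the ten terms of the first expansion match the displayed identity term by term. This is precisely the ``tedious but straightforward'' bookkeeping flagged just before Proposition \ref{brace_property_prop}; no idea beyond the two invocations of the higher-brace relation and the definition of the Hochschild differential is required.
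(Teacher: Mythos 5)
Your argument is correct and follows the same route as the paper's: both expand $\m\{\m\}\{\f,\g,\h\}=0$ and $\m\{\f,\g,\h\}\{\m\}$ via Proposition~\ref{brace_property_prop} and combine them with the definition of $\delta_\m$ to exhibit the associator as a coboundary plus terms killed on cocycles. Your parenthetical observation that the ring here is necessarily non-unital — because the length-zero cochain $\mathfrak e$ with $\mathfrak e_{0,0}=\one$ violates gappedness condition (a) and only becomes available in $\rCC$ — is correct and is a point the paper leaves implicit until Lemma~\ref{unit_rHH_lem}.
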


\begin{proof}
We aim to check the cup product is associative. In fact,
by Proposition \ref{brace_property_prop} again, we obtain
\begin{align*}
	0=\m\{\m\}\{\f,\g,\h\} &= \m\{\m,\f,\g,\h\}+ (-1)^{|\f|} \m\{\f,\m,\g,\h\} +(-1)^{|\f|+|\g|} \m\{\f,\g,\m,\h\} +(-1)^{|\f|+|\g|+|\h|} \m\{\f,\g,\h,\m\} \\
	&+ \m\{\m\{\f\},\g,\h\} +(-1)^{|\f|} \m \{\f,\m\{\g\}, \h\} +(-1)^{|\f|+|\g|} \m\{\f,\g,\m\{\h\}\} \\
	&+ \m\{\m\{\f,\g\},\h\} +(-1)^{|\f|} \m\{\f, \m\{\g,\h\}\} +\m\{\m\{\f,\g,\h\}\}
\end{align*}
\begin{align*}
	\m\{\f,\g,\h\}\{\m\} 
	&= (-1)^{|\f|+|\g|+|\h|} \m\{\m,\f,\g,\h\} + (-1)^{|\g|+|\h|} \m\{\f,\m,\g,\h\} +(-1)^{|\h|} \m\{\f,\g,\m,\h\}+ \m\{\f,\g,\h,\m\} \\
	&+ (-1)^{|\g|+|\h|} \m\{\f\{\m\},\g,\h\} + (-1)^{|\h|} \m\{\f,\g\{\m\},\h\} +\m\{\f,\g,\h\{\m\}\}
\end{align*}
It follows that
\begin{align*}
	(-1)^{|\g|} \big((\f\cup\g)\cup \h - \f\cup(\g\cup\h) \big) =\m\{\delta_\m \f,\g,\h\} +(-1)^{|\f|} \m\{\f,\delta_\m\g,\h\} + (-1)^{|\f|+|\g|} \m\{\f,\g,\delta_\m\h\} + \delta_\m \big(\m\{\f,\g,\h\}\big)
\end{align*}
Descending to the cohomology completes the proof.
\end{proof}

\subsubsection{Reduced Hochschild cohomology}
\label{sss_reduced_Hochschild_cohomology}

In practice, we need a slight variant.
The space $\CC_\G(C)$ consists of those operator systems $\mathfrak t'=(\mathfrak t'_{k,\beta})$. By the gappedness condition (a) in \S \ref{sss_label_group}, we have to assume $\mathfrak t'_{0,0}=0$. 
But, we must allow nontrivial component for $(k,\beta)=(0,0)$ later.
Thus, we define
\begin{equation}
	\label{rCC_eq}
\rCC (C)\subset \CC_{0,0}(C)\times \CC_\G(C)
\end{equation}
to be the space of the operator systems $\mathfrak t=(\mathfrak t_{k,\beta})_{k\in\mathbb N,\beta\in\G}$ such that $\mathfrak t-\mathfrak t_{0,0}\equiv (\mathfrak t_{k,\beta})_{(k,\beta)\neq (0,0)}$ is contained in $\CC_\G(C)$ and
\begin{equation}
	\label{rCC_condition_eq}
	\mathfrak t_{k,\beta}(\dots, \one,\dots)=0 \ \ \quad (\forall  \ k\ge 1, \ \forall \ \beta ) 
\end{equation}
We keep using the label grading on $\CC_\G(C)$, and we further define the grading on $C\equiv \CC_{0,0}(C)$ by
\begin{equation}
	\label{rCC_grading_00_eq}
	|\mathfrak t_{0,0}|=\deg' \mathfrak t_{0,0}
\end{equation}

\begin{rmk}
	Algebraically, the above version is needed for the unital ring structures.
	Geometrically, this is also necessary, because the operator $\q_{\ell,0,0}$ (for the moduli space of constant maps with one boundary and $\ell$ interior markings as in \S \ref{ss_from_QH_to_rHH}) should correspond to a term with label $(k,\beta)=(0,0)$.
\end{rmk}

	\begin{lem}
		\label{operations_on_rCC}
		The brace operations can be defined on $\rCC(C)$.
	\end{lem}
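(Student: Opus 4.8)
The plan is to reuse verbatim the brace formula of \S\ref{sss_brace} for $\g\{\f_1,\dots,\f_n\}$, now evaluated on operator systems of $\rCC(C)$ that may carry a nonzero $(0,0)$-component, and then to verify the two things this requires: that each component $(\g\{\f_1,\dots,\f_n\})_{k,\beta}$ is a finite sum, and that the resulting operator system again lies in $\rCC(C)$ — i.e.\ its part away from $(0,0)$ is still $\G$-gapped and it still satisfies the normalization \eqref{rCC_condition_eq}.

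For finiteness and gappedness I would argue exactly as for the classical braces. Fixing $(k,\beta)$, the splitting $\bar\beta+\beta_1+\cdots+\beta_n=\beta$ together with additivity of $E$ and non-negativity of energy on all nonzero components gives $E(\bar\beta)+\sum_i E(\beta_i)=E(\beta)$ with every summand in $[0,E(\beta)]$; passing from $\CC_\G(C)$ to $\rCC(C)$ only enlarges the range of each $\f_i$ by the single extra value $\beta_i=0$ of zero energy, and — crucially — the $(0,0)$-component of $\g$ never enters when $n\ge 1$, since then the relevant factor $\g_{r+n,\bar\beta}$ has arity $r+n\ge 1$. So conditions (b) and (c) of \S\ref{sss_label_group} for $\g$ and for the $\f_i$ bound the decompositions just as before, the sum is finite, and the same bookkeeping shows the non-$(0,0)$ part of $\g\{\f_1,\dots,\f_n\}$ is $\G$-gapped (condition (a) is never used, as already observed in \S\ref{sss_brace}). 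The label degree of the result is $|\g|+|\f_1|+\cdots+|\f_n|$, consistent with \eqref{brace_degree_eq} and the grading convention \eqref{rCC_grading_00_eq}.

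The step that carries the actual content is preservation of \eqref{rCC_condition_eq}, and I would do it by a direct term-by-term vanishing argument. For $n=0$ there is nothing to check since $\g\{\}=\g\in\rCC(C)$. For $n\ge 1$ and $k\ge 1$, suppose one of the $k$ inputs of $(\g\{\f_1,\dots,\f_n\})_{k,\beta}$ is the unit $\one$. In each summand of the brace formula that input is fed — up to the sign $(-1)^{s_\bullet}$ coming from the iterated twist $\id^{\#}$, which is harmless since $\deg\one=0$ gives $\id^{\#}(\one)=-\one$ — into exactly one slot: either one of the $r$ identity inputs of $\g_{r+n,\bar\beta}$, which forces $r\ge 1$ hence arity $r+n\ge 1$ and makes the summand vanish by \eqref{rCC_condition_eq} for $\g$; or one of the $t_i$ inputs of some factor $(\f_i)_{t_i,\beta_i}$, which forces $t_i\ge 1$ and makes the summand vanish by \eqref{rCC_condition_eq} for $\f_i$. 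Hence every summand is zero, so $\g\{\f_1,\dots,\f_n\}$ satisfies \eqref{rCC_condition_eq} and lies in $\rCC(C)$.

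I expect the only mildly delicate part to be this last case analysis — keeping track of the $\id^{\#}$ sign twists on $\one$ and of the fact that for $n\ge 1$ the arity of the outer factor $\g_{r+n,\bar\beta}$ is always positive, so that the normalization of $\g$ always applies and its $(0,0)$-component never interferes. It is worth flagging, by contrast, that the composition product $\diamond$ of \eqref{composition_Gerstenhaber_eq} does \emph{not} extend to $\rCC(C)$, since a nonzero $(0,0)$-component of its inner argument would be inserted infinitely many times; this is exactly why the lemma is stated only for the brace operations, in which each $\f_i$ occurs once.
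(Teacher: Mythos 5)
Your proposal is correct and follows essentially the same route as the paper's own proof: define the brace by the same formula, note that the $(0,0)$-components contribute only finitely many extra terms, and then verify \eqref{rCC_condition_eq} by tracking where the input $\one$ lands — into an identity slot of $\g_{r+n,\bar\beta}$ (killed by the normalization of $\g$, since $r+n\ge 1$) or into some $(\f_i)_{t_i,\beta_i}$ (killed by the normalization of $\f_i$). Your write-up is more explicit about the finiteness/gappedness bookkeeping and the sign twist on $\one$, and your closing remark on why $\diamond$ does not extend to $\rCC$ correctly echoes the paper's own observation, but no new idea is involved.
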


\begin{proof}

	Suppose $\g,\f_1,\dots,\f_n\in\rCC(C)$ for $n\ge 1$.
	We define $\h:=\g\{\f_1,\dots, \f_n\}$ exactly as in \S \ref{sss_brace}, and there will be at most finite extra terms from the $\CC_{0,0}$.
Depending on the place of $\one$, we decompose
\[
\h_{k,\beta}(\dots ,\one,\dots) = \sum \pm \g_{s,\bar\beta} ( \dots, (\f_i)_{t_i,\beta_i}(\dots,\one,\dots),\dots) + \sum \pm \g_{s,\bar\beta} (\dots, (\f_i)_{t_i,\beta_i} ,\dots,\one,\dots, (\f_{i+1})_{t_{i+1},\beta_{i+1}},\dots) 
\]
Hence, the condition (\ref{rCC_condition_eq}) of $\h$ just follows from that of $\g$ and $\f_i$'s.
\end{proof}

In particular, we can define $\g\{\f\}$ and $[\g,\f]=\g\{\f\} \pm \f\{ \g\}$ on $\rCC$.
By comparison, $\g\diamond\f$ is not defined for $\f,\g\in \rCC(C)$, since there may be infinite extra terms in (\ref{composition_Gerstenhaber_eq}) from the $\CC_{0,0}$.
We introduce $\rCC$ mainly for the sake of closed-open operators, whereas we must still use $\CC$ rather than $\rCC$ to develop the homotopy theory of $A_\infty$ algebras for the category $\UD$, e.g. the Whitehead theorem \ref{whitehead_thm}. 
Furthermore, the condition (\ref{rCC_condition_eq}) does not exactly match the unitality of an $A_\infty$ algebra $\m$ or that of an $A_\infty$ homomorphism $\f$. Indeed, we only have $\m-\m_{2,0}\in \rCC(C)$ and $\f-\f_{1,0} \in \rCC(C)$ in general.
But, according to our trial-and-error, the condition (\ref{rCC_condition_eq}) is indeed the correct one as suggested below:

\begin{lem}
	\label{rHH_defn_previous_lem}
	Suppose $(C,\m)\in \Obj \UD$. Then,
	
	\begin{enumerate}
		\item the Hochschild differential $\delta_\m=[\m,\cdot]$ still gives a differential map on $\rCC(C)$;
\item the Yoneda cup product $\cup_\m$ induces a map $\rCC(C)\times \rCC(C)\to \rCC(C)$.
\end{enumerate}

\end{lem}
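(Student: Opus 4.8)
The plan is to verify the two assertions by a direct but careful bookkeeping of where the unit $\one$ can land, using that the defining feature of $\rCC(C)$ is the "reduced" condition \eqref{rCC_condition_eq} (vanishing whenever any positive-arity slot is fed $\one$), together with the strict unitality of $\m$ built into $\Obj\UD$ (conditions (1)--(3) in \S\ref{sss_unitality_divisor_axiom}). Throughout I would write $\m=\m_{2,0}+\m'$ with $\m'\in\rCC(C)$, which is legitimate since $\m_{k,\beta}(\dots,\one,\dots)=0$ for all $(k,\beta)\neq(1,0),(2,0)$ and $\m_{1,0}$ is absent for a minimal model, and in any case $\m_{1,0}(\one)=0$ so $\m-\m_{2,0}$ satisfies \eqref{rCC_condition_eq}. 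The key point is that the brace-type operations $\g\{\f_1,\dots,\f_n\}$ are already defined on $\rCC(C)$ by Lemma \ref{operations_on_rCC}, so the only thing to check is that (a) $[\m,\t]=[\m_{2,0},\t]+[\m',\t]$ stays in $\rCC(C)$ and (b) $\t\cup_\m\s$ stays in $\rCC(C)$, for $\t,\s\in\rCC(C)$.

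For part (1): the term $[\m',\t]=\m'\{\t\}\pm\t\{\m'\}$ lies in $\rCC(C)$ already by Lemma \ref{operations_on_rCC}, so it suffices to treat $[\m_{2,0},\t]=\m_{2,0}\{\t\}-(-1)^{|\t|}\t\{\m_{2,0}\}$. Here $\m_{2,0}\{\t\}$ and $\t\{\m_{2,0}\}$ individually need not be reduced, because $\m_{2,0}$ has arity $2$ and one of its two inputs can be $\one$; but the two "boundary" pieces — namely $\m_{2,0}(\one,\t(\dots))$-type and $\m_{2,0}(\t(\dots),\one)$-type terms in $\m_{2,0}\{\t\}$ — are exactly cancelled (up to the unit axiom $\m_{2,0}(\one,x)=(-1)^{\deg x}\m_{2,0}(x,\one)=x$ and the sign conventions in \S\ref{sss_sign_degree}) by the corresponding "insert $\one$ into a slot of $\t$" terms coming from $\t\{\m_{2,0}\}$, precisely because $\t$ applied to a tuple containing $\one$ vanishes. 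I would make this cancellation explicit by expanding $[\m_{2,0},\t]_{k,\beta}(x_1,\dots,x_{i-1},\one,x_{i+1},\dots)$ and matching terms; the residual terms are of the form $\m_{2,0}(\text{stuff},\t(\dots\one\dots))$ which vanish by \eqref{rCC_condition_eq} for $\t$. One also checks the $(0,0)$-component behaves: $[\m,\t]_{0,0}=\m_{2,0}(\t_{0,0},\t_{0,0})$-type contributions plus $\m'\{\t\}_{0,0}$, and the grading convention \eqref{rCC_grading_00_eq} is compatible with $|\delta_\m\t|=|\t|+1$.

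For part (2): by \eqref{cup_product_eq}, $\t\cup_\m\s=(-1)^{|\t|+1}\m\{\t,\s\}=(-1)^{|\t|+1}\big(\m_{2,0}\{\t,\s\}+\m'\{\t,\s\}\big)$. The term $\m'\{\t,\s\}$ is in $\rCC(C)$ by Lemma \ref{operations_on_rCC}. The term $\m_{2,0}\{\t,\s\}$: since $\m_{2,0}$ has arity exactly $2$, the brace $\m_{2,0}\{\t,\s\}$ has no room for extra identity insertions beyond the two slots, which are filled by $\t$ and $\s$ themselves; hence $\m_{2,0}\{\t,\s\}_{k,\beta}=(-1)^{\epsilon}\m_{2,0}(\t_{?}(\dots),\s_{?}(\dots))$ with no free input slot of $\m_{2,0}$ available, and feeding $\one$ into any input of the composite forces $\one$ into an input slot of $\t$ or $\s$, killing the term by \eqref{rCC_condition_eq}. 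So $\m_{2,0}\{\t,\s\}\in\rCC(C)$, and $\t\cup_\m\s\in\rCC(C)$. I would also note that this is consistent with the cochain-level computation in the proof of Proposition \ref{cup_product_ring_prop}, which only used Proposition \ref{brace_property_prop}, so it descends to $\rCC(C)$ verbatim once closure is known.

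The main obstacle I anticipate is the sign bookkeeping in part (1): making the cancellation between the $\m_{2,0}\{\t\}$ boundary terms and the $\t\{\m_{2,0}\}$ insertion terms manifest requires keeping precise track of the shifted-degree sign operator $\id^\#$ and the label-grading signs in Proposition \ref{brace_property_prop}, and one must be slightly careful that $\t$ itself is not assumed unital as an $A_\infty$-morphism — only \eqref{rCC_condition_eq} holds — so the cancellation must use $\m_{2,0}$'s unitality and $\t$'s reducedness, never the (absent) $\t_{1,0}(\one)=\one$ property. Everything else is a finiteness-and-placement argument that parallels Lemma \ref{operations_on_rCC}.
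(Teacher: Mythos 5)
Your proposal is correct and follows essentially the same route as the paper's proof: decompose $\m=\m_{2,0}+(\m-\m_{2,0})$, dispose of the $\m-\m_{2,0}$ piece via Lemma \ref{operations_on_rCC}, and then verify condition \eqref{rCC_condition_eq} for the $\m_{2,0}$ contribution by direct expansion and sign bookkeeping (and for part (2), by noting that the arity-$2$ brace $\m_{2,0}\{\t,\s\}$ leaves no free slot). The one place where your sketch is slightly imprecise is the ``middle'' case $1<i<k$ of $[\m_{2,0},\t]$: there the surviving nonzero terms both come from $\t\{\m_{2,0}\}$, namely $\t(\dots,\m_{2,0}(x_{i-1},\one),x_i,\dots)$ and $\t(\dots,x_{i-1}^\#,\m_{2,0}(\one,x_i),\dots)$, and they cancel each other via $\m_{2,0}(x,\one)=-x^\#$ and $\m_{2,0}(\one,x)=x$, rather than cancelling against anything from $\m_{2,0}\{\t\}$ (those terms all vanish by reducedness of $\t$); your planned full expansion would surface this.
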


\begin{proof}
We only know $\m-\m_{2,0}\in \rCC(C)$, but we recall that $\m_{2,0}(\one, x)= (-1)^{\deg' x+1} \m_{2,0}(x,\one)=x$.

\textbf{(1).} Given $\f \in \rCC(C)$, it suffices to show $\delta_\m \f\in \rCC(C)$.
By Lemma \ref{operations_on_rCC}, we have $[\m-\m_{2,0},\f]\in\rCC(C)$.
	Thus, it suffices to check the condition (\ref{rCC_condition_eq}) for $\delta_{\m_{2,0}}(\f)=[\m_{2,0}, \f]$.
	Specifically, fix $k\ge 1$ and $1\le i\le k$; we aim to show 
	\[
	\Delta:=([\m_{2,0}, \f])_{k,\beta}(x_1,\dots, x_{i-1}, \one, x_i,\dots, x_{k-1})
	\]
	vanishes all the time. We check it by cases. Recall that we denote $x^\#=(-1)^{\deg' x} x$ (\ref{sign_operator_eq}). 
	
	\begin{itemize}
		\item [$\ast$] 
	When $ i\neq 1$ or $k$, using the condition (\ref{rCC_condition_eq}) of $\f$ deduces that
	\begin{align*}
	\Delta = & -(-1)^{\deg' \f} \ \f_{k-1,\beta} (x_1^\#,\dots, x_{i-2}^\#, \m_{2,0} (x_{i-1},\one), x_i, x_{i+1}, \dots, x_{k-1}) \\
	&-(-1)^{\deg' \f} \ \f_{k-1,\beta} (x_1^\#,\dots, x_{i-2}^\#, x_{i-1}^\# , \m_{2,0} (\one, x_i), x_{i+1}, \dots, x_{k-1})
	\end{align*}
	This vanishes as $\m_{2,0}(x,\one)=(-1)^{\deg' x+1} x=-x^\#$ and $\m_{2,0}(\one, x)=x$. 
	
	\item [$\ast$] 
	When $i=1$, using the condition (\ref{rCC_condition_eq}) of $\f$ deduces that
	\begin{align*}
		\Delta =	& \m_{2,0} ( \one^{\# \deg' \f}, \f_{k-1,\beta}(x_1,\dots, x_{k-1})) -(-1)^{\deg'\f} \  \f_{k-1,\beta} (\m_{2,0}(\one, x_1),x_2,\dots, x_{k-1})
	\end{align*}
Note that $\m_{2,0}(\one, x)=x$ and $\one^{\# \deg' \f}=(-1)^{\deg' \f} \one$. Thus, we also get $\Delta=0$.
	
	\item [$\ast$]
	When $i=k$, we can similarly compute the sign. Using the condition (\ref{rCC_condition_eq}) of $\f$ deduces that
			\begin{align*}
				\Delta &= \m_{2,0} (\f_{k-1,\beta}(x_1,\dots, x_{k-1}), \one) -(-1)^{\deg' \f} \f_{k-1,\beta} (x^\#_1,\dots, x^\#_{k-2}, \m_{2,0}( x_{k-1},\one)) \\
				&= 
				\Scale[0.8]{\big( (-1)^{\deg' \f +\deg' x_1+\cdots +\deg' x_{k-1}+1} - (-1)^{\deg' \f+\deg' x_1+\cdots +\deg'x_{k-2} + (\deg' x_{k-1}+1)} \big)}
				\cdot \f_{k-1,\beta}(x_1,\dots, x_{k-1})=0
			\end{align*}
	\end{itemize}
	
\textbf{(2).} Given $\f,\g\in \rCC(C)$, we aim to show $ \m\{\f,\g\} \in \rCC(C)$.
Since $\m-\m_{2,0}\in \rCC(C)$, applying Lemma \ref{operations_on_rCC} first implies that $(\m-\m_{2,0}) \{\f,\g\}\in\rCC(C)$.
However, the conditions (\ref{rCC_condition_eq}) of $\f$ and $\g$ also imply that
$
\big((\m_{2,0})\{\f,\g\} \big)_{k,\beta}( x_1,\dots, x_{i-1},\one,x_i,\dots, x_{k-1})
=0
$ for any $k\ge 1$ and $1\le i\le k$.
Hence, $(\m_{2,0})\{\f,\g\}\in \rCC(C)$.
So, the proof of (2) is now complete.
 (Remark that actually one can show that $\m\{\f_1,\dots,\f_n\}\in\rCC(C)$ for any $n\ge 2$ and $\f_1,\dots, \f_n\in\rCC(C)$.)
\end{proof}

Combining Lemma \ref{operations_on_rCC} and \ref{rHH_defn_previous_lem} with the proof of Proposition \ref{cup_product_ring_prop} yields the following valid definition:

\begin{defn}
	\label{rHH_defn}
	The \textbf{reduced Hochschild cohomology} for $(C,\m)\in\Obj\UD$ is defined by
	\[
	\rHH(C,\m) := H^*(\rCC(C),\delta_\m)
	\]
	Moreover, the cup product $\cup_\m$ also gives rise to a ring structure on $\rHH(C,\m)$.
\end{defn}

\subsubsection{Unitality}
A major advantage of the reduced Hochschild cohomology is the unitality:

\begin{lem}
	\label{unit_rHH_lem}
	Fix $(C,\m) \in \Obj\UD$. Then, $\rHH(C,\m)$ is a unital ring such that the \textbf{unit} is given by
	\begin{equation}
		\label{unit_rHH_eq}
	\mathfrak e =(\mathfrak e_{k,\beta})_{k\in\mathbb N, \beta\in \G} \in \rCC(C)
	\end{equation}
where $\mathfrak e_{0,0}=\one$ and $\mathfrak e_{k,\beta}=0$ for any $(k,\beta)\neq (0,0)$. Moreover, its label degree is $|\mathfrak e|=-1$.
\end{lem}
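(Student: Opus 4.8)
The plan is to verify, in order, that $\mathfrak e$ belongs to $\rCC(C)$ and has label degree $-1$, that $\mathfrak e$ is $\delta_\m$-closed (so that $[\mathfrak e]\in\rHH(C,\m)$ is defined), and that $[\mathfrak e]$ is a two-sided unit for $\cup_\m$ — and in fact I expect unitality to hold already at the cochain level. The first point is immediate from the definitions: every component $\mathfrak e_{k,\beta}$ with $(k,\beta)\neq(0,0)$ vanishes, so $\mathfrak e-\mathfrak e_{0,0}=0$ lies in $\CC_\G(C)$ and the condition (\ref{rCC_condition_eq}) holds vacuously, whence $\mathfrak e\in\rCC(C)$; and by (\ref{rCC_grading_00_eq}), $|\mathfrak e|=|\mathfrak e_{0,0}|=\deg'\one=\deg\one-1=-1$, which is moreover the value forced by $|\cup_\m|=1$ (so that $|\mathfrak e\cup_\m\f|=|\f|$).

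Next I would compute $\delta_\m\mathfrak e=[\m,\mathfrak e]=\m\{\mathfrak e\}+\mathfrak e\{\m\}$, using $|\m|=1$, $|\mathfrak e|=-1$ in (\ref{Gerstenhaber_bracker_eq}). The term $\mathfrak e\{\m\}$ vanishes identically since $\mathfrak e$ has no nonzero component of positive arity, leaving no slot into which to brace $\m$. For $\m\{\mathfrak e\}$, expanding the brace and using that $\mathfrak e$ contributes only its zero-arity input $\mathfrak e_{0,0}=\one$, one obtains for $(k,\beta)\neq(0,0)$
\[
(\m\{\mathfrak e\})_{k,\beta}(x_1,\dots,x_k)=\sum_{i=0}^{k}\m_{k+1,\beta}(x_1^\#,\dots,x_i^\#,\one,x_{i+1},\dots,x_k)=\CU[\m]_{k,\beta}(\one;x_1,\dots,x_k),
\]
which is zero by the cyclical unitality (I-2) of the object $(C,\m)\in\Obj\UD$; and $(\m\{\mathfrak e\})_{0,0}=\m_{1,0}(\one)=0$ by strict unitality (I-1). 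Hence $\delta_\m\mathfrak e=0$ and $[\mathfrak e]$ is a well-defined class.

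For the unit property I would compute $\mathfrak e\cup_\m\f$ and $\f\cup_\m\mathfrak e$ directly, for label-homogeneous $\f\in\rCC(C)$ (then extend by linearity), via the normalization $\f\cup_\m\g=(-1)^{|\f|+1}\m\{\f,\g\}$ of (\ref{cup_product_eq}). In both braces $\m\{\mathfrak e,\f\}$ and $\m\{\f,\mathfrak e\}$ the zero-arity input $\one$ coming from $\mathfrak e$ forces, through the strict unitality of $\m$ (which kills $\m_{r+2,\bar\beta}(\dots,\one,\dots)$ once $r+2\ge3$), that the only surviving summand feeds $\m_{2,0}$ exactly two arguments: $\one$ and a possibly $\#$-twisted copy of $\f_{k,\beta}(x_1,\dots,x_k)$, in the order dictated by the brace. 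For $\m\{\mathfrak e,\f\}$ the twist exponent attached to $\f$ is $0$, so this gives $\m_{2,0}(\one,\f_{k,\beta}(x_1,\dots,x_k))=\f_{k,\beta}(x_1,\dots,x_k)$ and $\mathfrak e\cup_\m\f=(-1)^{|\mathfrak e|+1}\f=\f$. For $\m\{\f,\mathfrak e\}$ the twist $\id^{\# s_1}$ with $s_1=|\mathfrak e|=-1$ places $\#$ on each input, and combining $\m_{2,0}(x,\one)=(-1)^{\deg x}x$ with $\deg\f_{k,\beta}=|\f|-\mu(\beta)-k+1$ and $\mu(\beta)\in2\mathbb Z$ gives $\m\{\f,\mathfrak e\}=(-1)^{|\f|+1}\f$, hence $\f\cup_\m\mathfrak e=(-1)^{|\f|+1}(-1)^{|\f|+1}\f=\f$. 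Thus $\mathfrak e$ is a strict two-sided unit for $\cup_\m$ already on cochains; passing to cohomology — where the associative ring structure is already in place by Definition \ref{rHH_defn} — exhibits $[\mathfrak e]$ as the unit of $\rHH(C,\m)$.

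The one delicate point is the sign bookkeeping in this last step, above all in the right-unit identity $\f\cup_\m\mathfrak e=\f$: one must reconcile the twist operators $\id^{\# s_m}$ that the brace formula of \S\ref{sss_brace} attaches to the inputs — here carrying the nonstandard exponent $s=|\mathfrak e|=-1$, which is interpreted through the involutivity $\id^\#\circ\id^\#=\id$ — with the sign $(-1)^{\deg x}$ in $\m_{2,0}(x,\one)$. Everything else follows directly from the unitality axioms (I-1) and (I-2) for objects of $\UD$ and from the fact that $\mathfrak e$ is concentrated in arity zero.
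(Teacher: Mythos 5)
Your proposal is correct and takes essentially the same approach as the paper; the paper's own proof is terse (it says ``Clearly $\delta_\m(\mathfrak e)=0$'', computes $(\mathfrak e\cup_\m\f)_{k,\beta}=\m_{2,0}(\mathfrak e_{0,0},\f_{k,\beta})=\f_{k,\beta}$ by unitality, and dismisses $\f\cup_\m\mathfrak e=\f$ with ``Similarly''), and you have merely spelled out all the computations, including the sign bookkeeping for the right-unit identity and the interpretation of the exponent $s_1=|\mathfrak e|=-1$ via the involutivity of $\id^\#$.
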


\begin{proof}
Clearly, $\delta_\m(\mathfrak e)=0$. By (\ref{rCC_grading_00_eq}), the degree of $\mathfrak e$ is
$
|\mathfrak e|=\deg' \one  =  -1
$.
Fix $[\f]\in \rHH(C,\m)$. 
Using the unitality of $\m$ implies $(\mathfrak e\cup_\m\f)_{k,\beta}= \m_{2,0} (\mathfrak e_{0,0}, \f_{k,\beta})=\f_{k,\beta}$
Similarly, we can check $\f\cup_\m \mathfrak e=\f$.
\end{proof}


\subsubsection{$\Lambda^X$-module}
\label{sss_Lambda_X_module}
From now on, we assume the label group is $\G=\pi_2(X,L)$.
The quantum Novikov ring $\Lambda^X$ in (\ref{quantum_Nov_eq}) acts on $\rCC$ by $(t^A\cdot \f)_{k,\beta}= \f_{k,\beta-A}$ for any $A\in \pi_2(X)$. That is, the $\rCC$ has a natural $\Lambda^X$-module structure.
Specifically, if $\f$ is supported in a single component $\CC_{k,\gamma}$,
the $t^A\cdot \f$ is simply the same multilinear map for a different label, supported in $\CC_{k,\gamma+A}$ instead. This will not violate (\ref{rCC_condition_eq}) or any gappedness conditions since $E(A)\ge 0$. Notice that the label degree (\S \ref{sss_label_grading}) will be changed by
\begin{equation}
	\label{tA_degree_label_eq}
	|t^A\cdot \f|=|\f|+ 2c_1(A) 
\end{equation}
since $\mu(\gamma+A)=\mu(\gamma)+2c_1(A)$. Particularly, we always have $(-1)^{|t^A\cdot \f|}=(-1)^{|\f|}$.

\begin{prop}
	The $\Lambda^X$-action is compatible with the brace operation, namely, $t^A\cdot \big(\g\{\f_1,\dots,\f_n\}\big) = (t^A\cdot \g)\{\f_1,\dots,\f_n\}=\g\{\f_1,\dots, (t^A\cdot \f_i),\dots,\f_n\}$ for $1\le i\le n$.
\end{prop}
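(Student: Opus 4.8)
The plan is to verify each of the two identities componentwise, directly from the defining formula for the brace operation in \S\ref{sss_brace}. First I would recall that the $\Lambda^X$-action is merely a reindexing of labels, $(t^A\cdot\f)_{k,\beta}=\f_{k,\beta-A}$, leaving the underlying multilinear maps — and hence the shifted degrees $\deg'$ — untouched; in particular $t^A\cdot\f$ again lies in $\rCC(C)$, as already noted above, since $E(A)\ge 0$ and no units are inserted. The only sign-sensitive data in the brace formula are the operators $\id^{\# s_m}$ and $\f_j^{\# s_j}$, which depend on $s_m=|\f_{m+1}|+\dots+|\f_n|$ only through its parity; since applying the $\Lambda^X$-action shifts label gradings by the even integer $2c_1(A)$, none of the $s_m$ changes parity, so every sign operator in the formula is preserved.

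For the first identity, I would write the $(k,\beta)$-component of $(t^A\cdot\g)\{\f_1,\dots,\f_n\}$ as the sum over decompositions $r_0+\dots+r_n=r$, $r+t_1+\dots+t_n=k$, and $\bar\beta+\beta_1+\dots+\beta_n=\beta$ of $(t^A\cdot\g)_{r+n,\bar\beta}=\g_{r+n,\bar\beta-A}$ composed with the same tensor product of $\f$'s and sign operators. Substituting $\bar\beta'=\bar\beta-A$ converts the label constraint into $\bar\beta'+\beta_1+\dots+\beta_n=\beta-A$, so the sum becomes exactly the $(k,\beta-A)$-component of $\g\{\f_1,\dots,\f_n\}$, i.e. the $(k,\beta)$-component of $t^A\cdot(\g\{\f_1,\dots,\f_n\})$. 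For the second identity I would argue in the same way, this time substituting $\beta_i'=\beta_i-A$ for the $i$-th label; here one also checks that replacing $\f_i$ by $t^A\cdot\f_i$ changes $s_m$ for $m<i$ by $2c_1(A)$, an even amount, so the sign operators are again unchanged.

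There is no genuine obstacle: the statement is a bookkeeping check, and the only point deserving attention is that the label shift be harmless with respect to the gappedness conditions of \S\ref{sss_label_group} and the reducedness condition (\ref{rCC_condition_eq}), both of which hold because $E(A)\ge 0$ and the action neither alters the multilinear maps nor introduces a unit. I would therefore simply record the two change-of-variable computations above, together with the parity remark that guarantees the signs match.
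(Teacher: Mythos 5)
Your argument is correct and rests on exactly the same two observations the paper uses: the $\Lambda^X$-action is a pure label shift, and by (\ref{tA_degree_label_eq}) it changes label degrees by the even quantity $2c_1(A)$, so every parity-dependent sign operator in the brace formula is unaffected. The paper merely streamlines the bookkeeping by reducing (via linearity) to $n=1$ and to $\g,\f$ supported in single components $\CC_{k+1,\gamma}$ and $\CC_{\ell,\eta}$, whereas you carry out the label change-of-variables inside the full sum for general $n$; the substance is identical.
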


\begin{proof}
	We only consider $n=1$, and the others are similar. By linearity, we may assume $\g$ and $\f$ are supported in $\CC_{k+1,\gamma}$ and $\CC_{\ell,\eta}$ respectively, and we may also assume $|\f|=p$ and $|\g|=q$.
	Without the labels, we simply have $\g\{\f\}(x_1,\dots, x_{k+\ell})=\sum_i \g( x^{\# p}_1,\dots, x_i^{\# p}, \f(x_{i+1},\dots, x_{i+\ell}),\dots, x_{k+\ell})$. Then, one can easily check that $t^A\cdot (\g\{\f\})$, $\g\{ t^A\cdot \f\}$, and $(t^A\cdot \g)\{\f\}$ are the same multilinear map as above, supported in $\CC_{k+\ell,\gamma+\eta+A}$ with the same label. The signs are also the same thanks to (\ref{tA_degree_label_eq}).
\end{proof}

\begin{cor}
	$\delta_\m (t^A\cdot \f)= t^A\cdot \delta_\m \f$.
\end{cor}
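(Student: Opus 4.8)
The plan is to unwind $\delta_\m$ via its definition $\delta_\m=[\m,\cdot]$ together with the formula (\ref{Gerstenhaber_bracker_eq}) for the Gerstenhaber bracket, and then to invoke the preceding Proposition on the compatibility of the $\Lambda^X$-action with the brace operations, applied twice.

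By multilinearity of the brace operations and $\Lambda^X$-linearity it suffices to treat a homogeneous $\f$; recall $|\m|=1$ always. First I would expand $\delta_\m(t^A\cdot\f)=[\m,\,t^A\cdot\f]=\m\{t^A\cdot\f\}-(-1)^{|\m|\,|t^A\cdot\f|}\,(t^A\cdot\f)\{\m\}$. The preceding Proposition, taken with $n=1$, gives $\m\{t^A\cdot\f\}=t^A\cdot(\m\{\f\})$ (reading $\m$ as the outer operation and $t^A\cdot\f$ as the single brace argument) and $(t^A\cdot\f)\{\m\}=t^A\cdot(\f\{\m\})$ (reading $t^A\cdot\f$ as the outer operation).

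Next I would dispose of the sign. Since $|\m|=1$ we have $(-1)^{|\m|\,|t^A\cdot\f|}=(-1)^{|t^A\cdot\f|}$, and the remark following (\ref{tA_degree_label_eq}) gives $(-1)^{|t^A\cdot\f|}=(-1)^{|\f|}=(-1)^{|\m|\,|\f|}$. Substituting, $\delta_\m(t^A\cdot\f)=t^A\cdot(\m\{\f\})-(-1)^{|\m|\,|\f|}\,t^A\cdot(\f\{\m\})=t^A\cdot\bigl(\m\{\f\}-(-1)^{|\m|\,|\f|}\f\{\m\}\bigr)=t^A\cdot[\m,\f]=t^A\cdot\delta_\m\f$, using additivity of $t^A\cdot(\cdot)$.

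There is essentially no obstacle; the one point that deserves a second look is the sign bookkeeping, namely that shifting the label of $\f$ by $A$ leaves the Koszul sign in the bracket unchanged — this is exactly the assertion $(-1)^{|t^A\cdot\f|}=(-1)^{|\f|}$ recorded after (\ref{tA_degree_label_eq}), which holds because $\mu$ takes values in $2\mathbb Z$ so that $\mu(\gamma+A)\equiv\mu(\gamma)\pmod 2$.
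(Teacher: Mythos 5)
Your proof is correct and is precisely the argument the paper intends: the corollary is stated without proof as an immediate consequence of the preceding proposition, and your unwinding of $\delta_\m=[\m,\cdot]$ via (\ref{Gerstenhaber_bracker_eq}), the two $n=1$ applications of the compatibility proposition, and the sign observation $(-1)^{|t^A\cdot\f|}=(-1)^{|\f|}$ recorded after (\ref{tA_degree_label_eq}) are exactly the steps being left to the reader.
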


\begin{cor}
	\label{Lambda_X_module_rHH_cor}
	The (reduced) Hochschild cohomology is a $\Lambda^X$-module.
\end{cor}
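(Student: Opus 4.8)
The plan is to deduce the module structure on cohomology directly from the chain level, using nothing beyond what has already been set up in \S\ref{sss_Lambda_X_module}. By that subsection, $\rCC(C)$ (and likewise $\CC_\G(C)$) is a $\Lambda^X$-module via the shift $(t^A\cdot\f)_{k,\beta}=\f_{k,\beta-A}$, and by the preceding corollary the Hochschild differential is $\Lambda^X$-linear: $\delta_\m(t^A\cdot\f)=t^A\cdot\delta_\m\f$. So the whole point is just that a differential on a $\Lambda^X$-module which is $\Lambda^X$-linear induces a $\Lambda^X$-module structure on its cohomology.

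Concretely, I would first record that $\Lambda^X$-linearity of $\delta_\m$ forces both $\ker\delta_\m$ and $\operatorname{im}\delta_\m$ to be $\Lambda^X$-submodules of $\rCC(C)$: if $\delta_\m\f=0$ then $\delta_\m(t^A\cdot\f)=t^A\cdot 0=0$, and if $\f=\delta_\m\g$ then $t^A\cdot\f=\delta_\m(t^A\cdot\g)$. Hence the quotient $\rHH(C,\m)=\ker\delta_\m/\operatorname{im}\delta_\m$ is a quotient of $\Lambda^X$-modules, and one defines the action on it by $t^A\cdot[\f]:=[t^A\cdot\f]$, which is well defined by the previous sentence. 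The module axioms — $t^0=1$ acts as the identity, $t^A\cdot(t^B\cdot[\f])=t^{A+B}\cdot[\f]$, and additivity/biadditivity — are inherited from $\rCC(C)$, where they are immediate from the shift formula $(t^A\cdot\f)_{k,\beta}=\f_{k,\beta-A}$, and they descend to the quotient. The same argument applies verbatim to $\CC_\G(C)$ and $HH^*(C,\m)$, which is why the statement is phrased with ``(reduced)'' in parentheses.

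There is no genuine obstacle here; the real content was already discharged in establishing compatibility of the $\Lambda^X$-action with the brace operations. One further remark I would include, if desired, is that this module structure is compatible with the cup product of Definition \ref{rHH_defn}: the proposition on compatibility of the $\Lambda^X$-action with braces gives $t^A\cdot(\f\cup_\m\g)=(t^A\cdot\f)\cup_\m\g=\f\cup_\m(t^A\cdot\g)$ already at the chain level, so after descending to cohomology $\rHH(C,\m)$ is in fact a $\Lambda^X$-algebra. This last point, however, is not needed for the stated corollary and can be left as a comment.
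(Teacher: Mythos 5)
Your proposal is correct and is exactly the intended argument: the paper leaves this corollary without an explicit proof because, as you note, it follows immediately from the preceding corollary ($\delta_\m(t^A\cdot\f)=t^A\cdot\delta_\m\f$), which makes kernel and image $\Lambda^X$-submodules so that the cohomology inherits the action. The additional remark about compatibility with the cup product is a correct and natural aside, also already implicit in the stated compatibility of the $\Lambda^X$-action with braces.
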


\subsection{From quantum cohomology to reduced Hochschild cohomology}
\label{ss_from_QH_to_rHH}
We study the moduli spaces of pseudo-holomorphic curves.
Let $L$ be an oriented relatively-spin Lagrangian submanifold in a symplectic manifold $X$. Let $J$ be an $\omega$-tame almost complex structure.
Given $\ell,k\in\mathbb N$ and $\beta\in\pi_2(X,L)$, we denote by $\mathcal M_{\ell, k+1,\beta}(L,\beta)$ the moduli space of all equivalence classes $[\uu,\mathbf z^+, \mathbf z]$ of $J$-holomorphic stable maps $(\uu,\mathbf z^+,\mathbf z)$ of genus-0 with one boundary component in $L$ such that $[\uu]=\beta$ and $\mathbf z^+=(z^+_1,\dots, z^+_\ell)$, $\mathbf z=(z_0,z_1,\dots, z_k)$ are the interior and boundary marked points respectively. Moreover, the moduli space is simply not defined when $(\ell,k,\beta)\neq (0,0,0), (0,1,0)$ for the sake of the stability. Note that with the notation in \S \ref{sss_moduli_space_A_infinity}, we have 
\begin{equation}
	\label{moduli_ell=0_eq}
	\mathcal M_{0, k+1,\beta}(J,L)=\mathcal M_{k+1,\beta}(J,L)
\end{equation}
The moduli space admits natural evaluation maps corresponding to the marked points:
\[
(\ev^+,\ev_0,\ev)=(\ev_1^+,\dots, \ev_\ell^+ ;\ev_0; \ev_1, \dots, \ev_k) : \mathcal M_{\ell,k+1,\beta}(J,L)\to X^\ell\times L^{k+1}
\]
where $\ev^+_i([\uu, \mathbf z^+,\mathbf z])= \uu (z^+_i)$ and $\ev_i([\uu, \mathbf z^+,\mathbf z])= \uu (z_i)$.
The codimension-one boundary of $\mathcal M_{\ell,k+1,\beta}(J,L)$ in the sense of Kuranishi structure and smooth correspondence (\ref{corr_map_eq}) is given by the following union of the fiber products:
\begin{equation}
	\label{moduli_boundary_eq}
\partial \mathcal M_{\ell,k+1,\beta}(J,L) = \bigcup \mathcal M_{\ell_1,k_1+1,\beta_1}(J,L)  \ \ \ {}_{\ev_0}\times_{\ev_i} \mathcal M_{\ell_2,k_2+1,\beta_2}(J,L)
\end{equation}
where the union is taken over all the $(\ell_1,\ell_2)$-shuffles of $\{1,\dots,\ell\}$ with $\ell_1+\ell_2=\ell$, $k_1+k_2=k$, $\beta_1+\beta_2=\beta$, and $1\le i\le k_2$.
Here a \textit{$(\ell_1,\ell_2)$-shuffle} is a permutation $(\mu_1,\dots,\mu_{\ell_1},\nu_1,\dots,\nu_{\ell_2})$ of $(1,2,\dots, \ell)$ such that $\{ \mu_1<\cdots<\mu_{\ell_1} \}$ and $\{ \nu_1<\cdots<\nu_{\ell_2}\}$.

Consider the smooth correspondence $\mathfrak X=( \mathcal X,M,M_0,f,f_0)$ given by $M_0= L$, $M= X^\ell \times L^{k}$, $f_0=\ev_0$, $f= (\ev^+, \ev_1,\dots, \ev_k)$, and $\mathcal X=\mathcal M_{\ell,k+1,\beta}(J,L)$.
For $(\ell,\beta)\neq (0,0)$ and for $g_i\in \Omega^*(X)$ and $h_i\in \OL$, we define
\[
\q^{J,L}_{\ell,k,\beta} (g_1,\dots, g_\ell, h_1,\dots, h_k) = \tfrac{1}{\ell !} \Corr (\mathcal M_{\ell,k+1,\beta}(J,L); (\ev^+,\ev),  \ev_0 ) ( g_1\wedge \cdots g_\ell \wedge h_1\wedge \cdots \wedge h_k)
\]
by (\ref{corr_map_eq}). Alternatively, it may be also instructive to adopt the following notation:
\[
\q^{J,L}_{\ell,k,\beta}(g_1,\dots, g_\ell ; h_1,\dots, h_k)= \pm \tfrac{1}{\ell !} (\ev_0)_!  (  \ev^{+*}_1 g_1\wedge\cdots \ev^{+*}_\ell g_\ell \wedge \ev^*_1 h_1\wedge\cdots \wedge \ev^*_k h_k)
\]
If $(\ell,\beta)=(0,0)$, then we exceptionally define $\q^{J,L}_{0,k,0}=0$ for $k\neq 1,2$, $\q^{J,L}_{0,1,0}(h) =d h$, and $\q^{J,L}_{0,2,0}(h_1,h_2)= (-1)^{\deg h_1} h_1\wedge h_2$.

\begin{prop}
	\label{operator_q_prop}
	The operators $\q_{\ell,k,\beta}:=\q^{J,L}_{\ell,k,\beta}$ satisfy the following properties:
	
	\begin{enumerate}[label=(\roman*)]
		\item   $\deg \q_{\ell,k,\beta}=2-2\ell-k-\mu(\beta)$. The $\q_{\ell,k,\beta}\neq 0$ only if $E(\beta)\ge 0$.
		\item  For any permutation $\sigma\in \mathcal S_\ell$, we have
		\[
		\q_{\ell,k,\beta}(g_1,\dots, g_\ell; h_1,\dots, h_k)=(-1)^a \q_{\ell,k,\beta}( g_{\sigma(1)}, \dots, g_{\sigma(\ell)} ; h_1,\dots, h_k)
		\]
		where $a=\sum_{i<j; \sigma(i)>\sigma(j)} \deg g_i \deg g_j$.
		\item  When $\ell=0$, we have $\q_{0,k,\beta}=\check\m^{J,L}_{k,\beta}$ (\S \ref{sss_moduli_space_A_infinity}).
		\item When $\ell\neq 0$ and $\beta=0$, we have $\q_{\ell,k,0}=0$ except $\q_{1,0,0}=\iota^*: \Omega^*(X)\to \OL$ where $\iota:L\to X$ is the inclusion.
		\item  Let $\one \in \Omega^0(L)$ be the constant-one function. Then, $\q_{\ell,k,\beta}(g_1,\dots, g_\ell; h_1,\dots, h_{i-1}, \one, h_i,\dots, h_k)=0$ except
		$
		\q_{0,2,0}(\one, h)=(-1)^{\deg h} \q_{0,2, 0} (h,\one)=h
		$.
		\item Given $g_1,\dots, g_\ell\in \Omega^*(X)$ and $h_1,\dots, h_k\in \OL$,
		\begin{align*}
			& \sum_{1\le i\le \ell} (-1)^{\dagger} \q_{\ell,k,\beta}( g_1,\dots, dg_i,\dots, g_\ell; h_1,\dots, h_k) + \sum_{\substack{ 
					\Scale[0.6]{\beta_1+\beta_2=\beta} \\ \Scale[0.6]{\lambda+\mu+\nu=k} \\ 
					\Scale[0.6]{\ell_1+\ell_2=\ell} }
				}
			\sum_{ \substack{ \Scale[0.6]{(\ell_1,\ell_2)\text{-shuffles}} \\ \Scale[0.6]{(\mu_1,\dots,\mu_{\ell_1}, \nu_1,\dots,\nu_{\ell_2})}} } \\
			& (-1)^\ast \q_{\ell_1,\lambda+\mu+1,\beta_1} ( g_{\mu_1},\dots, g_{\mu_{\ell_1}} ; h_1,\dots, h_\lambda, \q_{\ell_2,\nu,\beta_2} ( g_{\nu_1},\dots, g_{\nu_{\ell_2}}; h_{\lambda+1},\dots, h_{\lambda+ \nu} ), h_{\lambda+\nu+1},\dots, h_k) =0
		\end{align*}
	where $\dagger= \sum_{j=1}^{i-1} \deg g_j$ and $\ast= \sum_{j=1}^{\ell_1} \deg g_{\mu_j} + \sum_{j=1}^\lambda (\deg h_j-1) \cdot \sum_{j=1}^{\ell_2} (\deg g_{\nu_j}-1)$.
	\end{enumerate}

\end{prop}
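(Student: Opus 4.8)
The plan is to reduce all six items to the three structural facts about smooth correspondences recalled above: the degree shift $\dim M_0-\mathrm{vdim}$, the composition formula (\ref{corr_formula_eq}), and the Stokes formula (\ref{stokes_Kuranishi_eq}). Throughout I take for granted that the Kuranishi structures and perturbation data on the $\mathcal M_{\ell,k+1,\beta}(J,L)$ have been chosen compatibly with the boundary decomposition (\ref{moduli_boundary_eq}) and with the forgetful and relabeling maps, as in \cite{Kuranishi_book}; the content is then purely the formal and sign bookkeeping. For (i) I would insert $\mathrm{vdim}\,\mathcal M_{\ell,k+1,\beta}(J,L)=n+\mu(\beta)+2\ell+(k+1)-3$ and $\dim M_0=\dim L=n$ into the degree of the push-forward $\Corr(\,\cdot\,;(\ev^+,\ev),\ev_0)$, obtaining $\deg\q_{\ell,k,\beta}=2-2\ell-k-\mu(\beta)$; the three exceptional operators $d$ and $(-1)^{\deg\cdot}\wedge$ have the right degree by inspection, and $\q_{\ell,k,\beta}\neq 0$ forces $\mathcal M_{\ell,k+1,\beta}\neq\varnothing$, hence $E(\beta)=\omega(\beta)\ge 0$ by positivity of energy. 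Item (iii) is then just a matter of unwinding definitions through the identification (\ref{moduli_ell=0_eq}) and checking that the exceptional values $\q_{0,1,0}=d=\check\m_{1,0}$ and $\q_{0,2,0}=\check\m_{2,0}$ agree.

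For (ii) I would use the diffeomorphism of $\mathcal M_{\ell,k+1,\beta}(J,L)$ that permutes the labels of the $\ell$ interior marked points: since it permutes $2$-dimensional factors it is orientation-preserving, so it carries $\Corr$ to $\Corr$ with the inputs reindexed, and the only sign produced is the Koszul sign for reordering the forms $g_i$, which is exactly $(-1)^a$ with $a=\sum_{i<j,\,\sigma(i)>\sigma(j)}\deg g_i\deg g_j$; the normalization $\tfrac1{\ell!}$ is unaffected. For (iv) I would observe that $\mathcal M_{\ell,k+1,0}(J,L)$ consists of constant maps, hence fibers over $L$ through $\ev_0$ with fiber the Deligne--Mumford space of stable bordered genus-$0$ curves with $\ell$ interior and $k+1$ boundary points, of real dimension $2\ell+k-2$; all the input forms are pulled back from $L$ via $\ev_0$, so by the projection formula $\q_{\ell,k,0}$ equals the cup product of $\iota^*g_1,\dots,\iota^*g_\ell,h_1,\dots,h_k$ times $\int_{\mathrm{fiber}}1$, which vanishes unless the fiber is $0$-dimensional, i.e. unless $\ell=1,k=0$, in which case the fiber is a point, $\ev_0=\mathrm{id}$, $\ev^+_1=\iota$, and $\q_{1,0,0}=\iota^*$. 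For (v), inserting the constant function $\one$ at one of the $k$ boundary inputs amounts to pulling back the constant $1$; whenever the forgetful map $\mathfrak{forget}\colon\mathcal M_{\ell,k+1,\beta}\to\mathcal M_{\ell,k,\beta}$ dropping that boundary point exists and has positive-dimensional fiber, one factors $\Corr$ through its fiber integration and integrates $1$ over a fiber of dimension $\ge 1$, getting $0$. This fails exactly when forgetting destabilizes, namely for $(\ell,k,\beta)=(0,2,0)$, which gives precisely $\q_{0,2,0}(\one,h)=h=(-1)^{\deg h}\q_{0,2,0}(h,\one)$; the remaining would-be exceptions are genuinely zero ($\q_{0,1,0}(\one)=d\one=0$, and $\q_{0,k,0}=0$ for $k\neq 1,2$ by definition).

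The heart of the matter is (vi), which I would obtain by applying the Stokes formula (\ref{stokes_Kuranishi_eq}) to $\mathfrak X=(\mathcal M_{\ell,k+1,\beta}(J,L),\,X^\ell\times L^k,\,L,\,(\ev^+,\ev),\,\ev_0)$. Expanding $d_M$ on $\ev^{+*}_1g_1\wedge\cdots\wedge\ev^*_kh_k$ by the Leibniz rule produces the terms $\sum_i(-1)^\dagger\q_{\ell,k,\beta}(g_1,\dots,dg_i,\dots,g_\ell;h_1,\dots,h_k)$ together with the $dh_j$ terms; the latter are exactly the summands of the composition sum with $(\ell_2,k_2,\beta_2)=(0,0,0)$ and $\nu=1$, after using $\q_{0,1,0}=d$. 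The term $d_{M_0}\q_{\ell,k,\beta}(\dots)$ is the summand with $\ell_1=0,\beta_1=0,\lambda=\mu=0,\nu=k$. And $\Corr_{\partial\mathfrak X}$, rewritten via the codimension-one boundary description (\ref{moduli_boundary_eq}) and the composition formula (\ref{corr_formula_eq}) applied to each fiber product $\mathcal M_{\ell_1,k_1+1,\beta_1}\,{}_{\ev_0}\times_{\ev_i}\mathcal M_{\ell_2,k_2+1,\beta_2}$, produces all remaining composition summands, including the contribution of the constant bubble $\mathcal M_{1,1,0}\cong L$ that yields the operator $\q_{1,0,0}=\iota^*$. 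Rearranging gives the stated vanishing. The main obstacle, as always for such master equations, is the sign bookkeeping: matching the orientation convention for the fiber products in (\ref{corr_formula_eq}), the internal signs of $\Corr$ and of $\check\m_{2,0}$, the Koszul reordering incurred when a form is carried past the bubble output, and the redistribution of the normalizations $\tfrac1{\ell!}$ over $(\ell_1,\ell_2)$-shuffles, so that they combine to exactly $\dagger=\sum_{j<i}\deg g_j$ and $\ast=\sum_{j=1}^{\ell_1}\deg g_{\mu_j}+\big(\sum_{j=1}^{\lambda}(\deg h_j-1)\big)\big(\sum_{j=1}^{\ell_2}(\deg g_{\nu_j}-1)\big)$. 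I would fix the orientation conventions once following \cite{Kuranishi_book}, verify the signs on the low-complexity strata, and then let the standard inductive propagation handle the rest; a secondary point is to check that the exceptional definitions $\q_{0,1,0}=d$, $\q_{0,2,0}$, $\q_{1,0,0}=\iota^*$ make (vi) hold uniformly at the unstable corners, so that no case distinction is needed.
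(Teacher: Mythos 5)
Your proposal follows exactly the same route as the paper's own sketch: all six items are reduced to the smooth-correspondence calculus (degree count, composition formula, and Stokes formula), with (iv) handled by the constant-map factorization $\mathcal M_{\ell,k+1,0}\cong L\times\mathcal M_{\ell,k+1}$ and the dimension-zero fiber criterion, (v) by the forgetful map, and (vi) by applying (\ref{stokes_Kuranishi_eq}) to the boundary decomposition (\ref{moduli_boundary_eq}) and absorbing the $d g_i$, $d h_j$, and $d\circ\q$ terms as the $\q_{0,1,0}$ summands of the composition sum. The paper is terser on (i)--(iii) and on signs, but the argument is identical; your added details (the explicit $\mathrm{vdim}$ count and the relabeling diffeomorphism for (ii)) are consistent with what the paper leaves implicit.
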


\begin{proof}[Sketch of proof]
	Nothing is new here, and we only give rough explanations; see the various literature for more details: \cite{Solomon_Diff_survey}, \cite[Theorem 3.8.32]{FOOOBookOne}, \cite[\S 2.3]{FOOO_bookblue}, and \cite[Chapter 17]{FOOOSpectral}.
	First, the items (i) (ii) (iii) should be clear.
	We explain (iv). If $\beta=0$, the moduli spaces consist of constant maps into $L$. Hence, $\mathcal M_{\ell,k+1,\beta} (J,L)\cong L\times \mathcal M_{\ell,k+1}$, and the evaluation maps just become the projection maps $\ev_i \equiv \pr : L\times \mathcal M_{\ell,k+1} \to L$ or the projection maps pre-composed with the inclusion $\ev^+_i \equiv  \iota \circ \pr : L\times \mathcal M_{\ell,k+1} \to L \to X$. Then, $\q_{\ell,k,0}(g_1,\dots, g_\ell; h_1,\dots, h_k)=
	\pm \pr_! \pr^* \big( \iota^*g_1 \wedge \cdots \iota^* g_\ell \wedge h_1\wedge \cdots \wedge h_k \big)$.
	Note that $\pr_! \pr^* \neq 0$ only if the fibers of $\pr$ has dimension zero, namely, $2\ell+k-2=\dim \mathcal M_{\ell,k+1}=0$. When $\ell\neq 0$, the only possibility is that $\ell=1, k=0$. In this case, $\pr=\id_L$, and $\q_{1,0,0}=\iota^*$.
	Next, we can check the item (v) by studying the forgetful map of the marked point of $\one$ in a similar way, c.f. \cite[\S 6]{Yuan_I_FamilyFloer}.
	Eventually, we explain (vi). For the smooth correspondence $\mathfrak X$ as above, applying the Stokes' formulas (\ref{stokes_Kuranishi_eq}) to (\ref{moduli_boundary_eq}) implies that:
$d \circ \q_{\ell, k,\beta}(g_1,\dots, g_\ell; h_1,\dots h_k)  \pm \sum_i \q_{\ell,k,\beta}(g_1,\dots, dg_i,\dots, g_\ell; h_1,\dots, h_k) \pm \sum_i \q_{\ell,k,\beta}(g_1,\dots, g_\ell; h_1,\dots, dh_i,\dots, h_k) 
	= \pm \sum_{(\ell_1,\beta_1)\neq 0\neq (\ell_2,\beta_2)}  \\
	\q_{\ell_1,\lambda+\mu+1,\beta_1} (g_{\mu_1},\dots, g_{\mu_{\ell_1}}; h_1\dots, h_i, \q_{\ell_2,\nu,\beta_2} (g_{\nu_1},\dots, g_{\nu_{\ell_2}} ; h_{\lambda+1},\dots, h_{\lambda+\nu}), h_{\lambda+\nu+1},\dots, h_k)$.
Now, as $\q_{0,1,0}=\check\m_{1,0}^{J,L}=d$, the equation in (vi) then follows.
\end{proof}

For our purpose, we focus on the case $\ell=1$, then the equation in (vi) becomes the following:
\begin{align}
	\Scale[0.85]{\q_{1,k,\beta}(dg ;h_1,\dots, h_k) }
	&\Scale[0.85]{\displaystyle + \sum_{\substack{\beta_1+\beta_2=\beta  \\ \lambda+\mu+\nu=k}} (-1)^{\deg g +\sum_{j=1}^{\lambda}(\deg h_j-1)}
	\q_{1,\lambda+\nu+1,\beta_1} (g; h_1,\dots, h_\lambda,  \check \m^{J,L}_{\nu,\beta_2} (h_{\lambda+1},\dots, h_{\lambda+\nu}), h_{\lambda+\nu+1},\dots, h_k)  } \notag \\
	&\Scale[0.85]{\displaystyle + \sum_{\substack{\beta_1+\beta_2=\beta  \\ \lambda+\mu+\nu=k}} (-1)^{\sum_{j=1}^{\lambda} (\deg h_j-1) \cdot (\deg g-1)}
	\check \m^{J,L}_{\lambda+\mu+1,\beta_1} (h_1,\dots, h_\lambda, \q_{1,\nu,\beta_2} (g; h_{\lambda+1},\dots, h_{\lambda+\nu}),h_{\lambda+\nu+1},\dots, h_k)=0}
\label{q_1k}
\end{align}

Accordingly, we can define a linear map
\begin{equation}
	\label{hat_q_eq}
	\hat \q:  \Omega^*(X) \to \rCC(\OL)
\end{equation}
by setting $\hat \q(g )_{k,\beta}= \q_{1,k,\beta}(g; \cdots)$.
Here one can directly check that the condition (\ref{rCC_condition_eq}) by Proposition \ref{operator_q_prop} (v).
Moreover, for the label grading (\S \ref{sss_label_grading}), the item (i) implies
\begin{equation}
	\label{degree_hat_q_eq}
|\hat\q(g)|=|\q_{1,k,\beta}(g)|=\deg g -1 
\end{equation}
By \S \ref{sss_Lambda_X_module}, we can extend it linearly and thus obtain a $\Lambda^X$-linear map $\Omega^*(X)\hat\otimes \Lambda^X\to \rCC(\OL)$, still denoted by $\hat \q$.
Then, the equation (\ref{q_1k}) exactly implies
\[
\hat \q (dg) + \delta_{\check\m^{J,L}}  \big(\hat \q(g)\big)=0
\]
Consequently, the $\hat \q$ induces a $\Lambda^X$-linear map to the reduced Hochschild cohomology of $\check \m^{J,L}$:
\begin{equation}
	\label{[hat_q]_eq}
	[\hat \q]:  QH^*(X; \Lambda^X)\equiv H^*(X)\hat\otimes \Lambda^X   \to \rHH(\OL ,\check \m^{J,L})
\end{equation}

	Recall that the ud-homotopy theory of $A_\infty$ algebras for $\UD$ must work on $\CC$ in (\ref{CC_eq}). 
	In contrast, as $\hat\q(g)_{0,0}=\iota^*g$ can be nonzero by Proposition \ref{operator_q_prop} (iv),
	we must work on $\rCC$ in (\ref{rCC_eq}) here.

The reduced Hochschild cohomology ring $\rHH(\OL,\check \m^{J,L})$ (Definition \ref{rHH_defn}) admits a unital ring structure. The unit is the $\mathfrak e$ that is supported in $\CC_{0,0}$ (Lemma \ref{unit_rHH_lem}), and its label degree is $-1$. 
Meanwhile, $QH^*(X; \Lambda^X)\equiv H^*(X)\hat\otimes \Lambda^X$ has a unital ring structure with the unit $\one_X$ of degree $0$.

Our goal is to show the $\hat\q$ gives a unital ring homomorphism.
But, as we mentioned in the introduction, the main complication is not Theorem \ref{[hat_q]_ring_homo_thm} below but that the moduli space geometry \textit{cannot} make a ring homomorphism into the reduced Hochschild cohomology of the minimal model $A_\infty$ algebras. We will resolve this issue soon later by using several techniques about $\UD$.

\begin{thm}
	\label{[hat_q]_ring_homo_thm}
	The map $[\hat \q]$ is a \textit{unital} ring homomorphism of degree $-1$ such that $\hat\q (\one_X)= \mathfrak e$.
\end{thm}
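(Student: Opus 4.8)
The plan is to verify the unit statement directly, then establish multiplicativity at the chain level up to Hochschild coboundary, and finally descend to $\rHH$.

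\emph{Unitality and degree.} By the definition of $\mathfrak e$ (Lemma~\ref{unit_rHH_lem}) and of $\hat\q$, the equality $\hat\q(\one_X)=\mathfrak e$ amounts to $\q_{1,0,0}(\one_X;\,)=\one$ together with $\q_{1,k,\beta}(\one_X;h_1,\dots,h_k)=0$ for all $(k,\beta)\neq(0,0)$. The first is Proposition~\ref{operator_q_prop}(iv), which identifies $\q_{1,0,0}$ with $\iota^*$, so $\iota^*\one_X=\one$. For the second, when $\beta=0$ (hence $k\ge 1$) it is again Proposition~\ref{operator_q_prop}(iv), and when $\beta\neq 0$ one observes that $\ev^{+*}_1\one_X$ is the constant function on $\mathcal M_{1,k+1,\beta}(J,L)$ while $\ev_0,\ev_1,\dots,\ev_k$ all factor through the forgetful map that drops the interior marked point $z_1^+$, whose fibre is two-dimensional; integrating the constant along that fibre gives $0$ by degree, hence $\q_{1,k,\beta}(\one_X;\cdots)=0$. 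The degree claim is then a bookkeeping check: $\hat\q$ lowers the label grading by $1$ by (\ref{degree_hat_q_eq}), consistently with $|\mathfrak e|=-1$, with $|{\cdot}\cup{\cdot}|=+1$ on $\rHH$ coming from (\ref{brace_degree_eq})--(\ref{cup_product_eq}), and with the quantum product being degree-preserving, since $(\deg g_1-1)+(\deg g_2-1)+1=\deg(g_1\pmb\ast g_2)-1$.

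\emph{Multiplicativity.} We already know from (\ref{q_1k}) that $\hat\q(dg)=-\delta_{\check\m^{J,L}}\bigl(\hat\q(g)\bigr)$, so $\hat\q$ is a cochain map into $\rCC(\OL)$. To promote this to $[\hat\q](a\pmb\ast b)=[\hat\q](a)\cup[\hat\q](b)$ in $\rHH(\OL,\check\m^{J,L})$, I would produce a $\Lambda^X$-bilinear homotopy operator $\mathcal H(g_1,g_2)\in\rCC(\OL)$, with $\mathcal H(g_1,g_2)_{k,\beta}$ built from the two-interior-point operators $\q_{2,k,\beta}(g_1,g_2;\,\cdot\,)$, satisfying
\[
\hat\q(g_1\pmb\ast g_2)\;\pm\;\hat\q(g_1)\cup\hat\q(g_2)\;=\;\delta_{\check\m^{J,L}}\mathcal H(g_1,g_2)\;\pm\;\mathcal H(dg_1,g_2)\;\pm\;\mathcal H(g_1,dg_2),
\]
after which restricting to closed $g_1,g_2$ and passing to cohomology gives the theorem, the $\Lambda^X$-bilinear extension being automatic by \S\ref{sss_Lambda_X_module}. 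The operator $\mathcal H$ is to come from a parametrised moduli space of genus-$0$ $J$-holomorphic maps with two interior marked points $z_1^+,z_2^+$ and $k+1$ boundary marked points on $L$ in class $\beta$, carrying a ``clutching'' (seam) parameter that degenerates, at its two ends, to the two configurations: (i) $z_1^+,z_2^+$ collide onto a sphere component that carries both, whose contribution, by the composition formula (\ref{corr_formula_eq}), the identification of this sphere-with-two-markings-plus-node with $\mathcal M_3(A)$, and the absence of codimension-one boundary of $\mathcal M_\ell(A)$, is precisely $\hat\q\bigl(\sum_A(g_1\pmb\ast g_2)_A\,t^A\bigr)$; and (ii) $z_1^+,z_2^+$ separate onto two disk bubbles attached at boundary nodes to a common root disk, whose contribution is $\check\m^{J,L}\{\hat\q(g_1),\hat\q(g_2)\}=\pm\hat\q(g_1)\cup\hat\q(g_2)$ by (\ref{cup_product_eq}). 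Feeding the codimension-one boundary of this family into the Kuranishi Stokes formula (\ref{stokes_Kuranishi_eq}), the boundary splits into the two ends (i) and (ii), the de Rham strata yielding $\mathcal H(dg_1,g_2)$ and $\mathcal H(g_1,dg_2)$, and the generic disk-breaking strata (\ref{moduli_boundary_eq}) --- which are governed exactly by Proposition~\ref{operator_q_prop}(vi) with $\ell=2$ --- reassembling into $\delta_{\check\m^{J,L}}\mathcal H(g_1,g_2)$. All operators involved stay in $\rCC(\OL)$: the $\q_{2,k,\beta}$ by Proposition~\ref{operator_q_prop}(v), and $\delta_{\check\m^{J,L}}\mathcal H$ and the cup products by Lemma~\ref{rHH_defn_previous_lem}.

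\emph{Main obstacle.} The real work is the construction of the clutched moduli space with a Kuranishi structure compatible with the composition formula, so that its two ends genuinely realise the quantum product and the \emph{ordered} Yoneda cup product $\check\m^{J,L}\{\hat\q(g_1),\hat\q(g_2)\}$, rather than the symmetric Gerstenhaber-type combination $\hat\q(g_1)\{\hat\q(g_2)\}\pm\hat\q(g_2)\{\hat\q(g_1)\}$ that an unclutched two-interior-point family would produce; this is precisely what forces the extra seam parameter. The accompanying sign bookkeeping, in the shifted-degree/Koszul conventions of \S\ref{sss_sign_degree}--\S\ref{sss_brace}, is routine but lengthy. None of this is new --- it is the standard closed--open argument (cf.\ \cite{FOOOBookOne,FOOOSpectral} and \cite{Ganatra_thesis}) --- and, as emphasised in the introduction, the genuine difficulty of this paper lies not here but in transporting this ring homomorphism along the minimal-model $A_\infty$ equivalence $\mi$ via the operator $\Theta$ of \S\ref{ss_new_operator}, carried out in the sequel.
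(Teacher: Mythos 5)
Your proposal is correct and takes essentially the same approach as the paper: the paper realises your ``clutching/seam parameter'' concretely via the forgetful map $\forget:\mathcal M_{2,k+1,\beta}(J,L)\to\mathcal M_{2,1}$ and a path $\mathfrak y$ in $\mathcal M_{2,1}\cong\mathbb D$ from the stratum $[\Sigma_0]$ (interior markings collide onto a sphere component, giving the quantum product via the composition formula) to $[\Sigma_{12}]$ (interior markings escape onto two disk bubbles, giving the ordered Yoneda cup product), and your $\mathcal H(g_1,g_2)$ is the paper's $\mathfrak Q_{2,k,\beta}=\Corr(\mathcal M^{\mathfrak y}_{2,k+1,\beta};\dots)$, with the Kuranishi Stokes formula producing exactly the chain-homotopy identity you state. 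Your argument for $\hat\q(\one_X)=\mathfrak e$ is also the same forgetful-map dimension count the paper uses for $\beta\neq0$, combined with Proposition~\ref{operator_q_prop}(iv) for $\beta=0$.
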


\begin{proof}[Sketch of proof]
The argument here is standard and well-known to many experts, and we simply rewrite it in our notations and languages. 
But, we will moreover discover that our notion of the reduced Hochschild cohomology fits perfectly as well. In short, Proposition \ref{operator_q_prop} (v) corresponds to (\ref{rCC_condition_eq}).

First off, the degree matches by (\ref{degree_hat_q_eq}).
Next, we consider the forgetful map $\forget: \mathcal M_{2,k+1,\beta}(J,L)\to \mathcal M_{2,1}$ which forgets the maps and all the incoming boundary marked points and then shrinks the resulting unstable domain components if any.
Given $\mathfrak x\in \mathcal M_{2,1}$, we denote the fiber of $\forget$ over $\mathfrak x$ by 
\[
\mathcal M^{\mathfrak x}_{2,k+1,\beta} (J,L) := \forget^{-1}(\mathfrak x)
\]

Notice that the $\mathcal M_{2,1}$ is homeomorphic to the closed unit disk $\mathbb D$ and has a stratification (see \cite[Figure 2.6.1]{FOOO_bookblue}) which consists of two copies of $(-1,1)$, $\mathrm{int}(\mathbb D)\setminus\{0\}$, and three distinguished moduli points $[\Sigma_0]$, $[\Sigma_{12}]$, $[\Sigma_{21}]$.
Geometrically, we may assume one interior marking is at $0\in\mathbb D$, and the other is denoted by $z$. Then, the three moduli points corresponding to the situations when $z\to 0, \pm 1$ respectively.
To be specific, we have the following fiber products:
\begin{equation}
\label{M_2_k+1_Sigma_012_eq}
\mathcal M^{[\Sigma_0]}_{2,k+1,\beta}(J,L)= \bigcup_{\gamma+A=\beta} \mathcal M_3(A)  \ \ \ {}_{{}_{\Ev_0}}\times_{\Scale[0.66]{\ev_1^+}} \mathcal M_{1,k+1,\gamma}(J,L)
\end{equation}
\[
\mathcal M^{[\Sigma_{12}]}_{2,k+1,\beta}(J,L) =  \bigcup_{\substack{1\le i< j\le m+2 \\ 
m'+m''+m=k \\ \gamma'+\gamma''+\gamma=\beta
} }
\mathcal M_{1, m'+1,\gamma'}(J,L)  \ \ \ {}_{\ev_0}\times_{\ev_i} \ \mathcal M_{0,m+3,\gamma}(J,L) \ \ \ {}_{\ev_j}\times_{\ev_0} \ \mathcal M_{1, m''+1,\gamma''} (J,L)
\]
Now, we take a path $\mathfrak y: \oi \to \mathcal M_{2,1}$ from $[\Sigma_0]$ to $[\Sigma_{12}]$ and define
\[
\mathcal M^{\mathfrak y}_{2,k+1,\beta}(J,L ) := \bigcup_{t\in\oi} \{t \} \times \mathcal M^{\mathfrak y(t)}_{2,k+1,\beta}(J,L )
\]
Its codimensional-one boundary is given by
the union of $\mathcal M^{[\Sigma_0]}_{2,k+1,\beta}(J,L)$, $\mathcal M^{[\Sigma_{12}]}_{2,k+1,\beta}(J,L)$, and
\begin{equation}
	\label{moduli_M_2_k+1_path_boundary_eq}
	\bigcup_{\substack{m'+m''=k \\ \beta'+\beta''=\beta}}
	 \mathcal M^{\mathfrak y}_{2,m'+2,\beta'} (J,L)  \ \ \  {}_{\ev_i}\times_{\ev_0} \mathcal M_{0,m''+1,\beta''}(J,L)
\end{equation}
\begin{equation*}
	\bigcup_{\substack{m'+m''=k \\ \beta'+\beta''=\beta}}
	 \mathcal M_{0,m'+2,\beta'} (J,L)  \ \ \  {}_{\ev_i}\times_{\ev_0} \mathcal M^{\mathfrak y}_{2,m''+1,\beta''}(J,L)
\end{equation*}
\begin{figure}[h]
	\centering
	\includegraphics[width=9cm]{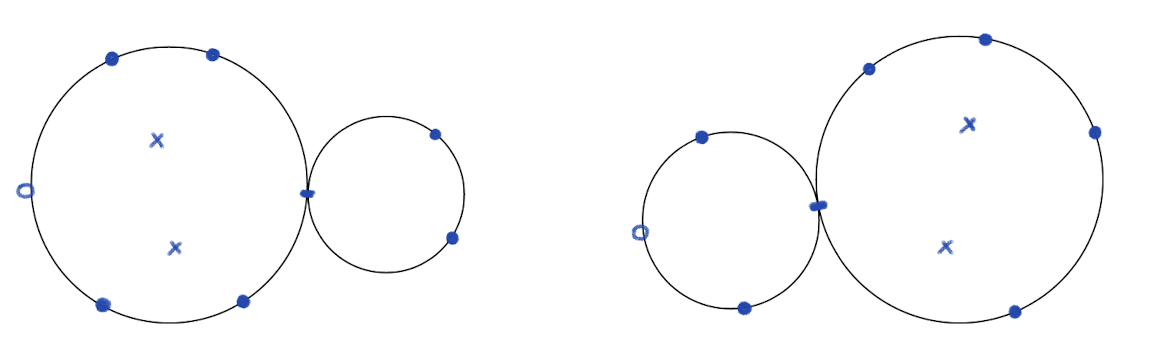}
	\caption{\footnotesize The figure for Equation (\ref{moduli_M_2_k+1_path_boundary_eq}) where the open circle indicates the outgoing marked point and the filled circles are the other boundary marked points.}
\end{figure}

Given $g_1,g_2\in \Omega^*(X)$ and $h_1,\dots, h_k\in \Omega^*(L)$, we define
\[
\mathfrak Q_{2,k,\beta}(g_1,g_2 ; h_1,\dots, h_k)
:=
\Corr(\mathcal M^{\mathfrak y}_{2,k+1,\beta}(J,L); (\ev^+_1,\ev^+_2,\ev),\ev_0 ) (g_1,g_2,h_1,\dots, h_k)
\]
by (\ref{corr_map_eq}).
Applying the Stokes' formula (\ref{stokes_Kuranishi_eq}) yields that
\begin{align*}
& \mathfrak Q_{2,k,\beta}(dg_1,g_2; h_1,\dots, h_k) + \mathfrak Q_{2,k,\beta}(g_1,dg_2; h_1,\dots, h_k)   \\
&=
\sum_{\beta'+\beta''=\beta} \pm \mathfrak Q_{2,m'+1,\beta'}(g_1,g_2; h_1,\dots, h_\lambda, \check \m^{J,L}_{m'',\beta''} (h_{\lambda+1},\dots, h_{\lambda+\nu} ), h_{\lambda+\nu+1},\dots, h_k) \\
&+
\sum_{\beta'+\beta''=\beta}  \pm \check \m^{J,L}_{m'+1,\beta'} (h_1,\dots, h_\lambda, \mathfrak Q_{2, m'', \beta''} ( g_1, g_2 ; h_{\lambda+1},\dots, h_{\lambda+\nu}) , h_{\lambda+\nu+1},\dots, h_k)  \\
&+
\sum_{\gamma+A=\beta} \pm \q_{1,k,\gamma} ( (g_1\pmb\ast g_2 )_A ; h_1,\dots, h_k)  \\
&+
\sum_{\gamma'+\gamma''+\gamma=\beta} \pm \ \check\m^{J,L}_{m+2,\gamma} ( h_1,\dots, h_{i-1}, \q_{1,m',\gamma'}( g_1 ; h_i,\dots, h_{i+m'-1}),\dots, \q_{1,m'',\gamma''}(g_2; h_{j+m'}, \dots ), \dots, h_k) 
\end{align*}
In reality, the left side together with those terms with $(m'',\beta'')=(1,0)$ in the first sum of the right side corresponds to the left side of the Stokes' formula (\ref{stokes_Kuranishi_eq}); all others correspond to the right side of (\ref{stokes_Kuranishi_eq}): the first and second summations correspond to (\ref{moduli_M_2_k+1_path_boundary_eq}), and the third and forth summations correspond to (\ref{M_2_k+1_Sigma_012_eq}).
As in (\ref{hat_q_eq}), we define a map
\[
\mathfrak {\hat Q} : \Omega^*(X) \otimes \Omega^*(X)  \to \rCC(\OL)
\]
by setting $\big(\mathfrak {\hat Q}(g_1,g_2 )\big)_{k,\beta} = \mathfrak Q_{2,k,\beta}(g_1,g_2; \cdots)$.
By Proposition \ref{operator_q_prop} (v), the image of $\mathfrak {\hat Q}$ satisfies the condition (\ref{rCC_condition_eq}) for $\rCC(\OL)$.
Now, the above relation can be concisely written as follows:
\[
\mathfrak {\hat Q} ( dg_1, g_2) +\mathfrak {\hat Q} (g_1, dg_2)
=
\delta_{\check \m^{J,L}} ( \mathfrak {\hat Q} (g_1,g_2))
\pm \big( \hat \q  (  g_1\pmb\ast g_2 ) - \hat\q(g_1) \cup \hat \q(g_2) \big)
\]
Accordingly, the map $ [\hat\q]$ in (\ref{[hat_q]_eq}) satisfies that for $g_1,g_2\in H^*(X)$, we have
\[
[\hat \q] ( g_1\pmb\ast g_2)= [\hat\q] (g_1) \cup [\hat\q] (g_2)
\]
Ultimately, it remains to show the unitality.
Let $\one_X\in \Omega^0(X)$ be the constant and $\mathfrak e$ be given in (\ref{unit_rHH_eq}). Then, we aim to show $\hat\q (\one_X)= \mathfrak e$.
In fact, suppose $S:=\hat \q (\one_X)_{k,\beta}(h_1,\dots, h_k)=\q_{1,k,\beta}(\one_X; h_1,\dots, h_k)$ is nonzero.
If $\beta=0$, then using Proposition \ref{operator_q_prop} (iv) implies that $k=0$ and $S=\q_{1,0,0}(\one_X)=\one_L$.
	If $\beta\neq 0$, it is defined by the moduli space $\mathcal M_{1,k+1,\beta}(J,L)$. We consider the forgetful map $\forget: \mathcal M_{1,k+1,\beta}(J,L)\to \mathcal M_{k+1,\beta}(J,L)$.
	To produce a nonzero term, the dimension of the fiber of $\forget$ needs to be zero just as what we discussed before. But this is impossible when $\beta\neq 0$. See also \cite{FuCyclic}.
\end{proof}

\section{Closed-open maps with quantum corrections}
\label{s_closed_open}

\subsection{The maps $\Theta$ and $\mathbb P$}

Recall that the moduli space system of $J$-holomorphic disks bounding $L$ gives rise to a chain-level $A_\infty$ algebra $(\OL, \check \m^{J,L})$. A metric $g$ induces the so-called harmonic contraction for which applying the homological perturbation to $\check\m^{J,L}$ yields the minimal model $A_\infty$ algebra $(\HL, \m^{g,J,L})$ along with an $A_\infty$ homotopy equivalence $\mi^{g,J,L}: \m^{g,J,L}\to \check \m^{J,L}$ (see \S \ref{sss_moduli_space_A_infinity})

For simplicity, we set $\check \m =\check \m^{J,L}$, $\m=\m^{g,J,L}$, and $\mi=\mi^{g,J,L}$. Note that they are contained in $\UD$.
The Whitehead theorem can be generalized to $\UD$; namely, there is a ud-homotopy inverse $\mi^{-1}$ of $\mi$ in the sense of Theorem \ref{whitehead_thm}.
Concerning (\ref{rCC_eq}), we define the following map:
\begin{equation}
	\label{map_Theta_eq}
	\Theta: \rCC(\OL)\to \rCC(\HL) \qquad \varphi \mapsto (\mi^{-1} \{ \varphi\}) \diamond \mi
\end{equation}
We apologize for the lack of motivation, but it is truly just found out by trial-and-error.
We may assume $\varphi=(\varphi_{k,\beta})_{k\ge 0, \beta\in\pi_2(X,L)}$ is homogeneously-graded, say $|\varphi|=p$. Then, we explicitly have
\begin{align*}
\Theta(\varphi)_{k,\beta} 
=
\displaystyle \sum_{\Scale[0.68]{\substack{\lambda,\mu,\nu\ge 0 \\ \sum r_i+\sum k_i+\sum s_i=k \\
\beta'+\beta''+\sum\alpha_i+\sum\beta_i+\sum\gamma_i=\beta}}
}
 \mi^{-1}_{\lambda+\mu+1,\beta'} \circ \Big(
\mi^{\# p}_{r_1,\alpha_1}\otimes \cdots \otimes \mi^{\# p}_{r_\lambda, \alpha_\lambda} \otimes \varphi_{\nu,\beta''}(\mi_{k_1,\beta_1}\otimes \cdots \otimes \mi_{k_\nu,\beta_\nu} )\otimes \mi_{s_1,\gamma_1}\otimes \cdots \otimes \mi_{s_\mu,\gamma_\mu}
\Big)
\end{align*}
Briefly, this reads $\Theta(\varphi)=\sum \mi^{-1} (\mi^{\# p} \cdots \mi^{\# p} \ \varphi (\mi \cdots \mi) \ \mi \cdots \mi)$.
One can check that $\Theta$ is $\Lambda^X$-linear (\S \ref{sss_Lambda_X_module}). Further, given $\varphi\in \rCC(\OL)$, the image $\Theta(\varphi)$ satisfies (\ref{rCC_condition_eq}) and so lies in $\rCC(\HL)$, since $\mi$ and $\mi^{-1}$ are unital.
The label degree is $|\Theta|=0$, i.e.  $|\Theta(\varphi)|=|\varphi|$. Moreover, for the $\mathfrak e$ in (\ref{unit_rHH_eq}), we have
\begin{equation}
	\label{Theta_mathfrak_e_eq}
	\Theta(\mathfrak e)=\mathfrak e
\end{equation}
as $\Theta(\mathfrak e)_{0,0}= \mi^{-1}_{1,0} (\mathfrak e_{0,0})=\mi^{-1}_{1,0}(\one)=\one=\mathfrak e_{0,0}$. (Here we slightly abuse the notation $\mathfrak e$.)

As said in the introduction, it is generally \textit{incorrect} that the $\Theta$ would induce a ring homomorphism between the two reduced Hochschild cohomologies $\rHH(\OL,\check \m)$ and $\rHH(\HL, \m)$, but it is not far from so.
Heuristically, if we could pretend that $\mi^{-1}\diamond \mi= \id$,
then the $\Theta$ would induce an honest ring homomorphism.
But, $\mi^{-1}\diamond \mi$ is only ud-homotopic to $\id$, especially when the nontrivial Maslov-0 disks are allowed.
Shortly, the above says that the gap for producing an honest ring homomorphism can be controlled by the ud-homotopy relations in $\UD$. Anyway, we start with some useful computations:

\begin{lem}
	\label{Theta_computation_lem}
	Given $\varphi\in \rCC(\OL)$, we have
	\begin{equation*}
		\Scale[0.92]{\Theta(\delta_{\check \m} \varphi)=  \big( (\m\diamond \mi^{-1}) \{ \varphi\} \big) \diamond \mi -(-1)^{|\varphi|} \Theta(\varphi) \{ \m\}} 
	\end{equation*}
Given $\varphi,\psi\in \rCC(\OL)$, we have
	\begin{equation*}
			\Scale[0.92]{\Theta(\varphi\cup_{\check\m} \psi)  + \mi^{-1}\{\delta_{\check\m} \varphi,\psi\} \diamond \mi + (-1)^{|\varphi|} \mi^{-1} \{\varphi, \delta_{\check\m} \psi\} \diamond \mi
		=
		\big((\m \diamond \mi^{-1}) \{\varphi,\psi\}\big) \diamond \mi - (-1)^{|\varphi|+|\psi|}  \big(\mi^{-1} \{\varphi,\psi\} \diamond \mi \big) \{ \m\}}
	\end{equation*}
\end{lem}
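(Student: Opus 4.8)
The strategy is to unravel $\Theta$ into braces and composition, then push $\delta_{\check\m}=[\check\m,\cdot]$ through using the brace identities of Proposition \ref{brace_property_prop} together with the $A_\infty$ relation $\check\m\{\check\m\}=0$, and finally convert all appearances of $\check\m$ acting on $\mi$ or $\mi^{-1}$ into $\m$ via the $A_\infty$ homomorphism equations $\check\m\diamond\mi=\mi\{\m\}$ and (for $\mi^{-1}$) the analogous equation $\check\m\diamond\mi^{-1}$ versus $\mi^{-1}\{\check\m\}$ — more precisely, I will use $\mi^{-1}\{\check\m\}\diamond\mi = (\text{something involving }\m)$ obtained by right-composing the $\mi$-homomorphism equation. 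Throughout, $\mi$ and $\mi^{-1}$ are degree-$0$ for the label grading (Definition \ref{label_grading_defn}), so all signs are governed by $|\varphi|=p$, $|\psi|$, and $|\m|=1$; I'll track them by the sign rule built into the brace formula in \S\ref{sss_brace}.

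\textbf{First identity.} I would start from $\Theta(\delta_{\check\m}\varphi) = \bigl(\mi^{-1}\{[\check\m,\varphi]\}\bigr)\diamond\mi$ and expand $[\check\m,\varphi]=\check\m\{\varphi\}-(-1)^{p}\varphi\{\check\m\}$. Apply Proposition \ref{brace_property_prop} to $\mi^{-1}\{\check\m\{\varphi\}\}$ and to $\mi^{-1}\{\varphi\{\check\m\}\}$ to redistribute the inner braces; the terms where $\check\m$ lands directly inside $\mi^{-1}$ assemble into $(\mi^{-1}\{\check\m\})\{\varphi\}$-type expressions, and the terms where $\check\m$ lands inside the $\varphi$-slot cancel against the corresponding pieces of $\varphi\{\check\m\}$. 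What survives is $\bigl((\mi^{-1}\{\check\m\})\{\varphi\}\bigr)\diamond\mi$ up to sign, plus a boundary term $(-1)^{p}\bigl(\mi^{-1}\{\varphi\}\bigr)\{\check\m\}\diamond\mi$ from the outermost brace being hit. Now use that $\diamond$ and $\{\check\m\}$ interact with the homomorphism equation: since $\check\m\diamond\mi=\mi\{\m\}$, one gets $(\cdots)\{\check\m\}\diamond\mi$ can be traded for $(\cdots)\diamond\mi\{\m\}$ modulo terms that combine with $\mi^{-1}\{\check\m\}\diamond\mi$. Collecting, $\mi^{-1}\{\check\m\}\diamond\mi$ is exactly the obstruction measuring the failure of $\mi^{-1}$ to strictly intertwine; rewriting $\check\m\diamond\mi^{-1}$ in place of $\mi^{-1}\{\check\m\}$ (valid after the $\diamond\mi$ because $\mi$ is an equivalence — or directly because $(\m\diamond\mi^{-1})\diamond\mi$ and $\mi^{-1}\{\check\m\}\diamond\mi$ differ by a term absorbed elsewhere) yields the stated right-hand side $\bigl((\m\diamond\mi^{-1})\{\varphi\}\bigr)\diamond\mi - (-1)^{p}\,\Theta(\varphi)\{\m\}$.

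\textbf{Second identity.} The same mechanism applies with two inputs: start from $\Theta(\varphi\cup_{\check\m}\psi) = \bigl(\mi^{-1}\{(-1)^{|\varphi|+1}\check\m\{\varphi,\psi\}\}\bigr)\diamond\mi$, expand the double brace $\mi^{-1}\{\check\m\{\varphi,\psi\}\}$ by Proposition \ref{brace_property_prop}, and separate three kinds of terms: $\check\m$ inside $\mi^{-1}$ (giving $(\mi^{-1}\{\check\m\})\{\varphi,\psi\}$, which becomes the $(\m\diamond\mi^{-1})\{\varphi,\psi\}$ piece and a $\{\m\}$ correction as above), $\check\m$ hitting $\varphi$ or $\psi$ (giving $\mi^{-1}\{\check\m\{\varphi\},\psi\}$ and $\mi^{-1}\{\varphi,\check\m\{\psi\}\}$, i.e. the $\delta_{\check\m}\varphi$, $\delta_{\check\m}\psi$ terms on the left once the $\varphi\{\check\m\}$-type halves of $\delta_{\check\m}$ are restored from the $\{\m\}$-boundary terms), and $\check\m$ hitting the two inputs simultaneously in a nested way, which cancels using $\check\m\{\check\m\}=0$. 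Moving everything across gives precisely the asserted equation.

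\textbf{Main obstacle.} The genuinely delicate part is bookkeeping: verifying that every $\check\m$-on-$\mi$ and $\check\m$-on-$\mi^{-1}$ occurrence is eliminated in favor of $\m$ with the correct sign, and that the leftover ``$\{\m\}$'' boundary terms are exactly $-(-1)^{|\varphi|}\Theta(\varphi)\{\m\}$ (resp. $-(-1)^{|\varphi|+|\psi|}(\mi^{-1}\{\varphi,\psi\}\diamond\mi)\{\m\}$) and not something with a stray $\diamond\mi$ in the wrong position. I expect to need to be careful that $\diamond$ is only defined on $\CC$, not $\rCC$, so all the intermediate expressions involving $\diamond$ must be read as: the $\rCC$-inputs $\varphi,\psi$ sit in the brace slots, while $\mi,\mi^{-1},\m$ (which live in $\CC$) are the things being $\diamond$-composed — this is exactly the shape of $\Theta$, and one should check closure stays inside $\rCC(\HL)$ at each step, which follows from unitality of $\mi,\mi^{-1},\m$ as already noted after \eqref{map_Theta_eq}. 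The sign verification is routine given the label-grading conventions but is where errors would hide, so I would do it slot-by-slot against the brace formula in \S\ref{sss_brace}.
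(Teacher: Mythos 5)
Your proposal is aimed in the right general direction — the paper's proof does indeed come down to Proposition~\ref{brace_property_prop} plus the $A_\infty$ homomorphism equations for $\mi$ and $\mi^{-1}$ — but several of the steps you describe would not actually close, because they rest on confusions about what the $A_\infty$ equations say.

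The most serious one: you invoke ``the analogous equation $\check\m\diamond\mi^{-1}$ versus $\mi^{-1}\{\check\m\}$'' and later propose ``rewriting $\check\m\diamond\mi^{-1}$ in place of $\mi^{-1}\{\check\m\}$.'' This is the wrong equation. Since $\mi^{-1}$ is an $A_\infty$ homomorphism \emph{from} $\check\m$ \emph{to} $\m$, the relation is $\m\diamond\mi^{-1}=\mi^{-1}\{\check\m\}$ (note $\m$, not $\check\m$, on the left). This is not a small slip: the right-hand side of the first identity contains $(\m\diamond\mi^{-1})\{\varphi\}$ precisely because of this relation. Your parenthetical hedge that ``$(\m\diamond\mi^{-1})\diamond\mi$ and $\mi^{-1}\{\check\m\}\diamond\mi$ differ by a term absorbed elsewhere'' is false and unnecessary — they are \emph{equal}, with no correction, and the proof never needs any ``absorption.'' Likewise you describe $\mi^{-1}\{\check\m\}\diamond\mi$ as ``the obstruction measuring the failure of $\mi^{-1}$ to strictly intertwine''; but there is no such failure, the homomorphism equation is exact.

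Second, you claim to rely on $\check\m\{\check\m\}=0$. The $A_\infty$ relation for $\check\m$ is not used anywhere in this lemma. All the cancellations come purely combinatorially from applying Proposition~\ref{brace_property_prop} twice. Concretely, the entire first identity follows from the single clean consequence
\[
\mi^{-1}\{\delta_{\check\m}\varphi\}
\;=\;
\mi^{-1}\{\check\m\}\{\varphi\}\;-\;(-1)^{p}\,\mi^{-1}\{\varphi\}\{\check\m\}
\qquad (p=|\varphi|),
\]
obtained by expanding both $\mi^{-1}\{\check\m\}\{\varphi\}$ and $\mi^{-1}\{\varphi\}\{\check\m\}$ via Proposition~\ref{brace_property_prop}; the mixed two-slot terms like $\mi^{-1}\{\check\m,\varphi\}$ and $\mi^{-1}\{\varphi,\check\m\}$ cancel and only $\mi^{-1}\{\check\m\{\varphi\}\}-(-1)^{p}\mi^{-1}\{\varphi\{\check\m\}\}=\mi^{-1}\{[\check\m,\varphi]\}$ survives. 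Then post-compose with $\diamond\,\mi$: the first term gives $\big((\m\diamond\mi^{-1})\{\varphi\}\big)\diamond\mi$ by the $\mi^{-1}$-equation, and the second becomes $(-1)^{p}\Theta(\varphi)\{\m\}$ by the exact rule $(\mathfrak g\{\check\m\})\diamond\mi=(\mathfrak g\diamond\mi)\{\m\}$ (which is just $\check\m\diamond\mi=\mi\{\m\}$ threaded through the $\diamond$ definition). The second identity follows the same pattern with two brace slots: Proposition~\ref{brace_property_prop} gives $\mi^{-1}\{\check\m\}\{\varphi,\psi\}-(-1)^{p+q}\mi^{-1}\{\varphi,\psi\}\{\check\m\}$ in terms of $\mi^{-1}\{\delta_{\check\m}\varphi,\psi\}$, $\mi^{-1}\{\varphi,\delta_{\check\m}\psi\}$, and the cup-product term, after which one post-composes with $\diamond\,\mi$ exactly as before. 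Your ``redistribute the inner braces and watch things cancel'' description is, in effect, trying to re-derive this brace identity from scratch term-by-term, which is where your bookkeeping worries come from; stating the identity first and then applying $\diamond\,\mi$ avoids all of that. Your observation at the end — that $\diamond$ is only defined on $\CC$, so one must keep $\varphi,\psi$ in brace slots and check that each intermediate stays in $\rCC$ — is correct and matches what the paper implicitly relies on.
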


\begin{proof}
	(i) May assume $\varphi$ is homogeneously-graded and $|\varphi|=p$. 
	First, by Proposition \ref{brace_property_prop}, we obtain $\mi^{-1}\{ \delta_{\check\m}\varphi \}= \mi^{-1}\{\check\m\{\varphi\}\}-(-1)^p \mi^{-1}\{\varphi\{\check\m\}\}= \mi^{-1}\{\check\m\} \{\varphi\} - (-1)^p \mi^{-1}\{\varphi\}\{ \check\m\}$.
	Because $\mi^{-1}$ is an $A_\infty$ homomorphism from $\check \m$ to $\m$, we have $\m\diamond \mi^{-1}=\mi^{-1} \{\check\m\}$. Moreover, using the fact that $\mi: \m\to \check \m$ is an $A_\infty$ homomorphism, we have $\big( \mi^{-1}\{\varphi\}\{\check\m\} \big) \diamond \mi = \big( \mi^{-1}\{\varphi\} \diamond \mi \big) \{ \m\}=\Theta(\varphi)\{\m\}$.
	
	(ii) May assume $\varphi$ and $\psi$ are homogeneously-graded and $|\varphi|=p, |\psi|=q$. By Proposition \ref{brace_property_prop},
	\[
	\mi^{-1}\{\check\m\} \{\varphi,\psi\} -(-1)^{p+q} \mi^{-1}\{\varphi,\psi\} \{\check\m\} = \mi^{-1}\{\delta_{\check\m} \varphi,\psi\} + (-1)^p \mi^{-1} \{\varphi, \delta_{\check\m} \psi\} +  \mi^{-1} \{\varphi\cup_{\check\m}\psi\}
	\]
	Applying $- \diamond\mi$ on both sides, and using the $A_\infty$ equations of both $\mi^{-1}$ and $\mi$, we complete the proof.
\end{proof}

	
Finally, we define the length-zero projection map:
	\begin{equation}
		\label{map_P_eq}
	\mathbb P: \rCC(\HL) \to   H^*(L)\hat\otimes \Lambda[[\pi_1(L)]] / \ia    \qquad  \mathfrak f \mapsto \sum_\beta T^{E(\beta)} Y^{\partial\beta} \mathfrak f_{0,\beta}
	\end{equation}
One can check that $\mathbb P$ is $(\Lambda, \Lambda^X)$-linear in the sense that $\mathbb P(t^A\cdot \f)= T^{E(A)} \mathbb P(\f)$.
For (\ref{unit_rHH_eq}), we have
\begin{equation}
	\label{P_e=one_eq}
	\mathbb P(\mathfrak e)= T^0 Y^0 \mathfrak e_{0,0}= \mathfrak e_{0,0}=\one
\end{equation}

\subsection{From reduced Hochschild cohomology to self Floer cohomology}


There is really no hope to cook up a unital ring homomorphism from $\rHH(\OL,\check \m)$ to $\rHH(\HL,\m)$. Notwithstanding, the following result will be sufficient for our purpose:

\begin{thm}
	\label{from_HH_to_HF_thm}
The composite map $\mathbb P  \Theta$ induces a unital ring homomorphism
\[
\Phi:=[\mathbb P  \Theta]: \rHH(\OL,\check \m) \to \HF(L, \m)
\]
\end{thm}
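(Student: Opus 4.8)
The plan is to show that $\mathbb P\Theta: \rCC(\OL)\to H^*(L)\hat\otimes\Lambda[[\pi_1(L)]]/\ia$ is a chain map (up to the correct signs) from $(\rCC(\OL),\delta_{\check\m})$ to $(H^*(L)\hat\otimes\Lambda[[\pi_1(L)]]/\ia,\mathbf m_1)$, that the induced map on cohomology is multiplicative for the cup product $\cup_{\check\m}$ on the source and the product $[x]\cdot[y]=[\mathbf m_2(-x^\#,y)]$ on the target, and that it sends the unit $\mathfrak e$ to $[\one]$. The unitality is the easy part: by (\ref{Theta_mathfrak_e_eq}) we have $\Theta(\mathfrak e)=\mathfrak e$, and by (\ref{P_e=one_eq}) we get $\mathbb P\Theta(\mathfrak e)=\one$, which represents $[\one]\in\HF(L,\m)$; and $\delta_{\check\m}(\mathfrak e)=0$ shows $\mathfrak e$ is a cocycle, so $\Phi([\mathfrak e])=[\one]$.

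For the chain-map property I would start from the first identity in Lemma \ref{Theta_computation_lem}, namely
\[
\Theta(\delta_{\check\m}\varphi)=\big((\m\diamond\mi^{-1})\{\varphi\}\big)\diamond\mi-(-1)^{|\varphi|}\Theta(\varphi)\{\m\},
\]
and then apply $\mathbb P$. The key observation is that $\mathbb P$ only reads off the length-zero part $\mathfrak f_{0,\beta}$, so when I apply $\mathbb P$ to $\Theta(\varphi)\{\m\}$ only those terms of the brace survive in which all the $\m$-slots and the $\Theta(\varphi)$-slot are plugged with $\m_{0,\beta}$'s; using the decomposition $\mathbf m_0=W\cdot\one+Q$ (\ref{W_Q_eq}), the unitality of $\m$ and $\mi,\mi^{-1}$, and the fact that $\ia$ is generated by the components of $Q$, this collapses modulo $\ia$ to $\mathbf m_1$ applied to $\mathbb P\Theta(\varphi)$ together with lower terms. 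Symmetrically, $\mathbb P$ applied to $\big((\m\diamond\mi^{-1})\{\varphi\}\big)\diamond\mi$ should, after the same length-zero bookkeeping and the divisor axiom (via Lemma \ref{val=0-lem} as in the proof of Proposition \ref{A_infinity_morphism_non_curved_prop}), vanish modulo $\ia$ because it is built from $\m_{0,\beta}$-inputs fed through $\mi,\mi^{-1}$ which are unital. The upshot should be an identity of the shape $\mathbb P\Theta(\delta_{\check\m}\varphi)=\pm\,\mathbf m_1(\mathbb P\Theta(\varphi))\ (\mathrm{mod}\ \ia)$, so that $\mathbb P\Theta$ is a chain map and $\Phi:=[\mathbb P\Theta]$ is well-defined.

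For multiplicativity I would feed the second identity of Lemma \ref{Theta_computation_lem} through $\mathbb P$ in exactly the same way. On a pair of cocycles $\varphi,\psi$ (so $\delta_{\check\m}\varphi=\delta_{\check\m}\psi=0$) that identity reads
\[
\Theta(\varphi\cup_{\check\m}\psi)=\big((\m\diamond\mi^{-1})\{\varphi,\psi\}\big)\diamond\mi-(-1)^{|\varphi|+|\psi|}\big(\mi^{-1}\{\varphi,\psi\}\diamond\mi\big)\{\m\};
\]
applying $\mathbb P$ and extracting the length-zero part, the first term on the right is killed modulo $\ia$ by the same unitality-plus-divisor-axiom argument as above, while the $\{\m\}$-term collapses modulo $\ia$ to $\mathbf m_2$ applied to the pair $(\mathbb P\Theta(\varphi),\mathbb P\Theta(\psi))$ with the correct sign-twist $(-)^\#$, i.e. to a representative of $\Phi([\varphi])\cdot\Phi([\psi])$. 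I expect to have to be careful matching the sign convention $[x]\cdot[y]=[\mathbf m_2(-x^\#,y)]$ from the definition of the ring structure on $\HF(L,\m)$ with the sign $(-1)^{|\varphi|+1}$ built into $\cup_{\check\m}$ in (\ref{cup_product_eq}); this is routine but fiddly. The main obstacle, and the step that carries the real content, is precisely the length-zero collapse of the brace terms: showing rigorously that after applying $\mathbb P$ the ``mixed'' contributions (those involving $\mi$ or $\mi^{-1}$ in positive arity, or $\m$ in positive arity beyond one slot) either vanish identically by the semipositive condition and cyclical/strict unitality, or drop out modulo $\ia$ because every surviving $\m_{0,\beta}$-input is $W\cdot\one$ up to $\ia$ and $\mi,\mi^{-1},\m$ annihilate $\one$ in the relevant slots. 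Once that bookkeeping is pinned down, the theorem follows by descending the two displayed identities to cohomology.
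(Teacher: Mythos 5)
Your overall architecture is correct: invoke Lemma~\ref{Theta_computation_lem}, apply $\mathbb P$, use unitality and the ideal $\ia$ to collapse the length-zero bookkeeping, and handle the unit $\mathfrak e$ via~(\ref{Theta_mathfrak_e_eq}) and~(\ref{P_e=one_eq}). However, you have the two terms on the right-hand side of Lemma~\ref{Theta_computation_lem} assigned to the wrong roles, and this error propagates through both the chain-map and the multiplicativity steps. Concretely: $\mathbb P\big(\Theta(\varphi)\{\m\}\big)$ is the term that \emph{vanishes} modulo $\ia$, not the one that gives $\mathbf m_1$. At length zero it reads $\sum T^{E(\beta')}Y^{\partial\beta'}\Theta(\varphi)_{1,\beta'}(\mathbf m_0)$, and since $\mathbf m_0\equiv W\cdot\one \pmod{\ia}$ one is feeding $\one$ into $\Theta(\varphi)_{1,\beta'}$; the strict unitality of $\mi,\mi^{-1}$ together with condition~(\ref{rCC_condition_eq}) for $\varphi$ then force every term to die. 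Conversely, $\mathbb P\big(((\m\diamond\mi^{-1})\{\varphi\})\diamond\mi\big)$ is the term that produces $\mathbf m_1(\mathbb P\Theta(\varphi))$: here $\m$ sits on the \emph{outside}, and the innermost slots carry factors $(\mi^{-1}\diamond\mi)_{0,\alpha}$. Since $\m$ must be the outermost operation to produce an $\mathbf m_1$, your claim that this side vanishes cannot be right.

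The second, related gap is that the crucial mechanism for simplifying the surviving term is not the divisor axiom plus Lemma~\ref{val=0-lem}, as you suggest, but rather Lemma~\ref{weakMC_observation_lem} applied to the ud-homotopy $\mi^{-1}\diamond\mi\simud\id$. That lemma gives $\sum_\alpha T^{E(\alpha)}Y^{\partial\alpha}(\mi^{-1}\diamond\mi)_{0,\alpha}\equiv 0\pmod{\ia}$, which is exactly what kills every slot of $\m$ except one, leaving $\m_{1,\cdot}$ applied to the slot carrying $\varphi$ and hence $\mathbf m_1(\mathbb P\Theta(\varphi))$. Without this lemma there is no way to reduce the $\lambda,\mu\geq 0$ sum to the $\lambda=\mu=0$ contribution. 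The analogous swap and the same omission occur in your sketch of the multiplicativity step: the paper's equation~(b) states $\mathbb P\Theta(\varphi\cup_{\check\m}\psi)=\mathbf m_2(\mathbb P\Theta(\varphi),\mathbb P\Theta(\psi))+\mathbf m_1\big(\mathbb P((\mi^{-1}\{\varphi,\psi\})\diamond\mi)\big)$, with the $\mathbf m_2$ coming again from $(\m\diamond\mi^{-1})\{\varphi,\psi\}\diamond\mi$ via Lemma~\ref{weakMC_observation_lem}, while the $\{\m\}$-term vanishes mod~$\ia$ for the same unitality reason as above.
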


\begin{proof}
	We notice $\mathbb P  \Theta (  \mathfrak e) =\one$ by (\ref{Theta_mathfrak_e_eq}) and (\ref{P_e=one_eq}).
	The following two items are what we need:

\begin{note}
	\textbf{\emph{(a).}} $\mathbb P   \Theta (\delta_{\check \m} \varphi) =  \mathbf m_1 \big(\mathbb P  \Theta(\varphi) \big)$
\end{note}
\begin{note}
	\textbf{\emph{(b).}} If $\varphi,\psi \in \rCC(\OL)$ are $\delta_{\check\m}$-closed, then
	\[
	\mathbb P \Theta( \varphi \cup_{\check \m} \psi) 
	= \mathbf m_2
	\big(\mathbb P   \Theta(\varphi), \mathbb P   \Theta(\psi) \big)
	+ \mathbf m_1 \big( \mathbb P( (\mi^{-1}\{\varphi,\psi\})\diamond \mi ) \big)
	\]
\end{note}

First, may assume $\varphi\in \rCC(\OL)$ is homogeneously-graded and $|\varphi|=p$.
Concerning the first half of Lemma \ref{Theta_computation_lem}, we compute that $\mathbb P \left( (\m\diamond \mi^{-1}) \{ \varphi\} ) \diamond \mi \right)$ equals to
\begin{align*}
	\sum_{\Scale[0.77]{\substack{ \lambda,\mu \ge 0 \ ; \  \beta',\beta'',\alpha_1,\dots,\alpha_\lambda, \gamma_1,\dots,\gamma_\mu	}}
	}
T^{E(\beta')} Y^{\partial\beta'}  \m_{\lambda+\mu+1,\beta'} \Big(
T^{E(\alpha_1)} Y^{\partial \alpha_1} (\mi^{-1}\diamond \mi)^{\# p}_{0 , \alpha_1} ,\dots, T^{E(\alpha_\lambda)} Y^{\partial \alpha_\lambda} (\mi^{-1}\diamond \mi)^{\# p}_{0,\alpha_\lambda} , \\
T^{E(\beta'')}  Y^{\partial\beta''} \big( ( \mi^{-1} \{ \varphi\})\diamond \mi \big)_{0,\beta''} ,  T^{E(\gamma_1)}Y^{\partial\gamma_1} (\mi^{-1}\diamond \mi)_{0,\gamma_1},\dots, T^{E(\gamma_\mu)} Y^{\partial\gamma_\mu} (\mi^{-1}\diamond \mi)_{0,\gamma_\mu}
	\Big)
\end{align*}
Moreover, since $\mi^{-1}\diamond\mi \simud \id$, we know $\sum_\alpha T^{E(\alpha)} Y^{\partial\alpha} (\mi^{-1}\diamond \mi)_{0,\alpha}\equiv \sum_\alpha T^{E(\alpha)}Y^{\partial\alpha} (\id)_{0,\alpha} \equiv 0 \  (\mathrm{mod} \ \ia)$ due to Lemma \ref{weakMC_observation_lem}.
Accordingly, those terms with $\lambda\neq 0$ or $\mu\neq 0$ all vanish modulo $\ia$. So, we have:
\[
\mathbb P \left( \big(\m\diamond \mi^{-1}) \{ \varphi\} \big)\diamond \mi \right) 
=
\sum_{\beta'+\beta''=\beta} T^{E(\beta')}Y^{\partial\beta'} \m_{1,\beta'} \big(T^{E(\beta'')} Y^{\partial\beta''}  \big( (\mi^{-1}\{\varphi\})\diamond \mi \big)_{0,\beta''} \big) = \mathbf m_1( \mathbb P  \Theta(\varphi))
\]
By the first half of Lemma \ref{Theta_computation_lem}, it remains to prove
\[
	\mathbb P ( \Theta(\varphi)\{ \m\}) 
	=
	\sum T^{E(\beta')}Y^{\partial\beta'} \big(\Theta (\varphi)\big)_{1,\beta'} \big(T^{E(\beta'')}Y^{\partial\beta''} \m_{0,\beta''} \big)
	\equiv 0 \ (\mathrm{mod} \ \ia)
\]
Indeed, we first note that $\sum T^{E(\beta'')}Y^{\partial\beta''} \m_{0,\beta''} =W\cdot \one$ modulo $\ia$. Besides, all of $\mi^{-1} , \varphi, \mi$ satisfy the condition (\ref{rCC_condition_eq}) except $\mi_{1,0}(\one)=\one$ and $\mi^{-1}_{1,0}(\one)=\one$, so any term in the expansion of $\Theta(\varphi)_{1,\beta'}(\one)\equiv \big( (\mi^{-1}\{\varphi\})\diamond\mi\big)_{1,\beta'}(\one)$ must vanish.
This completes the proof of (a).

Second, we assume $\varphi, \psi \in \rCC(\OL)$ are $\delta_{\check\m}$-closed and homogeneously-graded, say $|\varphi|=p$ and $|\psi|=q$.
	For the second half of Lemma \ref{Theta_computation_lem}, we compute that $		\mathbb P
	\left(
	( (\m\diamond \mi^{-1})\{ \varphi,\psi \} )\diamond \mi
	\right)$ equals to
	\begin{align*}
&		\mathbb P \Big(
		\m \big( \dots, (\mi^{-1}\diamond \mi)^{\# (p+q)},\dots,  ((\mi^{-1}\{\varphi\})\diamond \mi )^{\# q}, \dots, (\mi^{-1}\diamond \mi)^{\# q},\dots , (\mi^{-1}\{\psi\} )\diamond \mi, \dots, \mi^{-1}\diamond \mi,\dots  
		\big) \Big) \\
+&  \mathbb P \Big(
\m \big( \dots, (\mi^{-1}\diamond \mi)^{\# (p+q)},\dots,  (\mi^{-1}\{\varphi,\psi\} ) \diamond \mi,  \dots, \mi^{-1}\diamond \mi,\dots  
\big) \Big)
\end{align*}
Just as above, by applying Lemma \ref{weakMC_observation_lem} to $\mi^{-1}\diamond \mi\simud \id$, we know it further equals to (modulo $\ia$)
\begin{align*}
&\textstyle
\sum_{\beta_0,\beta_1,\beta_2} 
T^{E(\beta_0)} Y^{\partial\beta_0} \m_{2,\beta_0} 
\Big(
T^{E(\beta_1)} Y^{\partial\beta_1} \big( (\mi^{-1}\{ \varphi\})\diamond \mi \big)_{0,\beta_1}, T^{E(\beta_2)}Y^{\partial\beta_2}  \big( (\mi^{-1}\{ \psi\})\diamond \mi\big)_{0,\beta_2}
\Big) \\
+&
\textstyle \sum_{\beta_0, \beta_1} T^{E(\beta_0)} Y^{\partial\beta_0} \m_{1,\beta_0} 
\Big( T^{E(\beta_1)} Y^{\partial\beta_1}  \big(\mi^{-1}\{\varphi, \psi\} \diamond \mi \big)_{0,\beta_1} \Big) 	 	
=
\mathbf m_2
\big(\mathbb P   \Theta(\varphi), \mathbb P   \Theta(\psi) \big) + \mathbf m_1 \big(\mathbb P\big( (\mi^{-1}\{\varphi,\psi\}) \diamond \mi \big) \big)
	\end{align*}

By Lemma \ref{Theta_computation_lem},
since $\delta_{\check \m}\varphi=\delta_{\check \m} \psi=0$,
it remains to prove that $\mathbb P \big( ( \mi^{-1} \{\varphi,\psi\}\diamond \mi)\{ \m\} \big) \equiv  0  \ (\mathrm{mod} \ \ia)$.
Indeed, it can be proved by almost the same way as in part (a): first, $\sum T^{E(\alpha)}Y^{\partial\alpha} \m_{0,\alpha}=W\cdot \one \ (\mathrm{mod} \ \ia)$; then, just use the conditions (\ref{rCC_condition_eq}) of $\mi^{-1},\varphi,\psi,\mi$. 
Putting things together, the proof is now complete.
\end{proof}

\subsection{Closed-open maps with Maslov-0 disks} \label{ss_closed_open}

By Theorem \ref{[hat_q]_ring_homo_thm} and Theorem \ref{from_HH_to_HF_thm}, the composition of $[\hat \q]$ and $\Phi\equiv [\mathbb P    \Theta]$:
\[
\xymatrix{ \Psi:
 & QH^*(X; \Lambda^X) \ar[rr]^{[\hat \q]\qquad } & & \rHH(\OL,\check \m)  \ar[rr]^{\Phi=[\mathbb P  \Theta]} & & \HF(L, \m)
}
\]
is a unital ring homomorphism.
Recall that we have defined two versions of quantum cohomologies $QH^*(X; \Lambda^X)=H^*(X)\hat\otimes \Lambda^X$ and $QH^*(X; \Lambda)=H^*(X)\hat\otimes \Lambda$ in (\ref{QH_two_defn_eq}).
As $\HF(L,\m)$ is a $\Lambda$-algebra, we can consider the $\Lambda$-linear extension of the restriction of $\Psi = \Phi\circ [\hat\q]$ on $H^*(X; \mathbb R)$, which gives a map:
\begin{equation}
	\label{CO_gJL_eq}
	\CO: \quad  QH^*(X; \Lambda)\to \HF(L,\m)
\end{equation}

\begin{prop}
	The $\CO$ is a unital $\Lambda$-algebra homomorphism and
$\CO(\one_X)= [\one]$.
\end{prop}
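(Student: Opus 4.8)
The plan is to assemble the statement directly from the two ring-homomorphism theorems already proved, namely Theorem \ref{[hat_q]_ring_homo_thm} and Theorem \ref{from_HH_to_HF_thm}, together with the compatibilities recorded along the way. First I would observe that $[\hat\q]:QH^*(X;\Lambda^X)\to \rHH(\OL,\check\m)$ is a unital ring homomorphism with $[\hat\q](\one_X)=\mathfrak e$, and that $\Phi=[\mathbb P\Theta]:\rHH(\OL,\check\m)\to \HF(L,\m)$ is a unital ring homomorphism. Hence the composite $\Psi=\Phi\circ[\hat\q]$ is a unital ring homomorphism from $QH^*(X;\Lambda^X)$ to $\HF(L,\m)$, and $\Psi(\one_X)=\Phi(\mathfrak e)=[\one]$ by the unitality of $\Phi$ (equivalently by (\ref{Theta_mathfrak_e_eq}) and (\ref{P_e=one_eq})). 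This takes care of everything over the base ring $\Lambda^X$.

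Next I would pass from $\Lambda^X$-coefficients to $\Lambda$-coefficients. The point is that $\HF(L,\m)$ is a $\Lambda$-algebra, while $\Psi$ is a priori only $\Lambda^X$-linear; but the $\Lambda^X$-action factors through the ring map $\Lambda^X\to\Lambda$, $t^A\mapsto T^{E(A)}$, because $\hat\q$ is $(\ ,\Lambda^X)$-linear, $\Theta$ is $\Lambda^X$-linear (\S\ref{sss_Lambda_X_module}), and $\mathbb P$ is $(\Lambda,\Lambda^X)$-linear with $\mathbb P(t^A\cdot\f)=T^{E(A)}\mathbb P(\f)$. So $\Psi$ descends along $H^*(X;\mathbb R)\otimes\Lambda^X\to H^*(X;\mathbb R)\otimes\Lambda$, and its $\Lambda$-linear extension $\CO:QH^*(X;\Lambda)\to\HF(L,\m)$ of the restriction $\Psi|_{H^*(X;\mathbb R)}$ is well-defined. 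One must check that $\CO$ is still multiplicative: the product on $QH^*(X;\Lambda)$ is the $\Lambda$-bilinear extension of the product on $H^*(X;\mathbb R)$ with structure constants in $\Lambda$ obtained from those over $\Lambda^X$ under $t^A\mapsto T^{E(A)}$, so multiplicativity of $\CO$ follows from multiplicativity of $\Psi$ on $H^*(X;\mathbb R)$ by bilinearity and the compatibility of the two Novikov specializations. Unitality is immediate: $\CO(\one_X)=\Psi(\one_X)=[\one]$.

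The only genuine subtlety, and the step I would be most careful about, is convergence: $\Theta$, $\mathbb P$, and $\mathbf m_k$ are defined with formal power series in $\Lambda[[\pi_1(L)]]$, and to have an honest $\Lambda$-algebra (rather than a $\Lambda[[\pi_1(L)]]$-valued gadget) one must restrict to a polyhedral affinoid subalgebra on which everything converges, as flagged repeatedly in \S\ref{ss_non_curved_A_inf_algebra} and \S\ref{ss_evaluation_HF}. By the reverse isoperimetric inequality the relevant series converge on an analytic neighbourhood of $H^1(L;U_\Lambda)$, and $\CO$ takes values in $\HF(L,\m)=H^*(H^*(L)\hat\otimes\Lambda[[\pi_1(L)]]/\ia,\mathbf m_1)$, which under Assumption \ref{assumption_weak_MC} has $\ia=0$; after choosing a suitable affinoid domain the statement is literally an identity of unital $\Lambda$-algebra maps. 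I would therefore state the proof compactly: combine the two theorems, note the $\Lambda^X$-to-$\Lambda$ compatibility of $\hat\q$, $\Theta$, $\mathbb P$, and conclude via $\Psi(\one_X)=\Phi([\hat\q](\one_X))=\Phi(\mathfrak e)=[\one]$.

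\begin{proof}
	By Theorem \ref{[hat_q]_ring_homo_thm}, $[\hat\q]$ is a unital ring homomorphism with $[\hat\q](\one_X)=\mathfrak e$, and by Theorem \ref{from_HH_to_HF_thm}, $\Phi=[\mathbb P\Theta]$ is a unital ring homomorphism. Hence $\Psi=\Phi\circ[\hat\q]$ is a unital ring homomorphism from $QH^*(X;\Lambda^X)$ to $\HF(L,\m)$, and
	\[
	\Psi(\one_X)=\Phi\big([\hat\q](\one_X)\big)=\Phi(\mathfrak e)=[\mathbb P\Theta(\mathfrak e)]=[\one]
	\]
	using (\ref{Theta_mathfrak_e_eq}) and (\ref{P_e=one_eq}).

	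It remains to see that $\Psi$ descends to the stated $\Lambda$-linear map $\CO$ on $QH^*(X;\Lambda)$. The operator $\hat\q$ is $\Lambda^X$-linear in the sense of \S\ref{sss_Lambda_X_module}, $\Theta$ is $\Lambda^X$-linear, and $\mathbb P$ satisfies $\mathbb P(t^A\cdot\f)=T^{E(A)}\mathbb P(\f)$; therefore $\mathbb P\Theta\hat\q$ intertwines the $\Lambda^X$-action on the source with multiplication by $T^{E(A)}$ on $\HF(L,\m)$. Consequently $\Psi$ factors through the base change $H^*(X;\mathbb R)\otimes_{\mathbb R}\Lambda^X\to H^*(X;\mathbb R)\otimes_{\mathbb R}\Lambda$ along $t^A\mapsto T^{E(A)}$, and the $\Lambda$-linear extension of $\Psi|_{H^*(X;\mathbb R)}$ is exactly the map $\CO$ in (\ref{CO_gJL_eq}).

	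Finally, $\CO$ is multiplicative: the quantum product on $QH^*(X;\Lambda)$ is the $\Lambda$-bilinear extension of the product on $H^*(X;\mathbb R)$ whose structure constants are the images under $t^A\mapsto T^{E(A)}$ of those of $QH^*(X;\Lambda^X)$; since $\Psi$ is multiplicative on $H^*(X;\mathbb R)$ and compatible with this specialization, bilinearity gives $\CO(g_1\pmb\ast g_2)=\CO(g_1)\cdot\CO(g_2)$ for all $g_1,g_2\in QH^*(X;\Lambda)$. Unitality is $\CO(\one_X)=\Psi(\one_X)=[\one]$ from the previous paragraph. (Here all the formal power series in $\Theta,\mathbb P,\mathbf m_k$ converge after restricting $\Lambda[[\pi_1(L)]]$ to a suitable polyhedral affinoid subalgebra containing $H^1(L;U_\Lambda)$, as in \S\ref{ss_non_curved_A_inf_algebra} and \S\ref{ss_evaluation_HF}; under Assumption \ref{assumption_weak_MC} the ideal $\ia$ vanishes.)
\end{proof}
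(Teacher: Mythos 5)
Your proposal is correct and follows essentially the same route as the paper: first establish that $\Psi=\Phi\circ[\hat\q]$ is a unital ring homomorphism over $\Lambda^X$ by composing Theorems \ref{[hat_q]_ring_homo_thm} and \ref{from_HH_to_HF_thm}, then use the $(\Lambda,\Lambda^X)$-linearity of $\mathbb P\Theta\hat\q$ (i.e.\ $\Psi(t^A\cdot g)=T^{E(A)}\Psi(g)$) together with $\Lambda$-bilinearity of the extension to transfer multiplicativity from $\pmb\ast^X$ to $\pmb\ast$, finishing with $\CO(\one_X)=\Phi(\mathfrak e)=[\one]$. The paper spells out the bilinearity step by expanding $\pmb g=\sum T^{\lambda_i}g_i$, $\pmb h=\sum T^{\mu_j}h_j$, but the substance is identical; your added remark on affinoid convergence is a reasonable clarification, not a departure.
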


\begin{proof}
	Denote the quantum products in $QH^*(X;\Lambda^X)$ and $QH^*(X;\Lambda)$ by $\pmb \ast^X$ and $\pmb \ast $ respectively for a moment to distinguish. Recall that $\Theta$ and $\hat \q$ are $\Lambda^X$-linear, and $\mathbb P$ is $(\Lambda, \Lambda^X)$-linear. So, by construction, $\Psi$ is also $(\Lambda, \Lambda^X)$-linear, that is, $\Psi( t^A\cdot g)= T^{E(A)} \Psi(g)$. Since $\CO$ is the $\Lambda$-linear extension, we see that $\Psi(g {\pmb \ast^X}  h)=\CO(g\pmb \ast h)$ for any $g,h\in H^*(X;\mathbb R)$.
	Let $\pmb g, \pmb h\in QH^*(X; \Lambda)\equiv H^*(X)\hat\otimes \Lambda$, and write $\pmb g= \sum_{i=1}^\infty T^{\lambda_i} g_i$ and $\pmb h= \sum_{j=1}^\infty T^{\mu_j} h_j$, where $\lambda_i \nearrow \infty$, $\mu_j\nearrow \infty$, and $g_i,h_j\in H^*(X;\mathbb R)$. Now, we have
$
		\CO(\pmb g \pmb \ast \pmb h)= \sum_{i,j\ge 1} T^{\lambda_i+\mu_j} \CO (g_i\pmb \ast  h_j)=\sum_{i,j\ge 1} T^{\lambda_i+\mu_j} \Psi (g_i {\pmb \ast^X}  h_j)
		=
		\sum_{i,j\ge 1} T^{\lambda_i+\mu_j} \Psi (g_i) \cdot  \Psi(h_j)
		= \CO(\pmb g)\cdot \CO(\pmb h)
$.
Finally, $\CO(\one_X)=\Psi(\one_X)=\Phi \circ [\hat \q] (\one_X) =\Phi(\mathfrak e)= [\one]$.
\end{proof}

On the other hand, just like the Gromov-Witten theory, the operator $\q$ also satisfies the divisor axiom for the interior marked points. Specifically,
if $g_1\in Z^2(X)$ is a closed 2-form supported in $X\setminus L$, then
\begin{equation}
	\label{divisor_axiom_for_q_eq}
	\q_{\ell,k,\beta}(g_1,g_2,\dots, g_\ell; h_1,\dots, h_k) = \textstyle \int_\beta g_1 \cdot \q_{\ell-1,k,\beta}(g_2,\dots, g_\ell ;h_1,\dots, h_k)
\end{equation}

Denote by $c_1=c_1(X) \in H^2(X)$ the first Chern class of $X$. 
Note that there exists a cycle $Q$ in $X\setminus L$ with $\beta\cap Q=\mu(\beta)$ and $[Q]$ is Poincare dual to $2c_1$ \cite[\S 23.3]{FOOOSpectral}.
Together with Proposition \ref{operator_q_prop} (iii), we obtain
\begin{equation}
	\label{hat_q_c_1_eq}
	\big(\hat \q(c_1) \big)_{k,\beta}
	=
	\tfrac{\mu(\beta)}{2} \cdot  \check \m_{k,\beta} 
\end{equation}

\begin{prop}
	\label{CO_c_1_W_prop}
	$\CO(c_1)= W\cdot [\one]$.
\end{prop}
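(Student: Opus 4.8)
The strategy is to compute $\CO(c_1)$ directly by unwinding the three maps composing $\CO = \Phi \circ [\hat \q]$ (with the $\Lambda$-linear extension) and to exploit the explicit formula (\ref{hat_q_c_1_eq}) for $\hat\q(c_1)$ together with the defining formula (\ref{superpotential_L}) of $W_L$. Concretely, $\CO(c_1) = \Phi([\hat\q(c_1)]) = [\mathbb P\Theta(\hat\q(c_1))]$. So the first step is to identify $\mathbb P\Theta(\hat\q(c_1))$ as a cochain in $H^*(L)\hat\otimes \Lambda[[\pi_1(L)]]/\ia$ and then recognize its class in $\HF(L,\m)$. I would begin by substituting (\ref{hat_q_c_1_eq}), so that $\hat\q(c_1)$ is the Hochschild cochain $\varphi$ with $\varphi_{k,\beta} = \tfrac{\mu(\beta)}{2}\,\check\m_{k,\beta}$, and then pushing this through $\Theta$ of (\ref{map_Theta_eq}) and $\mathbb P$ of (\ref{map_P_eq}).

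\textbf{Key computational steps.} The main point is that $\varphi = \hat\q(c_1)$ is essentially the ``energy/Maslov derivation'' applied to $\check\m$, i.e. $\varphi = \tfrac12 D_E \check\m$ where $D_E$ multiplies the $\beta$-component by $\mu(\beta)$. One should first observe that such a derivation of $\check\m$ is, up to the Hochschild coboundary $\delta_{\check\m}$ of something, closely related to $\check\m$ itself — but actually for the present statement we do not need a clean cocycle identity; we only need to evaluate $\mathbb P\Theta(\varphi)$. Expanding $\Theta(\varphi)_{0,\beta}$ using the explicit formula: only the length-zero output survives under $\mathbb P$, so we get $\sum_\beta T^{E(\beta)}Y^{\partial\beta}\big((\mi^{-1}\{\varphi\})\diamond\mi\big)_{0,\beta}$, which is a sum of trees with leaves decorated by $\mi$'s, one internal vertex $\varphi_{\nu,\beta''} = \tfrac{\mu(\beta'')}{2}\check\m_{\nu,\beta''}$, all wrapped by $\mi^{-1}$. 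Because $\varphi$ inserts the weight $\tfrac{\mu(\beta'')}{2}$ and because $\mi, \mi^{-1}$ preserve $\pi_1(L)$ (their $\beta$-components have nontrivial Maslov only in nonnegative degree), the combinatorics should collapse: the sum over which vertex carries the $\mu$-weight, combined with the divisor axiom and the fact that $\mi^{-1}\diamond\mi \simud \id$ (so the ``trivial'' trees contribute $0$ mod $\ia$ by Lemma \ref{weakMC_observation_lem}, exactly as in the proof of Theorem \ref{from_HH_to_HF_thm}), should isolate precisely the term $\sum_{\mu(\beta)=2} T^{E(\beta)}Y^{\partial\beta}\m_{0,\beta} = W\cdot\one$ modulo $\ia$. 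That is, I expect to show $\mathbb P\Theta(\hat\q(c_1)) = W\cdot\one + (\text{a }\mathbf m_1\text{-exact term}) \pmod{\ia}$, whence $\CO(c_1) = [W\cdot\one] = W\cdot[\one]$ in $\HF(L,\m)$.

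\textbf{Alternative / cleaner route.} A more structural argument avoids the tree bookkeeping: since $\hat\q$ is a ring map sending $\one_X$ to $\mathfrak e$, and $c_1$ acts on $QH^*$ as an endomorphism whose ``Maslov grading'' shift corresponds under $[\hat\q]$ to multiplication by the energy-weighted derivation, one can instead use the divisor-axiom relation (\ref{divisor_axiom_for_q_eq}) together with the fact (\ref{hat_q_c_1_eq}) to write $\hat\q(c_1)$ as the Hochschild cocycle that represents, after applying $\Theta$ and $\mathbb P$, the length-zero part of the minimal-model $A_\infty$ structure $\m$ weighted by $\tfrac{\mu(\beta)}{2}$. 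In the minimal model, $\m_{0,\beta}$ is nonzero only when $\mu(\beta)\ge 0$, and $\mathbb P$ picks out $\sum_\beta T^{E(\beta)}Y^{\partial\beta}\,\tfrac{\mu(\beta)}{2}\,\m_{0,\beta}$; the $\mu(\beta)=0$ terms give $\tfrac12\cdot 0 = 0$ (and in any case land in $Q$, hence in $\ia$), while the $\mu(\beta)=2$ terms give exactly $W\cdot\one$; the terms with $\mu(\beta)>2$ have $\m_{0,\beta}$ of negative degree in $H^*(L)$, hence vanish. This directly yields $\CO(c_1) = W\cdot[\one]$.

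\textbf{Main obstacle.} The hard part is justifying that passing from $\hat\q(c_1)$ (a cochain built from the \emph{chain-level} $\check\m$ on $\OL$) through $\Theta$ genuinely reproduces the \emph{minimal-model} quantity $\sum_{\mu(\beta)=2}T^{E(\beta)}Y^{\partial\beta}\m_{0,\beta}$ and not some homotopic-but-different series — i.e. controlling the error terms coming from $\mi^{-1}\diamond\mi\neq\id$. This is precisely where Assumption \ref{assumption_weak_MC} (so $\ia = 0$) and Lemma \ref{weakMC_observation_lem} do the work: they guarantee the ``correction trees'' built from $(\mi^{-1}\diamond\mi)_{0,\alpha}$ all vanish modulo $\ia$, so the surviving contribution is literally the $\mu(\beta)=2$ part of $\m$. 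A secondary subtlety is keeping track of signs and of the label grading (the $\tfrac{\mu(\beta)}{2}$ weight interacting with $\Theta$'s $\Lambda^X$-linearity), but these are routine given Proposition \ref{operator_q_prop} and the computations already carried out in the proof of Theorem \ref{from_HH_to_HF_thm}.
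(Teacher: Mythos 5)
Your high-level plan (compute $\mathbb P\,\Theta(\hat\q(c_1))$ and show it equals $W\cdot\one$ modulo $\ia$) is the right target, and your alternative route correctly notices that $\sum_\beta T^{E(\beta)}Y^{\partial\beta}\tfrac{\mu(\beta)}{2}\m_{0,\beta}=W\cdot\one$ once one sees that the $\mu=0$ terms carry weight $0$, the $\mu=2$ terms give $W$, and the $\mu>2$ terms vanish for degree reasons. However, both routes leave the actual computation unproved, and the mechanism you invoke for controlling the error terms is not the one that actually works here.

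The concrete gap is in the handling of the ``correction trees.'' You propose to kill them using $\mi^{-1}\diamond\mi\simud\id$ and Lemma~\ref{weakMC_observation_lem}, ``exactly as in the proof of Theorem~\ref{from_HH_to_HF_thm}.'' But the tree shape here is different. In Theorem~\ref{from_HH_to_HF_thm} the outer vertex is $\m$, and the extra length-zero inputs group into $(\mi^{-1}\diamond\mi)_{0,\alpha}$, which Lemma~\ref{weakMC_observation_lem} shows sums to $0$ mod $\ia$. In the present computation, expanding $\big((\mi^{-1}\{\hat\q(c_1)\})\diamond\mi\big)_{0,\B}$ puts $\mi^{-1}_{s+t+1,\alpha}$ at the outer vertex with \emph{bare} inputs $\mi_{0,\gamma_i}$ and $\mi_{0,\eta_j}$, not $(\mi^{-1}\diamond\mi)_{0,\alpha}$'s, and Lemma~\ref{weakMC_observation_lem} says nothing about $\sum_\gamma T^{E(\gamma)}Y^{\partial\gamma}\mi_{0,\gamma}$ by itself. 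The correct mechanism, which the paper uses, is in two steps you do not mention: first, the semipositive/degree counting combined with the $A_\infty$ relation $\check\m\diamond\mi = \mi\{\m\}$ and the unitality of $\mi$ is used to show that $\sum_\beta T^{E(\beta)}Y^{\partial\beta}\big(\hat\q(c_1)\diamond\mi\big)_{0,\beta} = W\cdot\one$ mod $\ia$; second, once the middle sub-tree is a multiple of the unit $\one$, the \emph{unitality} of $\mi^{-1}$ (condition (II-1) of $\UD$) forces $s=t=0$ and $\alpha=0$, so only $\mi^{-1}_{1,0}(W\cdot\one)=W\cdot\one$ survives. Your alternative route simply asserts the identity $\mathbb P\,\Theta(\hat\q(c_1))=\sum_\beta T^{E(\beta)}Y^{\partial\beta}\tfrac{\mu(\beta)}{2}\m_{0,\beta}$ without proof, but establishing precisely this (or the equivalent statement $\mathbb P\,\Theta(\hat\q(c_1))=W\cdot\one$ mod $\ia$) is the entire content of the proposition, and the route from $\check\m$ to the minimal model $\m$ must pass through $\check\m\diamond\mi=\mi\{\m\}$ and the unitality arguments. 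So neither route as written closes the gap.
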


\begin{proof}
First, we compute in the cochain level:
\begin{align*}
	\mathbb P\circ \Theta \circ \hat\q (c_1) 
	&
	=
	\mathbb P \Big(	(\mi^{-1} \{ \hat \q(c_1)\})\diamond \mi	\Big) = \mathbb P\Big(	\mi^{-1}\big(\mi^\#,\dots, \mi^\#, \ \hat \q(c_1)\diamond \mi, \ \mi,\dots,\mi \big)	\Big) \\
	&
	=
	\sum_{s, t\ge 0, \B\in\pi_2(X,L)} \
	\sum_{\alpha+\beta+\sum \gamma_i+\sum \eta_i=\B}
	T^{E(\B)} Y^{\partial\B} \cdot
	 \mi^{-1}_{s+t+1,\alpha} \big( \mi^\#_{0,\gamma_1}, \dots, \mi^\#_{0,\gamma_s}, \ \big(\hat \q(c_1)\diamond \mi \big)_{0,\beta}, \ \mi_{0,\eta_1},\dots, \mi_{0,\eta_t} \big)
\end{align*}
Since $\mi_{0,\gamma}\in \Omega^{1-\mu(\gamma)}(L)$, the semipositive condition implies that $\mu(\gamma)=0$ whenever $\mi_{0,\gamma}\neq 0$.
By this observation and by (\ref{hat_q_c_1_eq}), the term 
\[
\big(\hat \q(c_1)\diamond \mi \big)_{0,\beta}  
=
\sum_{\beta_0+\cdots+\beta_\ell=\beta} 
\hat \q(c_1)_{\ell,\beta_0}
(\mi_{0,\beta_1},\dots, \mi_{0,\beta_\ell})
=
\sum_{\beta_0+\cdots+\beta_\ell=\beta} 
\tfrac{\mu(\beta_0)}{2} \cdot \check \m_{\ell,\beta_0}
(\mi_{0,\beta_1},\dots, \mi_{0,\beta_\ell})
\]
is contained in $\Omega^{2-\mu(\beta_0)}(L)$. By the semipositive condition again, we may assume $\mu(\beta_0)=0$ or $2$. Hence, whenever it is nonzero, we may always assume $\mu(\beta_0)=2$ and $\mu(\beta_i)=0$ for other $1\le i \le \ell$, in particular, $\mu(\beta)=2$.
Then, using the $A_\infty$ associativity relation $\check\m \diamond \mi= \mi\{\m\}$ yields that
\begin{align*}
	\big(\hat \q(c_1)\diamond \mi \big)_{0,\beta}  
	=
	\sum_{\mu(\beta_0)=2} \check \m_{\ell,\beta_0} (\mi_{0,\beta_1},\dots, \mi_{0,\beta_\ell})  
	=
	\Big( \sum \check \m_{\ell,\beta_0} (\mi_{0,\beta_1},\dots, \mi_{0,\beta_\ell})  \Big) \Big|_{\Omega^0(L)}
	=
	\Big(
	\sum \mi_{1,\beta'} (\m_{0,\beta''} ) \Big) \Big|_{\Omega^0(L)}
\end{align*}
Notice that $\mu(\beta')+\mu(\beta'')=\mu(\beta)=2$.
Thus, 
\begin{align*}
	\sum_\beta T^{E(\beta)} Y^{\partial\beta}  \big(\hat \q(c_1)\diamond \mi \big)_{0,\beta}
	&
	=
	\sum_{\mu(\beta)=2} T^{E(\beta)} Y^{\partial\beta}  \big(\hat \q(c_1)\diamond \mi \big)_{0,\beta} \\
	&
=	\sum_{\mu(\beta')=0} T^{E(\beta')} Y^{\partial\beta'} \mi_{1,\beta'} \big( \sum_{\mu(\beta'')=2} T^{E(\beta'')} Y^{\partial\beta''} \m_{0,\beta''} \big)  \  
	= \mi_{1,0}(W\cdot \one)=W\cdot \one \quad (\mathrm{mod} \ \ia)
\end{align*}
%
where the terms with $\mu(\beta'')=0$ are killed modulo $\ia$.
Beware that we abuse the notation $\one$ to represent the constant-one functions in both $\Omega^0(L)$ and $H^0(L)$.
Finally, back to the calculation at the start, using the unitality of $\mi^{-1}$ further deduces that $s=t=0$ and $\alpha=0$ there. To conclude,
\[
\mathbb P\circ \Theta\circ \hat \q(c_1) 
= \mi^{-1}_{1,0}(W \cdot \one) = W\cdot \one \qquad (\mathrm{mod} \ \ia)
\]
Passing to the cohomology, we get $\CO(c_1)=\Psi(c_1)=W\cdot [\one]$.
The proof is now complete.
\end{proof}

Suppose $\mathbf y \in H^1(L; U_\Lambda)$ can lift to a weak bounding cochain $b\in H^1(L; \Lambda_0)$; or equivalently, suppose the ideal $\ia$ vanishes at $\mathbf y$, i.e. $Q(\mathbf y)=0$.
(Indeed, up to Fukaya's trick, we may even allow $\mathbf y$ to lie in a neighborhood of $H^1(L;U_\Lambda)$ in $H^1(L;\Lambda^*)$.)
By Lemma \ref{Eva_y_HF_to_HF_lem}, we get a unital $\Lambda$-algebra homomorphism $\mathcal E_{\mathbf y}: \HF(L,\m) \to \HF(L,\m,\mathbf y)$.
Accordingly, the composition
\begin{equation}
	\label{CO_Eva_y_eq}
	\xymatrix{
		\CO_{\mathbf y}:=\mathcal E_{\mathbf y} \circ \CO: &  QH^*(X; \Lambda) \ar[rr]^{\CO} & & \HF(L,\m) \ar[rr]^{\mathcal E_{\mathbf y}} &  & \HF(L,\m, \mathbf y)
	}
\end{equation}
is also a unital $\Lambda$-algebra homomorphism. In particular,
\begin{equation}
	\label{CO_y_1_X_eq}
	\mathbb {CO}_{\mathbf y} (\one_X)= [\one]
\end{equation}
Moreover, by Proposition \ref{CO_c_1_W_prop}, we also have
\begin{equation}
	\label{CO_y_c_1_eq}
	\mathbb {CO}_{\mathbf y} (c_1)= W(\mathbf y)\cdot [\one]
\end{equation}
After all the above preparations, there is no further obstacle for the proof of our local result:

\begin{thm}
	[Theorem \ref{Main_thm_c_1_general_thm}]
	\label{eigenvalue_general_thm}
	Suppose the cohomology ring $H^*(L)$ is generated by $H^1(L;\mathbb Z)$. Under Assumption \ref{assumption_weak_MC}, any critical value of $W$ is an eigenvalue of $c_1\pmb\ast :QH^*(X;\Lambda)\to QH^*(X;\Lambda)$.
\end{thm}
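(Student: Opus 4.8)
The strategy is to combine the ring-homomorphism $\mathbb{CO}_{\mathbf y}\colon QH^*(X;\Lambda)\to \HF(L,\m,\mathbf y)$ with the non-vanishing criterion for the self Floer cohomology. First I would let $\lambda_0$ be a critical value of $W$, witnessed by a critical point $\mathbf y\in H^1(L;\Lambda^*)$ with $W(\mathbf y)=\lambda_0$. Under Assumption \ref{assumption_weak_MC} the ideal $\ia$ vanishes, so $Q(\mathbf y)=0$ automatically and $\mathbf y$ lifts to a weak bounding cochain; hence the machinery of \S\ref{ss_evaluation_HF} applies and $\HF(L,\m,\mathbf y)$ is a well-defined unital $\Lambda$-algebra. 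By Theorem \ref{nonvanishing_HF_thm}, since $\mathbf y$ is a critical point and $H^*(L)$ is generated by $H^1(L;\mathbb Z)$, we get $\HF(L,\m,\mathbf y)\ne 0$; in particular the unit $[\one]$ is nonzero in this ring (the argument of Theorem \ref{nonvanishing_HF_thm} shows precisely that $[\one]=0$ forces $\mathbf y$ to be non-critical).

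The second step is purely algebraic. Consider the element $c_1 - \lambda_0\cdot \one_X \in QH^*(X;\Lambda)$, where $\one_X$ is the quantum unit. Applying the $\Lambda$-algebra homomorphism $\mathbb{CO}_{\mathbf y}$ and using (\ref{CO_y_1_X_eq}) and (\ref{CO_y_c_1_eq}), one computes
\[
\mathbb{CO}_{\mathbf y}(c_1 - \lambda_0\cdot\one_X) = W(\mathbf y)\cdot[\one] - \lambda_0\cdot[\one] = (\lambda_0-\lambda_0)[\one] = 0 .
\]
Now suppose, for contradiction, that $\lambda_0$ is \emph{not} an eigenvalue of the $\Lambda$-linear operator $c_1\pmb\ast\colon QH^*(X;\Lambda)\to QH^*(X;\Lambda)$. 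Since $\Lambda$ is algebraically closed and $QH^*(X;\Lambda)$ is finite-dimensional, the operator $c_1\pmb\ast - \lambda_0\cdot\id$ is then invertible, and its inverse is given by multiplication by some element $P\in QH^*(X;\Lambda)$ in the commutative ring $QH^*(X;\Lambda)$ (indeed $c_1\pmb\ast - \lambda_0\id$ is multiplication by $c_1-\lambda_0\one_X$, and invertibility of this multiplication operator means $c_1-\lambda_0\one_X$ is a unit in the ring). Therefore $(c_1-\lambda_0\one_X)\pmb\ast P = \one_X$, and applying the ring homomorphism $\mathbb{CO}_{\mathbf y}$ gives
\[
[\one] = \mathbb{CO}_{\mathbf y}(\one_X) = \mathbb{CO}_{\mathbf y}(c_1-\lambda_0\one_X)\cdot \mathbb{CO}_{\mathbf y}(P) = 0\cdot \mathbb{CO}_{\mathbf y}(P) = 0
\]
in $\HF(L,\m,\mathbf y)$, contradicting $\HF(L,\m,\mathbf y)\ne 0$. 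Hence $\lambda_0$ must be an eigenvalue of $c_1\pmb\ast$.

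The two-case logical structure I would present explicitly is: either $\HF(L,\m,\mathbf y)=0$, in which case Theorem \ref{nonvanishing_HF_thm} says $\mathbf y$ is not a critical point (excluded by hypothesis), or $\HF(L,\m,\mathbf y)\ne 0$ and the unit is nonzero, so the displayed contradiction forces $\lambda_0$ to lie in the spectrum of $c_1\pmb\ast$. I would also remark that the whole argument is insensitive to the various choices (metric $g$, almost complex structure $J$, adjacent Lagrangian via Fukaya's trick): by Corollary \ref{crit_point_inv_cor} the critical points and values of $W$ are preserved under the analytic coordinate changes $\phi$, and by Proposition \ref{inv_Floer_self_prop_y} the non-vanishing of $\HF(L,\m,\mathbf y)$ is invariant as well, so the statement descends to the family Floer mirror $(X^\vee,W^\vee)$ and yields Theorem \ref{Main_thm_c_1}.

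**Main obstacle.** The genuinely hard work is entirely upstream of this final assembly: constructing the closed-open map $\CO$ with Maslov-0 corrections, which requires Theorem \ref{[hat_q]_ring_homo_thm} (quantum cohomology to reduced Hochschild cohomology of the \emph{cochain-level} $A_\infty$ algebra $\check\m$) and then Theorem \ref{from_HH_to_HF_thm} (passing through the operator $\Theta$ of (\ref{map_Theta_eq}) to the \emph{minimal model}), together with the self Floer cohomology formalism for $\UD$ in \S\ref{s_self_HF} and the non-vanishing Theorem \ref{nonvanishing_HF_thm}. Once all of these are in hand, the final deduction above is a short and formal spectral-theory argument — the only subtlety to be careful about is to invoke algebraic closedness of $\Lambda$ and finite-dimensionality of $QH^*(X;\Lambda)$ so that "non-eigenvalue" is equivalent to "$c_1-\lambda_0\one_X$ is a unit in the quantum ring", which is what lets the ring homomorphism $\mathbb{CO}_{\mathbf y}$ produce the contradiction.
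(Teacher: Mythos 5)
Your proof is correct and takes essentially the same route as the paper: you deduce $\mathbb{CO}_{\mathbf y}(c_1 - W(\mathbf y)\one_X)=0$ from (\ref{CO_y_1_X_eq})--(\ref{CO_y_c_1_eq}), use the unital ring-homomorphism property of $\mathbb{CO}_{\mathbf y}$ to argue that invertibility of $c_1-W(\mathbf y)\one_X$ would force $[\one]=0$, and contradict the non-vanishing $\HF(L,\m,\mathbf y)\neq 0$ from Theorem \ref{nonvanishing_HF_thm}. The only cosmetic difference is that you spell out the equivalence between ``$\lambda_0$ not an eigenvalue'' and ``$c_1-\lambda_0\one_X$ a unit in the finite-dimensional commutative $\Lambda$-algebra $QH^*(X;\Lambda)$'', which the paper simply phrases as ``suppose $a$ were invertible''; your closing remark about invariance under coordinate changes reproduces the paper's passage to Theorem~\ref{Main_thm_c_1}.
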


\begin{proof}
	Let $\mathbf y$ be a critical point of $W$ in the sense of Definition \ref{critical_point_W_defn}. We aim to show $W(\mathbf y)\in \Lambda$ is an eigenvalue of $c_1\pmb\ast$.
	Arguing by contradiction, suppose $a:=c_1-W(\mathbf y) \one_X$ was invertible in $QH^*(X; \Lambda)$. Then, there would exist $a^{-1}$ with $a{\pmb\ast} a^{-1}=\one_X$. 
	Notice that $\CO_{\mathbf y}(a)=\CO_{\mathbf y}(c_1)-W(\mathbf y) \CO_{\mathbf y}(\one_X)=0$ by (\ref{CO_y_1_X_eq}) and (\ref{CO_y_c_1_eq}).
	Since $\mathbb {CO}_{\mathbf y}$ is a unital $\Lambda$-algebra homomorphism, we conclude that
	\begin{align*}
		[\one]= \CO_{\mathbf y}(\one_X) = \CO_{\mathbf y} (a {\pmb\ast} a^{-1}) =\CO_{\mathbf y}(a) \cdot \CO_{\mathbf y} (a^{-1}) =0
	\end{align*}
Namely, we must have $\HF(L,\m,\mathbf y)=0$.
But, due to Theorem \ref{nonvanishing_HF_thm}, the $\mathbf y$ being a critical point of $W$ implies that $\HF(L,\m,\mathbf y)\neq 0$. This is a contradiction.
\end{proof}

\subsection{Global result and the family Floer program}
\label{ss_implication_family_Floer}

Lastly, we want to adapt Theorem \ref{eigenvalue_general_thm} to the non-archimedean SYZ picture, showing Conjecture \ref{conjecture_folklore} for the analytic mirror Landau-Ginzburg model in Theorem \ref{Main_theorem_thesis}. That is, we aim to show Theorem \ref{Main_thm_c_1}.


\subsubsection{General aspects}
A brief review of Theorem \ref{Main_theorem_thesis} is as follows.
First, using the moduli space of holomorphic disks and the homological perturbation, we obtain an $A_\infty$ algebra $(H^*(L),\m)$ in $\Obj \UD(L)$ for each Lagrangian torus fiber $L=L_q=\pi^{-1}(q)$.
Take a small rational polyhedron $\Delta$ in $B_0$ near $q$, and we can identify $\Delta$ with a subset in $\mathbb R^n$ via integral affine coordinates.
Define the $W$ and $\ia$ for $\m$ as in \S \ref{ss_weak_MC}, but now the ideal $\ia$ vanishes under Assumption \ref{assumption_weak_MC}.
Now, a local chart of $X^\vee$ is identified with a polyhedral affinoid domain $\trop^{-1}(\Delta)$ for the fibration map $\trop$ in (\ref{trop_eq}).
From the rigid analytic geometry, we know $\trop^{-1}(\Delta)$ can be recognized as the spectrum $\Sp\Lambda\langle \Delta\rangle$ of maximal ideals of the following affinoid algebra contained in $\Lambda[[\pi_1(L)]]\cong \Lambda[[Y_1^\pm,\dots, Y_n^\pm]]$:
\begin{equation}
	\label{affinoid_algebra_eq}
\Lambda\langle \Delta \rangle = \Big\{ \textstyle \sum_{\alpha\in\pi_1(L)} c_\alpha Y^\alpha 
\mid
\val (c_\alpha)+ \langle \alpha, v\rangle \to \infty, \, \, \text{for any} \ v\in \Delta
\Big\} 
\end{equation}
Recall that $\Lambda\langle \Delta\rangle$ consists of all those formal power series in $\Lambda[[\pi_1(L)]]$ which converge on $\trop^{-1}(\Delta)$.
Using the reverse isoperimetric inequalities \cite{ReverseI} and shrinking $\Delta$ if necessary, we may require the $\mathbf m_k\equiv \sum_\beta T^{E(\beta)} Y^{\partial\beta}\m_{k,\beta}$ maps $H^*(L)\hat\otimes \Lambda\langle \Delta\rangle$ to $H^*(L)\hat\otimes \Lambda\langle \Delta\rangle$ in (\ref{CF_m_k_eq}). In particular, the $W\equiv \mathbf m_0$ lies in $\Lambda\langle \Delta\rangle$ or equivalently converges on $\trop^{-1}(\Delta)$.

The affinoid space $\trop^{-1}(\Delta)$ embeds into the mirror analytic space $X^\vee$ as a local chart. 
In reality, the transition maps $\phi$ among all these local charts are given by restricting the homomorphisms in (\ref{phi_eq}) to some affinoid algebra like (\ref{affinoid_algebra_eq}).
Eventually, by \cite{Yuan_I_FamilyFloer}, the various local charts glue to the mirror rigid analytic space $X^\vee$.

\subsubsection{Proof of the global result}

Now, we are ready to prove Theorem \ref{Main_thm_c_1}.
Let $\lambda: = W^\vee(\mathbf y)$ be a critical value of the mirror Landau-Ginzburg superpotential $W^\vee$ at a point $\mathbf y$ in the mirror analytic space $X^\vee$.
In view of Corollary \ref{crit_point_inv_cor}, we may choose a specific local chart of $X^\vee$ to present the critical point $\mathbf y$.
Actually, since $X^\vee=\bigcup_q H^1(L_q; U_\Lambda)$ as a set, there is a Lagrangian fiber $L=L_q$ so that its dual fiber $H^1(L; U_\Lambda)$ contains the point $\mathbf y$. Remark that one may also take any other adjacent Lagrangian fiber $L'$ thanks to the Fukaya's trick.
Next, we find a nearby local chart which corresponds to an $A_\infty$ algebra $(\HL, \m)\in \Obj\UD$ associated to $L$. As said before, choosing integral affine coordinates near $q$, this local chart can be identified with a polyhedral affinoid domain $\trop^{-1}(\Delta)$, and the dual fiber $H^1(L;U_\Lambda)$ is then identified with the central fiber $\trop^{-1}(0)\equiv U_\Lambda^n$. The critical point can be also viewed as a point in $\trop^{-1}(\Delta) \subset (\Lambda^*)^n$, say $\mathbf y=(y_1,\dots, y_n)$.
Let $W$ be the local expression of $W^\vee$ in this local chart. Recall that the nonvanishing of $\HF(L,\m, \mathbf y)$ does not rely on the local chart, and so does
the critical value $\lambda= W(\mathbf y)$.
Now, it follows from Theorem \ref{eigenvalue_general_thm} that $\lambda$ is an eigenvalue of $c_1$.
Ultimately, applying the same argument to all critical points of $W^\vee$, we complete the proof of Theorem \ref{Main_thm_c_1}.

\paragraph{\pmb{Acknowledgment.}}
The author thanks Kenji Fukaya for his constant encouragement, and Ezra Getzler and Boris Tsygan for helpful comments on Hochschild cohomology. The author is also grateful to Mohammed Abouzaid, Denis Auroux, Andrew Hanlon, Mark McLean, Yuhan Sun, and Eric Zaslow for their interest and stimulating discussions. Special thanks to the anonymous referee for the meticulous review and invaluable suggestions.

\appendix

\bibliographystyle{alpha}
\bibliography{mybib_critical}

\end{document}